\newcommand{\felix}[1]{\marginpar{\footnotesize{{\color{white}{#1}}}}}
\newcommand{\X}{E}
\newcommand{\R}{{\mathds{R}}}
\newcommand{\E}{{\mathds{E}}}
\newcommand{\N}{{\mathds{N}}}
\renewcommand{\P}{{\mathds{P}}} 
\newcommand{\dom}{{\mathrm{dom}}}
\newcommand{\LB}{{\mathcal{L}}}
\newcommand{\diffin}[1]{\mathrm{d}#1}
\newcommand{\id}{\mathrm{id}}
\newcommand{\bD}{{\mathds D}}
\newcommand{\bE}{{\mathds E}}
\newcommand{\bN}{{\mathds N}}
\newcommand{\bP}{{\P}}
\newcommand{\bR}{{\mathds R}}
\newcommand{\cB}{\ensuremath{\mathcal B}}
\newcommand{\cC}{\ensuremath{\mathcal C}}
\newcommand{\cD}{\ensuremath{\mathcal D}}		
\newcommand{\cE}{\ensuremath{\mathcal E}}		
\newcommand{\cF}{\ensuremath{\mathcal F}}
\newcommand{\cL}{\ensuremath{\mathcal L}}
\newcommand{\cM}{\ensuremath{\mathcal M}}
\newcommand{\cN}{\ensuremath{\mathcal N}}
\newcommand{\cO}{\ensuremath{\mathcal O}}
\newcommand{\cP}{\ensuremath{\mathcal P}}
\newcommand{\cS}{\ensuremath{\mathcal S}}
\newcommand{\pred}{\ensuremath{\operatorname{pr}}}
\newcommand{\dl}{\mathrm{d}}
\newcommand{\triple}{{\vert\kern-0.25ex\vert\kern-0.25ex\vert}}
\newcommand{\meas}{\ensuremath{\zeta}}
\newcommand{\NN}{\mathbf N}
\newcommand{\one}{\mathds 1}
\newcommand{\m}{\mu}
\renewcommand{\geq}{\ensuremath{\geqslant}}
\renewcommand{\leq}{\ensuremath{\leqslant}}
\theoremstyle{plain}
\newtheorem{definition}{Definition}[section]
\newtheorem{theorem}[definition]{Theorem}
\newtheorem{lemma}[definition]{Lemma}
\newtheorem{corollary}[definition]{Corollary}
\newtheorem{prop}[definition]{Proposition}
\newtheorem{assumption}[definition]{Assumption}
\newtheorem{setting}[definition]{Setting}
\newtheorem{notation}[definition]{Notation}
\theoremstyle{definition}
\newtheorem{remark}[definition]{Remark}
\newtheorem{example}[definition]{Example}
\begin{document}

\title[Poisson Malliavin calculus]
{Poisson Malliavin calculus in Hilbert space\\ 
with an application to SPDE}

\author[A.~Andersson]{Adam Andersson}
\address{Adam Andersson\\
Syntronic Software Innovations\\
Lindholmspiren 3B\\
SE-417 56 Gothenburg\\
Sweden}
\email{adan@syntronic.com}



\author[F.~Lindner]{Felix Lindner}
\address{Felix Lindner\\
Fachbereich Mathematik\\
Technische Universit\"at Kaiserslautern\\
Postfach 3049, 67653 Kaiserslautern, Germany}
\email{lindner@mathematik.uni-kl.de}

\begin{abstract}
In this paper we introduce a Hilbert space-valued Malliavin calculus for Poisson random measures. 
It is solely based on elementary principles from the theory of point processes and basic moment estimates, and thus allows for a simple treatment of the Malliavin operators.
The main part of the theory is developed for general Poisson random measures, defined on a $\sigma$-finite measure space, with minimal conditions.
The theory is shown to apply to a space-time setting, suitable for studying stochastic partial differential equations. 
As an application, we analyze the weak order of convergence of space-time approximations for a class of linear equations with $\alpha$-stable noise, $\alpha\in(1,2)$.
For a suitable class of test functions, the weak order of convergence is found to be $\alpha$ times the strong order.
\end{abstract}

\keywords{Malliavin calculus, Poisson random measure, stochastic evolution equation, stochastic partial differential equation,  
Lévy process, 
$\alpha$-stable noise, 
weak convergence}
\subjclass[2010]{60H07, 60G55, 60H15, 65C30} 

\maketitle
\tableofcontents

\section{Introduction}

Lévy-driven stochastic partial differential equations (SPDE, for short) have\linebreak drawn much attention in the literature in the last years. 
Such equations are often treated within a Hilbert space framework, see, e.g., the monograph \cite{PesZab2007} and the references therein as well as the more recent articles 
\cite{BessaihHausenblasRazafimandimby2015, BrzGolImkPesPriZab2010, BrzezniakHausenblasZhu2013, Filipovic14, Liu12, MarinelliPrevotRoeckner10, PavlyukevichRiedle15, PesZab13, PriolaZabczyk2011},
to mention but a few.
In this paper we develop a Hilbert space-valued 
Poisson Malliavin calculus which is suitable for treating Hilbert space-valued stochastic evolution equations with 
purely non-Gaussian 
Lévy noise. 
We particularly provide a series of results which can be used to analyze the Malliavin regularity of the solution processes of such equations.
The Malliavin derivative is defined non-locally as a difference operator, similarly to \cite{LastPenrose2011,Picard1996a} in the real-valued case. 
Our main motivation is the analysis of weak errors of numerical approximations of Lévy-driven SPDE. These errors, which are relevant for the analysis of Monte Carlo methods, 
are known to be closely connected to the Malliavin regularity of the solution and its approximations, see, e.g., \cite{AnderssonKovacsLarsson,AnderssonKruseLarsson,KohatsuHiga2,kruse2013}  for corresponding results on SDE and SPDE with Gaussian noise.
While our Poisson Malliavin calculus is general enough to be applicable to a large class of equations with 
additive or multiplicative 
Lévy noise, we intend it to be as simple as possible and therefore avoid technicalities which are not needed 
for our purpose,
such as the use of chaos expansions and associated Fock space structures or the use of closure arguments for the definition of the Malliavin derivative.
Our approach is solely based on elementary principles from the theory of point processes and basic moment estimates, and thus allows for a simple treatment of the Malliavin operators.
For greater clarity, a large part of the theory is developed in a general setting without an underlying space-time structure.
As a first application of our theory, we analyze the weak order of convergence of space-time discretizations for a class of linear SPDE driven by $\alpha$-stable noise, $\alpha\in(1,2)$. 
In the accompanying paper \cite{AnderssonLindner2017b} we also treat semi-linear equations.

There exists an extensive literature and various different approaches to Malliavin calculus for Poisson random measures and jump processes, see, e.g., the monographs \cite{bichteler1987, DiNunno2009, ishikawa2016, privault2009} and the references therein.  
There are roughly two main lines of research: 
In the first line the Malliavin derivative is defined as a local operator acting on the size or the instant of the jumps, cf.~\cite{bally2007, bichteler1987, Bouleau2011, Carlen1990, LeonUtzetVives2014}. 
In the second line it is defined as an annihilation operator based on chaos expansions in terms multiple Poisson stochastic integrals, leading to a non-local difference operator. 
This second approach has originally been developed in
\cite{dermoune1988, Ito1988, Nualart1990, Nualart1995, Picard1996a}
and has later been extended in various directions, see, e.g.,~\cite{Applebaum2009, Ishikawa2006, LastPenrose2011b, LastPenrose2011, Sole2007, Vives2013} and the references therein. 
We follow the second approach in the present article but avoid the use of chaos expansions and Fock space structures.
Note that in the mentioned literature 
concerning the second line of research
only real-valued or finite-dimensional random variables and stochastic processes are considered, and in most cases the Poisson random measure is assumed to be defined on a locally-compact space. 
An exception to the latter restriction are the works by Picard \cite{Picard1996a,Picard1996b}, Last and Penrose \cite{LastPenrose2011b,LastPenrose2011} and Last \cite{Last2014}. These articles are closely related to our work and serve as our main reference, but have a different scope and purpose.
%
To the best of our knowledge, so far the only publications  within the second line of research dealing with a Poisson Malliavin calculus 
for Hilbert or Banach space-valued random variables and processes are
\cite{Dirksen2013}, where a Malliavin framework for Banach space-valued Poisson stochastic integrals is developed, and \cite{Barth2016}, where the framework from \cite{Dirksen2013} is applied to a class of linear SPDE with square-integrable additive Lévy noise in a Hilbert space setting.

The approach and the results in the present article differ considerably from those in \cite{Dirksen2013,Barth2016} in several regards: For instance, in \cite{Dirksen2013} the Malliavin derivative 
is first defined on a core of cylindrical random variables and then extended to larger classes of $L^p$-integrable random variables via a closure argument in a second step.
While this procedure has the advantage of being formally analogous to the construction in the Gaussian case \cite{Maas2010,Nualart2006}, it is not necessary in the Poisson case and comes with the drawback that it hides natural features of the Poisson Malliavin calculus and complicates several proofs. 
In contrast, we introduce the Malliavin derivative from the beginning as a difference operator acting on $L^0$ 
spaces of (equivalence classes of) random variables without prescribed integrability properties, and use Mecke's formula from the theory of point processes \cite{LastPenrose2016,Mecke1967} to ensure that it is well-defined. 
The realizations of the Malliavin derivative in $L^p$ spaces are then merely restrictions of this operator to smaller domains, and the closedness of such a restriction follows from an elementary continuity property of the difference operator in $L^0$ spaces, cf.~Section~\ref{sec:DiffOp}. 
Similarly, we introduce the Kabanov-Skorohod integral as an $L^1$-integral in a pathwise sense, and consider abstract extensions thereof to $L^p$ spaces with $p>1$ only where it is needed, cf.~Section~\ref{subsec:Kabanov-Skorohod}.
These aspects lead to natural simplifications and/or generalizations of several arguments; compare for instance the assertions and proofs of Propositions 5.4, 5.5 in \cite{Dirksen2013} and Proposition 2.6 in \cite{Barth2016} with those of Propositions~\ref{lem:product}, \ref{lem:duality} and Proposition \ref{lem:chain} below.

Apart from that, we derive numerous results which are not included in \cite{Dirksen2013,Barth2016}, but which 
are needed for the analysis of the Malliavin regularity of larger classes of Lévy-driven stochastic evolution equations. 
Among those results are the local duality formulas in Lemma~\ref{lemma:duality2} and Proposition~\ref{prop:duality_space_time_local}, which do not rely on global but only on local integrability properties for the Malliavin derivative and are thus well-suited for handling the typical integrability properties of Lévy processes without finite second moments,
such as the $\alpha$-stable processes considered in Section~\ref{sec:alpha_stable}.
Another example are the general commutation relations between the Malliavin derivative and the Skorohod-Kabanov integral in Propositions~\ref{lem:commutator1}, \ref{prop:commutator_relation_L^2}, \ref{prop:comm_space-time1} and \ref{prop:comm_space-time2}, which are essential for the analysis of the Malliavin regularity of stochastic integral processes. 
These relations also allow for the treatment of equations with multiplicative noise, cf.~the accompanying paper \cite{AnderssonLindner2017b}.
Finally, we note that in \cite{Barth2016} Malliavin calculus methods are used in the spirit of \cite{kruse2013} to derive a weak convergence result for spatial approximations of linear equations with square-integrable Lévy noise, which is very similar to an earlier result from \cite{KovLinSch2015}. In Section~\ref{sec:alpha_stable} of the present article we allow instead for $\alpha$-stable driving processes as an important class of non-square-integrable Lévy processes. Moreover, we consider discretizations in space and time as well as a class of path-dependent test functions. 
Our corresponding result in Theorem~\ref{thm:weak_error_alpha} appears to be the first result in the literature giving an explicit weak convergence rate for approximations of SPDE with non-square-integrable Lévy noise.

To complete the picture, let us also mention that Malliavin calculus methods have been applied to parabolic SPDE with jumps in \cite{Fournier2000} in order to show that the one-dimensional marginal distributions of the solution are absolutely continuous w.r.t.~Lebesgue measure.
Here the Malliavin derivative is defined as a local operator acting on the size of the jumps, and the considered equation is treated in the spirit of Walsh's approach to SPDE \cite{walsh1986}.

\addtocontents{toc}{\protect\setcounter{tocdepth}{1}}
\subsection*{Overview of the article} 
We start by collecting some preliminaries in Section~\ref{sec:Preliminaries}, where we introduce general notation and conventions in Subsection~\ref{sec:Notation_and_conventions} and describe the setting considered throughout the article in Subsection~\ref{sec:PRM_and_Mecke}. It is basically given by a Poisson random measure $N$ on a measurable space $(E,\cE)$ with a $\sigma$-finite intensity measure $\mu$
and a separable real Hilbert space $H$, in which the considered random variables and stochastic processes take their values. 
The $\sigma$-algebra $\cF$ of the underlying probability space $(\Omega,\cF,\bP)$ is assumed to be generated by $N$ (and completed w.r.t.~$\bP$), so that $N$ is the only source of randomness.
We also recall Mecke's formula, which plays a crucial role throughout the article.

In Section~\ref{sec:Malliavin} we develop an $H$-valued Poisson Malliavin calculus for Poisson random measures defined on the general $\sigma$-finite measure space $(E,\cE,\mu)$. In Subsection~\ref{sec:DiffOp} we introduce the Malliavin derivative
$$
D\colon L^0(\Omega;H)\to L^0(\Omega\times E;H)
$$
as a difference operator acting on (equivalence classes of) $H$-valued random variables. 
We derive several properties of the difference operator and introduce first order Sobolev-type spaces $\bD^{1,p}(H)\subset L^p(\Omega;H)$, $p>1$, which take integrability properties into account.
In Subsection~\ref{subsec:Kabanov-Skorohod} we use Mecke's formula to define a pathwise Kabanov-Skorohod integral 
$$
\delta\colon L^1(\Omega\times E;H)\to L^1(\Omega;H)
$$
in such a way that its adjoint coincides with the restricted difference operator $D|_{L^\infty(\Omega;H)}\colon L^\infty(\Omega;H)\to L^\infty(\Omega\times E;H)$. 
The pathwise integral is then extended to $L^p$ spaces, $p>1$,  by introducing abstract versions 
$$
\delta^{(p)}\colon\dom(\delta^{(p)})\subset L^p(\Omega\times E;H)\to L^p(\Omega;H)$$ 
as the adjoint operators of 
the restricted difference operators 
$D|_{\bD^{1,p'}(H)}\colon\bD^{1,p'}(H)\subset L^{p'}(\Omega;H)\to L^{p'}(\Omega\times E;H)$, $p'=\frac{p}{p-1}$. 
The integrals $\delta$, $\delta^{(p)}$ coincide on the intersection of their domains, which allows us to omit the integrability index and to set $\delta(\Phi):=\delta^{(p)}(\Phi)$, $\Phi\in\dom(\delta^{(p)})$.
Again, several properties of the operators are derived. 
Subsection \ref{subsec:higher_order} is concerned with the commutation relation
$$
D_x\delta(\Phi)=\delta(D_x\Phi)+\Phi(x),\quad x\in E,
$$
where $\Phi\colon\Omega\times E\to H$ is a suitable integrand.
We prove $L^1$ and $L^2$ versions of this relation 
as well as an $L^2$ isometry for the Kabanov-Skorohod integral. 
\felix{later $L^p$ versions(?)}
In Subsection~\ref{subsec:higher_order} we extend some of the previous results to higher order difference operators, 
higher order Sobolev spaces 
and multiple Kabanov-Skorohod integrals. 

In Section~\ref{sec:space_time} we assume a space-time structure of the underlying measure space and consider the special case 
$$
(E,\cE,\mu)=\big([0,T]\times U,\,\cB([0,T]\times U),\,\lambda\otimes\nu\big),
$$
where $T\in(0,\infty)$, $U$ is a separable real Hilbert space, $\lambda$ denotes Lebesgue measure, and $\nu$ is a $\sigma$-finite measure on the Borel-$\sigma$-algebra $\cB(U)$. We complement the general theory from Section~\ref{sec:Malliavin} by a series of results which are specifically adapted to this case and particularly relevant for the analysis of Lévy-driven SPDE.
Several auxiliary results for the Malliavin operators $D$ and $\delta$ in the space-time setting are presented in Subsection~\ref{subsec:Malliavin_operators_space-time}. As the Kabanov-Skorohod integral is an extension of an Itô-type integral w.r.t.\ the compensated Poisson random measure $\tilde N$, we can  exploit continuity properties of the latter integral in order to obtain important partial improvements of some of the general results from Section~\ref{sec:Malliavin}. This is done in Subsection~\ref{subsec:duality_comm_space-time}, where we derive an improved local duality formula 
as well as improved $L^p$ versions, $p\in[1,2]$, of the commutation relation between $D$ and $\delta$.
In Subsection ~\ref{subsec:PRMandLP} we show how Hilbert space-valued, purely non-Gaussian Lévy processes can be embedded into our framework.

A first application of our theory is presented in Section~\ref{sec:alpha_stable}, where we analyze the weak order of convergence of finite element discretizations
of linear SPDE of the form $\dl X(t)+AX(t)\,\dl t=\dl L(t)$. 
Here $-A$ is the generator of an analytic semigroup of bounded linear operators on $H$, 
and $L=(L(t))_{t\in[0,T]}$ is an infinite-dimensional Lévy process of $\alpha$-stable type, $\alpha\in (1,2)$.
In Subsection~\ref{sec:setting_alpha_stable} we describe the setting in detail, give concrete examples, and determine the spatio-temporal regularity of the solution process $X=(X(t))_{t\in[0,T]}$. 
For comparison's sake we analyze the strong order of convergence in Subsection~\ref{sec:strong_convergence}.
We obtain the estimate 
\begin{align*}
\sup_{t\in[0,T]}\big\|\tilde X_{h,k}(t)-X(t)\big\|_{L^{\alpha_-}(\Omega;H)}\leq C\big(h^{\beta_-}+k^{\frac{\beta_-}2}\big),
\end{align*}
valid for all $\alpha_-\in[1,\alpha)$ and $\beta_-\in[0,\beta)$, where the parameter $\beta\in(\frac2\alpha-1,\frac2\alpha]$ describes the spatial regularity of the solution $X$, 
$\tilde X_{h,k}=(\tilde X_{h,k}(t))_{t\in[0,T]}$ is the time-interpolated discrete solution, 
$h,k\in (0,1)$ are the space and time discretization parameters, 
and $C\in(0,\infty)$ depends on $\alpha_-$, $\beta_-$, but not on $h,k$. The weak convergence result is shown in Subsection~\ref{sec:weak_convergence}, with the help of results from Sections~\ref{sec:Malliavin} and \ref{sec:space_time}. For a suitable class of real-valued path-dependent test functions $f$ we obtain 
\begin{align*}
\big|\bE\;\!f(\tilde X_{h,k})-\bE\;\!f(X)\big|\leq C\big(h^{\alpha\beta_-}+k^{\frac{\alpha\beta_-}2}\big).
\end{align*}
The weak order of convergence is thus $\alpha$ times the strong order of convergence. This is a natural complement to similar results for equations with Gaussian noise, where the weak order is typically twice the strong order, see, e.g.,~\cite{AnderssonKovacsLarsson,AnderssonKruseLarsson,kruse2013}. It also complements the corresponding results for equations  with square-intergrable Lévy noise from \cite{KovLinSch2015,LindnerSchilling,Barth2016}.

\addtocontents{toc}{\protect\setcounter{tocdepth}{2}}

\section{Preliminaries}
\label{sec:Preliminaries}

\subsection{Notation and conventions}
\label{sec:Notation_and_conventions}
The following notation and conventions are used throughout this article.

If $(S,\cS,m)$ is a $\sigma$-finite measure space and $(X,\|\cdot\|_X)$ a Banach space,
\felix{see notes 14.11.'16} 
we denote by $L^0(S;X):=L^0(S,\cS,m;X)$ the space of (equivalence classes of) strongly $\cS$-measurable functions $f\colon S\to X$. As usual, we identify functions which coincide $m$-almost everywhere. The space $L^0(S;X)$ is endowed with the topology of local convergence in measure; it is metrizable via the metric $d(f,g)=\sum_{k=1}^\infty\frac{2^{-k}}{1+m(A_k)}\int_{A_k}(1\wedge\|f(s)-g(s)\|_X)\,m(\dl s)$, $f,g\in L^0(S;X)$, where $A_1,A_2,\ldots\in\cS$ with $A_k\nearrow S$ and $m(A_k)<\infty$ for all $k\in\N$. 
For $p\in[1,\infty]$, we denote by $L^p(S;X):=L^p(S,\cS,m;X)$ the subspace of $L^0(S;X)$ consisting of all (equivalence classes of) strongly $\cS$-measurable mappings $f\colon S\to X$ such that $\|f\|_{L^p(S;X)}:=\big(\int_S\|f(s)\|_X^p\,m(\dl s)\big)^{1/p}<\infty$ if $p\in[1,\infty)$ and  $\|f\|_{L^\infty(S;X)}:=\operatorname{ess\,sup}_{s\in S}\|f(s)\|_X<\infty$ if $p=\infty$. Note that $L^p(S;X)$ is continuously embedded into $L^0(S;X)$. 
If $(H,\langle\cdot,\cdot\rangle)$ is a Hilbert-space and $f\in L^p(S;H)$, $g\in L^{p'}(S;H)$, where $p\in[1,\infty]$ and $p':=\frac{p}{p-1}\in[1,\infty]$ are conjugate exponents, we also write $\tensor[_{L^p(S;H)}]{\langle f,g\rangle}{_{L^{p'}(S;H)}}$ for $\int_S\langle f(s),g(s)\rangle\, m(\dl s)$.

Given two $\sigma$-finite measure spaces $(S_1,\cS_1,m_1)$, $(S_2,\cS_2,m_2)$ and a Banach space  $(X,\|\cdot\|_X)$, we set $L^p(S_1\times S_2;X):=L^p\big(S_1\times S_2,\cS_1\otimes\cS_2,m_1\otimes m_2;X\big)$, $p\in\{0\}\cup[1,\infty]$.
For $p\in[1,\infty)$ we identify the spaces $L^p(S_1\times S_2;X)$ and $L^p(S_1;L^p(S_2;X))$ via the canonical isometric isomorphism which is determined by the mapping $\one_{A_1\times A_2} \otimes x\mapsto\one_{A_1}\otimes(\one_{A_2}\otimes x)$ and linearity. 
Here, $A_1\in\cS_1$ and $A_2\in\cS_2$ have finite measure, $x\in X$, $\one_{A_1\times A_2} \otimes x\in L^p(S_1\times S_2;X)$ is defined by $(\one_{A_1\times A_1} \otimes x)(s_1,s_2):=\one_{A_1\times A_2}(s_1,s_2)\,x$, and $\one_{A_1}\otimes(\one_{A_2}\otimes x)\in L^p(S_1;L^p(S_2;X))$ is defined by $(\one_{A_1}\otimes(\one_{A_2}\otimes x))(s_1):=\one_{A_1}(s_1)(\one_{A_2}(\cdot)\, x)$, $(s_1,s_2)\in S_1\times S_2$. In an analogous way, the space $L^0(S_1;L^p(S_2;X))$ is continuously embedded into $L^0(S_1\times S_2;X)$, and we usually identify the corresponding elements in both spaces without explicitly indicating it. 
Observe that, if $J$ denotes the embedding
of $L^0(S_1;L^p(S_2;X))$ into $L^0(S_1\times S_2;X)$, 
then the range of $J$ is given by all $f\in L^0(S_1\times S_2;X)$ such that for any representative of $f$, denoted again by $f$, we have $f(s_1,\cdot)\in L^p(S_2;X)$ for $m_1$-almost all $s_1\in S_1$. \felix{see notes 15.1.'17}
Moreover, for any such $f$ and for any representative of $\tilde f:=J^{-1}(f)\in L^0(S_1;L^p(S_2;X))$, denoted again by $\tilde f$, we have $\|f(s_1,\cdot)-\tilde f(s_1)\|_{L^p(S_2;X)}=0$ for $m_1$-almost all $s_1\in S_1$.

Given separable Hilbert spaces $U$ and $H$, we write $\LB(U,H)$ and $\LB_2(U,H)$ for the spaces of linear and bounded operators and Hilbert-Schmidt operators from $U$ to $H$, endowed with the operator norm $\|\cdot\|_{\LB(U,H)}$ and the Hilbert Schmidt norm $\|\cdot\|_{\LB_2(U,H)}$, respectively. $B_U:=\{x\in U:\|x\|_U\leq1\}$ is the closed unit ball in $U$ and $B^c_U:=U\setminus B_U$ its complement. If  $\varphi\colon H\to\bR$ is a Fréchet-differentiable function, its derivative $\varphi'(x)$ at a point $x\in H$ is considered as an element of $H$ via the Riesz isomorphism.
For $\delta\in (0,1)$ we denote by $\cC^{1,\delta}(H,\bR)$ be the space of Fréchet-differentiable functions $\varphi\colon H\to\bR$ such that $\sup_{x,y\in H}\frac{\|\varphi'(x)-\varphi'(y)\|_H}{\|x-y\|^\delta_H}<\infty$.

Given a probability space $(\Omega,\cF,\bP)$, a measure space $(S,\cS,m)$, a Banach space $X$ and a mapping $f\colon\Omega\times S\to X,\;(\omega,s)\mapsto f(\omega,s)$, we often omit the explicit notation of the argument $\omega\in\Omega$ and write $f(s)$ instead of
$f(\omega,s)$ or 
$f(\cdot,s)$, depending on the context. We sometimes also write $(f(s))$ for the mapping $f$ in order to indicate the dependence on the variable $s\in S$.

Finally, $C\in(0,\infty)$ denotes a finite constant which may change its value from line to line. 


\subsection{Poisson random measures and Mecke's formula}
\label{sec:PRM_and_Mecke}

Here we present our general setting and recall Mecke's formula for Poisson random measures. While most of the material in this subsection is fundamental in the theory of point processes and stochastic geometry, it is less standard in the analysis of SPDE. For references we refer to the monographs \cite{DaleyVereJones2008,LastPenrose2016,Schneider2008}.


We begin with formulating our main framework.
\begin{setting}\label{setting}
The following setting is considered throughout this article:
\begin{itemize}
\item $(\Omega,\cF,\P)$ is a complete probablility space. The $\sigma$-algebra $\cF$ coincides with the $\P$-completion of the $\sigma$-algebra $\sigma(N)$ generated by the Poisson random measure $N$ introduced below.
\item $(\X,\cE,\m)$ is a $\sigma$-finite measure space such that 
the diagonal $\{(x,y)\in E\times E:x=y\}$ is contained in the product $\sigma$-algebra $\cE\otimes\cE$.
\felix{nochmal checken, ob ich singletons $\{x\}$ wirklich nur in alter Def. von $\meas\setminus\delta_x$ gebraucht habe!}
We set $\cE_0:=\{B\in\cE:\m(B)<\infty\}$.
\item $\NN=\NN(\X)$ denotes the space of all $\sigma$-finite $\N_0\cup\{+\infty\}$-valued measures on $(E,\cE)$. It is endowed with the $\sigma$-algebra $\mathcal{N}=\mathcal N(\X)$ generated by the mappings $\NN\ni\eta\mapsto\eta(B)\in\N_0\cup\{+\infty\}$, $B\in\cE$.
\item $N\colon\Omega\to\NN$ is a Poisson random measure (Poisson point process) on $\X$ with intensity measure $\m$, allowing for the representation \eqref{eq:representationN} below
(cf.~also Remark~\ref{rem:PRM}).
\item $\tilde N:=N-\m$ is the compensated Poisson random measure associated to $N$, i.e., $\tilde N(B)=N(B)-\m(B)$ for all $B\in\cE_0$.
\item $H$ is a separable real Hilbert space with inner product $\langle\cdot,\cdot\rangle$ and norm $\|\cdot\|$. 
\end{itemize}
\end{setting}

We assume that the Poisson random measure $N$ is constructed in the standard way  as a random sum of Dirac measures, see e.g.~\cite[Theorem~6.4]{PesZab2007}. 
As a consequence, we have the representation
\felix{see notes 12.11.'16}
\begin{align}\label{eq:representationN}
N=\sum_{n=1}^{N(\X)}\delta_{X_n},
\end{align}
where $X_1,X_2,\ldots$ are suitably chosen $\X$-valued random variables and  $\delta_x$ denotes the Dirac measure at $x\in\X$.
\felix{explain notation $N(\omega,B)$, $N(B)$, ...}

\begin{remark}\label{rem:PRM}
Recall that $N$ being a Poisson random measure with reference measure $\m$ means that $N\colon\Omega\to\NN$ is $\cF$-$\cN$-measurable, that $N(B)$ is Poisson distributed with parameter $\m(B)$ if $B\in\cE_0$ and $N(B)=\infty$ $\P$-a.s.\ if $B\in\cE\setminus\cE_0$, and that $N(B_1),\dots,N(B_n)$ are independent for disjoint $B_1,\dots,B_n\in\cE$, $n\in\N$. 
In particular, for $B\in\cE_0$ 
all moments of $N(B)$ are finite and $\E N(B)=\E\big((N(B)-\m(B))^2\big)=\m(B)$.
\end{remark}

\color{black}

For any measurable function $F:\Omega\times\X\to[0,\infty]$, 
\felix{see notes 12.11.'16}
the integral $\int_\X F(\omega,x)N(\omega,\diffin x)$ is well-defined for all $\omega\in\Omega$, possibly infinite, and measurable as a function of $\omega$.
The latter can be seen, e.g., by using the representation \eqref{eq:representationN}. We will frequently work with functions $F$ whose definitions involve the mappings
\begin{align}
\Omega\times\X\ni(\omega,x)&\mapsto N(\omega)+\delta_x\in\NN,\label{eq:N+delta}\\
\Omega\times\X\ni(\omega,x)&\mapsto N(\omega)\setminus\delta_x\in\NN.\label{eq:N-delta}
\end{align}
Here and below we denote for $\eta\in\NN$ and $x\in\X$ by $\eta\setminus\delta_x\in\NN$  the measure defined by 
\begin{align*}
\eta\setminus\delta_x:=
\begin{cases}
\eta-\delta_x,&\text{if }\eta(B)\geq\delta_x(B)\text{ for all }B\in\cE\\
\eta,&\text{else.}
\end{cases}
\end{align*}

\begin{remark}[Measurability]\label{rem:measurability}
\felix{see notes 27.-28.9.'15, 12.11.'16}
The mappings \eqref{eq:N+delta}, \eqref{eq:N-delta} introduced above are $(\mathcal F\otimes\cE)$-$\mathcal N$-measurable. Indeed, considering the mapping \eqref{eq:N+delta}, it is sufficient to check the $(\mathcal N\otimes\cE)$-$\mathcal N$-measurabiliy of $\NN\times\X\ni(\eta,x)\mapsto \eta+\delta_x\in\NN$, and here it is enough to 
note that $\NN\times\X\ni(\eta,x)\mapsto (\eta+\delta_x)(B)\in\N_0\cup\{\infty\}$ is measurable for all $B\in\cE$.
Concerning the mapping \eqref{eq:N-delta}, one may use the representation \eqref{eq:representationN} to observe that
$
N(\omega)\setminus\delta_x=\sum_{n=1}^{N(\omega,\X)}\one_{\{X_n\neq x\}}(\omega)\,\delta_{X_n(\omega)}+\big(\sum_{n=1}^{N(\omega,\X)}\one_{\{X_n=x\}}(\omega)-1\big)^+\delta_x.
$
As the diagonal $\{(x,y)\in E\times E:x=y\}$ is by assumption contained in $\cE\otimes\cE$, it follows that $N(\omega)\setminus\delta_x$ is measurable as an $\NN$-valued function of $(\omega,x)\in\Omega\times\X$. 
\end{remark}

A fundamental result in this context which will be used repeatedly in this article is the following formula by Mecke, see \cite{LastPenrose2016,Mecke1967}.
\begin{prop}[Mecke's formula]
For measurable functions $f\colon\NN\times\X\to[0,\infty]$ we have
\begin{align}\label{eq:Mecke1}
  \E
  \Big[
    \int_\X
      f(N,x)
    \,N(\diffin{x})
  \Big]
  =
  \E
  \Big[
    \int_\X
      f(N+\delta_x,x)
    \,\m(\diffin{x})
  \Big]
\end{align}
and, equivalently, 
\begin{align}\label{eq:Mecke2}
  \E
  \Big[
    \int_\X
      f(N\setminus\delta_x,x)
    \,N(\diffin{x})
  \Big]
  =
  \E
  \Big[
    \int_\X
      f(N,x)
    \,\m(\diffin{x})
  \Big].
\end{align}
Conversely, any point process $N\colon\Omega\to\NN$ satisfying \eqref{eq:Mecke1} or \eqref{eq:Mecke2} for all measurable $f\colon\NN\times\X\to[0,\infty]$ is a Poisson point process with reference measure $\m$.
\end{prop}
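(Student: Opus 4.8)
The plan is to establish \eqref{eq:Mecke1} first, deduce \eqref{eq:Mecke2} from it by a substitution, and finally prove the converse characterization via the Laplace functional. For \eqref{eq:Mecke1} I would begin with the extra assumption $\m(\X)<\infty$, where the representation \eqref{eq:representationN} reads $N=\sum_{n=1}^{K}\delta_{X_n}$ with $K$ Poisson distributed with parameter $\m(\X)$ and $(X_n)_{n\ge1}$ i.i.d.\ with law $\m/\m(\X)$, independent of $K$. Writing the left-hand side as $\E\big[\sum_{n=1}^{K}f(N,X_n)\big]$, conditioning on $\{K=k\}$ and using exchangeability of $X_1,\dots,X_k$ gives $k\,\E\big[f(\delta_{X_1}+\sum_{j=2}^{k}\delta_{X_j},X_1)\big]$; since $X_1$ is independent of $\sum_{j=2}^{k}\delta_{X_j}$ this equals $\frac{k}{\m(\X)}\int_\X\E\big[f(\delta_x+\sum_{j=2}^{k}\delta_{X_j},x)\big]\,\m(\diffin x)$. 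Summing against the Poisson weights, the factor $k/\m(\X)$ shifts the Poisson index by one, and after reindexing one recognizes the (conditional) law of $N$, producing $\int_\X\E[f(N+\delta_x,x)]\,\m(\diffin x)$. All interchanges are justified by Tonelli's theorem since $f\ge0$. To remove the finiteness assumption I would partition $\X=\bigsqcup_k\X_k$ with $\X_k\in\cE_0$ and use superposition, $N=\sum_k N_k$ with $N_k$ independent Poisson random measures of intensity $\m|_{\X_k}$; fixing $k$, conditioning on $N_k':=\sum_{j\ne k}N_j$, and applying the finite case to $N_k$ with the integrand $\tilde f(\eta,x):=f(\eta+N_k',x)$, $x\in\X_k$, yields the identity for the $\X_k$-part, and summing over $k$ by monotone convergence gives \eqref{eq:Mecke1} in general. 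The measurability of the maps involved, in particular of \eqref{eq:N+delta}, is guaranteed by Remark~\ref{rem:measurability}.

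For the equivalence of the two forms, I would pass from \eqref{eq:Mecke1} to \eqref{eq:Mecke2} by applying \eqref{eq:Mecke1} to $f(\eta,x):=g(\eta\setminus\delta_x,x)$ for arbitrary measurable $g\ge0$, using that $(N+\delta_x)\setminus\delta_x=N$ holds for every $x$ by the definition of $\setminus\delta_x$; the right-hand side then collapses to $\E\big[\int_\X g(N,x)\,\m(\diffin x)\big]$, while the left-hand side is exactly the left-hand side of \eqref{eq:Mecke2}. Conversely, applying \eqref{eq:Mecke2} to $f(\eta,x):=g(\eta+\delta_x,x)$ and noting that $(N\setminus\delta_x)+\delta_x=N$ holds for $N$-almost every $x$, i.e.\ at the atoms of $N$ where $N(\{x\})\ge1$, recovers \eqref{eq:Mecke1}.

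For the converse, assume $N$ satisfies \eqref{eq:Mecke1}. For bounded measurable $h\ge0$ with $\m(\{h>0\})<\infty$ write $\eta(h):=\int_\X h(x)\,\eta(\diffin x)$ and set $\psi(s):=\E[\ee^{-sN(h)}]$, $s\ge0$. Applying \eqref{eq:Mecke1} to the nonnegative function $f(\eta,x):=h(x)\,\ee^{-s\eta(h)}$, using $(N+\delta_x)(h)=N(h)+h(x)$ together with Tonelli, gives
\begin{align*}
\E\Big[\int_\X h(x)\,\ee^{-sN(h)}\,N(\diffin x)\Big]=\Big(\int_\X h(x)\,\ee^{-sh(x)}\,\m(\diffin x)\Big)\,\psi(s).
\end{align*}
The left-hand side equals $-\psi'(s)$, so $\psi$ solves $\psi'(s)=-c(s)\psi(s)$ with $c(s):=\int_\X h(x)\,\ee^{-sh(x)}\,\m(\diffin x)$ and $\psi(0)=1$, whence $\psi(s)=\exp\big(-\int_\X(1-\ee^{-sh(x)})\,\m(\diffin x)\big)$. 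Taking $s=1$ identifies the Laplace functional of $N$ with that of a Poisson random measure of intensity $\m$, and since the Laplace functional determines the law on $(\NN,\mathcal N)$ this forces $N$ to be Poisson with reference measure $\m$.

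I expect the main obstacle to lie in the converse, specifically in rigorously justifying that $\psi$ is differentiable with $\psi'(s)=-\E\big[N(h)\,\ee^{-sN(h)}\big]$ before any moment information on $N$ is available; this is handled by dominated (respectively monotone) convergence, exploiting that the displayed right-hand side is finite for the admissible $h$. A secondary, purely technical point is the bookkeeping of measurability and the conditioning step underlying the $\sigma$-finite extension of the forward direction.
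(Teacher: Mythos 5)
First, a point of comparison that is out of your hands: the paper does not prove this proposition at all — it is quoted from the literature (\cite{LastPenrose2016,Mecke1967}), so there is no in-paper argument to measure yours against; the natural benchmark is \cite[Theorem~4.1]{LastPenrose2016}. Against that benchmark, your forward direction and the equivalence of \eqref{eq:Mecke1} and \eqref{eq:Mecke2} are exactly the standard proof: the mixed binomial computation under $\m(\X)<\infty$, the superposition/conditioning step for the $\sigma$-finite case, and the substitutions based on $(N+\delta_x)\setminus\delta_x=N$ (for every $x$) and $(N\setminus\delta_x)+\delta_x=N$ (for $N$-almost every $x$) are all sound. Your converse, however, genuinely differs from Last–Penrose: they apply the Mecke equation to $f(\eta,x)=\one_B(x)\one\{\eta(B)=k+1\}$, $B\in\cE_0$, to obtain the recursion $(k+1)\,\P(N(B)=k+1)=\m(B)\,\P(N(B)=k)$ and then identify joint laws over disjoint sets by induction, whereas you integrate a Laplace-functional ODE. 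Both routes work; yours is slicker but needs two supplements you only gesture at: the law-determining property of the Laplace functional (applied to simple $h$ supported on finite-measure sets, which yields the Poisson marginals and independence of Remark~\ref{rem:PRM}), and the fact that $N(B)=\infty$ a.s.\ when $\m(B)=\infty$, which follows from $\sigma$-finiteness once the finite-measure marginals are identified. A minor technical point: applying \eqref{eq:Mecke1} to $f(\eta,x)=g(\eta\setminus\delta_x,x)$ requires measurability of the abstract map $(\eta,x)\mapsto\eta\setminus\delta_x$ on $\NN\times\X$, which follows from the diagonal assumption by a monotone class argument; Remark~\ref{rem:measurability} only treats the composed map $(\omega,x)\mapsto N(\omega)\setminus\delta_x$.

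The one genuine gap is in the ODE step of the converse, and it is not the differentiability issue you flagged. Solving $\psi'(s)=-c(s)\psi(s)$ on $(0,\infty)$ and invoking ``$\psi(0)=1$'' implicitly uses continuity of $\psi$ at $0$, i.e.\ $\lim_{s\downarrow0}\E[\ee^{-sN(h)}]=1$, which is equivalent to $\P(N(h)<\infty)=1$ — and this is not known a priori for a point process merely satisfying \eqref{eq:Mecke1}. On $\{N(h)=\infty\}$ one has $\ee^{-sN(h)}=0$ for every $s>0$, so in fact $\psi(0+)=\P(N(h)<\infty)$ and your solution formula reads $\psi(s)=\P(N(h)<\infty)\exp\big(-\int_\X(1-\ee^{-sh(x)})\,\m(\diffin x)\big)$, which does not identify the Poisson Laplace functional unless the prefactor is $1$. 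The fix is one more application of the assumed identity: take $f(\eta,x)=h(x)\one\{\eta(h)=\infty\}$; since $h$ is bounded, $(N+\delta_x)(h)=\infty$ if and only if $N(h)=\infty$, so \eqref{eq:Mecke1} gives $\infty\cdot\P(N(h)=\infty)=\E\big[N(h)\one\{N(h)=\infty\}\big]=\big(\int_\X h(x)\,\m(\diffin x)\big)\P(N(h)=\infty)<\infty$, forcing $\P(N(h)=\infty)=0$. With that inserted, your argument closes.
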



Besides others, Mecke's formula ensures that the operators $\varepsilon^+$, $\varepsilon^-$ introduced next are well-defined as mappings acting on equivalence classes of random variables or random fields. 
Given a random variable $F\colon\Omega\to H$, the factorization lemma from measure theory yields the existence of a measurable function $f\colon\NN\to H$ such that $F=f(N)$ $\P$-a.s. Note that this equality does not necessarily hold for all $\omega\in\Omega$, since we have defined the underlying $\sigma$-algebra $\cF$ as the $\P$-completion of $\sigma(N)$ and not as $\sigma(N)$ itself.
We call $f$ a representative of $F$.
In this situation, we define for $x\in\X$,
\begin{align}\label{eq:epsilon+}
\varepsilon^+_x
F:= f(N+\delta_x).
\end{align}
Here and in the sequel, we usually omit the explicit notation of the argument $\omega\in\Omega$ for simplicity.

\begin{lemma}[Well-definedness of $\varepsilon^+$]
\label{lem:epsilon+welldefined}
For $F\in L^0(\Omega;H)$, the definition \eqref{eq:epsilon+} of $\varepsilon^+_xF$ is $\P \otimes\m$-almost everywhere independent of the choice of the representative $f$. In particular, the operator 
\[F\mapsto \big(\varepsilon^+_x F\big)\] 
is well-defined as a mapping from $L^0(\Omega;H)$ to $L^0(\Omega\times \X;H)$.
\end{lemma}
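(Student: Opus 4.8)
The plan is to reduce both assertions to a single statement about representatives and then to prove that statement using Mecke's formula \eqref{eq:Mecke1}. The claim I would isolate is the following: if $f,g\colon\NN\to H$ are measurable and $f(N)=g(N)$ $\P$-a.s., then $f(N+\delta_x)=g(N+\delta_x)$ for $\P\otimes\m$-almost all $(\omega,x)\in\Omega\times\X$. Both parts of the lemma follow from this at once. The independence of $\big(\varepsilon^+_x F\big)$ of the chosen representative is exactly the case where $f,g$ are two representatives of one and the same $F$, while the well-definedness on equivalence classes is the case where $f,g$ are representatives of two $\P$-indistinguishable random variables $F,F'$; in either case the hypothesis $f(N)=g(N)$ $\P$-a.s.\ holds.

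To prove the claim I would apply \eqref{eq:Mecke1} to the non-negative measurable function $h\colon\NN\times\X\to[0,\infty]$ given by $h(\eta,x):=\one_{\{f(\eta)\neq g(\eta)\}}$. This is a legitimate choice since $H$ is separable, so that $\{\eta\in\NN:f(\eta)\neq g(\eta)\}\in\cN$ and $h$ is $\cN\otimes\cE$-measurable. Mecke's formula then gives
\begin{align*}
\E\Big[\int_\X\one_{\{f(N+\delta_x)\neq g(N+\delta_x)\}}\,\m(\diffin x)\Big]
=\E\Big[\int_\X\one_{\{f(N)\neq g(N)\}}\,N(\diffin x)\Big]
=\E\big[\one_{\{f(N)\neq g(N)\}}\,N(\X)\big].
\end{align*}
The right-hand side vanishes: the non-negative (possibly infinite) random variable $\one_{\{f(N)\neq g(N)\}}\,N(\X)$ is supported on the $\P$-null event $\{f(N)\neq g(N)\}$, and the expectation of such a variable is zero. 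Hence the left-hand side, which equals $(\P\otimes\m)\big(\{(\omega,x):f(N+\delta_x)\neq g(N+\delta_x)\}\big)$, is zero, which is precisely the claim.

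It remains to verify that $(\omega,x)\mapsto\varepsilon^+_xF=f(N+\delta_x)$ genuinely defines an element of $L^0(\Omega\times\X;H)$, i.e.\ that it is strongly $\cF\otimes\cE$-measurable. I would obtain this by composing the $(\cF\otimes\cE)$-$\cN$-measurable mapping \eqref{eq:N+delta}, $(\omega,x)\mapsto N(\omega)+\delta_x$, whose measurability is provided by Remark~\ref{rem:measurability}, with the $\cN$-$\B(H)$-measurable representative $f$; separability of $H$ then upgrades Borel measurability to strong measurability, and the same composition also justifies the $\cF\otimes\cE$-measurability of the indicator appearing on the left-hand side above.

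I expect the only genuinely delicate point to be the treatment of that right-hand side, where the factor $N(\X)$ is typically infinite: one must argue that $\one_{\{f(N)\neq g(N)\}}\,N(\X)$ has vanishing expectation because it is concentrated on a $\P$-null set, rather than attempting to control $N(\X)$ itself. Everything else is routine bookkeeping with the factorization lemma and the measurability statement of Remark~\ref{rem:measurability}.
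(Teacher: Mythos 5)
Your proof is correct and follows essentially the same route as the paper: both apply Mecke's formula \eqref{eq:Mecke1} to a non-negative function detecting where two representatives disagree (the paper integrates $\|(f-g)(\eta)\|$ where you integrate $\one_{\{f(\eta)\neq g(\eta)\}}$), and both conclude because the $N$-integral side has expectation zero, its integrand vanishing $\P$-almost surely, with the possible infinitude of $N(\X)$ on the exceptional null set being harmless. The replacement of the norm by an indicator and your explicit invocation of Remark~\ref{rem:measurability} for the joint measurability of $(\omega,x)\mapsto f(N(\omega)+\delta_x)$ are only cosmetic differences.
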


\begin{proof}
The assertion follows from Mecke's formula \eqref{eq:Mecke1}. If $f,\,g\colon\NN\to H$ are measurable functions such that $f(N)=g(N)$ $\P$-almost surely, then
\begin{align*}
0&=\E\int_\X\big\|(f-g)(N)\big\|\,N(\diffin x)=\E\int_\X\big\|(f-g)(N+\delta_x)\big\|\,\m(\diffin x),
\end{align*}
where the integrand of the integral w.r.t.\ $N(\dl x)$ is constant in $x$ and equals zero $\P$-a.s.
We conclude that $f(N+\delta_x)=g(N+\delta_x)$ $\P\otimes\m$-almost everywhere.
\end{proof}

In analogy to \eqref{eq:epsilon+}, for a random variable $F\colon\Omega\to H$ with representative $f\colon\NN\to H$ and $x\in \X$ we may set 
$
\varepsilon^-_xF:=f(N\setminus\delta_x).
$
Mecke's formula \eqref{eq:Mecke2} implies that this definition is $\P\otimes N$-almost everywhere independent of the choice of the representative $f$. 
Here, $\P\otimes N$ denotes the product measure of $\P$ and $N$, the latter being considered as a transition kernel from $(\Omega,\mathcal F)$ to $(\X,\cE)$; it is given by
\felix{see notes 12.11.'16}
\begin{align}\label{eq:PotimesN}
(\P\otimes N)(B):=\int_\Omega\int_\X \one_B(\omega,x)N(\omega,\diffin x)\P(\diffin \omega),\quad B\in\mathcal F\otimes \cE.
\end{align}
In this context we are, however,  mainly interested in the case where $F$ also depends on $x\in\X$.  Given a measurable mapping $F\colon \Omega\times\X\to H$ we set for $x\in\X$
\begin{align}\label{eq:epsilon-}
\varepsilon^-_x F(x):=f(N\setminus\delta_x,x),
\end{align}
where $f\colon\NN\times\X\to H$ is measurable such that  $F(x)=f(N,x)$ $\P\otimes \m$-almost everywhere, called again a representative of $F$.
The proof of the following lemma concerning the well-definedness of $\varepsilon^-$ as an operator acting on equivalence classes of random fields  is similar to that of Lemma~\ref{lem:epsilon+welldefined}, using the identity \eqref{eq:Mecke2} instead of \eqref{eq:Mecke1}, and therefore omitted. 

\begin{lemma}[Well-definedness of $\varepsilon^-$]
\label{lem:epsilon-welldefined}
\felix{proof commented out; see notes 11.11.'16}
For $F\in L^0(\Omega\times\X;H)$, the definition \eqref{eq:epsilon-} of $\varepsilon^-_xF(x)$ is $\P\otimes N$-almost everywhere independent of the choice of the representative $f$. In particular, the operator
\[\big(F(x)\big)\mapsto \big(\varepsilon^-_x F(x)\big)\] 
is well-defined as a mapping from $L^0(\Omega\times\X;H)=L^0\big(\Omega\times\X,\P\otimes\m;H\big)$ to $L^0\big(\Omega\times\X,\P\otimes N;H\big)$.
\end{lemma}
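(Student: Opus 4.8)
The plan is to mirror the proof of Lemma~\ref{lem:epsilon+welldefined}, replacing Mecke's formula \eqref{eq:Mecke1} by its equivalent form \eqref{eq:Mecke2}, which is precisely tailored to the operator $\varepsilon^-$ and to the measure $\P\otimes N$ appearing on the target side.

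First I would fix two representatives $f,g\colon\NN\times\X\to H$ of $F$, so that $f(N,x)=F(x)=g(N,x)$ holds $\P\otimes\m$-almost everywhere, i.e.\ $(f-g)(N,x)=0$ for $\P\otimes\m$-almost all $(\omega,x)$. The goal is to show that the two candidate definitions $f(N\setminus\delta_x,x)$ and $g(N\setminus\delta_x,x)$ coincide $\P\otimes N$-almost everywhere. The key step is to apply Mecke's formula \eqref{eq:Mecke2} to the nonnegative measurable function $(\eta,x)\mapsto\norm{(f-g)(\eta,x)}$ on $\NN\times\X$, which yields
\begin{align*}
\E\Big[\int_\X\norm{(f-g)(N\setminus\delta_x,x)}\,N(\diffin x)\Big]
=\E\Big[\int_\X\norm{(f-g)(N,x)}\,\m(\diffin x)\Big].
\end{align*}
The right-hand side is the integral of $\norm{(f-g)(N,x)}$ against $\P\otimes\m$ and hence vanishes by the choice of $f,g$. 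Consequently the left-hand side vanishes as well; but by the definition \eqref{eq:PotimesN} of $\P\otimes N$ this left-hand side is exactly the integral of $\norm{(f-g)(N\setminus\delta_x,x)}$ with respect to $\P\otimes N$. Therefore $\norm{(f-g)(N\setminus\delta_x,x)}=0$, i.e.\ $f(N\setminus\delta_x,x)=g(N\setminus\delta_x,x)$, for $\P\otimes N$-almost all $(\omega,x)$, which establishes the claimed independence of the representative.

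For the ``in particular'' part I would verify that $\varepsilon^-_xF(x)=f(N\setminus\delta_x,x)$ genuinely defines an element of $L^0(\Omega\times\X,\P\otimes N;H)$. By Remark~\ref{rem:measurability} the map $(\omega,x)\mapsto N(\omega)\setminus\delta_x$ is $(\cF\otimes\cE)$-$\cN$-measurable, so composing it with the strongly measurable representative $f$ yields strong measurability of $(\omega,x)\mapsto f(N\setminus\delta_x,x)$. Combined with the first part, the resulting equivalence class in $L^0(\Omega\times\X,\P\otimes N;H)$ does not depend on the chosen representative, so the operator $\big(F(x)\big)\mapsto\big(\varepsilon^-_xF(x)\big)$ is well-defined on equivalence classes.

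The only genuinely delicate point, and the thing to get right, is the bookkeeping of measures: Mecke's identity \eqref{eq:Mecke2} is exactly the device that transports the $\P\otimes\m$-null set on which $f=g$ into a $\P\otimes N$-null set for the thinned configurations $N\setminus\delta_x$, and it is essential that the target space carries $\P\otimes N$ rather than $\P\otimes\m$ — this is why the $\varepsilon^-$ statement differs from the $\varepsilon^+$ statement in its range. Everything else is a routine transcription of the argument for $\varepsilon^+$.
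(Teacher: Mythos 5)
Your proof is correct and is exactly the argument the paper intends: it omits the proof precisely because it is "similar to that of Lemma~\ref{lem:epsilon+welldefined}, using the identity \eqref{eq:Mecke2} instead of \eqref{eq:Mecke1}," which is what you carried out. Applying \eqref{eq:Mecke2} to $(\eta,x)\mapsto\|(f-g)(\eta,x)\|$, noting the right-hand side vanishes by the choice of representatives, and identifying the left-hand side as the $\P\otimes N$-integral via \eqref{eq:PotimesN} is the correct and complete transcription.
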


\begin{remark}\label{rem:epsilon_for_random_fields}
We will also work with the following extensions of the above defined operators $\varepsilon^\pm$ to random fields.
\felix{see notes 11.11.'16}
Let $( S,\cS,m)$ be a $\sigma$-finite measure space; typically we have $( S,\cS,m)=(\X^n,\cE^{\otimes n},\m^{\otimes n})$ for some $n\in\N$.
Given a measurable mapping $F\colon \Omega\times S\to H$ and $s\in S$, $x\in\X$ we set
$$
\varepsilon^+_x F(s):=f(N+\delta_x,s),
$$
where $f\colon\NN\times S\to H$ is a measurable function such that  $F(s)=f(N,s)$ $\P\otimes m$-almost everywhere. Lemma~\ref{lem:epsilon+welldefined} implies that this definition of $\varepsilon^+_x F(s)$ is $\P\otimes m\otimes\m$-almost everywhere independent of the choice of the representative $f$  and that the operator 
$\big(F(s)\big)\mapsto\big(\varepsilon^+_xF(s)\big)$ 
is well-defined as a mapping from $L^0(\Omega\times S;H)$ to $L^0(\Omega\times S\times E;H)$.
Similarly, given a measurable mapping $F\colon\Omega\times S\times E\to H$ with representative $f\colon\NN\times S\times E\to H$ and $s\in S$, $x\in E$ we set
$$
\varepsilon^-_xF(s,x):=f(N\setminus\delta_x,s,x).
$$
As a consequence of Lemma~\ref{lem:epsilon-welldefined}, this definition of $\varepsilon^-_xF(s,x)$ is $\P\otimes m\otimes N$-almost everywhere independent of the choice of the representative $f$, and the operator 
$\big(F(s,x)\big)\mapsto\big(\varepsilon^-_xF(s,x)\big)$ 
is well-defined as a mapping from $L^0(\Omega\times S\times E;H)=L^0(\Omega\times S\times E,\P\otimes m\otimes \m;H)$ to $L^0(\Omega\times S\times E,\P\otimes m\otimes N;H)$. Here $\bP\otimes m\otimes N$ is understood as the product measure of the measure $\bP\otimes N$ on $(\Omega\times E,\cF\otimes \cE)$, given by \eqref{eq:PotimesN}, and the measure $m$ on $(S,\cS)$.
\end{remark}

\section{Hilbert space-valued Poisson Malliavin calculus}
\label{sec:Malliavin}

In this section we develop a Hilbert space-valued Malliavin calculus for Poisson random measures defined on a $\sigma$-finite measure space.  
Throughout the section we consider the setting described in Subsection~\ref{sec:PRM_and_Mecke}.

\subsection{Difference operator and first order Sobolev spaces}
\label{sec:DiffOp}

In the Gaussian case the Malliavin derivative is a differential operator. In the Poisson case one possible analogue is a finite difference operator, compare \cite{dermoune1988, Ito1988, Nualart1990, Nualart1995, Picard1996a}. The following definition is meaningful due to Lemma~\ref{lem:epsilon+welldefined}.

\begin{definition}[Difference operator]\label{def:diffOp}
Let the operator
\begin{align*}
  &D\colon L^0(\Omega;H)\rightarrow L^0(\Omega\times\X;H),\;F\mapsto DF=(D_xF)
\end{align*}
be defined by 
\[D_xF:=\varepsilon^+_xF-F,\quad x\in E.\]
That is, we have
$D_x F= f(N+\delta_x)-f(N)$,
where $f\colon\NN\to H$ is a measurable function such that $f(N)$ is a representative of (the equivalence class of random variables) $F\in L^0(\Omega;H)$.
\end{definition}

Immediate algebraic consequences of the definition of $D$ are the following analogues of the chain rule and the product rule. 
For the convenience of the reader we present the proofs. 

\begin{prop}[Chain rule]\label{lem:chain}
Let $F\in L^0(\Omega;H)$ and $h$ be a measurable mapping from $H$ to another (real and separable) Hilbert space $V$. Then,  
\begin{align*}
  D h(F)
& =
  h(F+D F)-h(F).
\end{align*}
\end{prop}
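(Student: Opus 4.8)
The plan is to verify the identity at the level of representatives and then invoke Lemma~\ref{lem:epsilon+welldefined} to ensure everything descends to equivalence classes. First I would fix a measurable representative $f\colon\NN\to H$ of $F$, so that $F=f(N)$ $\P$-a.s. Since $h\colon H\to V$ is measurable and $V$ is separable, the composition $h\circ f\colon\NN\to V$ is (strongly) measurable, and $h(F)=h(f(N))=(h\circ f)(N)$ $\P$-a.s.; thus $h\circ f$ is a legitimate representative of $h(F)\in L^0(\Omega;V)$.

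Applying Definition~\ref{def:diffOp} to $h(F)$ with this representative gives
\begin{align*}
D_x h(F)=(h\circ f)(N+\delta_x)-(h\circ f)(N)=h\big(f(N+\delta_x)\big)-h\big(f(N)\big),\quad x\in\X.
\end{align*}
On the other hand, directly from $D_xF=\varepsilon^+_xF-F=f(N+\delta_x)-f(N)$ one reads off the pointwise identity $F+D_xF=f(N+\delta_x)=\varepsilon^+_xF$. Substituting this into the right-hand side of the claimed formula yields
\begin{align*}
h(F+D_xF)-h(F)=h\big(f(N+\delta_x)\big)-h\big(f(N)\big),
\end{align*}
which coincides with the expression for $D_xh(F)$ obtained above. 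This establishes the asserted equality in $L^0(\Omega\times\X;V)$.

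The only point requiring care is the passage from a particular representative to the equivalence class, i.e.\ checking that neither side depends on the choice of $f$. This is precisely the content of Lemma~\ref{lem:epsilon+welldefined}, applied to the $V$-valued variable $h(F)$ for the left-hand side and to $F$ itself for the right-hand side (using $F+D_xF=\varepsilon^+_xF$). Consequently I expect no genuine analytic obstacle here: the whole computation reduces to the observation that $\varepsilon^+$ commutes with the deterministic map $h$, that is $\varepsilon^+_x\big(h(F)\big)=h\big(\varepsilon^+_xF\big)$, which becomes transparent as soon as one works with the representative $h\circ f$ rather than $f$. The mild subtlety to keep in mind while writing up is merely notational, namely that the product-rule-style term $F+DF$ is to be understood as the element $\big(F+D_xF\big)\in L^0(\Omega\times\X;H)$ and that $h$ is then applied pointwise in $x$.
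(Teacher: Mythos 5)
Your proof is correct and follows essentially the same route as the paper: the paper's argument is the one-line computation $D_xh(F)=\varepsilon^+_xh(F)-h(F)=h(\varepsilon^+_xF)-h(F)=h(F+D_xF)-h(F)$, whose key step is exactly the commutation $\varepsilon^+_x h(F)=h(\varepsilon^+_xF)$ that you verify explicitly at the level of representatives via $h\circ f$. Your write-up merely makes explicit the representative-level bookkeeping and the appeal to Lemma~\ref{lem:epsilon+welldefined} that the paper leaves implicit.
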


\begin{proof}
The assertion follows directly from the definition of the operators $D$ and $\varepsilon^+$ since, for $x\in\X$,
\begin{align*}
  D_x h(F)
& =
  \varepsilon^+_x h(F)-h(F)
  =
  h(\varepsilon^+_x F)-h(F)\\
& =
  h\big(F+(\varepsilon^+_xF -F)\big)-h(F)
  =
  h(F+D_xF)-h(F).\qedhere
\end{align*}
\end{proof}

\begin{prop}[Product rule]\label{lem:product}
For $F,G\in L^0(\Omega;H)$, 
\begin{align*}
  D\langle F,G\rangle=\langle DF,G\rangle+\langle F,DG\rangle+\langle DF,DG\rangle.
\end{align*}
\end{prop}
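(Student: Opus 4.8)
The plan is to mimic exactly the argument used for the chain rule in Proposition~\ref{lem:chain}, exploiting that the shift operator $\varepsilon^+_x$ is multiplicative, in the sense that it commutes with the formation of inner products. First I would fix representatives: let $f,g\colon\NN\to H$ be measurable functions such that $f(N)$ and $g(N)$ are representatives of $F$ and $G$, respectively. Then $x\mapsto\langle f(x),g(x)\rangle$ is a measurable real-valued function on $\NN$ and $\langle f(N),g(N)\rangle$ is a representative of $\langle F,G\rangle\in L^0(\Omega;\bR)$. The key elementary observation is that, for $x\in\X$,
\begin{align*}
\varepsilon^+_x\langle F,G\rangle
=\langle f(N+\delta_x),g(N+\delta_x)\rangle
=\langle\varepsilon^+_xF,\varepsilon^+_xG\rangle,
\end{align*}
which is immediate from the definition of $\varepsilon^+$ since evaluating the representative $N\mapsto\langle f(N),g(N)\rangle$ at $N+\delta_x$ simply evaluates $f$ and $g$ separately at $N+\delta_x$.

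With this in hand the proof is a one-line bilinear expansion. Using $D_x\langle F,G\rangle=\varepsilon^+_x\langle F,G\rangle-\langle F,G\rangle$ together with the identities $\varepsilon^+_xF=F+D_xF$ and $\varepsilon^+_xG=G+D_xG$, I would write
\begin{align*}
D_x\langle F,G\rangle
&=\langle\varepsilon^+_xF,\varepsilon^+_xG\rangle-\langle F,G\rangle
=\langle F+D_xF,\,G+D_xG\rangle-\langle F,G\rangle\\
&=\langle D_xF,G\rangle+\langle F,D_xG\rangle+\langle D_xF,D_xG\rangle,
\end{align*}
where the last equality uses bilinearity of $\langle\cdot,\cdot\rangle$ and cancellation of the $\langle F,G\rangle$ terms. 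Since $x\in\X$ was arbitrary, this gives the claimed identity in $L^0(\Omega\times\X;H)$ (more precisely in $L^0(\Omega\times\X;\bR)$, the inner product being real-valued).

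I do not anticipate any serious obstacle here; the only point that deserves a word is the well-definedness on equivalence classes, namely that the displayed computation does not depend on the choice of representatives $f,g$. This is exactly what Lemma~\ref{lem:epsilon+welldefined} guarantees: each of $\varepsilon^+_xF$, $\varepsilon^+_xG$, and $\varepsilon^+_x\langle F,G\rangle$ is well-defined $\P\otimes\m$-almost everywhere independently of the representative, and hence so are all terms appearing above. Thus the identity holds as an equality of equivalence classes, completing the proof.
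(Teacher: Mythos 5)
Your proof is correct and follows essentially the same route as the paper's: both rest on the multiplicativity identity $\varepsilon^+_x\langle F,G\rangle=\langle\varepsilon^+_xF,\varepsilon^+_xG\rangle$ followed by a bilinear expansion, with well-definedness on equivalence classes supplied by Lemma~\ref{lem:epsilon+welldefined}. The only difference is cosmetic: you make the representatives and the well-definedness step explicit, while the paper leaves them implicit in the phrase ``the definition of the operators $D$ and $\varepsilon^+$ implies.''
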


\begin{proof}
The definition of the operators $D$ and $\varepsilon^+$ impies that, for $x\in\X$,
\begin{align*}
D_x\langle F,G\rangle
&=\varepsilon^+_x\langle F,G\rangle-\langle F,G\rangle
=\big\langle\varepsilon^+_xF,\varepsilon^+_xG\big\rangle-\langle F,G\rangle\\
&=\big\langle\varepsilon^+_xF-F,\varepsilon^+_xG-G\big\rangle+\big\langle\varepsilon^+_xF-F,G\big\rangle+\big\langle F,\varepsilon^+_xG-G\big\rangle\\
&=\langle D_xF,D_xG\rangle+\langle D_xF,G\rangle+\langle F,D_xG\rangle.
\qedhere
\end{align*}
\end{proof}

%

We next aim at restricting the operator $D$, originally defined on the space of all (equivalence classes of) random variables, to Sobolev-type spaces in which integrability is taken into account. 

\begin{definition}
For $p>1$ we define $\bD^{1,p}(H)$ as the space of all random variables $F\in L^p(\Omega;H)$ such that $D F\in L^p(\Omega\times\X;H)$. It is equipped with the norm 
\begin{align*}
  \|F\|_{\bD^{1,p}(H)}
  :=
  \Big(
    \|F\|_{L^p(\Omega;H)}^p
    +
    \|D F\|_{L^p(\Omega\times\X;H)}^p
  \Big)^\frac1p.
\end{align*}
\end{definition}

The closedness of $D$ as an operator from $L^p(\Omega;H)$ to $L^p(\Omega\times\X;H)$ with domain $\bD^{1,p}(H)$, and thus the Banach space property of $\bD^{1,p}(H)$, can be deduced from the following elementary moment estimate. It enables us to control local $L^q$-norms of $DF$ for $L^p$-integrable random variables $F$ with $1\leq q<p$.

\begin{lemma}[Local $L^q$-estimate]\label{lem:local_estimate_D}
Let $p>1$, $F\in L^p(\Omega;H)$ and $B\in\cE_0$. Then $\one_{\Omega\times B}DF\in L^q(\Omega\times\X;H)$ for all $q\in[1,p)$, and 
\[
\|\one_{\Omega\times B}DF\|_{L^q(\Omega\times\X; H)}
\leq C_B\, \|F\|_{L^p(\Omega;H)}
\]
with a finite constant $C_B=C_{B,\mu,p,q}$
that does not depend on $F$.
\end{lemma}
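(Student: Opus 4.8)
The plan is to estimate the integrand of $\|\one_{\Omega\times B}DF\|_{L^q}^q$ pointwise, splitting off the shift operator $\varepsilon^+$, and then to convert the contribution of $\varepsilon^+$ into a moment of $F$ weighted by $N(B)$ by means of Mecke's formula. Starting from the defining identity $D_xF=\varepsilon^+_xF-F$ and the elementary inequality $\|a-b\|^q\le 2^{q-1}(\|a\|^q+\|b\|^q)$, I would first obtain
\begin{align*}
\E\int_B\|D_xF\|^q\,\m(\diffin x)
\le 2^{q-1}\Big(\E\int_B\|\varepsilon^+_xF\|^q\,\m(\diffin x)+\m(B)\,\E\|F\|^q\Big).
\end{align*}
The second term is already harmless: since $(\Omega,\cF,\P)$ is a probability space and $q<p$, the contractivity of $L^r$-norms gives $\E\|F\|^q=\|F\|_{L^q(\Omega;H)}^q\le\|F\|_{L^p(\Omega;H)}^q$, and $\m(B)<\infty$ because $B\in\cE_0$, so this contributes at most $\m(B)\,\|F\|_{L^p(\Omega;H)}^q$.

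For the first term I would invoke Mecke's formula \eqref{eq:Mecke1} with the nonnegative measurable function $g(\eta,x):=\|f(\eta)\|^q\,\one_B(x)$, where $f\colon\NN\to H$ is a representative of $F$. Since $g(N+\delta_x,x)=\|f(N+\delta_x)\|^q\one_B(x)=\|\varepsilon^+_xF\|^q\one_B(x)$ by \eqref{eq:epsilon+}, the formula trades the $\m$-integral of the shifted field for an $N$-integral of the unshifted one:
\begin{align*}
\E\int_B\|\varepsilon^+_xF\|^q\,\m(\diffin x)
=\E\int_B\|f(N)\|^q\,N(\diffin x)
=\E\big[\|F\|^q\,N(B)\big].
\end{align*}
Lemma~\ref{lem:epsilon+welldefined} guarantees that the left-hand side is a genuine function of the equivalence class of $F$, so the choice of representative is immaterial.

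It then remains to bound $\E[\|F\|^q N(B)]$, and this is where the strict inequality $q<p$ enters decisively. I would apply Hölder's inequality with the conjugate exponents $p/q$ and $r:=p/(p-q)$, both finite since $1\le q<p$, to decouple the two factors:
\begin{align*}
\E\big[\|F\|^q N(B)\big]
\le\big(\E\|F\|^p\big)^{q/p}\big(\E N(B)^r\big)^{1/r}
=\|F\|_{L^p(\Omega;H)}^q\,\big(\E N(B)^r\big)^{1/r}.
\end{align*}
As $B\in\cE_0$, the random variable $N(B)$ is Poisson distributed with finite parameter $\m(B)$ (Remark~\ref{rem:PRM}) and hence has finite moments of all orders, so $\E N(B)^r<\infty$ depends only on $\m(B)$, $p$ and $q$. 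Collecting the two contributions and taking $q$-th roots yields the assertion with
\begin{align*}
C_B=\Big(2^{q-1}\big[(\E N(B)^r)^{1/r}+\m(B)\big]\Big)^{1/q},
\end{align*}
which is finite and independent of $F$. The only genuinely delicate step is the interplay between Mecke's formula and Hölder's inequality: the formula replaces the potentially poorly integrable shift $\varepsilon^+_xF$ by the product $\|F\|^q N(B)$, and the gap $p-q>0$ is exactly what supplies a finite Hölder exponent for the Poisson factor while keeping $F$ in $L^p$. If $q=p$ were allowed this decoupling would break down, which is why the estimate is inherently local in $x$ (through $B\in\cE_0$) and sub-critical in integrability (through $q<p$).
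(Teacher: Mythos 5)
Your proof is correct and follows essentially the same route as the paper's: split $D_xF=\varepsilon^+_xF-F$ via the elementary inequality $(a+b)^q\le 2^{q-1}(a^q+b^q)$, convert $\E\int_B\|\varepsilon^+_xF\|^q\,\m(\diffin x)$ into $\E[\|F\|^q N(B)]$ by Mecke's formula \eqref{eq:Mecke1}, and decouple with H\"older's inequality using exponents $p/q$ and $p/(p-q)$ together with the finiteness of all moments of the Poisson variable $N(B)$. The only differences are cosmetic (you keep $\m(B)$ as a separate term in the constant, while the paper absorbs it via $\m(B)\le\|N(B)\|_{L^{(p/q)'}(\Omega;\R)}$).
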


\begin{proof}
Fix $q\in[1,p)$. The inequality $(a+b)^q\leq 2^{q-1} (a^q+b^q)$ implies
\begin{align*}
\big\|\one_{\Omega\times B}D F\big\|_{L^q(\Omega\times\X;H)}^q
&\leq 2^{q-1}
\E\int_{B}\varepsilon_{x}^+\|F\|^q\,\m(\diffin{x})
+
2^{q-1}\m(B)
\|F\|_{L^q(\Omega;H)}^q.
\end{align*}
By Mecke's formula \eqref{eq:Mecke1} and the H\"older inequality with exponent $p/q>1$ and dual exponent $(p/q)'=p/(p-q)$, 
\begin{align*}
\E\int_{B}\varepsilon_{x}^+\|F\|^q\,\m(\diffin{x})
&=\E\int_B \|F\|^q N(\diffin{x})
=
\E\big(N(B)\,\|F\|^q\big)\\
&\leq   \big\|N(B)\big\|_{L^{p/(p-q)}(\Omega;\R)}
\|F\|_{L^p(\Omega;H)}^q.
\end{align*}
Since $N(B)$ has finite moments of all orders and since $\m(B)\leq \|N(B)\|_{L^{(p/q)'}(\Omega;\R)}$, we have shown the assertion.
\end{proof}

Lemma~\ref{lem:local_estimate_D} has the following immediate and very useful consequence. Recall the conventions on $L^p$ spaces, $p\in\{0\}\cup[1,\infty]$, from Section~\ref{sec:Notation_and_conventions}.

\begin{corollary}\label{cor:localizing_lemma_1}
For all $p>q\geq1$ the restriction of the difference operator $D\colon$ $L^0(\Omega;H)\to L^0(\Omega\times E;H)$ to $L^p(\Omega;H)$ is a continuous mapping from $L^p(\Omega;H)$ to $L^0(E;L^q(\Omega;H))$.
\end{corollary}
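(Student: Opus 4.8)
The plan is to first verify that $DF$ actually lies in the subspace $L^0(E;L^q(\Omega;H))$ of $L^0(\Omega\times E;H)$, and then to deduce continuity from Lemma~\ref{lem:local_estimate_D} together with the explicit metric generating the topology of local convergence in measure. For the first step, recall from Section~\ref{sec:Notation_and_conventions} that $L^0(E;L^q(\Omega;H))$ is identified with the set of those $G\in L^0(\Omega\times E;H)$ whose representatives satisfy $G(x,\cdot)\in L^q(\Omega;H)$ for $\mu$-almost all $x\in E$. Fixing $p>q\geq1$ and $F\in L^p(\Omega;H)$, Lemma~\ref{lem:local_estimate_D} gives $\one_{\Omega\times B}DF\in L^q(\Omega\times E;H)$ for every $B\in\cE_0$, so Tonelli's theorem yields $\int_B\|D_xF\|_{L^q(\Omega;H)}^q\,\mu(\dl x)<\infty$ and hence $D_xF\in L^q(\Omega;H)$ for $\mu$-almost all $x\in B$. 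Choosing sets $B_j\in\cE_0$ with $B_j\nearrow E$ (possible by $\sigma$-finiteness), one concludes $D_xF\in L^q(\Omega;H)$ for $\mu$-almost all $x\in E$, so that $DF$ belongs to the range of the canonical embedding and $D$ maps $L^p(\Omega;H)$ into $L^0(E;L^q(\Omega;H))$.

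For continuity I would exploit that $D$ is linear and that $L^p(\Omega;H)$ is metrizable, so that it suffices to verify sequential continuity at the origin: whenever $F_n\to0$ in $L^p(\Omega;H)$ we must show $DF_n\to0$ in $L^0(E;L^q(\Omega;H))$. With sets $A_k\in\cE_0$, $A_k\nearrow E$, as in the definition of the metric on $L^0(E;L^q(\Omega;H))$, the bound $1\wedge t\leq t$, Hölder's inequality and Lemma~\ref{lem:local_estimate_D} give, for each fixed $k$,
\begin{align*}
\int_{A_k}\big(1\wedge\|D_xF_n\|_{L^q(\Omega;H)}\big)\,\mu(\dl x)
&\leq\mu(A_k)^{1-\frac1q}\,\big\|\one_{\Omega\times A_k}DF_n\big\|_{L^q(\Omega\times E;H)}\\
&\leq \mu(A_k)^{1-\frac1q}\,C_{A_k}\,\|F_n\|_{L^p(\Omega;H)}\to0
\end{align*}
as $n\to\infty$ (with the convention $\mu(A_k)^{1-1/q}:=1$ when $q=1$). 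Each term of the defining series thus converges to $0$ while being bounded uniformly in $n$ by $2^{-k}$, so dominated convergence for the series yields $d(DF_n,0)\to0$, which is the asserted continuity.

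The only genuinely delicate point is the bookkeeping in the first step: one has to read ``$DF\in L^0(E;L^q(\Omega;H))$'' correctly through the identification of this space with a subspace of $L^0(\Omega\times E;H)$ and invoke the range characterization of the embedding recalled in Section~\ref{sec:Notation_and_conventions}. Once this identification is in place, everything reduces to a routine combination of Lemma~\ref{lem:local_estimate_D}, Tonelli's theorem and the explicit form of the metric; in particular, no moment estimate beyond the one already established in Lemma~\ref{lem:local_estimate_D} is required.
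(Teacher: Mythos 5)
Your proof is correct and follows essentially the same route as the paper's: the combination of H\"older's inequality with the local $L^q$-estimate of Lemma~\ref{lem:local_estimate_D} to bound $\int_A\big(1\wedge\|D_xF_n-D_xF\|_{L^q(\Omega;H)}\big)\,\mu(\dl x)$ for $A\in\cE_0$, followed by the explicit metric on $L^0(E;L^q(\Omega;H))$. Your additional steps --- the Tonelli argument verifying that $DF$ indeed lies in $L^0(E;L^q(\Omega;H))$, the reduction to continuity at the origin via linearity, and the dominated-convergence treatment of the series defining the metric --- merely make explicit what the paper leaves implicit.
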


\begin{proof}
\felix{see notes 23.11.'16}
\felix{(notes 3.10.'16 = old version)}
Consider $F, F_n\in L^p(\Omega;H)$, $n\in\N$, such that $F_n\xrightarrow{n\to\infty} F$ in $L^p(\Omega;H)$ and take $A\in\cE_0$. Using the Hölder inequality and the local $L^q$-estimate from Lemma~\ref{lem:local_estimate_D}, we have
\begin{align*}
\int_A\big(1\,\wedge &\,\|D_xF_n-D_xF\|_{L^q(\Omega;H)}\big)\m(\dl x)\\
&\leq \m(A)\wedge \Big(\m(A)^{1-\frac1q}\big\|\one_{\Omega\times B}(DF_n-DF)\|_{L^q(\Omega\times E;H)}\Big)\\
&\leq C\big(1\wedge\|F_n-F\|_{L^p(\Omega;H)}\big)\xrightarrow{n\to\infty}0.
\end{align*}
Recalling from Section~\ref{sec:Notation_and_conventions} the canonical metric $d$ on an $L^0$ space, this implies that $DF_n\xrightarrow{n\to\infty}DF$ in $L^0(E;L^q(\Omega;H))$.
\end{proof}

With Corollary~\ref{cor:localizing_lemma_1} at hand we obtain the completeness of $\bD^{1,p}(H)$.

\begin{prop}\label{thm:Banach}
For all $p>1$ the space $\bD^{1,p}(H)$ is complete, i.e.\ a Banach space. In particular, the restriction of $D:L^0(\Omega;H)\to L^0(\Omega\times\X;H)$ to $\bD^{1,p}(H)$ is closed from $L^p(\Omega;H)$ to $L^p(\Omega\times\X;H)$. Moreover, the space $\bD^{1,2}(H)$ is a Hilbert space.
\end{prop}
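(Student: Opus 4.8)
The plan is to derive the completeness of $\bD^{1,p}(H)$ from the closedness of $D$, viewed as an operator from $L^p(\Omega;H)$ to $L^p(\Omega\times E;H)$ with domain $\bD^{1,p}(H)$. Since both $L^p(\Omega;H)$ and $L^p(\Omega\times E;H)$ are complete Bochner spaces, completeness of the graph-norm space $\bD^{1,p}(H)$ is equivalent to this closedness, so it suffices to treat the two simultaneously. Concretely, I would start from a sequence $(F_n)\subset\bD^{1,p}(H)$ that is Cauchy in the $\bD^{1,p}(H)$-norm. By definition of this norm, $(F_n)$ is then Cauchy in $L^p(\Omega;H)$ and $(DF_n)$ is Cauchy in $L^p(\Omega\times E;H)$; by completeness of these two spaces there exist limits $F\in L^p(\Omega;H)$ and $\Phi\in L^p(\Omega\times E;H)$ with $F_n\to F$ and $DF_n\to\Phi$ in the respective $L^p$-norms. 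Everything reduces to the identification $\Phi=DF$: once this is established, $\Phi\in L^p(\Omega\times E;H)$ forces $F\in\bD^{1,p}(H)$ with $DF=\Phi$, and then $F_n\to F$ in the $\bD^{1,p}(H)$-norm.

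The identification of $\Phi$ with $DF$ is the heart of the argument, and it is exactly here that Corollary~\ref{cor:localizing_lemma_1} is used to bridge the two different modes of convergence. I would fix any $q\in[1,p)$. Because $F_n\to F$ in $L^p(\Omega;H)$, the corollary yields $DF_n\to DF$ in $L^0(E;L^q(\Omega;H))$; composing with the continuous embedding of $L^0(E;L^q(\Omega;H))$ into $L^0(E\times\Omega;H)=L^0(\Omega\times E;H)$ recorded in Subsection~\ref{sec:Notation_and_conventions}, this gives $DF_n\to DF$ in $L^0(\Omega\times E;H)$. On the other hand, $DF_n\to\Phi$ in $L^p(\Omega\times E;H)$, and since $L^p(\Omega\times E;H)$ embeds continuously into $L^0(\Omega\times E;H)$, also $DF_n\to\Phi$ in $L^0(\Omega\times E;H)$. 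As $L^0(\Omega\times E;H)$ is metrizable, hence Hausdorff, limits are unique, so $DF=\Phi$ in $L^0(\Omega\times E;H)$, completing the identification.

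Running the same computation without the Cauchyness-in-graph-norm hypothesis gives the closedness statement directly: if $F_n\to F$ in $L^p(\Omega;H)$ and $DF_n\to\Phi$ in $L^p(\Omega\times E;H)$, then $F\in\bD^{1,p}(H)$ and $DF=\Phi$. Finally, for $p=2$ the norm $\|\cdot\|_{\bD^{1,2}(H)}$ is induced by the inner product $\langle F,G\rangle_{L^2(\Omega;H)}+\langle DF,DG\rangle_{L^2(\Omega\times E;H)}$ on the now-complete space $\bD^{1,2}(H)$, which makes it a Hilbert space. The one genuine obstacle is reconciling the comparatively weak $L^0$-convergence of $(DF_n)$ delivered by the local estimate of Corollary~\ref{cor:localizing_lemma_1} with the strong $L^p$-convergence of $(DF_n)$ to $\Phi$; the resolution is to push both convergences into the common coarser topology of $L^0(\Omega\times E;H)$ and invoke uniqueness of limits there, which is precisely what the continuous embeddings from the notation section permit.
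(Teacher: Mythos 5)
Your proposal is correct and follows essentially the same route as the paper: extract the two $L^p$-limits $F$ and $\Phi$ from the Cauchy sequence and identify $DF=\Phi$ via Corollary~\ref{cor:localizing_lemma_1}. The only difference is that you spell out the identification step (pushing both convergences into $L^0(\Omega\times\X;H)$ via the continuous embeddings and invoking uniqueness of limits there), which the paper compresses into ``follows readily from Corollary~\ref{cor:localizing_lemma_1}.''
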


\begin{proof}
Let $(F_n)_{n\in\N}$ be a Cauchy sequence in $\bD^{1,p}(H)$. There exists $F\in L^p(\Omega;H)$ such that $F_n\to F$ in $L^p(\Omega;H)$ and $\Phi\in L^p(\Omega\times\X;H)$ such that $DF_n\to\Phi$ in $L^p(\Omega\times\X;H)$. It remains to check that $D F=\Phi$ in $L^0(\Omega\times E;H)$, which follows readily from Corollary~\ref{cor:localizing_lemma_1}. 
\end{proof}

In Gaussian Malliavin calculus, see \cite{Nualart2006}, the derivative is in the first step defined on a core of smooth random variables and in a second step extended to Sobolev spaces, by proving closability. This procedure naturally provides an approximation class for limiting arguments. In our approach to Poisson Malliavin calculus, no such class is obtained for free but our next lemma provides one.

\begin{lemma}[Approximation in $\bD^{1,p}(H)$ and $L^p(\Omega;H)$]\label{lem:coreD12}
Let $p>1$. 
\felix{here $p=1$ also ok}
\begin{enumerate}[(i)]
\item
Let $F\in \bD^{1,p}(H)$ and $(e_k)_{k\in\N}$ be an orthonormal basis of $H$. For $n\in\N$ define $F_n\in L^p(\Omega;H)$ by
$$F_n:=\sum_{k\in\N} \Big(\Big(-\frac n{k}\Big)\vee\langle F,e_k\rangle\wedge \frac n{k}\Big)\,e_k.$$
Then, $F_n\in\bD^{1,p}(H)\cap L^\infty(\Omega;H)$ and $F_n\to F$ in $\bD^{1,p}(H)$. In particular, the space $\bD^{1,p}(H)\cap L^\infty(\Omega;H)$ is dense in 
$\bD^{1,p}(H)$.
\item
Every $H$-valued random variable of the form $F=\varphi(N(B_1),\ldots,N(B_n))$, with $B_1,\ldots,B_n\in\cE_0$ and a bounded mapping $\varphi\colon\bN_0^n\to H$, belongs to $\bD^{1,q}(H)$ for all $q>1$. In particular, the space $\bigcap_{q>1}\bD^{1,q}(H)\cap L^\infty(\Omega;H)$ is dense in $L^p(\Omega;H)$.
\end{enumerate}
\end{lemma}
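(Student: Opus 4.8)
The plan for (i) is to recognise that $F_n=h_n(F)$, where $h_n\colon H\to H$ is the componentwise truncation $h_n(v):=\sum_{k\in\N}t_{n,k}(\langle v,e_k\rangle)\,e_k$ with $t_{n,k}(s):=(-n/k)\vee s\wedge(n/k)$. Since each $t_{n,k}$ is $1$-Lipschitz, $h_n$ is $1$-Lipschitz on $H$, and since $\|h_n(v)\|^2\le\sum_k(n/k)^2=n^2\pi^2/6$ it maps $H$ into the ball of radius $n\pi/\sqrt6$, so that $F_n\in L^\infty(\Omega;H)$. As $h_n$ is (Borel) measurable, the chain rule, Proposition~\ref{lem:chain}, applies and gives $D_xF_n=h_n(F+D_xF)-h_n(F)=h_n(\varepsilon^+_xF)-h_n(F)$; the $1$-Lipschitz property then yields $\|D_xF_n\|\le\|\varepsilon^+_xF-F\|=\|D_xF\|$, whence $F_n\in\bD^{1,p}(H)$ with $\|DF_n\|_{L^p(\Omega\times\X;H)}\le\|DF\|_{L^p(\Omega\times\X;H)}$.

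For the convergence in (i), $F_n\to F$ in $L^p(\Omega;H)$ is immediate from dominated convergence: $\|F_n-F\|^2=\sum_k\big((|\langle F,e_k\rangle|-n/k)^+\big)^2\to0$ pointwise and is dominated by $\|F\|^2$. The crux is $DF_n\to DF$ in $L^p(\Omega\times\X;H)$, where one must avoid dominating by $\|\varepsilon^+_xF\|$, which need not lie in $L^p(\Omega\times\X)$ when $\m(\X)=\infty$. The key observation is that the componentwise remainder $r_{n,k}:=t_{n,k}-\mathrm{id}$ is piecewise linear with slopes in $\{0,-1\}$ and hence itself $1$-Lipschitz; writing $D_xF_n-D_xF=[h_n(\varepsilon^+_xF)-\varepsilon^+_xF]-[h_n(F)-F]$ coordinatewise as $r_{n,k}(\langle\varepsilon^+_xF,e_k\rangle)-r_{n,k}(\langle F,e_k\rangle)$ and summing the $1$-Lipschitz bounds over $k$ gives the clean domination $\|D_xF_n-D_xF\|\le\|\varepsilon^+_xF-F\|=\|D_xF\|\in L^p(\Omega\times\X)$. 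For fixed $(\omega,x)$ each summand vanishes once $n/k$ exceeds the relevant coordinates, so dominated convergence in $k$ gives $\|D_xF_n-D_xF\|\to0$ pointwise, and a second application over $\Omega\times\X$ with dominating function $\|D_xF\|^p$ yields $DF_n\to DF$ in $L^p$. Together with $F_n\to F$ in $L^p(\Omega;H)$ this is convergence in $\bD^{1,p}(H)$, and the asserted density follows.

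For the integrability in (ii), take the representative $f(\eta):=\varphi(\eta(B_1),\dots,\eta(B_n))$, so that $\varepsilon^+_xF=\varphi(N(B_1)+\one_{B_1}(x),\dots,N(B_n)+\one_{B_n}(x))$. Hence $D_xF=0$ whenever $x\notin B:=\bigcup_{i=1}^nB_i$, while $\|D_xF\|\le 2\sup_{m\in\bN_0^n}\|\varphi(m)\|=:2M$ for all $x$. As $B\in\cE_0$, this gives $\|DF\|_{L^q(\Omega\times\X;H)}^q\le(2M)^q\,\m(B)<\infty$ together with $\|F\|_{L^q(\Omega;H)}\le M$, so $F\in\bD^{1,q}(H)$ for every $q>1$, and clearly $F\in L^\infty(\Omega;H)$.

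For the density in (ii), let $\cZ$ denote the linear space of random variables of the stated form; by the previous paragraph $\cZ\subset\bigcap_{q>1}\bD^{1,q}(H)\cap L^\infty(\Omega;H)$, so it suffices to show $\cZ$ is dense in $L^p(\Omega;H)$. Since $H$-valued simple functions are dense in $L^p(\Omega;H)$ for finite $p$, it is enough to approximate $\one_Av$ for $A\in\cF$ and $v\in H$. Using that $\cF$ is the $\P$-completion of $\sigma(N)$ and that $\sigma$-finiteness of $\m$ yields $\sigma(N)=\sigma(\{N(B):B\in\cE_0\})=\sigma(\cA)$ for the algebra $\cA$ of cylinder events $\{(N(B_1),\dots,N(B_n))\in S\}$, $B_i\in\cE_0$, $S\subset\bN_0^n$, we may assume $A\in\sigma(\cA)$. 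The standard fact that an algebra is dense in the $\sigma$-algebra it generates with respect to $\P(\,\cdot\,\triangle\,\cdot\,)$ produces, for each $\varepsilon>0$, a cylinder event $A'\in\cA$ with $\P(A\,\triangle\,A')<\varepsilon$; then $\one_{A'}v=\varphi(N(B_1),\dots,N(B_n))$ with $\varphi:=\one_S(\cdot)\,v$ bounded, so $\one_{A'}v\in\cZ$ and $\|\one_Av-\one_{A'}v\|_{L^p(\Omega;H)}=\|v\|\,\P(A\,\triangle\,A')^{1/p}\to0$. The main obstacle is precisely this last measure-theoretic reduction, which must keep the approximants bounded functions of finitely many $N(B_i)$ with $B_i\in\cE_0$; once this is in place the integrability and derivative bounds are routine.
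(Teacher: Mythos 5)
Your proposal is correct and follows essentially the same route as the paper: in (i) the Lipschitz property of coordinatewise truncation gives the domination $\|D_xF_n\|\le\|D_xF\|$ and $\|D_xF_n-D_xF\|\le\|D_xF\|$, pointwise convergence holds because the truncation becomes the identity once $n/k$ exceeds the relevant coordinates, and dominated convergence concludes; in (ii) the derivative vanishes off $\bigcup_i B_i$ and is bounded, and density follows from an algebra/monotone-class approximation. The only difference is presentational — you phrase (i) via the chain rule and the remainder maps $r_{n,k}$, and you spell out the monotone-class argument that the paper merely cites — so this is the same proof with the details filled in.
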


\begin{proof}
\felix{see notes 11.11.'16 (also 9.10.'15, 21.2.'16)}
$(i)$:
Oviously, $F_n$ is bounded. Set $F^{(k)}:=\langle F,e_k\rangle$, $F^{(k)}_n:=(-\frac n{k})\vee\langle F,e_k\rangle\wedge \frac n{k}$ 
and observe that $D_xF_n(\omega)=\sum_{k\in\N}(D_xF^{(k)}_n(\omega))e_k$ and $|D_xF^{(k)}_n(\omega)|\leq |D_xF^{(k)}(\omega)|$ for $x\in\X$, $\omega\in\Omega$, $k,n\in\N$. Thus $\|DF_n(\omega)\|\leq\|DF(\omega)\|$ and therefore $F_n\in\bD^{1,p}(H)$.
Moreover, we have $F^{(k)}_n(\omega)\xrightarrow{n\to\infty}F^{(k)}(\omega)$  as well as
$D_xF^{(k)}_n(\omega)\xrightarrow{n\to\infty} D_xF^{(k)}(\omega)$ for  $x\in\X$, $\omega\in\Omega$, $k\in\N$. 
The latter convergence holds since \linebreak$D_xF^{(k)}_n(\omega)-D_xF^{(k)}(\omega)=0$ whenever $|F^{(k)}(\omega)|\vee|\varepsilon_x^+F^{(k)}(\omega)|\leq \frac n{k}$ and since $\big\{(\omega,x)\in\Omega\times\X:|F^{(k)}(\omega)|\vee|\varepsilon_x^+F^{(k)}(\omega)|\leq \frac n{k}\big\}\nearrow\Omega\times\X$ as $n\to\infty$. 
As $|F^{(k)}_n(\omega)|\leq |F^{(k)}(\omega)|$ and $|D_xF^{(k)}_n(\omega)|\leq |D_xF^{(k)}(\omega)|$ we conclude that
\begin{align*}
&\|F-F_n\|^p_{\bD^{1,p}(H)}
=
\E\big[\|F-F_n\|^p\big]+\E\int_\X\|D_xF-D_xF_n\|^p\,\m(\diffin x)\\
&=
\E\Big[\Big(\sum_{k\in\N}\big(F^{(k)}-F^{(k)}_n\big)^2\Big)^{\frac p2}\Big]+\E\int_\X\Big(\sum_{k\in\N}\big(D_xF^{(k)}-D_xF^{(k)}_n\big)^2\Big)^{\frac p2}\m(\dl x)
\xrightarrow{n\to\infty}0
\end{align*}
by the dominated convergence theorem.

$(ii)$:\felix{see notes 28.12.'16}
As $D_xF=\varphi\big((N+\delta_x)(B_1),\ldots,(N+\delta_x)(B_n)\big)-\varphi(N(B_1),\ldots,N(B_n))$ vanishes on $\Omega\times(\bigcup_{i=1}^nB_i)^c$ and is bounded on $\Omega\times((\bigcup_{i=1}^nB_i)$, it is clear that $F$ belongs to $\bD^{1,q}(H)$ for all $q>1$. A standard monotone class argument (see, e.g., Theorem~A3 in \cite{LastPenrose2016}) yields that the set of all random variables $F$ of the described form lies dense in $L^p(\Omega;H)$.
\end{proof}

For our next result we introduce sub-$\sigma$-algebras of $\mathcal F$. To every set $A\in\cE$ we associate the $\sigma$-algebra 
\begin{equation}\label{eq:defFA}
\mathcal F_A:=\overline{\sigma\big(N(A\cap B):B\in\cE\big)}^\P.
\end{equation}
where $\overline{\vphantom{(}\dots}^\P$ denotes the $\P$-completion. 
By slight abuse of terminology we say that $F\in L^0(\Omega;H)$ is $\mathcal F_A$-measurable if the equivalence class of random variables $F$ has a $\mathcal F_A$-measurable representative.

\begin{lemma}\label{lem:DF_F_FA_measurable}
Let $A\in\cE$ and $F\in L^0(\Omega;H)$ be $\mathcal F_A$-measurable. Then $DF=0$ $\P\otimes \m$-almost everywhere on $\Omega\times A^c$.
\end{lemma}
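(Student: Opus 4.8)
The plan is to exploit the fact that $\mathcal F_A$-measurability of $F$ means, up to $\P$-null sets, that $F$ depends on $N$ only through its restriction to $A$. First I would introduce the restriction map $\pi_A\colon\NN\to\NN$ defined by $(\pi_A\eta)(B):=\eta(A\cap B)$ for $B\in\cE$; note that $\pi_A\eta$ is again a $\sigma$-finite $\N_0\cup\{+\infty\}$-valued measure since it is dominated by $\eta$. As each coordinate $\eta\mapsto\eta(A\cap B)$ is $\cN$-measurable, $\pi_A$ is $\cN$-$\cN$-measurable, and by the very definition of $\mathcal F_A$ in \eqref{eq:defFA} we have $\sigma\big(N(A\cap B):B\in\cE\big)=\sigma(\pi_A\circ N)$, so that $\mathcal F_A$ is precisely the $\P$-completion of $\sigma(\pi_A\circ N)$.

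Next I would produce a convenient representative of $F$. By assumption $F$ has an $\mathcal F_A$-measurable representative, and since $H$ is separable, a standard fact about completions shows that every $\overline{\mathcal G}^\P$-measurable $H$-valued map coincides $\P$-a.s.\ with a genuinely $\mathcal G$-measurable one, for $\mathcal G=\sigma(\pi_A\circ N)$. Applying the factorization lemma to the latter yields a measurable $g\colon\NN\to H$ with $F=g(\pi_A(N))$ $\P$-a.s. In particular, $f:=g\circ\pi_A\colon\NN\to H$ is a representative of $F$ in the sense of Definition~\ref{def:diffOp}.

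With this representative the computation is immediate. For $x\in A^c$ and any $B\in\cE$ we have $x\notin A\cap B$, hence $\delta_x(A\cap B)=0$ and $(N+\delta_x)(A\cap B)=N(A\cap B)$; that is, $\pi_A(N+\delta_x)=\pi_A(N)$. Consequently, for every $x\in A^c$,
\[
D_xF=f(N+\delta_x)-f(N)=g\big(\pi_A(N+\delta_x)\big)-g\big(\pi_A(N)\big)=0.
\]
Since by Lemma~\ref{lem:epsilon+welldefined} the equivalence class $DF\in L^0(\Omega\times\X;H)$ does not depend on the chosen representative, this pointwise identity shows $DF=0$ $\P\otimes\m$-almost everywhere on $\Omega\times A^c$.

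The only genuinely delicate point is the passage from $\mathcal F_A$-measurability to a $\sigma(\pi_A\circ N)$-measurable representative, i.e.\ disposing of the $\P$-completion; everything after that reduces to the observation that inserting a point outside $A$ leaves $\pi_A(N)$ unchanged.
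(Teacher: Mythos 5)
Your proof is correct and follows essentially the same route as the paper: both introduce the restriction map $\eta\mapsto\eta_A$ (your $\pi_A$), identify $\cF_A$ as the $\P$-completion of the $\sigma$-algebra generated by the restricted point process, use the factorization lemma to obtain a representative of $F$ that factors through this restriction, and conclude from the trivial identity $\pi_A(N+\delta_x)=\pi_A(N)$ for $x\in A^c$. The only difference is that you spell out the passage from the completed $\sigma$-algebra to a genuinely $\sigma(\pi_A\circ N)$-measurable representative, a step the paper leaves implicit.
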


\begin{proof}
Given a $\sigma$-finite $\N_0\cup\{+\infty\}$-valued measure $\eta\in\NN$, let $\eta_A\in\NN$ be defined by $\eta_A(B):=\eta(A\cap B)$, $B\in\cE$, i.e.\ by removing from $\eta$ all point masses that are located outside of $A$. Then, $\cF_A$ coincides with the 
$\P$-completion of the $\sigma$-algebra that is generated by the mapping $N_A\colon\Omega\to\NN,\,\omega\mapsto N_A(\omega):=(N(\omega))_A$.
The measurability assumption and the factorization lemma imply that $F$ has a representative of the form $f(N_A)$, where $f$ is a measurable function from $\NN$ to $H$. Obviously, $f(N_A)=f_A(N)$ with $f_A\colon\NN\to H$ defined by $f_A(\eta):=f(\eta_A)$, $\eta\in\NN$. Now the assertion follows from the identity $D_x F=f_A(N+\delta_x)-f_A(N)$ in $L^0(\Omega\times\X;H)$.
\end{proof}

We end this subsection with a remark concerning the natural extention of the difference operator $D$ to random fields, see also Remark~\ref{rem:epsilon_for_random_fields}.

\begin{remark}[Difference operator for random fields]
\label{rem:D_for_random_fields}
If $( S,\cS,m)$ is any $\sigma$-finite measure space, we define the operator 
$$
D\colon L^0(\Omega\times S;H)\to L^0(\Omega\times S\times E;H),\;F=(F(s))\mapsto DF=(D_xF(s))
$$
by setting 
$$
D_xF(s):= \varepsilon^+_xF(s)-F(s),\quad s\in S,\;x\in E.
$$ 
That is, we have $D_xF(s)=f(N+\delta_x,s)-f(N,s)$, where $f\colon\NN\times S\to H$ is a measurable mapping such that $f(N,\cdot)$ is a representative of (the equivalence class of random functions) $F\in L^0(\Omega\times S;H)$. Note that we are slightly abusing notation here as $D$ is not the same operator as in Definition \ref{def:diffOp}. 
However, if we consider 
\felix{see notes 1.11.'16}
a fixed version of $F\in L^0(\Omega\times S;H)$ and a fixed version of $DF\in L^0(\Omega\times S\times E;H)$ (denoted by $F$ and $DF$ again), then
$$
\text{for $m$-almost all }s\in S:\quad D_x(F(s))=D_x F(s) 
\text{ in } L^0(\Omega\times\X;H),
$$
where $D_x(F(s))$ is the derivative of $F(s)\in L^0(\Omega;H)$ in the sense of Definition~\ref{def:diffOp} and $D_xF(s)$ is the derivative of the random field $F\in L^0(\Omega\times S;H)$ as introduced above.
\felix{see notes 23.11.'16 (notes 1.11.'16 = old version)}
Moreover, arguing similarly as in the proof of Corollary~\ref{cor:localizing_lemma_1}, we obtain the following continuity property which will be used repeatedly:
For all $p>q\geq1$, the restriction of the difference operator $D\colon$ $L^0(\Omega\times S;H)\to L^0(\Omega\times S\times E;H)$ to $L^0(S;L^p(\Omega;H))\subset L^0(\Omega\times S;H)$ is a continuous mapping from $L^0(S;L^p(\Omega;H))$ to $L^0(S\times E;L^q(\Omega;H))$. 
\end{remark}
\felix{old version of this remark commented our (better version?)}


\subsection{Kabanov-Skorohod integral and duality}
\label{subsec:Kabanov-Skorohod}

In the real-valued case $H=\bR$ the Kabanov-Skohorod integral, originally defined as a creation operator in terms of chaos expansions in an $L^2$ setting \cite{Kabanov1976},\felix{check reference} 
coincides with the adjoint of the restriction $D|_{\bD^{1,2}(\bR)}$ of the difference operator $D$ to the space $\bD^{1,2}(\bR)$, see \cite{LastPenrose2011} and the references therein. Pathwise interpretations can be found in \cite{LastPenrose2011, Picard1996a, privault2009}\felix{check (and add?) references}. In this section we first introduce a pathwise defined, Hilbert space-valued Kabanov-Skorohod integral $\delta\colon L^1(\Omega\times E;H)\to L^1(\Omega;H)$, based on a corresponding generalization of Mecke's formula in terms of pathwise Bochner integrals. This operator satisfies a duality relation w.r.t.\ the restriction $D|_{L^\infty(\Omega;H)}$ of $D$ to $L^\infty(\Omega;H)$. We then introduce a family of operators $\delta^{(p)}\colon\dom(\delta^{(p)})\subset L^p(\Omega\times E;H)\to L^p(\Omega;H)$, $p>1$, as adjoint operators to $D|_{\bD^{1,p'}(H)}$, $\frac1p+\frac1{p'}=1$, $p>1$, and show that they coincide with each other and with the pathwise Kabonov-Skorohod integral on the intersections of their domains.  
Although our approach does not rely on chaos-expansions, we use the terminology `Kabanov-Skorohod integral' known from the real-valued $L^2$ setting, cf.~\cite{Last2014,LastPenrose2011}.
We also present sufficient conditions for a random field $\Phi\colon\Omega\times E\to H$ to belong to $\dom(\delta^{(p)})$ as well as a local duality formula.


\begin{prop}[$H$-valued Mecke formula]\label{lem:stoch_int}
The integral mapping
\begin{align*}
  L^1(\Omega\times \X;H)
  \ni
  \Phi
  \mapsto
  \int_\X
    \varepsilon_x^-\Phi(x)
  N(\diffin{x})
  \in
  L^1(\Omega;H),
\end{align*}
where $\int_\X\varepsilon_x^-\Phi(x)N(\diffin{x})$ is $\bP$-almost surely defined as an $H$-valued Bochner integral, is well-defined and continuous. 
For all $\Phi\in L^1(\Omega\times\X;H)$ it holds that
\begin{align}
\label{eq:bound_stoch_int}
  \Big\|
    \int_{\X}
      \varepsilon_x^-
      \Phi(x)  
    N(\diffin{x})
  \Big\|_{L^1(\Omega;H)}
  \leq
  \|\Phi\|_{L^1(\Omega\times\X;H)}
\end{align}
and 
\begin{align}
\label{eq:Mecke_H-valued}
  \E
  \Big[
  \int_{\X}
    \varepsilon_x^-
    \Phi(x)  
  N(\diffin{x})
  \Big]
  =
  \E
  \Big[
    \int_\X
      \Phi(x)
    \,\m(\diffin{x})
  \Big].
\end{align}
\end{prop}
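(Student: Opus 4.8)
The plan is to reduce every assertion to the scalar Mecke formula \eqref{eq:Mecke2}, applied either to the norm $\|\cdot\|$ or to real-valued functionals $\langle\cdot,h\rangle$. First I would fix a representative $f\colon\NN\times\X\to H$ of $\Phi$, so that $\varepsilon_x^-\Phi(x)=f(N\setminus\delta_x,x)$; by Remark~\ref{rem:measurability} this is jointly measurable in $(\omega,x)$, hence strongly measurable since $H$ is separable, and by Lemma~\ref{lem:epsilon-welldefined} it is $\P\otimes N$-almost everywhere independent of the choice of $f$. Applying \eqref{eq:Mecke2} to the non-negative measurable function $(\eta,x)\mapsto\|f(\eta,x)\|$ yields
\[
\E\Big[\int_\X\|\varepsilon_x^-\Phi(x)\|\,N(\diffin x)\Big]=\E\Big[\int_\X\|f(N,x)\|\,\m(\diffin x)\Big]=\|\Phi\|_{L^1(\Omega\times\X;H)}<\infty.
\]
In particular $\int_\X\|\varepsilon_x^-\Phi(x)\|\,N(\diffin x)<\infty$ for $\P$-almost all $\omega$, so that the Bochner integral $\int_\X\varepsilon_x^-\Phi(x)\,N(\diffin x)$ is $\P$-almost surely well-defined, and its dependence on $\omega$ is measurable by standard properties of the Bochner integral together with the joint measurability of the integrand.

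The bound \eqref{eq:bound_stoch_int} then follows at once from the triangle inequality for Bochner integrals:
\[
\Big\|\int_\X\varepsilon_x^-\Phi(x)\,N(\diffin x)\Big\|_{L^1(\Omega;H)}\leq\E\Big[\int_\X\|\varepsilon_x^-\Phi(x)\|\,N(\diffin x)\Big]=\|\Phi\|_{L^1(\Omega\times\X;H)}.
\]
This shows that the integral indeed takes values in $L^1(\Omega;H)$ and that the linear integral mapping is bounded, hence continuous.

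It remains to establish \eqref{eq:Mecke_H-valued}, and here lies the main obstacle: Mecke's formula is stated only for $[0,\infty]$-valued integrands, whereas our integrand is $H$-valued. I would circumvent this by testing against an arbitrary $h\in H$. Since taking the inner product with a fixed vector commutes with Bochner integration, it suffices to prove
\[
\E\Big[\int_\X\langle\varepsilon_x^-\Phi(x),h\rangle\,N(\diffin x)\Big]=\E\Big[\int_\X\langle\Phi(x),h\rangle\,\m(\diffin x)\Big]
\]
for every $h\in H$. Writing $g(\eta,x):=\langle f(\eta,x),h\rangle$ and splitting $g=g^+-g^-$ into its positive and negative parts, both parts are non-negative and measurable, and since $|g(N,x)|\leq\|h\|\,\|\Phi(x)\|$ is $\P\otimes\m$-integrable the corresponding integrals are finite. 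Applying \eqref{eq:Mecke2} separately to $g^+$ and $g^-$ and subtracting gives the displayed scalar identity. As $h\in H$ was arbitrary and both $H$-valued expectations are well-defined elements of $H$, their equality of inner products against every $h$ forces the vectors to coincide, which is precisely \eqref{eq:Mecke_H-valued}.
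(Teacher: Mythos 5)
Your proof is correct, and for the identity \eqref{eq:Mecke_H-valued} it takes a genuinely different route from the paper. The first half (applying \eqref{eq:Mecke2} to the norm to get $\bP$-a.s.\ existence of the Bochner integral, then the Bochner inequality for \eqref{eq:bound_stoch_int} and continuity) coincides with the paper's argument. For \eqref{eq:Mecke_H-valued}, however, the paper proceeds constructively: it first verifies the identity for simple integrands of the form $\Phi=\sum_{k=1}^K f_k\otimes h_k$ with $f_k\in L^1(\Omega\times E;\R^+)$, $h_k\in H$, where the scalar Mecke formula applies coordinate-wise, and then extends to general $\Phi\in L^1(\Omega\times E;H)$ by density of simple integrands, using the continuity estimate \eqref{eq:bound_stoch_int} and the continuity of the expectation operator on $L^1(\Omega;H)$. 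You instead use a duality argument: test both sides against an arbitrary $h\in H$, reduce to a signed scalar Mecke formula via the decomposition $g=g^+-g^-$ (legitimate here since both parts have finite integrals, thanks to the bound $|g(N,x)|\leq\|h\|\,\|\Phi(x)\|$), and conclude because the inner products against every $h$ determine an element of $H$. Your route avoids any approximation step and is arguably more elementary, exploiting that the dual of $H$ separates points; the paper's route is more in the spirit of how Bochner integrals are built and reuses the continuity estimate it has just established. One small point to tighten: the measurability in $\omega$ of $\int_\X\varepsilon_x^-\Phi(x)\,N(\diffin x)$ does not follow from ``standard properties of the Bochner integral'' alone, since the integrating measure $N(\omega,\cdot)$ itself depends on $\omega$; as in the paper, one should invoke the representation \eqref{eq:representationN} of $N$ as a random sum of Dirac measures (writing the integral as a countable sum of measurable terms) together with the completeness of $\cF$, which handles the fact that $\varepsilon^-_x\Phi(x)$ is only fixed up to $\P\otimes N$-null sets.
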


\begin{proof}
By the definition \eqref{eq:epsilon-} of $\varepsilon^-$ and Mecke's formula \eqref{eq:Mecke2} 
we have
\begin{align}\label{eq:Mecke_stoch_int}
  \E
  \Big[
    \int_\X
      \big\|
        \varepsilon_x^-
        \Phi(x)
      \big\|
    \,N(\diffin{x})
  \Big]
  =
  \E
  \Big[
  \int_{\X}
    \varepsilon_x^-
    \big\|
      \Phi(x)
    \big\|
  \,N(\diffin{x})
  \Big]
  =
  \E
  \Big[
    \int_\X
      \big\|\Phi(x)\big\|
    \,\m(\diffin{x})
  \Big]
  .
\end{align}
This implies that
$
    \int_\X
      \|
        \varepsilon_x^-
        \Phi(x)
      \|
    N(\diffin{x})
$
is finite $\P$-almost surely if $\Phi\in L^1(\Omega\times E;H)$, and therefore
$\int_\X\varepsilon_x^-\Phi(x)\,N(\diffin{x})$
exists $\P$-almost surely as a Bochner integral in $H$. By Lemma~\ref{lem:epsilon-welldefined} the value of $\int_\X\varepsilon_x^-\Phi(x)\,N(\diffin{x})$ is $\bP$-almost surely independent of the choice of the representative of $\Phi$. The $\mathcal F$-$\mathcal B(H)$-measurability of the mapping $\omega\mapsto  \int_\X\varepsilon_x^-\Phi(\omega,x)
N(\omega,\diffin{x})$ 
follows from the completeness of $\cF$ and, e.g., the measureability of (any fixed version of) $(\omega,x)\mapsto \varepsilon^-_x\Phi(\omega,x)$ and the representation \eqref{eq:representationN} of $N$.
By the Bochner inequality it holds that
\begin{align*}
  \Big\|
    \int_{\X}
      \varepsilon_x^-
      \Phi(x)  
    N(\diffin{x})
  \Big\|
  \leq
  \int_{\X}
    \big\|
      \varepsilon_x^-
      \Phi(x)
    \big\|
  N(\diffin{x})
\end{align*}
$\P$-almost surely. This and \eqref{eq:Mecke_stoch_int} imply the estimate \eqref{eq:bound_stoch_int}. 
Concerning the last assertion we notice that for simple integrands $\Phi\in L^1(\Omega\times E;H)$ of the form 
\begin{align*}
  \Phi
  =
  \sum_{k=1}^K 
  f_k \otimes h_k,
  \quad
  (f_k)_{k=1}^K\subset L^1(\Omega\times E;\R^+)
  ,\ 
  (h_k)_{k=1}^K\subset H
  ,\ 
  K\in\bN,
\end{align*}
it holds by Mecke's fomula~\eqref{eq:Mecke2} that
\begin{equation}
\begin{split}
\label{eq:Mecke_simple}
  \E
  \Big[
  \int_{\X}
    \varepsilon_x^-
    \Phi(x)  
  N(\diffin{x})
  \Big]
&=
  \sum_{k=1}^K
  \E
  \Big[
    \int_\X
      \varepsilon_x^- f_k(x)
    N(\diffin{x})
  \Big]
  \,h_k
  =
  \sum_{k=1}^K
  \E
  \Big[
    \int_\X
      f_k(x)
    \,\m(\diffin{x})
  \Big]
  \,h_k\\
&=
  \E
  \Big[
    \int_\X
      \sum_{k=1}^K
        f_i(x)\,
        h_k
    \,\m(\diffin{x})
  \Big]
  =
  \E
  \Big[
    \int_\X
      \Phi(x)
    \,\m(\diffin{x})
  \Big]
  \in H.
\end{split}
\end{equation}
For general integrands $\Phi\in L^1(\Omega\times E;H)$ the equality~\eqref{eq:Mecke_H-valued} follows from \eqref{eq:Mecke_simple} and approximation of $\Phi$ by simple integrands in $L^1(\Omega\times E;H)$, using the continuity estimates~\eqref{eq:bound_stoch_int} and $\big\|\int_E\Phi(x)\,\m(\dl x)\big\|_{L^1(\Omega;H)}\leq\|\Phi\|_{L^1(\Omega\times E;H)}$ and the fact that the expectation operator $\bE[\ldots]$ is a continuous from $L^1(\Omega;H)$ to $H$.
\end{proof}

Due to Proposition~\ref{lem:stoch_int} the following definition is meaningful.

\begin{definition}[Pathwise Kabanov-Skorohod integral]
\label{def:pathwise_Kabanov}
Let the operator 
\[
  \delta\colon L^1(\Omega\times E;H)\to L^1(\Omega;H),\;\Phi\mapsto\delta(\Phi)
\]
be defined by
\begin{align*}
  \delta(\Phi)
  :=
  \int_\X
    \varepsilon_x^-\Phi(x)
  N(\diffin{x})
  -
  \int_\X
    \Phi(x)
  \,\m(\diffin{x}).
\end{align*}
\end{definition}

The $H$-valued Mecke formula immediately implies that 
$
  \E\,\delta(\Phi)=0
$ for all
$
  \Phi\in L^1(\Omega\times\X;H)
$. Our next proposition shows that the adjoint of $\delta$ coincides with the restriction $D|_{L^\infty(\Omega;H)}\colon L^\infty(\Omega;H)\to L^\infty(\Omega\times E;H)$ of $D$  to $L^\infty(\Omega;H)$.
\begin{prop}[$L^1$-$L^\infty$-duality]\label{lem:duality}
For all $F\in L^\infty(\Omega;H)$ and $\Phi\in L^1(\Omega\times\X;H)$ it holds that
\begin{align*}
  \tensor[_{L^\infty(\Omega,H)}]
  {\big\langle
    F, \delta(\Phi)
  \big\rangle}{_{L^1(\Omega;H)}}
  =
 \tensor[_{L^\infty(\Omega\times\X;H)}]
 	{\big\langle DF,\Phi\big\rangle}
 	{_{L^1(\Omega\times\X;H)}}.
\end{align*}
\end{prop}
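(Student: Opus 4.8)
The plan is to expand the definition of $\delta(\Phi)$, reduce the claim to a single application of the scalar Mecke formula \eqref{eq:Mecke2}, and then reassemble the terms into $\langle D_xF,\Phi(x)\rangle$. Writing $\delta(\Phi)=\int_\X\varepsilon_x^-\Phi(x)\,N(\diffin x)-\int_\X\Phi(x)\,\m(\diffin x)$ and using that $\langle F,\cdot\rangle$ is a continuous linear functional which commutes with the $H$-valued Bochner integrals, I would first record
\begin{align*}
\tensor[_{L^\infty(\Omega;H)}]{\langle F,\delta(\Phi)\rangle}{_{L^1(\Omega;H)}}
=\E\int_\X\langle F,\varepsilon_x^-\Phi(x)\rangle\,N(\diffin x)-\E\int_\X\langle F,\Phi(x)\rangle\,\m(\diffin x).
\end{align*}
The whole content of the proposition then lies in transforming the first (pathwise $N$-integral) term by Mecke's formula; the second term already has the form of (part of) the right-hand side.

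For the key step I would fix representatives $f\colon\NN\to H$ of $F$ and $\phi\colon\NN\times\X\to H$ of $\Phi$, and introduce the real-valued kernel $g(\eta,x):=\langle f(\eta+\delta_x),\phi(\eta,x)\rangle$. The crucial observation is the representative identity $g(N\setminus\delta_x,x)=\langle F,\varepsilon_x^-\Phi(x)\rangle$ $\bP\otimes N$-almost everywhere: at $\bP\otimes N$-almost every $(\omega,x)$ the point $x$ is an atom of $N$, so $(N\setminus\delta_x)+\delta_x=N$ and hence $f((N\setminus\delta_x)+\delta_x)=f(N)=F$, while $\phi(N\setminus\delta_x,x)=\varepsilon_x^-\Phi(x)$ by \eqref{eq:epsilon-}. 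Applying \eqref{eq:Mecke2} then gives $\E\int_\X g(N\setminus\delta_x,x)\,N(\diffin x)=\E\int_\X g(N,x)\,\m(\diffin x)$, and since $g(N,x)=\langle\varepsilon_x^+F,\Phi(x)\rangle$ $\bP\otimes\m$-almost everywhere, the first term above equals $\E\int_\X\langle\varepsilon_x^+F,\Phi(x)\rangle\,\m(\diffin x)$. Subtracting the second term and using $D_xF=\varepsilon_x^+F-F$ yields exactly $\E\int_\X\langle D_xF,\Phi(x)\rangle\,\m(\diffin x)=\tensor[_{L^\infty(\Omega\times\X;H)}]{\langle DF,\Phi\rangle}{_{L^1(\Omega\times\X;H)}}$.

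Two integrability points must be addressed so that these manipulations are licit. First, Mecke's formula \eqref{eq:Mecke2} is stated for $[0,\infty]$-valued integrands, whereas $g$ is signed; I would therefore apply it separately to the positive and negative parts of $g$, which is justified once finiteness of the integrals is known. Finiteness follows from the bound $|g(N\setminus\delta_x,x)|\leq\|F\|_{L^\infty(\Omega;H)}\,\|\varepsilon_x^-\Phi(x)\|$ together with $\E\int_\X\|\varepsilon_x^-\Phi(x)\|\,N(\diffin x)=\E\int_\X\|\Phi(x)\|\,\m(\diffin x)=\|\Phi\|_{L^1(\Omega\times\X;H)}<\infty$ (the scalar, nonnegative case of \eqref{eq:Mecke2}), and analogously for $g(N,x)$ after noting that $\|\varepsilon_x^+F\|\leq\|F\|_{L^\infty(\Omega;H)}$ holds $\bP\otimes\m$-almost everywhere, which is itself a one-line consequence of \eqref{eq:Mecke1} applied to the indicator $\one_{\{\|\cdot\|>\|F\|_{L^\infty(\Omega;H)}\}}$.

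The step I expect to require the most care is the representative identity $g(N\setminus\delta_x,x)=\langle F,\varepsilon_x^-\Phi(x)\rangle$: it hinges on $(N\setminus\delta_x)+\delta_x=N$ holding precisely at the atoms of $N$, i.e.\ $\bP\otimes N$-almost everywhere but \emph{not} $\bP\otimes\m$-almost everywhere, so one must keep careful track of which null sets (those of $\bP\otimes N$ versus those of $\bP\otimes\m$) are relevant at each stage. The measurability of $(\omega,x)\mapsto g(N\setminus\delta_x,x)$ and the well-definedness of both sides independently of the chosen representatives are guaranteed by Lemmas~\ref{lem:epsilon+welldefined} and \ref{lem:epsilon-welldefined}, so no further closure or approximation argument is needed.
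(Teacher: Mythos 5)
Your proof is correct and takes essentially the same route as the paper's: after moving the inner product inside the pathwise $N$-integral, the paper's single displayed chain of equalities $\bE\int_E\langle F,\varepsilon^-_x\Phi(x)\rangle\,N(\dl x)=\bE\int_E\varepsilon^-_x\langle\varepsilon^+_xF,\Phi(x)\rangle\,N(\dl x)=\bE\int_E\langle\varepsilon^+_xF,\Phi(x)\rangle\,\m(\dl x)$ is exactly your representative identity $g(N\setminus\delta_x,x)=\langle F,\varepsilon_x^-\Phi(x)\rangle$ at the atoms of $N$, followed by Mecke's formula \eqref{eq:Mecke2}. Your additional care with signed integrands (splitting into positive and negative parts) and the bound $\|\varepsilon^+_xF\|\leq\|F\|_{L^\infty(\Omega;H)}$ $\bP\otimes\m$-a.e.\ merely makes explicit what the paper leaves implicit.
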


\begin{proof}
Using the definitions of $\delta$ and $D$ the statement reads
\begin{align*}
&\E\,
  \Big\langle
    F
    ,
    \int_\X
      \varepsilon_x^-
      \Phi(x)
    \,N(\diffin{x})
    -    
    \int_\X
      \Phi(x)
    \,\m(\diffin{x})
  \Big\rangle
  =
  \E
  \int_\X
    \big\langle 
      \varepsilon_x^+F-F,\Phi(x)
    \big\rangle
  \,\m(\diffin{x}).
\end{align*}
It reduces to
$$
  \E
  \Big\langle
    F
    ,
    \int_\X
      \varepsilon_x^-\Phi(x)
    \,N(\diffin{x})
  \Big\rangle
  =
  \E
    \int_\X
      \big\langle
        \varepsilon_x^+F
        ,
        \Phi(x)
      \big\rangle
    \,\m(\diffin{x}).
$$
Due to standard properties of Bochner integrals, the inner product on the left hand side can be moved inside the pathwise integral w.r.t.~$N$. Further, 
\begin{align*}
\bE\int_E\big\langle F,\varepsilon^-_x\Phi(x)\big\rangle\,N(\dl x)
=
\bE\int_E\varepsilon^-_x\big\langle \varepsilon^+_xF,\Phi(x)\big\rangle\,N(\dl x)
=
\bE\int_E\big\langle \varepsilon^+_xF,\Phi(x)\big\rangle\,\m(\dl x),
\end{align*} 
by Mecke's formula~\eqref{eq:N+delta},
which finishes the proof.
\end{proof}

The duality relation in Proposition~\ref{lem:duality} motivates the following definition of operators $\delta^{(p)}$, $p>1$, as realizations of $\delta$ in $L^p$. 

\begin{definition}[Abstract Kabanov-Skorohod integral]
\label{def:Kabanov-Skorohod}
Let $p,p'>1$ be such that $\frac1p+\frac1{p'}=1$. The operator
 \[\delta^{(p)}\colon\dom\big(\delta^{(p)}\big)\subset L^p(\Omega\times\X;H)\to L^p(\Omega;H) \]
is defined as the adjoint of 
\begin{align*}
D|_{\bD^{1,p'}(H)}\colon\bD^{1,p'}(H)\subset L^{p'}(\Omega;H) \to L^{p'}(\Omega\times\X;H),
\end{align*}
i.e.\ of the restriction of $D$ to $\bD^{1,p'}(H)$, considered as a densely defined operator from $L^{p'}(\Omega;H)$ to $L^{p'}(\Omega\times\X;H)$. Here\felix{compare notes 14.11.'16 --2--} we identify $L^p(\Omega\times E;H)$ and $L^p(\Omega;H)$ with the dual spaces of $L^{p'}(\Omega\times E;H)$ and $L^{p'}(\Omega;H)$, respectively.
\end{definition}

We now verify the compatibility of Definition~\ref{def:pathwise_Kabanov} and \ref{def:Kabanov-Skorohod} and furthermore show that an integrand $\Phi\in L^1(\Omega\times\X;H)\cap L^p(\Omega\times\X;H)$ belongs to $\dom(\delta^{(p)})$ iff the pathwise integral $\delta(\Phi)$ belongs to $L^p(\Omega;H)$.

\begin{prop}\label{prop:coincides}
The definitions of the operators $\delta$, $\delta^{(p)}$, $p>1$, are compatible in the following sense:
\begin{enumerate}[(i)]
\item
For $\Phi\in L^1(\Omega\times\X;H)\cap L^p(\Omega\times\X;H)$ it holds that $\Phi\in\dom(\delta^{(p)})$ if, and only if, $\delta(\Phi)\in L^p(\Omega;H)$. In this case we have $\delta(\Phi)=\delta^{(p)}(\Phi)$.
\item
If $\Phi\in\dom(\delta^{(p)})\cap\dom(\delta^{(q)})$ for some $p,q>1$, then $\delta^{(p)}(\Phi)=\delta^{(q)}(\Phi)$.
\end{enumerate}
\end{prop}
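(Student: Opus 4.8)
The plan is to reduce everything to the $L^1$-$L^\infty$ duality of Proposition~\ref{lem:duality} together with the approximation results of Lemma~\ref{lem:coreD12}, the only real work being to pass from the $L^\infty$-$L^1$ pairing to the $L^{p'}$-$L^p$ pairing. Throughout I use the characterization of the adjoint: for conjugate exponents $p,p'$, a field $\Phi\in L^p(\Omega\times\X;H)$ lies in $\dom(\delta^{(p)})$ if and only if the linear functional $F\mapsto{}_{L^{p'}}\langle DF,\Phi\rangle_{L^p}$ is continuous on $(\bD^{1,p'}(H),\|\cdot\|_{L^{p'}(\Omega;H)})$, in which case $\delta^{(p)}(\Phi)$ is its representer in $L^p(\Omega;H)$, i.e.\ ${}_{L^{p'}}\langle F,\delta^{(p)}(\Phi)\rangle_{L^p}={}_{L^{p'}}\langle DF,\Phi\rangle_{L^p}$ for all $F\in\bD^{1,p'}(H)$.

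For the ``if'' part of~(i), assume $\Phi\in L^1\cap L^p$ with $\delta(\Phi)\in L^p(\Omega;H)$. Fix $F\in\bD^{1,p'}(H)$ and choose, by Lemma~\ref{lem:coreD12}(i), approximants $F_n\in\bD^{1,p'}(H)\cap L^\infty(\Omega;H)$ with $F_n\to F$ in $L^{p'}(\Omega;H)$ and $DF_n\to DF$ in $L^{p'}(\Omega\times\X;H)$. Applying Proposition~\ref{lem:duality} to each $F_n$ (allowed since $F_n\in L^\infty$) gives $\bE\langle F_n,\delta(\Phi)\rangle=\bE\int_\X\langle D_xF_n,\Phi(x)\rangle\,\m(\dl x)$. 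Because $\delta(\Phi)\in L^p$ and $\Phi\in L^p$, both sides are $L^{p'}$-$L^p$ pairings, so letting $n\to\infty$ yields ${}_{L^{p'}}\langle F,\delta(\Phi)\rangle_{L^p}={}_{L^{p'}}\langle DF,\Phi\rangle_{L^p}$ for all $F\in\bD^{1,p'}(H)$. By the characterization above this means $\Phi\in\dom(\delta^{(p)})$ and $\delta^{(p)}(\Phi)=\delta(\Phi)$.

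The ``only if'' part of~(i) is the main obstacle, precisely because $\delta(\Phi)\in L^p$ is the conclusion, so I cannot yet invoke $L^{p'}$-$L^p$ density. Assume $\Phi\in L^1\cap L^p$ and $\Phi\in\dom(\delta^{(p)})$, and set $G:=\delta(\Phi)-\delta^{(p)}(\Phi)$, which lies in $L^1(\Omega;H)$ since $\delta(\Phi)\in L^1$ and $\delta^{(p)}(\Phi)\in L^p\subset L^1$. For $F\in\bD^{1,p'}(H)\cap L^\infty(\Omega;H)$ one has $DF\in L^\infty\cap L^{p'}$, so $\bE\int_\X\langle D_xF,\Phi(x)\rangle\,\m(\dl x)$ is absolutely convergent and represents both the $L^\infty$-$L^1$ pairing (Proposition~\ref{lem:duality}) and the $L^{p'}$-$L^p$ pairing (adjoint relation); subtracting gives $\bE\langle F,G\rangle=0$. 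I then test with the concrete functions $F=\one_A\,h$, $h\in H$, where $A=\{(N(B_1),\dots,N(B_n))\in C\}$, $B_i\in\cE_0$, $C\subset\bN_0^n$; these belong to $\bD^{1,p'}(H)\cap L^\infty$ by Lemma~\ref{lem:coreD12}(ii), so $\bE[\one_A\langle h,G\rangle]=0$. Such sets $A$ form a $\pi$-system generating $\cF=\sigma(N)$, whence a Dynkin-class argument (for fixed $h$) shows $\langle h,G\rangle=0$ $\bP$-a.s., and running $h$ through a countable dense subset of $H$ yields $G=0$. Thus $\delta(\Phi)=\delta^{(p)}(\Phi)\in L^p$.

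For~(ii), assume $\Phi\in\dom(\delta^{(p)})\cap\dom(\delta^{(q)})$ and, without loss of generality, $p\le q$; on the probability space $\Omega$ we then have $\delta^{(q)}(\Phi)\in L^q\subset L^p$, so $G:=\delta^{(p)}(\Phi)-\delta^{(q)}(\Phi)\in L^p(\Omega;H)$. I cannot compare $L^p(\Omega\times\X;H)$ and $L^q(\Omega\times\X;H)$ directly because $\Omega\times\X$ is only $\sigma$-finite, so instead I evaluate both adjoint relations on the common core $F=\varphi(N(B_1),\dots,N(B_n))$ of Lemma~\ref{lem:coreD12}(ii): for these $F$ the field $DF$ is bounded and supported on $\Omega\times(\bigcup_iB_i)$ with $\m(\bigcup_iB_i)<\infty$, so $\bE\int_\X\langle D_xF,\Phi(x)\rangle\,\m(\dl x)$ is absolutely convergent (using $\Phi\in L^p$ and Hölder on the finite-measure support) and equals both ${}_{L^{p'}}\langle DF,\Phi\rangle_{L^p}$ and ${}_{L^{q'}}\langle DF,\Phi\rangle_{L^q}$. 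Hence $\bE\langle F,G\rangle=0$ for all $F$ in this class, which is dense in $L^{p'}(\Omega;H)$; since $G\in L^p=(L^{p'})^{\ast}$, this forces $G=0$ and therefore $\delta^{(p)}(\Phi)=\delta^{(q)}(\Phi)$.
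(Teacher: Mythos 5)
Your proposal is correct and takes essentially the same route as the paper: both directions rest on the $L^1$-$L^\infty$ duality of Proposition~\ref{lem:duality} combined with the approximation classes of Lemma~\ref{lem:coreD12}, exactly as in the paper's proof of (i) and (ii). The only point where you go beyond the printed text is the converse direction of (i), where the paper merely remarks that it ``follows similarly as assertion (ii) below if one uses a suitable monotone class argument''; your $\pi$-system/Dynkin-class argument with test functions $\one_{\{(N(B_1),\dots,N(B_n))\in C\}}\,h$ is precisely the intended completion of that sketch.
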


\begin{proof}
Let $p':=\frac{p}{p-1}$, $q':=\frac{q}{q-1}$ be the conjugate exponents to $p,q>1$.
\felix{see notes 29.12.'16}
We first prove the assertion (i):
Let $\Phi\in L^1(\Omega\times\X;H)\cap L^p(\Omega\times\X;H)$ be such that $\delta(\Phi)\in L^p(\Omega;H)$.
By the duality formula in Lemma~\ref{lem:duality} we have for all $F\in\bD^{1,p'}(H)\cap L^\infty(\Omega;H)$ that
\begin{align*}
&\tensor[_{L^{p'}(\Omega;H)}]{\big\langle F,\delta(\Phi)\big\rangle}{_{L^p(\Omega;H)}}
=
\tensor[_{L^{\infty}(\Omega;H)}]{\big\langle F,\delta(\Phi)\big\rangle}{_{L^1(\Omega;H)}}\\
&\quad=
\tensor[_{L^{\infty}(\Omega\times E;H)}]{\big\langle DF,\Phi\big\rangle}{_{L^1(\Omega\times E;H)}}
=
\tensor[_{L^{p'}(\Omega\times E;H)}]{\big\langle DF,\Phi\big\rangle}{_{L^p(\Omega\times E;H)}}.
\end{align*}
Since $\bD^{1,p'}(H)\cap L^\infty(\Omega;H)$ is dense in $\bD^{1,p'}(H)$ according to Lemma~\ref{lem:coreD12}(i), 
we obtain 
$\tensor[_{L^{p'}(\Omega;H)}]{\langle F,\delta(\Phi)\rangle}{_{L^p(\Omega;H)}}=\tensor[_{L^{p'}(\Omega\times E;H)}]{\langle DF,\Phi\rangle}{_{L^p(\Omega\times E;H)}}$
for all $F\in\bD^{1,p'}(H)$.
We conclude that $\Phi\in\dom(\delta^{(p)})$ and $\delta(\Phi)=\delta^{(p)}(\Phi)$.
The converse implication 
\felix{see notes 4.2.'17--B--}
follows similarly as the assertion (ii) below if one uses a suitable monotone class argument. 
Next we verify the assertion (ii):
For $F\in\bD^{1,p'}(H)\cap\bD^{1,q'}(H)$, the definition of the operators $\delta^{(p)}$, $\delta^{(q)}$ and the assumption that $\Phi\in \dom(\delta^{(p)})\cap\dom(\delta^{(q)})$ imply
\begin{align*}
\bE\big\langle F,\delta^{(p)}(\Phi)\big\rangle
=
\bE\int_E\big\langle D_xF,\Phi(x)\big\rangle\,\m(\dl x)
=
\bE\big\langle F,\delta^{(q)}(\Phi)\big\rangle.
\end{align*}
As the space $\bD^{1,p'}(H)\cap\bD^{1,q'}(H)$ is dense in $L^{p'}(\Omega;H)$ and in $L^{q'}(\Omega;H)$ according to Lemma~\ref{lem:coreD12}(ii), the assertion follows.
\end{proof}

Proposition~\ref{prop:coincides} allows us to simplify notation: In the sequel we write 
\begin{equation}\label{eq:notation_Kabanov-Skorohod}
\delta(\Phi):=\delta^{(p)}(\Phi),\quad \Phi\in\dom(\delta^{(p)}),\;p>1.
\end{equation}
The next lemma is a complement to the local $L^q$-estimate in Lemma~\ref{lem:local_estimate_D}. In combination with Propostion~\ref{prop:coincides}(i) it particularly implies that every random field $\Phi\in L^p(\Omega\times E;H)$, $p>1$, which vanishes outside a set $\Omega\times B$ , $B\in\cE_0$, belongs to $\dom(\delta^{(q)})$ for any $q\in(1,p)$. We refer to Remark~\ref{rem:Kabanov-Skorohod_1} and Propostion~\ref{prop:L2D12_subset_domdelta}, \ref{prop:space_time1} below for further sufficient conditions for a random field $\Phi$ to belong to $\dom(\delta^{(p)})$.

\begin{lemma}[Local $L^q$-estimate]\label{lem:local_estimate_delta}
Let $p>1$ and $\Phi\in L^p(\Omega\times\X;H)$ be such that $\Phi=0$ $\P\otimes \m$-almost everywhere on $\Omega\times B^c$ for some $B\in\cE_0$. Then $\delta(\Phi)\in L^q(\Omega;H)$ for all $q\in[1,p)$, and
\begin{align*}
\|\delta(\Phi)\|_{L^q(\Omega;H)}
\leq
C_B\,\|\Phi\|_{L^p(\Omega\times\X;H)}
\end{align*}
with a finite constant 
$C_B=C_{B,\mu,p,q}$ 
that does not depend on $\Phi$.
\end{lemma}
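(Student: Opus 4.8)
The plan is to mirror the proof of the companion estimate in Lemma~\ref{lem:local_estimate_D}, exploiting the pathwise definition of $\delta$ together with Mecke's formula~\eqref{eq:Mecke2}. First I would record that, since $\Phi$ vanishes outside $\Omega\times B$ with $\m(B)<\infty$ and $\P$ is a probability measure, Hölder's inequality gives $\|\Phi\|_{L^1(\Omega\times\X;H)}\leq\m(B)^{1-1/p}\|\Phi\|_{L^p(\Omega\times\X;H)}<\infty$; hence $\Phi\in L^1(\Omega\times\X;H)$ and $\delta(\Phi)$ is the well-defined pathwise integral of Definition~\ref{def:pathwise_Kabanov}, namely $\delta(\Phi)=\int_B\varepsilon_x^-\Phi(x)\,N(\diffin x)-\int_B\Phi(x)\,\m(\diffin x)$. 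By the triangle inequality in $L^q(\Omega;H)$ it then suffices to bound the two summands separately.

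The deterministic term is routine: by Minkowski's integral inequality followed by Hölder over the finite-measure set $B$, together with the embedding $L^p\hookrightarrow L^q$ on $\Omega\times B$, one obtains $\big\|\int_B\Phi(x)\,\m(\diffin x)\big\|_{L^q(\Omega;H)}\leq \m(B)^{1-1/p}\|\Phi\|_{L^p(\Omega\times\X;H)}$.

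The stochastic term is the main point. Pathwise, the Bochner inequality gives $\big\|\int_B\varepsilon_x^-\Phi(x)\,N(\diffin x)\big\|\leq \int_B g(x)\,N(\diffin x)$ with $g(x):=\|\varepsilon_x^-\Phi(x)\|$, so it remains to estimate the $L^q(\Omega;\R)$-norm of this nonnegative scalar integral. The key step is a double application of Hölder's inequality. First, pathwise w.r.t.\ the finite measure $N|_B$ (of total mass $N(B)<\infty$ a.s., since $\m(B)<\infty$), with exponents $p$ and $p/(p-1)$, gives $\int_B g(x)\,N(\diffin x)\leq N(B)^{(p-1)/p}\big(\int_B g(x)^p\,N(\diffin x)\big)^{1/p}$. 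Raising to the power $q$, taking expectations, and applying Hölder in $\omega$ with exponents $p/q$ and $p/(p-q)$ separates off a power of $N(B)$: the factor $\E[N(B)^{q(p-1)/(p-q)}]$ is finite because $N(B)$ is Poisson distributed with finite parameter $\m(B)$ and hence has moments of every order (Remark~\ref{rem:PRM}). The remaining factor is $\E\int_B g(x)^p\,N(\diffin x)$, and here I would use that $g(x)^p=\|\varepsilon_x^-\Phi(x)\|^p=\varepsilon_x^-(\|\Phi(x)\|^p)$ together with Mecke's formula~\eqref{eq:Mecke2} to identify it with $\E\int_B\|\Phi(x)\|^p\,\m(\diffin x)=\|\Phi\|_{L^p(\Omega\times\X;H)}^p$.

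Collecting the two bounds yields the claim with $C_B=\m(B)^{1-1/p}+\big(\E[N(B)^{q(p-1)/(p-q)}]\big)^{(p-q)/(pq)}$, which depends only on $B,\m,p,q$ and not on $\Phi$. The only delicate point is the moment bound for the random integral against $N$ when $q>1$; the trick that makes it work is precisely the two-stage Hölder argument --- pathwise w.r.t.\ $N|_B$ to raise the integrand to the power $p$ (so that Mecke applies), and then in $\omega$ to peel off the random mass $N(B)$, whose finite moments are guaranteed by the finiteness of $\m(B)$. For $q=1$ one may alternatively invoke the $L^1$-bound~\eqref{eq:bound_stoch_int} directly, but the same computation covers this case as well.
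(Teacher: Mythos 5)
Your proof is correct and follows essentially the same route as the paper's: split the pathwise integral $\delta(\Phi)$ into its $N$-integral and $\m$-integral parts, bound the deterministic part by H\"older over $B$, and control the random part by peeling off a moment of $N(B)$ (finite since $N(B)$ is Poisson with parameter $\m(B)<\infty$) and converting the remaining $N$-integral into a $\m$-integral via Mecke's formula. The only difference is cosmetic ordering: the paper applies Jensen pathwise at exponent $q$, then Mecke, then H\"older on $\Omega\times B$, whereas you apply H\"older pathwise at exponent $p$, then H\"older in $\omega$, then Mecke at exponent $p$ --- both produce a constant of the same type, depending only on $B,\m,p,q$.
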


\begin{proof}\felix{see notes 3.10.'15}
We fix $q\in[1,p)$ and note that
\begin{align*}
\|\delta(\Phi)\|_{L^q(\Omega;H)}^q
\leq 
2^{q-1}\E\Big(\Big\|\int_{\X} \varepsilon_x^-\Phi(x)N(\diffin{x}) \Big\|^q
+\Big\|\int_{\X} \Phi(x)\m(\diffin{x}) \Big\|^q\Big).
\end{align*}
Using Jensen's inequality and Mecke's formula \eqref{eq:Mecke1}, we obtain
\begin{align*}
  \E\Big\|\int_{\X} \varepsilon_x^-\Phi(x)N(\diffin{x}) \Big\|^q
&\leq
  \E\Big(
  N(B)^{q-1}\int_B
  \big\|
    \varepsilon_x^- \Phi(x)
  \big\|^q N(\diffin{x})\Big)\\
&=
  \E
  \int_B
  (N(B)+1)^{q-1}
  \|
    \Phi(x)  
  \|^q 
  \,\m(\diffin{x}).
\end{align*}
Next we apply Hölder's inequality with exponent $p/q>1$ and dual exponent $(p/q)'=p/(p-q)$ to get
\begin{align*}
  \E\Big\|\int_{\X} \varepsilon_x^-\Phi(x)N(\diffin{x}) \Big\|^q
&\leq
  \Big(\E\big[(N(B)+1)^{\frac{(q-1)p}{p-q}}\big]\m(B)\Big)^{\frac{p-q}{p}}  \|\Phi\|_{L^p(\Omega\times\X;H)}^q.
\end{align*}
We complete the proof by observing that
\begin{align*}
  \E\Big\|\int_{\X} \Phi(x)\m(\diffin{x}) \Big\|^q
  \leq
\m(B)^{\frac{(p-1)q}{p}}  \|\Phi\|_{L^p(\Omega\times\X;H)}^q,
\end{align*}
which holds by a similar calculation.
\end{proof}

\begin{remark}[Generalized predictability]
\label{rem:Kabanov-Skorohod_1}
\felix{see notes 4.1.'17}
Arguing similarly as in the proof of Lemma~3.9 and using the closedness of $\delta=\delta^{(p)}$ one obtains the following improvement of Lemma~\ref{lem:local_estimate_delta} for specific integrands $\Phi$ which enjoy a measurability structure that is related to the notion of predictabiliy known from space-time settings: If $B\in\cE_0$, $p>1$ and $F\in L^p(\Omega;H)$ is $\cF_{B^c}$-measurable, then $\Phi:=\one_{\Omega\times B} F$ belongs to $\dom(\delta^{(p)})$ and
\begin{align}\label{eq:Kabanov_predictable}
\delta(\Phi)=\tilde N(B)F.
\end{align}
Equality \eqref{eq:Kabanov_predictable} remains valid for $\cF_{B^c}$-measurable $F\in L^1(\Omega;H)$, i.e.\ for $p=1$. It can even be generalized to all $\Phi\in L^1(\Omega\times E;H)$ which have a version that is measurable w.r.t.\ the $\sigma$-algebra $\sigma(B\times A:B\in\cE_0, A\in \cF_{B^c})$ (the measures $\bP\otimes\m$ and $\bP\otimes N$ coincide on this $\sigma$-algebra, compare \cite[Théorème 1]{Picard1996a}).  However for $p>1$ such a generalization requires a suitable order structure on $E$, compare~\cite{LastPenrose2011b}. We therefore refrain from further details at this point and refer to Section~\ref{sec:space_time}, in particular Lemma~\ref{lem:space_time1} and Proposition~\ref{prop:space_time1}, for corresponding considerations in a space-time setting.
\end{remark}
%

Lemma~\ref{lem:local_estimate_delta} also allows us to establish the following `local' duality formula, which is useful as it does not rely on a integrability assumption for $DF$. The formula will particularly be crucial
 in the proofs of Proposition~\ref{prop:L2D12_subset_domdelta} ($L^2$-isometry) and Proposition~\ref{prop:commutator_relation_L^2} ($L^2$-commutation relation) below.
\begin{lemma}[Local duality formula]\label{lemma:duality2}
Let $1\leq q<p$, let $\Phi\in L^{p}(\Omega\times\X;H)$ be such that $\Phi=0$ $\P\otimes\m$-almost everywhere on $\Omega\times B^c$ for some $B\in\cE_0$, and $F\in  L^{q'}(\Omega;H)$ with $q':=\frac q{q-1}$ (where we set $\frac10:=\infty$). Then $\langle F,\delta(\Phi)\rangle\in L^1(\Omega;\bR)$, $\langle DF,\Phi\rangle\in L^1(\Omega\times E;\bR)$, and 
\[
\E\langle F,\delta(\Phi)\rangle=\E\int_B\langle DF,\Phi\rangle\,\m(\diffin x).
\]
\end{lemma}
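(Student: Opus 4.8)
The plan is to reduce the assertion to the $L^1$--$L^\infty$ duality already proved in Proposition~\ref{lem:duality} by a truncation argument, and to control both the integrability statements and the passage to the limit by means of the two local $L^q$-estimates (Lemmas~\ref{lem:local_estimate_D} and \ref{lem:local_estimate_delta}). The key elementary observation is that the hypothesis $1\leq q<p$ forces the conjugate exponents to satisfy $q'>p'$, so that $p'\in[1,q')$ lies precisely in the range where the local estimate for $D$ can be applied to an integrand from $L^{q'}(\Omega;H)$.

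First I would settle the two integrability claims. Since $\Phi$ vanishes outside $\Omega\times B$ with $\m(B)<\infty$ and $\P$ is a probability measure, one has $\Phi\in L^1(\Omega\times\X;H)$, so $\delta(\Phi)$ is well-defined by Definition~\ref{def:pathwise_Kabanov}, and Lemma~\ref{lem:local_estimate_delta} gives $\delta(\Phi)\in L^q(\Omega;H)$; H\"older's inequality with exponents $(q',q)$ then yields $\langle F,\delta(\Phi)\rangle\in L^1(\Omega;\bR)$. For the other term, in the case $q>1$ the exponent $p'$ satisfies $p'\in[1,q')$, so Lemma~\ref{lem:local_estimate_D} applied to $F\in L^{q'}(\Omega;H)$ gives $\one_{\Omega\times B}DF\in L^{p'}(\Omega\times\X;H)$; since $\Phi$ vanishes off $\Omega\times B$, H\"older with exponents $(p',p)$ produces $\langle DF,\Phi\rangle\in L^1(\Omega\times\X;\bR)$. (The degenerate case $q=1$, where $q'=\infty$ and $F\in L^\infty(\Omega;H)$, will be seen to be directly contained in Proposition~\ref{lem:duality}.)

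For the identity itself, the case $q=1$ is immediate from Proposition~\ref{lem:duality}. For $q>1$ I would truncate, setting $F_n:=\one_{\{\|F\|\leq n\}}F\in L^\infty(\Omega;H)$. Each $F_n$ is bounded, hence $DF_n\in L^\infty(\Omega\times\X;H)$, so Proposition~\ref{lem:duality} applies and gives, using that $\Phi$ vanishes outside $\Omega\times B$,
$$\E\langle F_n,\delta(\Phi)\rangle=\E\int_B\langle DF_n,\Phi\rangle\,\m(\diffin x).$$
It then remains only to pass to the limit $n\to\infty$ on both sides.

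The left-hand convergence is routine: $F_n\to F$ in $L^{q'}(\Omega;H)$ by dominated convergence, and pairing against $\delta(\Phi)\in L^q(\Omega;H)$ via H\"older gives $\E\langle F_n,\delta(\Phi)\rangle\to\E\langle F,\delta(\Phi)\rangle$. On the right-hand side the local estimate does the essential work: by linearity of $D$ and Lemma~\ref{lem:local_estimate_D} applied to $F_n-F\in L^{q'}(\Omega;H)$ with exponent $p'\in[1,q')$, one obtains $\|\one_{\Omega\times B}(DF_n-DF)\|_{L^{p'}(\Omega\times\X;H)}\leq C_B\|F_n-F\|_{L^{q'}(\Omega;H)}\to0$, whence H\"older against $\Phi\in L^p(\Omega\times\X;H)$ forces convergence of the right-hand integrals to $\E\int_B\langle DF,\Phi\rangle\,\m(\diffin x)$. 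Letting $n\to\infty$ yields the claim. The only point that genuinely requires care—and thus the main (though mild) obstacle—is to check that $p'$ really lies in $[1,q')$, since this single inequality underpins both the integrability of $\langle DF,\Phi\rangle$ and the convergence of the right-hand side; once this is in place, the remainder is a standard truncation-and-H\"older argument.
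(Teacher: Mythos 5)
Your proof is correct, and while the integrability part coincides exactly with the paper's (both use Lemma~\ref{lem:local_estimate_delta} plus H\"older for $\langle F,\delta(\Phi)\rangle$, and Lemma~\ref{lem:local_estimate_D} together with the key inequality $p'<q'$ for $\langle DF,\Phi\rangle$), your derivation of the identity itself takes a genuinely different route. The paper re-runs the Mecke-formula computation \eqref{eq:Mecke2} from the proof of Proposition~\ref{lem:duality} directly for the pair $(F,\Phi)$, using the just-established local facts $\int_B\varepsilon_x^-\Phi(x)\,N(\diffin x)\in L^{q}(\Omega;H)$ and $\one_{\Omega\times B}\,\varepsilon^+_\cdot F\in L^{p'}(\Omega\times\X;H)$ to justify each manipulation; no approximation is involved. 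You instead treat Proposition~\ref{lem:duality} as a black box: truncate $F$ to $F_n\in L^\infty(\Omega;H)$, apply the proposition, and pass to the limit, with the local $L^q$-estimate for $D$ converting $L^{q'}$-convergence of $F_n$ into local $L^{p'}$-convergence of $DF_n$ --- which is precisely the continuity mechanism behind Corollary~\ref{cor:localizing_lemma_1}. Your route avoids re-examining the Mecke computation at the cost of a limiting argument; the paper's is shorter given that the computation was already spelled out, and it yields intermediate integrability statements that are reused later (e.g.\ in the proofs of Propositions~\ref{prop:L2D12_subset_domdelta} and \ref{prop:commutator_relation_L^2}). One point you should make explicit: the claim that $F_n$ bounded implies $DF_n\in L^\infty(\Omega\times\X;H)$ deserves a line of justification, e.g.\ via the chain rule (Proposition~\ref{lem:chain}) with $h(y):=\one_{\{\|y\|\leq n\}}\,y$, which gives $D_xF_n=h(F+D_xF)-h(F)$ and hence $\|D_xF_n\|\leq 2n$ almost everywhere (equivalently, choose a representative of $F_n$ bounded by $n$ everywhere and invoke Lemma~\ref{lem:epsilon+welldefined}); with that added, the argument is complete.
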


\begin{proof}
\felix{see notes 3.10.'15}
Note that $\Phi\in L^1(\Omega\times E;H)$, so that $\delta(\Phi)$ is defined in the sense of Definition~\ref{def:pathwise_Kabanov}. Lemma~\ref{lem:local_estimate_delta} implies that $\delta(\Phi)\in L^q(\Omega;H)$ and therefore $\langle F,\delta(\Phi)\rangle\in L^1(\Omega;\bR)$ by Hölder's inequality. Moreover, according to Lemma~\ref{lem:local_estimate_D} we know that $\one_{\Omega\times B}DF\in L^{p'}(\Omega\times E;H)$ with $p':=\frac p{p-1}<q'$. Thus $\langle DF,\Phi\rangle\in L^1(\Omega\times E;\bR)$ by Hölder's inequality.
The claimed identity follows by Mecke's formula \eqref{eq:Mecke2} similarly as in the proof of Lemma~\ref{lem:duality}, using both the fact that $\int_B\varepsilon^-_x\Phi(x)N(\diffin x)\in L^{q}(\Omega;H)$, which holds due to Lemma~\ref{lem:local_estimate_delta} and since $\int_B\Phi(x)\,\m(\diffin x)\in L^{q}(\Omega;H)$, and the fact that $\one_{\Omega\times B}\,\varepsilon^+_\cdot F\in L^{p'}(\Omega\times\X;H)$, which follows from Lemma~\ref{lem:local_estimate_D} and the assumption that $F\in L^{q'}(\Omega;H)\subset L^{p'}(\Omega;H)$.
\end{proof}

Complementing Remark~\ref{rem:D_for_random_fields} on the application of the difference operator $D$ to random fields, we end this subsection with a remark concerning the natural extension of the Kabanov-Skorohod integral $\delta$ to parameter-dependent random fields, compare also Remark~\ref{rem:epsilon_for_random_fields} and \ref{rem:D_for_random_fields}. 

\begin{remark}[Parameter-dependent Kabanov-Skorohod integral]
\label{rem:Kabanov-Skorohod_2}
\felix{see notes 15.1.'17}
Let $(S,\cS,m)$ \linebreak be any $\sigma$-finite measure space and $p\geq1$; in case $p=1$ we set $\dom(\delta^{(p)}):=L^1(\Omega\times E;H)$. If $\Phi\in L^0(\Omega\times S\times E;H)$ is such that for any version of $\Phi$, denoted again by $\Phi$, we have $\Phi(s,\cdot)\in \dom(\delta^{(p)})$ for $m$-almost all $s\in S$, then the parameter-dependent Kabanov-Skorohod integral $\delta(\Phi(s,\cdot))$ is defined for $m$-almost all $s\in S$ as an element in $L^p(\Omega;H)$. Recalling the conventions concerning $L^0$-spaces from Secion~\ref{sec:Notation_and_conventions}, we identify $\Phi$ with the corresponding element in $L^0(S;\dom(\delta^{(p)}))\subset L^0(S;L^p(\Omega\times E;H))\subset L^0(\Omega\times S\times E;H)$ and, accordingly, $\big(\delta(\Phi(s,\cdot))\big)_{s\in S}$ is considered as an element of $L^0(S;L^p(\Omega;H))\subset L^0(\Omega\times S;H)$. We thus obtain an operator
\begin{align*}
\delta\colon L^0(S;\dom(\delta^{(p)}))\subset L^0(S;L^p(\Omega\times E;H))\to L^0(S;L^p(\Omega;H)),
\end{align*}
for which, by slightly abusing notation, we will
use the same notation  $\delta$ as for the operators in Definition~\ref{def:pathwise_Kabanov} and in \eqref{eq:notation_Kabanov-Skorohod}.
\end{remark}

\subsection{Commutation relations}
\label{sec:commutation}
In addition to Setting~\ref{setting} we assume throughout this subsection that $(E,\cE)$ is a Borel space, i.e., that there exists a Borel-measurable bijection from $E$ to a Borel subset of the unit interval $[0,1]$ with measurable inverse.  
This additional assumption is used to obtain Lemma~\ref{lem:commutator} below.
It is well known that every Borel subset of a Polish space is a Borel space.

It is often of interest to determine the derivative $DF$ of a random variable $F=\delta(\Phi)$ which is itself given as the Skorohod-Kabanov integral of some random field $\Phi$. The commutation relations presented here describe the structure of the concatenation of $\delta$ and $D$. In this general setting we restrict ourselves to results in $L^1$ and in $L^2$. In Section~\ref{sec:space_time}, where we consider a space-time setting, we also obtain $L^p$-results, $p\in[1,2]$, by exploiting the continuity properties of the Itô integral w.r.t. the compensated Poisson random measure $\tilde N$.

The proof of the next lemma follows the lines of the proof of \cite[Proposition~6.2]{LastPenrose2016}, up to some obvious modifications, and is therefore
 omitted. The assumption that $(E,\cE)$ is a Borel space is essential at this point.
The lemma is used in the proof of Proposition~\ref{lem:commutator1} to verify the measurability of the mapping $f\colon\mathbf N\to H$ therein.

\begin{lemma}\label{lem:commutator}\felix{Notation?}
\felix{see notes 22.1.'17}
Let $(B_i)_{i\in\bN}\subset\cE$ be a partition of $E$ such that $\m(B_i)<\infty$ for all $i\in\bN$ and set $\mathbf N_{\m}:=\{\eta\in\mathbf N:\eta(B_i)<\infty\text{ for all }i\in\bN\}$. Then there exist measurable mappings $\pi_n\colon \mathbf N\to E$, $n\in\bN$, such that
\begin{align*}
\eta=\sum_{n=1}^{\eta(E)}\delta_{\pi_n(\eta)},\quad\eta\in\mathbf N_\m.
\end{align*}
\end{lemma}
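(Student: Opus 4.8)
The plan is to construct the maps $\pi_n$ by a measurable enumeration of the atoms of $\eta$, carried out one block $B_i$ at a time and then concatenated into a single sequence. First I would record the structural fact that for $\eta\in\NN_\m$ each restriction $\eta|_{B_i}$ is a finite $\N_0$-valued measure on the Borel space $(E,\cE)$, hence a finite sum of Dirac masses: after removing the (countably many) atoms, a remaining non-atomic part of positive mass could, by the intermediate-value property of non-atomic measures on a Borel space, be split off as a set of non-integer mass, contradicting integrality. Thus it suffices to enumerate, measurably in $\eta$, the finitely many atoms (counted with multiplicity) of each $\eta|_{B_i}$ and to glue these enumerations together.

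For the enumeration within a block I would use the Borel structure. Fix a Borel isomorphism $\phi\colon E\to\tilde E\subset[0,1]$ with measurable inverse; the pushforward $\eta\mapsto\phi_*\eta$ is measurable from $\NN$ to the space of point measures on $[0,1]$, since $(\phi_*\eta)(A)=\eta(\phi^{-1}(A))$ depends measurably on $\eta$. Exploiting the order of $[0,1]$, I define for fixed $i$ and $k\in\bN$ the $k$-th atom of $\eta|_{B_i}$ in increasing $\phi$-order: at dyadic level $m$ let $I_{m,1},\dots,I_{m,2^m}$ be the level-$m$ dyadic subintervals of $[0,1]$, set $c_{m,j}(\eta):=\eta\big(B_i\cap\phi^{-1}(I_{m,1}\cup\dots\cup I_{m,j})\big)$, and on the measurable set $\{\eta(B_i)\geq k\}$ put $j_m(\eta):=\min\{j:c_{m,j}(\eta)\geq k\}$. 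The left endpoints $\ell_m(\eta):=(j_m(\eta)-1)2^{-m}$ are measurable functions of $\eta$ (each $c_{m,j}$ is $\eta$ evaluated on a fixed set of $\cE$), they increase with $m$, and they converge to the $\phi$-image of the $k$-th atom. I would therefore set $\sigma_{i,k}(\eta):=\phi^{-1}(\lim_m\ell_m(\eta))$ on $\{\eta(B_i)\geq k\}$ and $\sigma_{i,k}(\eta):=x_0$ otherwise, for a fixed reference point $x_0\in E$. Since the limit lies in $\tilde E$ and $\phi^{-1}$ is measurable, each $\sigma_{i,k}\colon\NN\to E$ is measurable, and by construction $\{\sigma_{i,k}(\eta):1\leq k\leq\eta(B_i)\}$ is, with multiplicities, exactly the atom set of $\eta|_{B_i}$.

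It remains to concatenate the blockwise enumerations. Writing $M_0:=0$ and $M_i(\eta):=\sum_{j=1}^i\eta(B_j)$, which is finite for every finite $i$ on $\NN_\m$, I would define, for $n\leq\eta(E)$, $\pi_n(\eta):=\sigma_{i,k}(\eta)$ where $i,k$ are determined by $M_{i-1}(\eta)<n\leq M_i(\eta)$ and $k:=n-M_{i-1}(\eta)$, and $\pi_n(\eta):=x_0$ for $n>\eta(E)$. For fixed $n$ the sets $\{M_{i-1}(\eta)=n-k\}\cap\{\eta(B_i)\geq k\}$, over $i\geq1$ and $1\leq k\leq n$, form a countable measurable partition on which $\pi_n$ coincides with the measurable map $\sigma_{i,k}$, so each $\pi_n$ is measurable. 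Because every $\eta(B_i)$ is finite, each atom receives a finite global index, and
\[
\sum_{n=1}^{\eta(E)}\delta_{\pi_n(\eta)}=\sum_{i=1}^\infty\sum_{k=1}^{\eta(B_i)}\delta_{\sigma_{i,k}(\eta)}=\sum_{i=1}^\infty\eta|_{B_i}=\eta,\qquad\eta\in\NN_\m,
\]
as required.

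The step I expect to be the main obstacle is the measurable selection of the atom locations in the second paragraph: establishing simultaneously that the dyadic-search quantities $\ell_m$ are measurable, that they converge, and that the limit is the correct atom location inside $E$ rather than merely in $[0,1]$, all while correctly accounting for repeated atoms. This is precisely where the assumption that $(E,\cE)$ is a Borel space is indispensable, as it is what permits the transport to $[0,1]$ and the use of its order and topology for the search.
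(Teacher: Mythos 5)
Your proof is correct and follows essentially the same route as the paper: the paper in fact omits the proof of this lemma, stating that it follows the lines of the proof of Proposition~6.2 in Last and Penrose's monograph, and that argument is precisely what you have reconstructed --- transfer to a Borel subset of $[0,1]$ via the Borel isomorphism, enumeration of the atoms in increasing order through cumulative counts, and concatenation over the blocks $B_i$. (One cosmetic repair: for $\eta\notin\mathbf N_{\mu}$ the monotone limit $\lim_m\ell_m(\eta)$ need not lie in $\tilde E$, so $\sigma_{i,k}$ should be sent to $x_0$ on that measurable exceptional set as well, which affects neither the measurability of $\sigma_{i,k}$ on all of $\mathbf N$ nor the claimed identity on $\mathbf N_{\mu}$.)
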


The commutation relations in Propositions~\ref{lem:commutator1} and \ref{prop:commutator_relation_L^2} below involve applications of $D$ to random fields and applications of $\delta$ to parameter-dependent random fields, compare Remarks \ref{rem:D_for_random_fields} and \ref{rem:Kabanov-Skorohod_2}. Note that the derivative $D\Phi$ of a random function $\Phi\in L^0(\Omega\times E;H)$ is an element of $L^0(\Omega\times E\times E;H)$, and for fixed $x\in E$ we have $D_x\Phi\in L^0(\Omega\times E;H)$.

\begin{prop}[Commutation relation -- $L^1$-version]\label{lem:commutator1}
Let $\Phi\in L^1(\Omega\times\X;H)$ be such that $D_x\Phi\in L^1(\Omega\times\X;H)$ for $\m$-almost all $x\in E$. 
Then the equality  
\begin{align*}
  D_x\delta(\Phi)
  =
  \delta(D_x\Phi)+\Phi(x)
\end{align*}
holds $\bP\otimes\m$-almost everywhere.
\end{prop}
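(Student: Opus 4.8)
The plan is to establish the identity by computing both sides pathwise, using the algebraic structure of the operators $D$, $\varepsilon^\pm$, and the pathwise representation of $\delta$ from Definition~\ref{def:pathwise_Kabanov}. The key technical input is Lemma~\ref{lem:commutator}, which provides measurable selection functions $\pi_n$ so that, on the set $\mathbf N_\m$ (which has full $\bP$-measure since $N(B_i)<\infty$ almost surely for $B_i\in\cE_0$), the random measure $N$ can be written as a genuine sum $N=\sum_{n=1}^{N(E)}\delta_{\pi_n(N)}$ with measurable atoms. This allows the pathwise integral $\int_E\varepsilon^-_y\Phi(y)\,N(\diffin y)$ to be expressed as an honest sum over atoms, to which the difference operator $D_x$ can be applied term by term via the chain rule (Proposition~\ref{lem:chain}).

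The core computation is as follows. First I would write, using Definition~\ref{def:pathwise_Kabanov},
\begin{align*}
  \delta(\Phi)
  =
  \int_E\varepsilon^-_y\Phi(y)\,N(\diffin y)-\int_E\Phi(y)\,\m(\diffin y),
\end{align*}
and apply $D_x=\varepsilon^+_x-\id$ to each term. The deterministic integral $\int_E\Phi(y)\,\m(\diffin y)$ transforms simply, since $\varepsilon^+_x$ commutes with $\m$-integration: its $D_x$-image is $\int_E D_x\Phi(y)\,\m(\diffin y)$. For the pathwise $N$-integral, the crucial algebraic fact is that adding an atom at $x$ changes the integral in two ways, namely by shifting the integrand via $\varepsilon^+_x$ and by adding one extra summand evaluated at the new atom $x$. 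Concretely, I expect the key identity
\begin{align*}
  \varepsilon^+_x\int_E\varepsilon^-_y\Phi(y)\,N(\diffin y)
  =
  \int_E\varepsilon^-_y\big(\varepsilon^+_x\Phi(y)\big)\,N(\diffin y)+\varepsilon^-_x\big(\varepsilon^+_x\Phi(x)\big),
\end{align*}
where the last term simplifies because $\varepsilon^-_x\varepsilon^+_x=\id$ (removing a freshly added atom restores the original configuration), giving $\Phi(x)$. Here the first term on the right arises because applying $\varepsilon^+_x$ to $N$ and then reading off the atom at the original positions $y$ still requires evaluating $\varepsilon^-_y$ relative to the enlarged configuration; the interchange $\varepsilon^+_x\varepsilon^-_y=\varepsilon^-_y\varepsilon^+_x$ for $x\ne y$ is what makes this work. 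Combining, $D_x$ of the pathwise integral equals $\int_E\varepsilon^-_y(D_x\Phi)(y)\,N(\diffin y)+\Phi(x)$, and subtracting the $\m$-integral part reassembles $\delta(D_x\Phi)+\Phi(x)$.

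The main obstacle will be justifying the interchange identities at the level of representatives and establishing the measurability and integrability needed so that the pathwise manipulations descend to equalities of $L^0$ equivalence classes $\bP\otimes\m$-almost everywhere. Specifically, I would fix a representative $f\colon\mathbf N\times E\to H$ of $\Phi$ and use the selection maps $\pi_n$ to write everything as explicit sums over atoms; the delicate point is that $\varepsilon^-_y$ is only defined $\bP\otimes N$-almost everywhere (Lemma~\ref{lem:epsilon-welldefined}), so the diagonal contribution producing $\Phi(x)$ must be handled carefully, distinguishing the atom newly created at $x$ from the pre-existing atoms. The hypothesis $\Phi\in L^1(\Omega\times E;H)$ with $D_x\Phi\in L^1(\Omega\times E;H)$ for $\m$-almost every $x$ guarantees that $\delta(\Phi)$ and each $\delta(D_x\Phi)$ are well-defined $L^1$ objects via Proposition~\ref{lem:stoch_int}, and I would invoke the $H$-valued Mecke formula to control the relevant integrals and to verify that the exceptional null sets are genuinely $\bP\otimes\m$-null. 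Once the pathwise identity is secured on $\mathbf N_\m$ for a fixed representative, the passage to equivalence classes follows from the well-definedness results of Lemmas~\ref{lem:epsilon+welldefined} and~\ref{lem:epsilon-welldefined}.
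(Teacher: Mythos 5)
Your proposal is correct and takes essentially the same route as the paper's proof: both differentiate the pathwise representation of $\delta(\Phi)$ from Definition~\ref{def:pathwise_Kabanov} term by term, use Lemma~\ref{lem:commutator} to build a measurable representative of $\int_\X\varepsilon_y^-\Phi(y)\,N(\diffin y)$, exploit the interchange $\varepsilon_x^+\varepsilon_y^-=\varepsilon_y^-\varepsilon_x^+$ at atoms of $N$ together with the new-atom contribution $\varepsilon_x^-\varepsilon_x^+\Phi(x)=\Phi(x)$, and invoke Mecke's formula for the integrability statements that make the manipulations valid $\bP\otimes\m$-almost everywhere. The only cosmetic difference is that you phrase the term-by-term differentiation via the chain rule and the atom-selection maps $\pi_n$, whereas the paper applies the definition of $D$ directly to the constructed representative $f$.
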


\begin{proof}
It holds that 
\begin{equation}\label{eq:proof_commutation_L1_1}
\begin{aligned}
  &D_x\delta(\Phi)
=
  D_x \int_\X
      \varepsilon_y^-
      \Phi(y)
    \,N(\diffin{y})
    -
    D_x\int_\X
      \Phi(y)
    \,\m(\diffin{y})\\
&\quad=
  \int_E
    \varepsilon_x^+\varepsilon_y^-\Phi(y)
  \big(
    N + \delta_x
  \big)(\diffin{y})
  -
  \int_E
    \varepsilon_y^-\Phi(y)
    N(\diffin{y})
  -
  \int_\X
    D_x\Phi(y)
  \m(\diffin{y})\\
&\quad=
  \int_E
    \varepsilon_y^-\varepsilon_x^+\Phi(y)
  N(\diffin{y})
  +
  \Phi(x)
  -
  \int_E
    \varepsilon_y^-\Phi(y)
    N(\diffin{y})
  -
  \int_\X
    D_x\Phi(y)
  \m(\diffin{y})\\  
&\quad=
  \delta(D_x\Phi)+\Phi(x).
\end{aligned}
\end{equation}  
In order to justify the application of $D_x$ 
\felix{see notes 22.1.'17}
to $\int_E\varepsilon_y^-\Phi(y)\,N(\diffin{y})$ in \eqref{eq:proof_commutation_L1_1} we take a measurable function $\varphi\colon\NN\times\X\to H$ such that $\varphi(N,\cdot)$ is a version of $\Phi$ and let $\mathbf N_\m\in\mathcal N$ be as in Lemma~\ref{lem:commutator}. Define $f\colon\NN\to H$ by
\[
f(\eta):=
\begin{cases}
\int_\X\varphi\big(\eta\setminus\delta_y,y\big)\,\eta(\diffin y),&\text{if }\eta\in\mathbf N_\m\text{ and }\int_\X\big\|\varphi\big(\eta\setminus\delta_y,y\big)\big\|\,\eta(\diffin y)<\infty\\
0,&\text{else.}
\end{cases}
\]
Thanks to Lemma~\ref{lem:commutator} one can check that $f$ is $\mathcal N$-$\cB(H)$-measurable, compare Remark~\ref{rem:measurability}. We have $N\in\mathbf N_\m$ $\bP$-a.s. and $\int_E\|\varphi(N\setminus\delta_y,y)\|N(\dl y)<\infty$ $\bP$-a.s.; the latter holds due to Mecke's formula \eqref{eq:Mecke2} and since $\varphi(N,\cdot)=\Phi\in L^1(\Omega\times\X;H)$.
As a consequence, $f(N)$ is a version of
$\int_E\varepsilon_y^-\Phi(y)\,N(\diffin{y})$
and therefore, $\bP$-almost surely,
\begin{equation}\label{eq:proof_commutation_L1_2}
\begin{aligned}
& D_x\int_\X\varepsilon^-_y\Phi(y)N(\diffin y)
  =
 f(N+\delta_x)-f(N)\\
&\quad
  = 
  \int_E\varphi\big((N+\delta_x)\setminus\delta_y,y\big)\,(N+\delta_x)(\diffin y)
  -
  \int_E\varphi\big(N\setminus\delta_y,y\big)\,N(\diffin y).
\end{aligned}
\end{equation}
Note that, for $\m$-almost all $x\in E$, the first integral in the second line of \eqref{eq:proof_commutation_L1_2} exists $\bP$-almost surely.
\felix{see notes 25.1.'17}
This is due to the fact that  $\bE\int_E\|\varphi(N+\delta_x,y)\|\,\m(\dl y)=\bE\int_E\|\varepsilon_x^+\Phi(y)\|\,\m(\dl y)<\infty$ for $\m$-almost all $x\in E$, which follows from our assumptions, and due to Mecke's formula \eqref{eq:Mecke2}, which yields that $\bE\int_E\|\varphi((N+\delta_x)\setminus \delta_y,y)\|N(\dl y)<\infty$ for $\m$-almost all $x\in E$. In a similar way one can justify the application of $D_x$ to $\int_E\Phi(y)\m(\dl y)$ in \eqref{eq:proof_commutation_L1_1}. 
Finally, observe that the third equality in \eqref{eq:proof_commutation_L1_1} holds as 
\begin{align*}
\int_E\varepsilon_x^+\varepsilon_y^-\Phi(y)\,N(\dl y)
&=
\int_E\varphi\big((N+\delta_x)\setminus\delta_y,y\big)\,N(\dl y)\\
&=
\int_E\varphi\big((N\setminus\delta_y)+\delta_x,y\big)\,N(\dl y)
=
\int_E\varepsilon_y^-\varepsilon_x^+\Phi(y)\,N(\dl y).\qedhere
\end{align*}
\end{proof}

Our next proposition generalizes 
the related results \cite[Theorem~4.1]{Nualart1990} and \cite[Corollaire~4]{Picard1996a} in several ways. An analogue in the Gaussian case is \cite[Proposition 1.3.1]{Nualart2006}.

\begin{prop}[$L^2$-isometry]\label{prop:L2D12_subset_domdelta}
The space $L^2(\X;\bD^{1,2}(H))\subset L^2(\Omega\times E;H)$ 
is contained in $\dom(\delta^{(2)})$. The restriction $\delta|_{L^2(\X;\bD^{1,2}(H))}$ is continuous as a mapping from $L^2(\X;\bD^{1,2}(H))$ to $L^2(\Omega;H)$, and for all $\Phi,\Psi\in L^2(\X;\bD^{1,2}(H))$ we have
\begin{equation}\label{eq:L2D12_subset_domdelta_1}
\begin{aligned}
\E\big\langle\delta(\Phi),\delta(\Psi)\big\rangle
&=
\E\int_\X\langle\Phi(x),\Psi(x)\rangle\,\m(\diffin x)\\
&\quad+
\E\int_\X\int_\X\big\langle D_x\Phi(y),D_y\Psi(x)\big\rangle\,\m(\diffin y)\,\m(\diffin x).
\end{aligned}
\end{equation}
\end{prop}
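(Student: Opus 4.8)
The plan is to establish the identity \eqref{eq:L2D12_subset_domdelta_1} first on a dense class of bounded, finitely supported integrands, to extract from it the continuity bound $\E\|\delta(\Phi)\|^2\le\|\Phi\|_{L^2(\X;\bD^{1,2}(H))}^2$, and then to pass to the limit using density together with the closedness of $\delta^{(2)}$ (which holds because $\delta^{(2)}$ is the adjoint of the densely defined operator $D|_{\bD^{1,2}(H)}$, density of $\bD^{1,2}(H)$ in $L^2(\Omega;H)$ following from Lemma~\ref{lem:coreD12}(ii)). Concretely, I would take as test class the simple integrands $\Phi=\sum_{j}\one_{A_j}F_j$ with $A_j\in\cE_0$ and $F_j\in\bD^{1,2}(H)\cap L^\infty(\Omega;H)$; by the Bochner-space structure of $L^2(\X;\bD^{1,2}(H))$ and the density statement in Lemma~\ref{lem:coreD12}(i), such integrands are dense in $L^2(\X;\bD^{1,2}(H))$.

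For $\Phi,\Psi$ in this class, supported on a common set $B\in\cE_0$, the crucial point is that $\|\varepsilon_x^-\Phi(x)\|$ is $\bP\otimes N$-a.e.\ bounded by $\|\Phi\|_{L^\infty(\Omega\times\X;H)}$ (by Mecke's formula), so that $\delta(\Phi)$ is dominated by $C(N(B)+\mu(B))$ and hence lies in $L^r(\Omega;H)$ for every $r<\infty$. The local duality formula of Lemma~\ref{lemma:duality2} may therefore be applied with $F=\delta(\Phi)$, giving $\E\langle\delta(\Phi),\delta(\Psi)\rangle=\E\int_B\langle D_x\delta(\Phi),\Psi(x)\rangle\,\mu(\mathrm{d}x)$. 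I would then insert the commutation relation from Proposition~\ref{lem:commutator1}, namely $D_x\delta(\Phi)=\delta(D_x\Phi)+\Phi(x)$, whose hypotheses are met since $\Phi$ and $(D_x\Phi)$ are bounded with finite-measure support, splitting the right-hand side into two terms. The term carrying $\Phi(x)$ yields $\E\int_\X\langle\Phi,\Psi\rangle\,\mu(\mathrm{d}x)$. For the remaining term I would apply the local duality formula a second time, for $\mu$-almost every fixed $x$, to the integrand $(D_x\Phi)$ and the random variable $\Psi(x)\in\bD^{1,2}(H)\cap L^\infty(\Omega;H)$, obtaining $\E\langle\delta(D_x\Phi),\Psi(x)\rangle=\E\int_\X\langle D_x\Phi(y),D_y\Psi(x)\rangle\,\mu(\mathrm{d}y)$; integrating in $x$ and invoking Fubini (justified by Cauchy--Schwarz together with $D\Phi,D\Psi\in L^2$) produces exactly the double integral in \eqref{eq:L2D12_subset_domdelta_1}.

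From the identity on the test class, Cauchy--Schwarz applied to the double-integral term (using the symmetry of the product measure) gives $\E\|\delta(\Phi)\|^2\le\|\Phi\|^2_{L^2(\Omega\times\X;H)}+\|D\Phi\|^2_{L^2(\Omega\times\X\times\X;H)}=\|\Phi\|^2_{L^2(\X;\bD^{1,2}(H))}$, so $\delta$ is continuous on the test class into $L^2(\Omega;H)$. Given an arbitrary $\Phi\in L^2(\X;\bD^{1,2}(H))$, I would approximate it by test integrands $\Phi_n$ in the $L^2(\X;\bD^{1,2}(H))$ norm; then $\Phi_n\to\Phi$ in $L^2(\Omega\times\X;H)$ while $(\delta(\Phi_n))_n$ is Cauchy in $L^2(\Omega;H)$, so by closedness of $\delta^{(2)}$ we obtain $\Phi\in\dom(\delta^{(2)})$ with $\delta(\Phi)=\lim_n\delta(\Phi_n)$ and the asserted continuity bound. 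Finally, both sides of \eqref{eq:L2D12_subset_domdelta_1} are continuous in $(\Phi,\Psi)$ with respect to the $L^2(\X;\bD^{1,2}(H))$ norm (the right-hand side by Cauchy--Schwarz, the left-hand side by the continuity of $\delta$ just established), so the identity extends from the dense class to all of $L^2(\X;\bD^{1,2}(H))$.

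The main obstacle is the rigorous justification of the two-fold duality/commutation computation on the test class, i.e.\ verifying that all integrability requirements of Lemma~\ref{lemma:duality2} and Proposition~\ref{lem:commutator1} are met at each step and that the relevant Fubini interchange is legitimate. The essential difficulty is that the generic local estimate of Lemma~\ref{lem:local_estimate_delta} only places $\delta(\Phi)$ in $L^q(\Omega;H)$ for $q<2$, which would be too weak to feed $F=\delta(\Phi)$ into the local duality formula; this is circumvented precisely by restricting to \emph{bounded} integrands on finite-measure sets, for which $\delta(\Phi)$ inherits all moments of $N(B)$ and local duality applies with room to spare.
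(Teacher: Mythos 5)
Your proof is correct and follows essentially the same route as the paper's: the identity is first established for simple integrands $\sum_k\one_{\Omega\times A_k}F_k$ with $F_k\in\bD^{1,2}(H)\cap L^\infty(\Omega;H)$ via the $L^1$-commutation relation (Proposition~\ref{lem:commutator1}) and two applications of the local duality formula (Lemma~\ref{lemma:duality2}), then extended by Cauchy--Schwarz, density (Lemma~\ref{lem:coreD12}(i)) and closedness of $\delta^{(2)}$. The only cosmetic difference is that you justify $\delta(\Phi)\in L^r(\Omega;H)$ for all $r<\infty$ by direct domination through $N(B)$, whereas the paper obtains the same conclusion from Lemma~\ref{lem:local_estimate_delta} applied with arbitrarily large integrability exponent; both are valid.
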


\begin{proof}
Let us first consider $\Phi,\Psi\in L^2(\X;\bD^{1,2}(H))\subset L^2(\Omega\times E;H)$ of the form 
$\Phi=\sum_{k=1}^K\one_{\Omega\times A_k} F_k$, $\Psi=\sum_{k=1}^K\one_{\Omega\times B_k}\,G_k $
with $F_k,G_k\in \bD^{1,2}(H)\cap L^\infty(\Omega;H)$ and $A_k,B_k\in\cE_0$, $K\in\bN$. In particular, for all $p\in[1,\infty]$ it holds that $\Phi\in L^p(\Omega\times\X;H)$ and, for $\m$-almost all $x\in E$, $D_x\Phi=\sum_{k=1}^K\one_{\Omega\times A_k}D_xF_k\in L^p(\Omega\times\X;H)$. Thus Lemma~\ref{lem:commutator1} implies that $D_x\delta(\Phi)=\delta(D_x\Phi)+\Phi(x)$ $\P\otimes \m$-almost everywhere. Note that 
$\bE\int_B\|D_x\delta(\Phi)\|^p\m(\dl x)<\infty$ 
for all $p\in[1,\infty)$ due to Lemmata~\ref{lem:local_estimate_D}  and \ref{lem:local_estimate_delta}; as a consequence we also have 
$\bE\int_B\|\delta(D_x\Phi)\|^p\m(\dl x)<\infty$ 
for all $p\in[1,\infty)$. Thus we can write
\begin{align}\label{eq:proof_isometry_1}
\E\int_\X\big\langle D_x\delta(\Phi),\Psi(x)\big\rangle\,\m(\diffin x)
=
\E\int_\X\big\langle\delta(D_x\Phi)+\Phi(x),\Psi(x)\big\rangle\,\m(\diffin x)
\end{align}
and all integrals are defined.
Since $\Psi\in L^p(\Omega\times\X;H)$ for all $p\in[1,\infty]$ and $\delta(\Phi)\in L^{q'}(\Omega;H)$ for all $q'\in[1,\infty)$ by Lemma~\ref{lem:local_estimate_delta}, 
we can apply Lemma~\ref{lemma:duality2} to obtain
\begin{align}\label{eq:proof_isometry_2}
\E\int_\X\big\langle D_x\delta(\Phi),\Psi(x)\big\rangle\,\m(\diffin x)=\bE\big\langle\delta(\Phi),\delta(\Psi)\big\rangle.
\end{align}
Similarly, since for $\m$-almost all $x\in E$ we have $D_x\Phi\in L^p(\Omega\times E;H)$ for all $p\in[1,\infty]$ and $\Psi(x)\in L^{q'}(\Omega;H)$ for all $q'\in[1,\infty]$, a further application of Lemma~\ref{lemma:duality2} yields
\begin{align}\label{eq:proof_isometry_3}
\bE\big\langle\delta(D_x\Phi),\Psi(x)\big\rangle=\bE\int_E\big\langle D_x\Phi(y),D_y\Psi(x)\big\rangle\,\m(\dl y)
\end{align}
for $\m$-almost all $x\in E$. The combination of \eqref{eq:proof_isometry_1}, \eqref{eq:proof_isometry_2} and \eqref{eq:proof_isometry_3} gives \eqref{eq:L2D12_subset_domdelta_1} for $\Phi$ and $\Psi$ of the considered elementary form.
Choosing $\Phi=\Psi$ and applying the Cauchy-Schwarz inequality, we obtain
\begin{align*}
\|\delta(\Phi)\|_{L^2(\Omega;H)}^2\
\leq
\|\Phi\|^2_{L^2(\Omega\times\X;H)}
+
\|D\Phi\|^2_{L^2(\Omega\times\X^2;H)}
=
\|\Phi\|^2_{L^2(\X;\bD^{1,2}(H))}.
\end{align*}
By the approximation result in Lemma~\ref{lem:coreD12}(i) and the closedness of $\delta=\delta^{(2)}$ this estimate extends to all $\Phi\in L^2(\X;\bD^{1,2}(H))$. In particular we have $L^2(\X;\bD^{1,2}(H))\subset \dom(\delta)$.
In the same way, the identity \eqref{eq:L2D12_subset_domdelta_1} extends to arbitrary $\Phi$ and $\Psi$ in $L^2(\X;\bD^{1,2}(H))$.
\end{proof}

We are now ready to state and prove the commutation relation in $L^2$.

\begin{prop}[Commutation relation -- $L^2$-version]
\label{prop:commutator_relation_L^2}
Let $\Phi\in L^2(\X;\bD^{1,2}(H))$ $\subset L^2(\Omega\times E;H)$ 
be such that $D_x\Phi\in\dom(\delta^{(2)})$ for $\m$-almost all $x\in\X$ and 
assume that
$\bE\int_E\|\delta(D_x\Phi)\|^2\,\m(\dl x)<\infty$.
Then $\delta(\Phi)\in\bD^{1,2}(H)$ and
\begin{align*}
  D_x\delta(\Phi)=\delta(D_x\Phi)+\Phi(x)
\end{align*}
as an equality in $L^2(\Omega\times E;H)$.
\end{prop}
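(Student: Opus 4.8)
The overall strategy is to compute the difference quotient $D\delta(\Phi)$, which is a well-defined element of $L^0(\Omega\times\X;H)$ irrespective of any integrability (since $D$ acts on all of $L^0(\Omega;H)$), and to show that it coincides with the field $\Psi:=\delta(D_\cdot\Phi)+\Phi$. Note first that $\delta(\Phi)\in L^2(\Omega;H)$ by the isometry (Proposition~\ref{prop:L2D12_subset_domdelta}), and that $\Psi\in L^2(\Omega\times\X;H)$ because $\Phi\in L^2(\X;\bD^{1,2}(H))$ and $\bE\int_\X\|\delta(D_x\Phi)\|^2\,\m(\dl x)<\infty$ by assumption. Consequently, once the identity $D\delta(\Phi)=\Psi$ is established in $L^0(\Omega\times\X;H)$, the membership $\delta(\Phi)\in\bD^{1,2}(H)$ and the asserted $L^2$-equality follow automatically, since then $D\delta(\Phi)=\Psi\in L^2$ while $\delta(\Phi)\in L^2$. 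So the whole proof reduces to this single $L^0$-identity.

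To access $D\delta(\Phi)$ I would localise so as to bring the pathwise $L^1$-commutation relation into play. For $B\in\cE_0$ set $\Phi^B:=\one_{\Omega\times B}\Phi$. Then $\Phi^B\in L^1(\Omega\times\X;H)$ and $D_x\Phi^B=\one_{\Omega\times B}D_x\Phi\in L^1(\Omega\times\X;H)$ for $\m$-almost all $x$, so Proposition~\ref{lem:commutator1} yields
\begin{align*}
D_x\delta(\Phi^B)=\delta(\one_{\Omega\times B}D_x\Phi)+\one_B(x)\Phi(x)\quad\text{$\bP\otimes\m$-a.e.}
\end{align*}
Fixing an exhausting sequence $B_m\nearrow\X$ with $B_m\in\cE_0$, dominated convergence gives $\Phi^{B_m}\to\Phi$ in $L^2(\X;\bD^{1,2}(H))$, hence $\delta(\Phi^{B_m})\to\delta(\Phi)$ in $L^2(\Omega;H)$ by the continuity statement in Proposition~\ref{prop:L2D12_subset_domdelta}, and therefore $D\delta(\Phi^{B_m})\to D\delta(\Phi)$ in $L^0(\Omega\times\X;H)$ by the $L^0$-continuity of $D$ (Corollary~\ref{cor:localizing_lemma_1}, cf.\ Remark~\ref{rem:D_for_random_fields}). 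Since also $\one_{B_m}\Phi\to\Phi$ in $L^0$, the displayed identity shows that $\delta(\one_{\Omega\times B_m}D_\cdot\Phi)$ converges in $L^0(\Omega\times\X;H)$ and that everything comes down to identifying this $L^0$-limit with $\delta(D_\cdot\Phi)$.

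This last identification is the main obstacle. Truncation is \emph{not} stable for $\delta^{(2)}$: the local $L^q$-estimate (Lemma~\ref{lem:local_estimate_delta}) only places $\one_{\Omega\times B_m}D_x\Phi$ in $\dom(\delta^{(q)})$ for $q<2$, with a constant that degenerates as $B_m\nearrow\X$, so the family $\delta(\one_{\Omega\times B_m}D_\cdot\Phi)$ is controlled only in $L^0$ and not in $L^2$; moreover no second-order Sobolev regularity of $\Phi$ is assumed, so the isometry cannot be applied to the field $D_x\Phi$ to force $L^2$-convergence. The point is therefore to pin the $L^0$-limit down weakly, using the \emph{local} duality formula (Lemma~\ref{lemma:duality2}), which needs only the local integrability supplied by our hypotheses. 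Concretely, for fixed $x$ and any bounded test variable $F\in\bigcap_{r>1}\bD^{1,r}(H)\cap L^\infty(\Omega;H)$, Lemma~\ref{lemma:duality2} gives $\bE\langle F,\delta(\one_{\Omega\times B_m}D_x\Phi)\rangle=\bE\int_{B_m}\langle D_yF,D_x\Phi(y)\rangle\,\m(\dl y)$, whereas the duality for $D_x\Phi\in\dom(\delta^{(2)})$ gives $\bE\langle F,\delta(D_x\Phi)\rangle=\bE\int_\X\langle D_yF,D_x\Phi(y)\rangle\,\m(\dl y)$; the difference vanishes in the limit by dominated convergence because $\langle D_\cdot F,D_x\Phi(\cdot)\rangle\in L^1(\Omega\times\X)$. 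Pairing against elementary parameter fields $\Theta\in L^2(\X;\bD^{1,2}(H))$ (which are bounded and dense), integrating over $x$ with the domination provided by $\bE\int_\X\|\delta(D_x\Phi)\|^2\,\m(\dl x)<\infty$, and using that such $\Theta$ separate points of $L^2(\Omega\times\X;H)$ together with uniqueness of the $L^0$-limit, one concludes that the limit equals $\delta(D_\cdot\Phi)$. This gives $D\delta(\Phi)=\delta(D_\cdot\Phi)+\Phi=\Psi$ in $L^0(\Omega\times\X;H)$, and by the reduction of the first paragraph the proof is complete.

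The step I expect to require the most care is exactly this passage to the limit $B_m\nearrow\X$ in the term $\delta(\one_{\Omega\times B_m}D_\cdot\Phi)$: one must reconcile a sequence that is merely $L^0$-controlled with the genuinely $L^2$ object $\delta(D_\cdot\Phi)$ furnished by the hypothesis. It is precisely here that the local (rather than global) duality formula and the integrability assumption on $\delta(D_\cdot\Phi)$ do the essential work, and the reason the theory in this general setting stops at $L^1$ and $L^2$ while deferring genuine $L^p$-versions to the space-time setting, where continuity of the underlying It\^o integral repairs the truncation instability.
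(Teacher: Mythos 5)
Your reduction to the $L^0$-identity, the truncation $\Phi^{B_m}:=\one_{\Omega\times B_m}\Phi$, the application of Proposition~\ref{lem:commutator1} to $\Phi^{B_m}$, and the two convergence statements you derive are all individually correct. The gap is in the final inference of your identification step: from (i) $\delta(\one_{\Omega\times B_m}D_\cdot\Phi)\to L:=D\delta(\Phi)-\Phi$ in $L^0(\Omega\times\X;H)$ and (ii) $\E\int_\X\langle\Theta(x),\delta(\one_{\Omega\times B_m}D_x\Phi)\rangle\,\m(\dl x)\to\E\int_\X\langle\Theta(x),\delta(D_x\Phi)\rangle\,\m(\dl x)$ for elementary $\Theta$, you conclude $L=\delta(D_\cdot\Phi)$. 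This does not follow: a sequence can converge in measure to one limit while its integral pairings converge to those of a different object, unless the sequence is (locally) uniformly integrable — think of $f_m=m\one_{[0,1/m]}$ on $[0,1]$, which tends to $0$ in measure while $\int_0^1 f_m(t)g(t)\,\dl t\to g(0)$ for continuous $g$. To invoke ``separation of points of $L^2$'' you would need $\E\int_\X\langle\Theta(x),L(x)\rangle\,\m(\dl x)=\lim_m\E\int_\X\langle\Theta(x),\delta(\one_{\Omega\times B_m}D_x\Phi)\rangle\,\m(\dl x)$, i.e., exactly the passage from $L^0$-convergence to convergence of pairings that is missing; and, as you yourself observe, no uniform $L^p$-bound with $p>1$ for $\delta(\one_{\Omega\times B_m}D_\cdot\Phi)$ is available (the constants in Lemma~\ref{lem:local_estimate_delta} degenerate as $B_m\nearrow\X$, and no second-order regularity of $\Phi$ is assumed), so there is no uniform integrability to bridge the two modes of convergence. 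The step you flag as the crux is therefore asserted rather than proved; ruling out a discrepancy between the in-measure limit and the weak limit is essentially equivalent to the proposition itself.

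The repair is to stay at the level of bilinear pairings on \emph{both} sides and never pass to the $L^0$-limit of $\delta(\one_{\Omega\times B_m}D_x\Phi)$: pair your truncated identity against elementary $\Theta$ and rewrite the left-hand pairing as $\E\langle\delta(\Phi^{B_m}),\delta(\Theta)\rangle$ via the local duality formula (Lemma~\ref{lemma:duality2} with $F=\delta(\Phi^{B_m})\in L^2(\Omega;H)$); since $\delta(\Phi^{B_m})\to\delta(\Phi)$ in $L^2(\Omega;H)$ by the isometry, every term then converges, giving $\E\langle\delta(\Phi),\delta(\Theta)\rangle=\E\int_\X\langle\Phi(x)+\delta(D_x\Phi),\Theta(x)\rangle\,\m(\dl x)$; a second application of Lemma~\ref{lemma:duality2}, now with $F=\delta(\Phi)$, and the density statement of Lemma~\ref{lem:coreD12}(ii) then yield the identity $\P\otimes\m$-almost everywhere, after which your first paragraph finishes the proof. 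This is in substance the paper's argument, which avoids truncating $D_x\Phi$ altogether: it computes $\E\langle\delta(\Phi),\delta(\Psi)\rangle$ once via the isometry of Proposition~\ref{prop:L2D12_subset_domdelta} combined with the defining duality of $\delta^{(2)}$, and once via Lemma~\ref{lemma:duality2} for elementary $\Psi$, and concludes by density — everything weak, which is precisely what your shortcut tries to bypass.
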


\begin{proof}
Due to Proposition~\ref{prop:L2D12_subset_domdelta}, the assumptions on $\Phi$ and the duality relation between $\delta=\delta^{(2)}$ and $D|_{\bD^{1,2}(H)}$ we have
\begin{align*}
\E\big\langle\delta(\Phi),\delta(\Psi)\big\rangle
=
\E\int_\X\big\langle\Phi(x)+\delta(D_x\Phi),\Psi(x)\big\rangle\,\m(\diffin x)
\end{align*}
for all $\Psi\in L^2(\X;\bD^{1,2}(H))$. 
On the other hand, taking $\Psi=\sum_{k=1}^K\one_{\Omega\times B_k}G_k$ with $G_k\in\bD^{1,2}(H)\cap L^\infty(\Omega;H)$ and $B_k\in\cE_0$, $K\in\bN$, we have $\Psi\in L^{p}(\Omega\times E;H)$ for all $p\in[1,\infty]$ so that the local duality formula in Lemma~\ref{lemma:duality2} with $q=q'=2$ implies 
\begin{align*}
\E\big\langle\delta(\Phi),\delta(\Psi)\big\rangle
=
\E\int_{\bigcup_{k=1}^KB_k}\big\langle D_x\delta(\Phi),\Psi(x)\big\rangle\,\m(\dl x).
\end{align*}
Note that $\bE\int_C\|D_x\delta(\Phi)\|^{p'}\,\m(\dl x)<\infty$ for all $C\in\cE_0$ and $p'\in[1,2)$ due to Lemma~\ref{lem:local_estimate_D}.
By Lemma~\ref{lem:coreD12}(ii), for all $C\in \cE_0$ and all $p>1$ the linear span of $\big\{\one_{\Omega\times B}\,G:G\in \bD^{1,p}(H)\cap\bD^{1,2}(H)\cap L^\infty(\Omega;H),\,B\in\cE_0,\,B\subset C\big\}$ is dense in 
$\{\Psi\in L^p(\Omega\times\X;H):\Psi=0\text{ on }\Omega\times C^c\}$ w.r.t.\ the norm in $L^p(\Omega\times\X;H)$.
We conclude that $\Phi(x)+\delta(D_x\Phi)=D_x\delta(\Phi)$ $\P\otimes \m$-almost everywhere in $\Omega\times C$. As $(E,\cE,\m)$ is $\sigma$-finite, we have $\Phi(x)+\delta(D_x\Phi)=D_x\delta(\Phi)$ $\P\otimes \m$-almost everywhere in $\Omega\times E$. In particular this implies $D\delta(\Phi)\in L^2(\Omega\times\X;H)$ and thus $\delta(\Phi)\in\bD^{1,2}(H)$.
\end{proof}
\color{black}

\subsection{Higher order calculus}
\label{subsec:higher_order}

Here we extend some of the results of the previous sections to higher order difference operators and multiple Kabanov-Skorohod integrals. This is mainly done by elementary induction arguments. The structure of this section follows that of the previous ones.
Let us introduce some suitable notation:
If $k\in\bN$, $\mathbf{x}=(x_1,\dots,x_k)\in \X^k$ and $I=\{i_1,\dots,i_{|I|}\}\subset\{1,\dots,k\}$ with $i_1<\dots < i_{|I|}$, then the vector $\mathbf{x}_I\in \X^{|I|}$ is given by $\mathbf{x}_I=(x_{i_1},\dots,x_{i_{|I|}})$. 
We also set $[k]:=\{1,\ldots,k\}$.
Moreover, for $\mathbf x=(x_1,\dots,x_k) \in \X^k$ we denote by $\varepsilon_{\mathbf x}^{+}$ the composition 
$\varepsilon_{\mathbf x}^+:=\varepsilon_{x_k}^+\circ\dots\circ\varepsilon_{x_1}^+$
whenever it is meaningful, compare Remark~\ref{rem:epsilon_for_random_fields}.
\color{black}

\subsubsection*{Higher order difference operators and Sobolev spaces}

We start by defining the action of higher order difference operators on random variables.

\begin{definition}[Higher order difference operators]\label{def:higherDiffOp}
Let the operators 
\begin{align*}
  &D^k\colon L^0(\Omega;H)\rightarrow L^0(\Omega\times E^{k};H)
  ,\quad k\in\bN,
\end{align*}
be defined 
by iteration of $D$. 
That is, we iteratively set $D^k:=D\circ D^{k-1}$, where $D$ is understood as an operator from $L^0(\Omega\times E^{k-1};H)$ to $L^0(\Omega\times E^k;H)=L^0(\Omega\times E^{k-1}\times E;H))$, compare Remark~\ref{rem:D_for_random_fields}.
For $\mathbf{x}=(x_1,\dots,x_k)$ we write 
$D_{\mathbf{x}}^k F$ instead of $(D^kF)(x_1,\dots,x_k)$.
\end{definition}

Analogously to what has been said in Remark~\ref{rem:D_for_random_fields} we may extend the operators $D^k$ to random fields and consider, e.g., $D^k\colon L^0(\Omega\times E^n;H)\to L^0(\Omega\times E^{n+k};H)$ for $n\in\bN$.

Similar to the first order situation we derive elementary non-recursive identities.
\begin{prop}\label{lem:Dn_explicit}
Let $F\in L^0(\Omega;H)$, $k\in\bN$, and  $h$ be a measurable function from $H$ to another (real and separable) Hilbert space $V$. Then, $\bP\otimes\m^{\otimes k}$-almost everywhere,
\begin{align*}
  D_{\mathbf{x}}^k
  F
  &=
  \sum_{I\subset[k]}
  (-1)^{k-|I|}
  \varepsilon_{\mathbf{x}_{I}}^+F
  \quad\textrm{and}\quad
  D_{\mathbf{x}}^k
  h(F)
  =
  \sum_{I\subset [k]}
  (-1)^{k-|I|}h
  \Bigg(
    \sum_{J\subset I}
    D^{|J|}_{\mathbf{x}_J}F
  \Bigg).
\end{align*}
\end{prop}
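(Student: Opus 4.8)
The plan is to reduce both identities to the single algebraic fact that the operators $\varepsilon_x^+$ commute and satisfy $\varepsilon_x^+=\id+D_x$. First I would fix a measurable representative $f\colon\mathbf N\to H$ of $F$ and work throughout at the level of $f$, so that each $\varepsilon_{x_i}^+$ acts by $f(N)\mapsto f(N+\delta_{x_i})$. Since adding Dirac measures commutes, $(N+\delta_{x_i})+\delta_{x_j}=(N+\delta_{x_j})+\delta_{x_i}$, the operators $\varepsilon_{x_1}^+,\dots,\varepsilon_{x_k}^+$ commute on the representative, and $\varepsilon_{\mathbf x_I}^+$ is unambiguously defined. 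The well-definedness of these operators on equivalence classes, together with their extensions to random fields, has already been established (Lemma~\ref{lem:epsilon+welldefined}, Remarks~\ref{rem:epsilon_for_random_fields} and \ref{rem:D_for_random_fields}), so the pointwise identities for $f$ transfer to $\P\otimes\m^{\otimes k}$-almost everywhere identities for the corresponding equivalence classes.

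For the first identity I would argue by induction on $k$. The case $k=1$ is exactly $D_{x_1}F=\varepsilon_{x_1}^+F-F$. For the inductive step I would use $D^k=D\circ D^{k-1}$ together with $D_{x_k}G=\varepsilon_{x_k}^+G-G$ applied to the random field $G=D_{\mathbf x'}^{k-1}F$, where $\mathbf x'=(x_1,\dots,x_{k-1})$, insert the inductive formula for $D_{\mathbf x'}^{k-1}F$, and push $\varepsilon_{x_k}^+$ inside using commutativity and $\varepsilon_{x_k}^+\varepsilon_{\mathbf x'_J}^+=\varepsilon_{\mathbf x_{J\cup\{k\}}}^+$. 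Splitting the resulting subsets of $[k]$ according to whether they contain $k$ then yields $\sum_{I\subset[k]}(-1)^{k-|I|}\varepsilon_{\mathbf x_I}^+F$, completing the induction. Alternatively one may write $D^k=\prod_{i=1}^k(\varepsilon_{x_i}^+-\id)$ and expand the product directly, which is legitimate precisely because the factors commute.

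For the second identity the key auxiliary observation is the Möbius-type inversion
\[
\varepsilon_{\mathbf x_I}^+F=\prod_{i\in I}(\id+D_{x_i})\,F=\sum_{J\subset I}\Big(\prod_{i\in J}D_{x_i}\Big)F=\sum_{J\subset I}D_{\mathbf x_J}^{|J|}F,
\]
again valid by commutativity (note the $D_{x_i}$ commute as well, being differences of commuting operators). Combined with the chain-rule property $\varepsilon_{\mathbf x_I}^+h(F)=h(\varepsilon_{\mathbf x_I}^+F)$, which follows by iterating $\varepsilon_x^+h(F)=h(\varepsilon_x^+F)$ (the content of Proposition~\ref{lem:chain}), I would apply the first identity to the random variable $h(F)$:
\[
D_{\mathbf x}^k h(F)=\sum_{I\subset[k]}(-1)^{k-|I|}\varepsilon_{\mathbf x_I}^+h(F)=\sum_{I\subset[k]}(-1)^{k-|I|}h\big(\varepsilon_{\mathbf x_I}^+F\big),
\]
and then substitute the inversion formula for $\varepsilon_{\mathbf x_I}^+F$ inside $h$ to reach the claim.

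The routine part is the combinatorial sign-bookkeeping; the only genuine point requiring care is the measure-theoretic one, namely ensuring that the commutation of the $\varepsilon^+$, the recursive unfolding of $D^k$ on random fields, and the passage from the fixed representative $f$ to equivalence classes are all valid $\P\otimes\m^{\otimes k}$-almost everywhere. This is exactly the bookkeeping controlled by the well-definedness lemmas of Section~\ref{sec:Malliavin}, so I would handle it by fixing representatives once at the outset and observing that every manipulation holds pointwise for them.
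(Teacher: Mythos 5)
Your proof is correct. For the first identity you follow exactly the paper's route: induction on $k$, unfolding $D^k=D\circ D^{k-1}$, applying $D_{x_k}=\varepsilon_{x_k}^+-\id$ to the inductive expression, and splitting the subsets of $[k]$ according to whether they contain $k$. Where you genuinely diverge is the second identity: the paper merely states that it ``admits a similar proof'', i.e.\ a parallel induction with its own sign bookkeeping, whereas you deduce it formally from the first identity by applying that formula to $h(F)$, using the iterated chain rule $\varepsilon_{\mathbf x_I}^+h(F)=h(\varepsilon_{\mathbf x_I}^+F)$ (Proposition~\ref{lem:chain}), and then substituting the inversion $\varepsilon_{\mathbf x_I}^+F=\prod_{i\in I}(\id+D_{x_i})F=\sum_{J\subset I}D^{|J|}_{\mathbf x_J}F$, which is valid because the $\varepsilon_{x_i}^+$ (hence the $D_{x_i}$) commute at the level of a fixed representative. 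This buys a second induction for free and makes transparent that the second formula is a purely algebraic consequence of the first together with the chain rule; the paper's sketched route keeps the two statements independent at the cost of repeating the combinatorics. Your treatment of the measure-theoretic side --- fixing a representative $f$ once, noting that every manipulation holds pointwise for it, and invoking Lemma~\ref{lem:epsilon+welldefined} and Remarks~\ref{rem:epsilon_for_random_fields}, \ref{rem:D_for_random_fields} for well-definedness on equivalence classes --- is exactly the bookkeeping the paper leaves implicit.
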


\begin{proof}
For $k=1$ the first statement is true by the definition of $D$ and since $\emptyset\subset\{1\}$. In order to perform an induction assume that the first statement holds for $k-1\in\N$. Writing $\tilde{\mathbf x}:=(x_1,\dots,x_{k-1})$ we have
\begin{align*}
  D^k_{\mathbf{x}}F
&=
  D_{x_k}D^{k-1}_{\tilde{\mathbf x}}F
  =
  D_{x_k}
  \sum_{I\subset[k-1]}
  (-1)^{k-1-|I|}
  \varepsilon_{\tilde{\mathbf x}_{I}}^+F\\
&=
  \sum_{I\subset[k-1]}
  (-1)^{k-1-|I|}
  \varepsilon_{\mathbf{x}_{I\cup\{k\}}}^+F
  +
  \sum_{I\subset[k-1]}
  (-1)^{k-|I|}
  \varepsilon_{\mathbf{x}_{I}}^+F=
  \sum_{I\subset[k]}
  (-1)^{k-|I|}
  \varepsilon_{\mathbf{x}_{I}}^+F.  
\end{align*}
By the induction principle the statement is valid for all $k\in \N$. The second identity admits a similar proof.
\felix{see notes 27.1.'17}
\end{proof}

We now introduce the higher order Sobolev spaces.

\begin{definition}
For $k\in\N$ and $p> 1$ we define $\bD^{k,p}(H)$ as the space of all random variables $F\in L^p(\Omega;H)$ such that $D^j F\in L^p(\Omega\times\X^j;H)$ for all $j\in\{1,\ldots,k\}$. It is equipped with the norm 
\begin{align*}
  \|X\|_{\bD^{k,p}(H)}
  =
  \Bigg(
    \|X\|_{L^p(\Omega;H)}^p
    +
    \sum_{j=1}^k
    \|D^j X\|_{L^p(\Omega\times\X^j;H)}^p
  \Bigg)^\frac1p.
\end{align*}
\end{definition}

\begin{prop}
For all $k\in\N$ and $p>1$ the space $\bD^{k,p}(H)$ is complete, i.e.\ a Banach space. In particular, the restriction of $D^k\colon L^0(\Omega;H)\to L^0(\Omega\times\X^k;H)$ to $\bD^{k,p}(H)$ is closed from $L^p(\Omega;H)$ to $L^p(\Omega\times\X^k;H)$. The space $\bD^{k,2}(H)$ is a Hilbert space.
\end{prop}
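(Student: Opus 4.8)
The plan is to follow the blueprint of the first-order result, Proposition~\ref{thm:Banach}, the only additional ingredient being the continuity property for the difference operator acting on random fields recorded at the end of Remark~\ref{rem:D_for_random_fields}. Throughout I identify $L^p(\Omega\times E^{j};H)$ with $L^p(E^{j};L^p(\Omega;H))$ and use the continuous embeddings $L^p(E^{j};L^p(\Omega;H))\hookrightarrow L^0(E^{j};L^p(\Omega;H))$ and $L^0(E^{j};L^q(\Omega;H))\hookrightarrow L^0(\Omega\times E^{j};H)$ from Section~\ref{sec:Notation_and_conventions}.

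First I would establish the following first-order closedness statement for random fields, which is the workhorse of the argument. Fix $j\in\{1,\dots,k\}$ and suppose $G_n\to G$ in $L^p(\Omega\times E^{j-1};H)$ and $DG_n\to\Psi$ in $L^p(\Omega\times E^{j};H)$, where $D$ denotes the difference operator on random fields over the parameter space $S=E^{j-1}$ (cf.\ Remark~\ref{rem:D_for_random_fields}). Then $DG=\Psi$. Indeed, viewing $G_n,G$ as elements of $L^0(E^{j-1};L^p(\Omega;H))$, the convergence $G_n\to G$ holds in this space, so the continuity property of Remark~\ref{rem:D_for_random_fields} (with $S=E^{j-1}$ and some fixed $q\in[1,p)$) yields $DG_n\to DG$ in $L^0(E^{j};L^q(\Omega;H))\hookrightarrow L^0(\Omega\times E^{j};H)$. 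On the other hand $DG_n\to\Psi$ in $L^p(\Omega\times E^{j};H)\hookrightarrow L^0(\Omega\times E^{j};H)$. Since limits in $L^0(\Omega\times E^{j};H)$ are unique, $DG=\Psi$.

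With this at hand, completeness follows as in Proposition~\ref{thm:Banach}. Let $(F_n)_{n\in\N}$ be Cauchy in $\bD^{k,p}(H)$. By definition of the norm, $(D^jF_n)_{n\in\N}$ is Cauchy in $L^p(\Omega\times E^{j};H)$ for every $j\in\{0,\dots,k\}$ (with $D^0:=\mathrm{id}$), hence converges to some $\Phi_j\in L^p(\Omega\times E^{j};H)$; put $F:=\Phi_0$. Applying the workhorse with $G_n=D^{j-1}F_n$, $G=\Phi_{j-1}$ and $\Psi=\Phi_j$ gives $D\Phi_{j-1}=\Phi_j$ for each $j\in\{1,\dots,k\}$. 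A finite induction over $j$, using $D^j=D\circ D^{j-1}$ from Definition~\ref{def:higherDiffOp}, then shows $\Phi_j=D^jF$ for all $j$; in particular $D^jF\in L^p(\Omega\times E^{j};H)$ for $j=1,\dots,k$, so $F\in\bD^{k,p}(H)$ and $F_n\to F$ in $\bD^{k,p}(H)$. Thus $\bD^{k,p}(H)$ is complete. The asserted closedness of $D^k$ on $\bD^{k,p}(H)$ is the corresponding statement read off the graph, exactly as in Proposition~\ref{thm:Banach}, and for $p=2$ the norm is induced by the inner product $\langle F,G\rangle_{L^2(\Omega;H)}+\sum_{j=1}^k\langle D^jF,D^jG\rangle_{L^2(\Omega\times E^{j};H)}$, so $\bD^{k,2}(H)$ is a Hilbert space.

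I expect the main obstacle to be bookkeeping rather than analysis: one must keep the roles of the $\Omega$-factor and the $E^{j-1}$-parameter factor straight when passing between $L^p(\Omega\times E^{j-1};H)$, $L^0(E^{j-1};L^p(\Omega;H))$ and $L^0(\Omega\times E^{j};H)$, and verify that the continuity property of Remark~\ref{rem:D_for_random_fields} is being invoked with $S=E^{j-1}$ and an admissible pair $p>q\geq1$. The one genuinely substantive point is that the two convergence modes for $DG_n$ (in $L^0(E^{j};L^q(\Omega;H))$ and in $L^p(\Omega\times E^{j};H)$) are compared in the common space $L^0(\Omega\times E^{j};H)$, where uniqueness of limits forces the identification; everything else is a direct transcription of the $k=1$ proof.
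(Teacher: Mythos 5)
Your proof is correct and takes essentially the same route as the paper: the paper's own proof invokes the continuity of $D$ on random fields (Corollary~\ref{cor:localizing_lemma_1} and Remark~\ref{rem:D_for_random_fields}) to conclude that $D^k$ is continuous from $L^p(\Omega;H)$ to $L^0(E^k;L^q(\Omega;H))$ for $q\in[1,p)$, and then finishes analogously to Proposition~\ref{thm:Banach}. Your version differs only in bookkeeping — you iterate a first-order closedness statement, exploiting the $L^p$-convergence of each intermediate derivative $D^{j-1}F_n$ supplied by the Cauchy hypothesis, rather than composing the $L^0$-continuity of $D$ through a decreasing chain of exponents to get continuity of the full operator $D^k$ from convergence of $F_n$ alone; both arguments rest on the same continuity property and on uniqueness of limits in $L^0(\Omega\times E^j;H)$.
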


\begin{proof}
The continuity of the operators $D\colon L^0(E^{j-1}\!;L^p(\Omega;H))\to L^0(E^j;L^q(\Omega;H))$, $j=1,\ldots,k$, $q\in[1,p)$, stated in Corollary~\ref{cor:localizing_lemma_1} and Remark~\ref{rem:D_for_random_fields}, implies that $D^k$ is continuous from $L^p(\Omega;H)$ to $L^0(E^k;L^q(\Omega;H))$ for all $q\in[1,p)$.
The proof is completed analogously to proof of Proposition~\ref{thm:Banach}.
\end{proof}

\subsubsection*{Multiple Kabanov-Skorohod integrals and duality}

Next we treat multiple Kabanov-Skorohod integrals. 
We begin with a generalization of the pathwise $L^1$-integral introduced in Definition~\ref{def:pathwise_Kabanov}.

\begin{definition}[Pathwise multiple Kabanov-Skorohod integral] 
\label{def:Kabanov-Skorohod_multiple_pathwise}
The operators
\[
  \delta^k
  \colon 
  L^1(\Omega\times E^k;H)
  \to 
  L^1(\Omega;H)
  ,\quad
  k\in\bN,
\] 
are defined by iteration of the pathwise Kabanov-Skorohod integral. That is, we iteratively set
$\delta^k:=\delta^{k-1}\circ\delta$, where $\delta$ is understood as an operator from $L^1(\Omega\times E^k;H)=L^1(\Omega\times E^{k-1}\times E;H)$ to $L^1(\Omega\times E^{k-1};H)$, compare Remark~\ref{rem:Kabanov-Skorohod_2}.
\end{definition}

Analogously to what has been said in Remark~\ref{rem:Kabanov-Skorohod_2} we may extend the operators $\delta^k$ to parameter-dependent random fields and consider, e.g., the mapping $\delta^k\colon L^0(E^n;L^1(\Omega\times E^k;H))\to L^0(E^n;L^1(\Omega;H))$ for $n\in\bN$.

In order to derive an explicit representation formula for multiple Kabanov-Skorohod integrals we need to make some preparations. The $k$-th factorial measure of $N$ is the $\mathbf N(E^k)$-valued random variable $N^{(k)}$ given by
\begin{align*}
N^{(k)}(\omega):=\sum_{\substack{1\leq n_1,\ldots,n_k\leq N(\omega,E)\\n_i\neq n_j\text{ for }i\neq j}}\delta_{(X_{n_1}(\omega),\ldots,X_{n_k}(\omega))},\quad\omega\in\Omega.
\end{align*}
Here $\mathbf N(E^k)$ denotes the space of all $\sigma$-finite $\mathds N_0\cup\{\infty\}$-valued measures on $(E^k,\cE^{\otimes k})$ and $\delta_{(X_{n_1}(\omega),\ldots,X_{n_k}(\omega))}$ is the Dirac measure at $(X_{n_1}(\omega),\ldots,X_{n_k}(\omega))\in E^k$.
The following multivariate version of Mecke's formula is well known, see \cite[Theorem~4.5]{LastPenrose2016}:
For measurable functions $f\colon \mathbf N\times E^k\to [0,\infty]$ we have
\begin{align}\label{eq:Mecke_multivariat}
\bE\int_{E^k}f(N\setminus\delta_{x_1}\setminus\ldots\setminus\delta_{x_k},\mathbf x)\,N^{(k)}(\dl\mathbf x)=\bE\int_{E^k}f(N,\mathbf x)\,\m^{\otimes k}(\dl\mathbf x),
\end{align}
where we write $\mathbf x=(x_1,\ldots,x_k)\in E^k$. Hereby the measurability of the mapping $\Omega\times E^k\ni(\omega,\mathbf x)\mapsto N(\omega)\setminus\delta_{x_1}\setminus\ldots\setminus\delta_{x_k}\in\mathbf N(E)$ appearing in the integrand on the left hand side can be checked analogously as in Remark~\ref{rem:measurability}, using the representation \eqref{eq:representationN} of $N$ and the assumtion that the diagonal in $E^2$ is contained in $\cE^{\otimes 2}$. 
\felix{see notes 27.1.'17}
Given $F\in L^0(\Omega\times E^k;H)$ and a measurable mapping $f\colon
\times E^k\to H$ such that $f(N,\cdot)$ is a version of $F$ we set
$
\varepsilon_\mathbf x^-F(\mathbf x):=f(N\setminus\delta_{x_1}\setminus\ldots\setminus\delta_{x_k},\mathbf x).
$
By \eqref{eq:Mecke_multivariat} this definition is $\bP\otimes N^{(k)}$-almost everywhere independent of choice of representative $f$, compare Lemma~\ref{lem:epsilon-welldefined}.
\felix{see notes 27.1.'17}
We are now able to state the following Hilbert space-valued multivariate Mecke formula. Its proof is completely analogous to that of Proposition~\ref{lem:stoch_int} and therefore omitted.

\begin{prop}[$H$-valued multivariate Mecke formula]\label{lem:stoch_int_multiple}
The integral mapping
\begin{align*}
  L^1(\Omega\times \X^k;H)
  \ni
  \Phi
  \mapsto
  \int_{\X^k}
    \varepsilon_\mathbf{x}^-\Phi(\mathbf x)
  N^{(k)}(\diffin{\mathbf x})
  \in
  L^1(\Omega;H),
\end{align*}
where $\int_{\X^k}
    \varepsilon_\mathbf{x}^-\Phi(\mathbf x)
  N^{(k)}(\diffin{\mathbf x})$ is $\bP$-almost surely defined as an $H$-valued Bochner integral, is well-defined and continuous.
For all $\Phi\in L^1(\Omega\times E^k;H)$ it holds that
\begin{align}
\label{eq:bound_stoch_int_multi}
  \Big\|
  \int_{\X^k}
    \varepsilon_\mathbf{x}^-\Phi(\mathbf x)
  N^{(k)}(\diffin{\mathbf x})  
  \Big\|_{L^1(\Omega;H)}
  \leq
  \|\Phi\|_{L^1(\Omega\times\X^k;H)},
\end{align}
and
\begin{align*}
  \E
  \Big[
  \int_{\X^k}
    \varepsilon_\mathbf{x}^-\Phi(\mathbf x)
  N^{(k)}(\diffin{\mathbf x})
  \Big]
  =
  \E
  \Big[
    \int_{\X^k}
      \Phi(\mathbf x)
    \,\m^{\otimes k}(\diffin{\mathbf x})
  \Big].
\end{align*}
\end{prop}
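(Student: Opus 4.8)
The plan is to mirror the proof of Proposition~\ref{lem:stoch_int} step by step, the only change being that the single-variate Mecke formula \eqref{eq:Mecke2} is everywhere replaced by the multivariate one \eqref{eq:Mecke_multivariat} and $N$ by the factorial measure $N^{(k)}$. There are three things to establish: that the pathwise Bochner integral $\int_{\X^k}\varepsilon_\mathbf{x}^-\Phi(\mathbf x)\,N^{(k)}(\diffin{\mathbf x})$ exists $\bP$-a.s.\ and defines an element of $L^1(\Omega;H)$; the continuity estimate \eqref{eq:bound_stoch_int_multi}; and the Mecke identity itself.

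For a.s.\ existence and measurability, I would fix a representative $f\colon\mathbf N\times\X^k\to H$ of $\Phi$ and apply \eqref{eq:Mecke_multivariat} to the nonnegative scalar integrand $(\eta,\mathbf x)\mapsto\|f(\eta,\mathbf x)\|$, obtaining $\bE\int_{\X^k}\|\varepsilon_\mathbf{x}^-\Phi(\mathbf x)\|\,N^{(k)}(\diffin{\mathbf x})=\bE\int_{\X^k}\|\Phi(\mathbf x)\|\,\m^{\otimes k}(\diffin{\mathbf x})=\|\Phi\|_{L^1(\Omega\times\X^k;H)}$. Since the right-hand side is finite, the integrand is $\bP$-a.s.\ $N^{(k)}$-integrable, so the $H$-valued Bochner integral exists $\bP$-a.s. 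Its $\bP$-a.s.\ independence of the chosen representative is exactly the $\bP\otimes N^{(k)}$-a.e.\ well-definedness of $\varepsilon_\mathbf{x}^-$ recorded before the proposition (the analogue of Lemma~\ref{lem:epsilon-welldefined}), and its $\cF$-$\cB(H)$-measurability follows, as in Proposition~\ref{lem:stoch_int}, from the completeness of $\cF$, the measurability of any fixed version of $(\omega,\mathbf x)\mapsto\varepsilon_\mathbf{x}^-\Phi(\omega,\mathbf x)$, and the representation \eqref{eq:representationN} of $N$ (hence of $N^{(k)}$).

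The bound \eqref{eq:bound_stoch_int_multi} then follows by applying the Bochner inequality $\|\int\ldots\|\le\int\|\ldots\|$ pathwise and taking expectations, which returns $\|\Phi\|_{L^1(\Omega\times\X^k;H)}$ by the displayed identity above. For the Mecke identity I would first take simple integrands $\Phi=\sum_{j=1}^J f_j\otimes h_j$ with $f_j\in L^1(\Omega\times\X^k;\R^+)$ and $h_j\in H$; applying the scalar formula \eqref{eq:Mecke_multivariat} to each $f_j$ and summing gives $\bE\int_{\X^k}\varepsilon_\mathbf{x}^-\Phi(\mathbf x)\,N^{(k)}(\diffin{\mathbf x})=\bE\int_{\X^k}\Phi(\mathbf x)\,\m^{\otimes k}(\diffin{\mathbf x})$. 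Finally I would extend to arbitrary $\Phi\in L^1(\Omega\times\X^k;H)$ by approximating with simple integrands in $L^1(\Omega\times\X^k;H)$ and passing to the limit, using the continuity estimate \eqref{eq:bound_stoch_int_multi}, the trivial bound $\|\int_{\X^k}\Phi\,\m^{\otimes k}(\diffin{\mathbf x})\|_{L^1(\Omega;H)}\le\|\Phi\|_{L^1(\Omega\times\X^k;H)}$, and the continuity of $\bE[\,\cdot\,]\colon L^1(\Omega;H)\to H$. No step presents a genuine obstacle: the entire argument rests on the already-established multivariate Mecke formula \eqref{eq:Mecke_multivariat}, and the only mild technical point---the joint measurability needed to make sense of the factorial-measure Bochner integral---is handled exactly as in the single-variate case via \eqref{eq:representationN}.
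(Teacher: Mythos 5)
Your proposal is correct and coincides with the paper's own (omitted) argument: the paper explicitly states that the proof of Proposition~\ref{lem:stoch_int_multiple} is completely analogous to that of Proposition~\ref{lem:stoch_int}, which is precisely the step-by-step transfer you carry out, replacing Mecke's formula \eqref{eq:Mecke2} by its multivariate version \eqref{eq:Mecke_multivariat} and $N$ by $N^{(k)}$. The three ingredients you identify (a.s.\ existence and measurability via the scalar multivariate Mecke formula, the Bochner inequality for \eqref{eq:bound_stoch_int_multi}, and the simple-integrand-plus-density argument for the identity) are exactly those of the single-variate proof.
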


With the integral mapping of Proposition~\ref{lem:stoch_int_multiple} at hand we can prove a non-recursive formula for $\delta^k$. As in Subsection~\ref{sec:commutation} our argumentation requires the additional assumption that $(E,\cE)$ is a Borel-space.

\begin{prop}\label{lem:deltak}
In addition to Setting \ref{setting} assume that $(E,\cE)$ is a Borel space. For all $\Phi\in L^1(\Omega\times\X^k;H)$, $k\in\N$, it holds that 
\begin{align*}
  \delta^k(\Phi)
  =
  \sum_{I\subset[k]}
  (-1)^{k-|I|}
  \int_{\X^{|I^c|}}
  \int_{\X^{|I|}}
    \varepsilon_{\mathbf{x}_I}^-\Phi(\mathbf{x})
  \,N^{(|I|)}(\diffin{\mathbf{x}_I})
  \,\m^{\otimes |I^c|}(\diffin{\mathbf{x}_{I^c}}),
\end{align*}
and the order of integration has no importance.
\end{prop}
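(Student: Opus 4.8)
The plan is to argue by induction on $k$, exploiting the recursive definition $\delta^k=\delta^{k-1}\circ\delta$. The base case $k=1$ is immediate: the two subsets $I\subset\{1\}$ reproduce exactly the two terms of Definition~\ref{def:pathwise_Kabanov}, bearing in mind that $N^{(1)}=N$ and that integration over $\X^0$ against $N^{(0)}$ and $\m^{\otimes 0}$ is trivial. For the inductive step I would fix $\Phi\in L^1(\Omega\times\X^k;H)$, regard the last coordinate as the integration variable of the inner $\delta$, and split
\[
\Psi(\mathbf{x}'):=\delta\big(\Phi(\mathbf{x}',\cdot)\big)=\Psi^N(\mathbf{x}')-\Psi^\mu(\mathbf{x}'),\qquad \mathbf{x}'=(x_1,\dots,x_{k-1}),
\]
where $\Psi^N(\mathbf{x}')=\int_\X\varepsilon^-_{x_k}\Phi(\mathbf{x}',x_k)\,N(\diffin{x_k})$ and $\Psi^\mu(\mathbf{x}')=\int_\X\Phi(\mathbf{x}',x_k)\,\m(\diffin{x_k})$. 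Since $\delta$ is bounded from $L^1(\Omega\times\X;H)$ to $L^1(\Omega;H)$ by the estimate in Proposition~\ref{lem:stoch_int} and an elementary bound for the $\m$-integral, one has $\Psi\in L^1(\Omega\times\X^{k-1};H)$, so the induction hypothesis applies to $\delta^{k-1}(\Psi)$ and produces a sum over $J\subset[k-1]$ with sign $(-1)^{(k-1)-|J|}$.

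The heart of the argument is to compute $\varepsilon^-_{\mathbf{x}_J}\Psi$ and integrate it against $N^{(|J|)}(\diffin{\mathbf{x}_J})$. For the $\m$-part, $\varepsilon^-_{\mathbf{x}_J}$ passes inside the deterministic integral, giving $\int_\X\varepsilon^-_{\mathbf{x}_J}\Phi(\mathbf{x}',x_k)\,\m(\diffin{x_k})$, so $x_k$ remains a $\m$-integrated coordinate. For the $N$-part, applying $\varepsilon^-_{\mathbf{x}_J}$ to the pathwise integral $\Psi^N$ replaces $N$ by $N\setminus\delta_{\mathbf{x}_J}$ both in the integrand and in the integrating measure; using Lemma~\ref{lem:commutator} and the Borel-space assumption to fix a measurable representative of $\Psi^N$ on $\NN$, one obtains
\[
\varepsilon^-_{\mathbf{x}_J}\Psi^N(\mathbf{x}')=\int_\X \varepsilon^-_{\mathbf{x}_J}\varepsilon^-_{x_k}\Phi(\mathbf{x}',x_k)\,(N\setminus\delta_{\mathbf{x}_J})(\diffin{x_k}).
\]
The key combinatorial ingredient is then the diagonal-removal identity
\[
\int_{\X^{|J|}}\!\Big[\int_\X g(\mathbf{x}_J,x_k)\,(N\setminus\delta_{\mathbf{x}_J})(\diffin{x_k})\Big]N^{(|J|)}(\diffin{\mathbf{x}_J})=\int_{\X^{|J|+1}}\!g(\mathbf{x}_J,x_k)\,N^{(|J|+1)}(\diffin{(\mathbf{x}_J,x_k)}),
\]
which holds because integrating the extra variable against $N$ while excising the already chosen points is precisely the passage from $N^{(|J|)}\otimes N$ off its diagonals to the factorial measure $N^{(|J|+1)}$; on the support of the latter the composed excision $\varepsilon^-_{\mathbf{x}_J}\varepsilon^-_{x_k}$ agrees $\bP\otimes N^{(|J|+1)}$-almost everywhere with $\varepsilon^-_{(\mathbf{x}_J,x_k)}$.

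Assembling these computations, each $J\subset[k-1]$ arising from the induction hypothesis contributes exactly two terms to $\delta^k(\Phi)=\delta^{k-1}(\Psi^N)-\delta^{k-1}(\Psi^\mu)$: one in which $x_k$ joins the $N$-integrated block, indexed by $I=J\cup\{k\}$ with sign $(-1)^{(k-1)-|J|}=(-1)^{k-|I|}$, and one in which $x_k$ stays $\m$-integrated, indexed by $I=J$ with sign $-(-1)^{(k-1)-|J|}=(-1)^{k-|I|}$. As the subsets of $[k]$ containing $k$ (respectively not containing $k$) are enumerated once each, summing reproduces the claimed formula at level $k$. Finally, every term on the right-hand side is an $L^1$-object by Proposition~\ref{lem:stoch_int_multiple}, while $N^{(|I|)}$ is symmetric in its arguments and $\m^{\otimes|I^c|}$ is a product measure, so Fubini's theorem yields the irrelevance of the order of integration. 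I expect the main obstacle to be the rigorous justification of commuting $\varepsilon^-_{\mathbf{x}_J}$ through the pathwise $N$-integral together with the diagonal-removal identity: this is exactly where the Borel-space hypothesis and the measurable enumeration of Lemma~\ref{lem:commutator} enter, and where one must track carefully the off-diagonal support and the $\bP\otimes N^{(|I|)}$-null sets on which the various $\varepsilon^-$ operators are compared.
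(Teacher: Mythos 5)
Your proof is correct and follows essentially the same route as the paper's: the recursion $\delta^k=\delta^{k-1}\circ\delta$, commuting $\varepsilon^-$ through the pathwise $N$-integral via the measurable enumeration of Lemma~\ref{lem:commutator} (exactly as in the proof of Proposition~\ref{lem:commutator1}), the factorial-measure (diagonal-removal) identity, and Fubini's theorem justified by Mecke's formula and the $L^1$ hypothesis. The only difference is that the paper explicitly refrains from writing out the general induction and instead presents the case $k=2$, which is precisely the $|J|=1$ instance of your inductive step.
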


\begin{proof}
The case $k=1$ holds by the definition of $\delta$. We refrain from presenting the general induction argument but instead consider the case $k=2$. We have
\begin{equation*}\label{eq:deltak}
\begin{aligned}
&\delta^2(\Phi)
  =
  \delta
  \Big(
    \int_\X
      \varepsilon_{x_1}^-
      \Phi(x_1,\cdot)
    N(\diffin x_1)
    -
    \int_\X
      \Phi(x_1,\cdot)
    \m(\diffin x_1)
  \Big)\\
&\quad
  =
  \int_\X
  \varepsilon_{x_2}^-
  \Big(
    \int_\X
      \varepsilon_{x_1}^-
      \Phi(x_1,x_2)
    N(\diffin x_1)
  \Big)
  N(\diffin x_2)
  -
  \int_\X
    \int_\X
      \varepsilon_{x_1}^-
      \Phi(x_1,x_2)
    N(\diffin x_1)  
  \m(\diffin{x_2})\\
&\qquad
  -
  \int_{\X}
    \varepsilon_{x_2}^-
    \Big(
      \int_\X
        \Phi(x_1,x_2)
      \m(\diffin x_1)
  \Big)
  N(\diffin x_2)  
  +
  \int_{\X}
      \int_\X
        \Phi(x_1,x_2)
      \m(\diffin x_1)
  \m(\diffin x_2).
\end{aligned}
\end{equation*}
Looking at the first integral we use Lemma~\ref{lem:commutator} 
and argue analogously to the proof of Proposition~\ref{lem:commutator1} to see that it equals
\begin{align*}
  \int_\X
  \int_\X
  \varepsilon_{x_2}^-
  \varepsilon_{x_1}^-
      \Phi(x_1,x_2)
  \big(
    N
    \setminus
    \delta_{x_2}
  \big)
  (\diffin x_1)
  N(\diffin x_2)
  =
  \int_{\X^2}
    \varepsilon_{\mathbf x}^-\Phi(\mathbf x) N^{(2)}(\diffin \mathbf x ).
\end{align*}
\felix{see notes 27.1.'17}
Similarly, the third integral equals
$\int_E\int_E\varepsilon_{x_2}^-\Phi(x_1,x_2)\m(\dl x_1)N(\dl x_2)$.
Finally, the assumption $\Phi\in L^1(\Omega\times E^2;H)$, Mecke's formula \eqref{eq:Mecke2}, and pathwise applications of Fubini's theorem allow to exchange the order of integration in the mixed iterated integrals w.r.t.\ $N$ and $\m$ $\bP$-almost everywhere.
\felix{see notes 27.1.'17}
\end{proof}

\begin{prop}[Higher order $L^1$-$L^\infty$-duality]\label{lem:higher_duality}
For all $F\in L^\infty(\Omega;H)$ and $\Phi\in L^1(\Omega\times\X^k;H)$, $k\in\N$, it holds that
\begin{align*}
  \tensor[_{L^\infty(\Omega;H)}]{\big\langle
    F, \delta^k(\Phi)
  \big\rangle}{_{L^1(\Omega;H)}}
  =
  \tensor[_{L^\infty(\Omega\times\X^k;H)}]{\big\langle
    D^kF,\Phi
  \big\rangle}{_{L^1(\Omega\times\X^k;H)}}.
\end{align*}
\end{prop}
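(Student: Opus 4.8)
The plan is to argue by induction on $k\in\bN$, relying on the recursive definitions $D^k=D\circ D^{k-1}$ and $\delta^k=\delta^{k-1}\circ\delta$ together with the first order duality from Proposition~\ref{lem:duality}. This route avoids the explicit formula of Proposition~\ref{lem:deltak} and thus does not require the Borel space assumption. The base case $k=1$ is precisely Proposition~\ref{lem:duality}.

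For the inductive step I would assume the assertion for $k-1$ and fix $F\in L^\infty(\Omega;H)$ and $\Phi\in L^1(\Omega\times\X^k;H)$, writing $\mathbf x=(\mathbf x',x_k)$ with $\mathbf x'=(x_1,\dots,x_{k-1})\in\X^{k-1}$. First I would record two integrability facts. Since $\varepsilon^+$ preserves essential boundedness, the first order difference operator satisfies $\|DG\|_{L^\infty}\leq 2\|G\|_{L^\infty}$, and iterating yields $D^{k-1}F\in L^\infty(\Omega\times\X^{k-1};H)$ with $\|D^{k-1}F\|_{L^\infty}\leq 2^{k-1}\|F\|_{L^\infty}$. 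On the integrand side, the continuity estimate \eqref{eq:bound_stoch_int} together with $\big\|\int_\X\Phi(\cdot)\,\m(\diffin{x})\big\|_{L^1}\leq\|\Phi\|_{L^1}$ shows that the parameter-dependent integral $\delta(\Phi)$, in the sense of Remark~\ref{rem:Kabanov-Skorohod_2} with $S=\X^{k-1}$ and $p=1$, lies in $L^1(\Omega\times\X^{k-1};H)$ with $\|\delta(\Phi)\|_{L^1}\leq 2\|\Phi\|_{L^1}$.

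I would then apply the induction hypothesis to the pair $F$ and $\delta(\Phi)\in L^1(\Omega\times\X^{k-1};H)$ and use Tonelli's theorem to move the expectation inside, obtaining
\begin{align*}
\tensor[_{L^\infty(\Omega;H)}]{\big\langle F,\delta^k(\Phi)\big\rangle}{_{L^1(\Omega;H)}}
=\int_{\X^{k-1}}\bE\big\langle D^{k-1}_{\mathbf x'}F,\delta\big(\Phi(\mathbf x',\cdot)\big)\big\rangle\,\m^{\otimes(k-1)}(\diffin{\mathbf x'}).
\end{align*}
For $\m^{\otimes(k-1)}$-almost every $\mathbf x'$ we have $D^{k-1}_{\mathbf x'}F\in L^\infty(\Omega;H)$ and $\Phi(\mathbf x',\cdot)\in L^1(\Omega\times\X;H)$, so Proposition~\ref{lem:duality} applies fiberwise and gives, together with $D_{x_k}D^{k-1}_{\mathbf x'}F=D^k_{\mathbf x}F$,
\begin{align*}
\bE\big\langle D^{k-1}_{\mathbf x'}F,\delta\big(\Phi(\mathbf x',\cdot)\big)\big\rangle
=\bE\int_\X\big\langle D^k_{\mathbf x}F,\Phi(\mathbf x)\big\rangle\,\m(\diffin{x_k}).
\end{align*}
Substituting this into the previous identity and applying Fubini's theorem — legitimate since $|\langle D^k_{\mathbf x}F,\Phi(\mathbf x)\rangle|\leq 2^k\|F\|_{L^\infty}\|\Phi(\mathbf x)\|$ is $\bP\otimes\m^{\otimes k}$-integrable — yields $\bE\int_{\X^k}\langle D^kF,\Phi\rangle\,\m^{\otimes k}(\diffin{\mathbf x})$, which is the right-hand side of the claimed duality. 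This closes the induction.

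The main obstacle I anticipate is not the chain of equalities but the measurability and integrability bookkeeping required to apply Proposition~\ref{lem:duality} fiberwise and to invoke Tonelli and Fubini. Concretely, one must check joint measurability of $(\omega,\mathbf x)\mapsto\langle D^k_{\mathbf x}F,\Phi(\mathbf x)\rangle$ and verify that the parameter-dependent operators $D^{k-1}$ and $\delta$ are compatible with the conventions of Remarks~\ref{rem:D_for_random_fields} and \ref{rem:Kabanov-Skorohod_2}, so that the fiberwise identities genuinely hold for almost every $\mathbf x'$ and may be integrated. Once the $L^\infty$ bound on $D^{k-1}F$ and the $L^1$ bound on $\delta(\Phi)$ are established, these remaining points are routine.
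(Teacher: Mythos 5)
Your proposal is correct and follows essentially the same route as the paper, whose proof is precisely "Proposition~\ref{lem:duality} via Fubini's theorem and induction"; your write-up just makes explicit the integrability bookkeeping ($\|D^{k-1}F\|_{L^\infty}\leq 2^{k-1}\|F\|_{L^\infty}$, $\|\delta(\Phi)\|_{L^1}\leq 2\|\Phi\|_{L^1}$) and the fiberwise compatibility via Remarks~\ref{rem:D_for_random_fields} and \ref{rem:Kabanov-Skorohod_2} that the paper leaves implicit.
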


\begin{proof}
  This follows from Proposition~\ref{lem:duality} via Fubini's theorem and induction.
\end{proof}

Analogously to the introduction of the operators $\delta^{(p)}$ in Definition~\ref{def:Kabanov-Skorohod} we define for $k\in\bN$ and $p>1$ realizations $\delta^{k,(p)}$ of $\delta^k$ in $L^p$.
Note that $\bD^{k,p'}(H)$ is dense in $L^{p'}(\Omega;H)$ for all $k\in\bN$, $p'>1$; this can be seen similarly as in the proof of Lemma~\ref{lem:coreD12}(ii).


\begin{definition}[Abstract multiple Kabanov-Skorohod integral]
\label{def:Kabanov-Skorohod_multiple}
Let $p,p'>1$ be sucht that $\frac1p+\frac1{p'}=1$. For $k\in\N$ the operator
\[\delta^{k,(p)}\colon \,\dom(\delta^{k,(p)})\subset L^{p}(\Omega\times\X^k;H)\to L^p(\Omega;H) \]
is defined as the adjoint of 
\begin{align*}
D^k|_{\bD^{k,p'}(H)}\colon\bD^{k,p'}(H)\subset L^{p'}(\Omega;H) \to L^{p'}(\Omega\times\X^k;H),
\end{align*}
i.e., of the restriction of $D^k$ to $\bD^{k,p'}(H)$, considered as a densely defined operator from $L^{p'}(\Omega;H)$ to $L^{p'}(\Omega\times\X^k;H)$.
\end{definition}

As in the first order case we verify the compatibility of Definition~\ref{def:Kabanov-Skorohod_multiple_pathwise} and~\ref{def:Kabanov-Skorohod_multiple}. To this end we first compare $\delta^{k,(p)}$ to the $k$-fold iteration of $\delta^{(p)}$. The latter is the operator 
$
(\delta^{(p)})^k\colon\dom\big((\delta^{(p)})^k\big)\subset L^p(\Omega\times E^k;H)\to L^p(\Omega;H)
$
iteratively defined by $(\delta^{(p)})^k:=(\delta^{(p)})^{k-1}\circ\delta^{(p)}$, where $\delta^{(p)}$ is understood as a mapping from $L^p(E^{k-1};\dom(\delta^{(p)}))\subset L^p(\Omega\times E^k;H)$ to $L^p(\Omega\times E^{k-1};H)$, compare Remark~\ref{rem:Kabanov-Skorohod_2}. Here $\dom\big((\delta^{(p)})^k\big)$ is iteratively defined as the space of all $\Phi\in L^p(\Omega\times E^k;H)$ such that $\Phi(\tilde{\mathbf x},\cdot)\in\dom(\delta^{(p)})$ for $\m^{\otimes(k-1)}$-almost all $\tilde{\mathbf x}\in E^{k-1}$ and such that $\big(\delta^{(p)}(\Phi(\tilde{\mathbf x},\cdot))\big)_{\tilde{\mathbf x}\in E^{k-1}}\in\dom\big((\delta^{(p)})^{k-1}\big)$.

\begin{lemma}\label{lem:coincides_multi}
For all $k\in\bN$, $p>1$ we have $\dom\big((\delta^{(p)})^{k}\big)\subset\dom(\delta^{k,(p)})$ and $(\delta^{(p)})^{k}=\delta^{k,(p)}$ on $\dom\big((\delta^{(p)})^{k}\big)$.
\end{lemma}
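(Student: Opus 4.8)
The plan is to proceed by induction on $k$, verifying in each step that the candidate $(\delta^{(p)})^k(\Phi)$ satisfies the relation that defines the adjoint $\delta^{k,(p)}$. For $k=1$ there is nothing to show, since $\delta^{1,(p)}=\delta^{(p)}=(\delta^{(p)})^1$ by definition. So assume the claim for $k-1$ and fix $\Phi\in\dom((\delta^{(p)})^k)$. By the definition of this domain, $\Phi\in L^p(\Omega\times\X^k;H)$, the slice $\Phi(\tilde{\mathbf x},\cdot)$ lies in $\dom(\delta^{(p)})$ for $\m^{\otimes(k-1)}$-almost all $\tilde{\mathbf x}\in\X^{k-1}$, the field $\Psi:=\big(\delta^{(p)}(\Phi(\tilde{\mathbf x},\cdot))\big)_{\tilde{\mathbf x}}$ lies in $\dom((\delta^{(p)})^{k-1})$, and $(\delta^{(p)})^k(\Phi)=(\delta^{(p)})^{k-1}(\Psi)\in L^p(\Omega;H)$. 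Since $\bD^{k,p'}(H)$ is dense in $L^{p'}(\Omega;H)$, it is enough to prove
\[
\E\int_{\X^k}\big\langle D^k_{\mathbf x}F,\Phi(\mathbf x)\big\rangle\,\m^{\otimes k}(\diffin{\mathbf x})
=
\E\big\langle F,(\delta^{(p)})^k(\Phi)\big\rangle
\]
for all $F\in\bD^{k,p'}(H)$; by Hölder's inequality this identity exhibits $(\delta^{(p)})^k(\Phi)\in L^p(\Omega;H)$ as representing the $L^{p'}(\Omega;H)$-continuous functional $F\mapsto\E\int\langle D^kF,\Phi\rangle$, whence $\Phi\in\dom(\delta^{k,(p)})$ and $\delta^{k,(p)}(\Phi)=(\delta^{(p)})^k(\Phi)$.

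To establish the identity I would split off the last integration variable. Writing $\mathbf x=(\tilde{\mathbf x},x_k)$ with $\tilde{\mathbf x}\in\X^{k-1}$ and using $D^k=D\circ D^{k-1}$, so that $D^k_{\mathbf x}F=D_{x_k}\big(D^{k-1}_{\tilde{\mathbf x}}F\big)$, the left-hand side equals
\[
\E\int_{\X^{k-1}}\int_{\X}\big\langle D_{x_k}\big(D^{k-1}_{\tilde{\mathbf x}}F\big),\Phi(\tilde{\mathbf x},x_k)\big\rangle\,\m(\diffin{x_k})\,\m^{\otimes(k-1)}(\diffin{\tilde{\mathbf x}}),
\]
the use of Fubini being justified because $\langle D^k_{\mathbf x}F,\Phi(\mathbf x)\rangle\in L^1(\Omega\times\X^k;\R)$ by Hölder's inequality (as $D^kF\in L^{p'}$ and $\Phi\in L^p$). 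The key point is that, for $\m^{\otimes(k-1)}$-almost all $\tilde{\mathbf x}$, the slice $D^{k-1}_{\tilde{\mathbf x}}F$ belongs to $\bD^{1,p'}(H)$: indeed $D^{k-1}F\in L^{p'}(\Omega\times\X^{k-1};H)$ gives $D^{k-1}_{\tilde{\mathbf x}}F\in L^{p'}(\Omega;H)$ for almost all $\tilde{\mathbf x}$, and Remark~\ref{rem:D_for_random_fields} identifies the derivative of the slice with the slice of the derivative, $D_{x_k}\big(D^{k-1}_{\tilde{\mathbf x}}F\big)=(D^kF)(\tilde{\mathbf x},x_k)$ in $L^0(\Omega\times\X;H)$, which lies in $L^{p'}(\Omega\times\X;H)$ for almost all $\tilde{\mathbf x}$, again by Fubini.

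Now, for almost all $\tilde{\mathbf x}$, the first-order duality defining $\delta^{(p)}$ (Definition~\ref{def:Kabanov-Skorohod}) applies to $D^{k-1}_{\tilde{\mathbf x}}F\in\bD^{1,p'}(H)$ and $\Phi(\tilde{\mathbf x},\cdot)\in\dom(\delta^{(p)})$, giving
\[
\E\int_{\X}\big\langle D_{x_k}\big(D^{k-1}_{\tilde{\mathbf x}}F\big),\Phi(\tilde{\mathbf x},x_k)\big\rangle\,\m(\diffin{x_k})
=
\E\big\langle D^{k-1}_{\tilde{\mathbf x}}F,\Psi(\tilde{\mathbf x})\big\rangle .
\]
Integrating over $\tilde{\mathbf x}$ and applying Fubini once more, the left-hand side returns the left-hand side of the target identity, while the right-hand side becomes $\E\int_{\X^{k-1}}\langle D^{k-1}_{\tilde{\mathbf x}}F,\Psi(\tilde{\mathbf x})\rangle\,\m^{\otimes(k-1)}(\diffin{\tilde{\mathbf x}})$, which is finite since $F\in\bD^{k,p'}(H)\subset\bD^{k-1,p'}(H)$ and $\Psi\in L^p(\Omega\times\X^{k-1};H)$. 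Because $F\in\bD^{k-1,p'}(H)$ and, by the induction hypothesis, $\Psi\in\dom(\delta^{k-1,(p)})$ with $\delta^{k-1,(p)}(\Psi)=(\delta^{(p)})^{k-1}(\Psi)$, the relation defining $\delta^{k-1,(p)}$ turns this into $\E\langle F,(\delta^{(p)})^{k-1}(\Psi)\rangle=\E\langle F,(\delta^{(p)})^k(\Phi)\rangle$. Chaining the equalities yields the target identity and completes the induction.

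The main difficulty here is not analytic but lies in the measure-theoretic bookkeeping: one must keep all pairings in $L^1$ so that the repeated use of Fubini and the interchange of the $\X$-integration with the expectation are legitimate, and one must correctly pass between a random field and its almost-everywhere-defined slices. Concretely, the two facts carrying the argument are that taking the $\tilde{\mathbf x}$-slice commutes with the difference operator—so that $D^{k-1}_{\tilde{\mathbf x}}F$ lands in $\bD^{1,p'}(H)$ and its derivative is the slice of $D^kF$ (Remark~\ref{rem:D_for_random_fields})—and that $\Psi(\tilde{\mathbf x})=\delta^{(p)}(\Phi(\tilde{\mathbf x},\cdot))$ is an admissible slice-wise Kabanov-Skorohod integral in the sense of Remark~\ref{rem:Kabanov-Skorohod_2}. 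Once these compatibilities are invoked, the proof reduces to performing the single-variable duality twice, exactly in the spirit of the first-order compatibility result in Proposition~\ref{prop:coincides}.
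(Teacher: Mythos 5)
Your proof is correct and follows exactly the route the paper takes: the paper's own proof is the one-line remark that the claim ``follows directly by induction over $k$, using Fubini's theorem and the duality between $D|_{\bD^{1,p'}(H)}$ and $\delta^{(p)}$,'' and your argument is precisely a careful working-out of that sketch, including the right integrability checks for Fubini and the slice-wise identification of $D^{k-1}_{\tilde{\mathbf x}}F$ via Remark~\ref{rem:D_for_random_fields}. No gaps; if anything, your version supplies the bookkeeping the paper leaves implicit.
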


\begin{proof}\felix{see notes 4.2.'17--A--}
The assertion follows directly by induction over $k$, using Fubini's theorem and the duality between $D|_{\bD^{1,p'}(H)}$ and $\delta^{(p)}$.
\end{proof}

\begin{prop}\label{prop:coincides_multi}
Let $k\in\bN$. The operators $\delta^k$, $\delta^{k,(p)}$, $p>1$, coincide on the intersections of their domains. More precisely:
\begin{enumerate}[(i)]
\item If $\Phi\in L^1(\Omega\times E^k;H)\cap L^p(\Omega\times E^k;H)$ for some $p>1$ and if for all\linebreak $j\in\{1,\ldots,k\}$ it holds that $\delta^j(\Phi(\tilde{\mathbf x},\cdot))\in L^p(\Omega;H)$ for $\m^{\otimes(k-j)}$-almost all $\tilde{\mathbf x}\in E^{k-j}$, then $\Phi\in\dom(\delta^{k,(p)})$ and $\delta^k(\Phi)=\delta^{k,(p)}(\Phi)$.
\item If $\Phi\in \dom(\delta^{k,(p)})\cap\dom(\delta^{k,(q)})$ for some $p,q>1$, then $\delta^{k,(p)}(\Phi)=\delta^{k,(q)}(\Phi)$.
\end{enumerate}
\end{prop}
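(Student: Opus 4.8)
The plan is to treat both parts as the higher–order counterparts of Proposition~\ref{prop:coincides}, using the higher–order duality formula in Proposition~\ref{lem:higher_duality}, the density of bounded cylinder variables, and the comparison of $\delta^{k,(p)}$ with the iterated operator $(\delta^{(p)})^k$ provided by Lemma~\ref{lem:coincides_multi}. Throughout I would use that the cylinder variables $F=\varphi(N(B_1),\dots,N(B_m))$ with $\varphi$ bounded lie in $\bD^{k,r}(H)$ for every $r>1$ and are dense in each $L^r(\Omega;H)$, exactly as in Lemma~\ref{lem:coreD12}(ii).

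For part (ii) I would argue verbatim as in the first–order case. Given $\Phi\in\dom(\delta^{k,(p)})\cap\dom(\delta^{k,(q)})$, the defining adjoint relations give, for every $F\in\bD^{k,p'}(H)\cap\bD^{k,q'}(H)$,
\[
\bE\big\langle F,\delta^{k,(p)}(\Phi)\big\rangle
=\bE\int_{E^k}\big\langle D^k_{\mathbf x}F,\Phi(\mathbf x)\big\rangle\,\m^{\otimes k}(\dl\mathbf x)
=\bE\big\langle F,\delta^{k,(q)}(\Phi)\big\rangle .
\]
Thus the difference $\delta^{k,(p)}(\Phi)-\delta^{k,(q)}(\Phi)\in L^{\min(p,q)}(\Omega;H)$ pairs to zero against all bounded cylinder variables, which are dense in $L^{(\min(p,q))'}(\Omega;H)$; hence it vanishes.

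For part (i) the first (direct) route is to observe that $\delta^k(\Phi)$ is defined in $L^1(\Omega;H)$ by Definition~\ref{def:Kabanov-Skorohod_multiple_pathwise} and that the case $j=k$ of the hypothesis upgrades it to $L^p(\Omega;H)$. For $F\in\bD^{k,p'}(H)\cap L^\infty(\Omega;H)$ the higher–order duality of Proposition~\ref{lem:higher_duality} (applicable since $F\in L^\infty$ and $\Phi\in L^1$) gives $\bE\langle F,\delta^k(\Phi)\rangle=\bE\int_{E^k}\langle D^k_{\mathbf x}F,\Phi(\mathbf x)\rangle\,\m^{\otimes k}(\dl\mathbf x)$, and since $\delta^k(\Phi)\in L^p$, $D^kF\in L^{p'}$ and $\Phi\in L^p$, both sides are the corresponding $L^{p'}$–$L^p$ pairings. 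Extending this identity from the bounded test variables to all of $\bD^{k,p'}(H)$ and reading off the adjoint definition of $\delta^{k,(p)}$ then yields $\Phi\in\dom(\delta^{k,(p)})$ and $\delta^{k,(p)}(\Phi)=\delta^k(\Phi)$. A second (iterative) route is to peel off the innermost variable: by Fubini $\Phi(\tilde{\mathbf x},\cdot)\in L^1\cap L^p(\Omega\times E;H)$ for $\m^{\otimes(k-1)}$–a.a.\ $\tilde{\mathbf x}$, so the $j=1$ hypothesis and Proposition~\ref{prop:coincides}(i) give $\Phi(\tilde{\mathbf x},\cdot)\in\dom(\delta^{(p)})$ with $\delta^{(p)}(\Phi(\tilde{\mathbf x},\cdot))=\delta(\Phi(\tilde{\mathbf x},\cdot))=:\Psi(\tilde{\mathbf x})$; the hypotheses for $j=2,\dots,k$ turn into the order–$(k-1)$ hypotheses for $\Psi$, so an induction together with Lemma~\ref{lem:coincides_multi} would close the loop.

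I expect the main obstacle to be the integrability bookkeeping, which is exactly why the hypothesis is imposed for all $j$ and not only for $j=k$. In the direct route the delicate point is the passage from bounded test variables to all $F\in\bD^{k,p'}(H)$: unlike the first–order situation, the componentwise truncation of Lemma~\ref{lem:coreD12}(i) no longer dominates the iterated derivatives $D^jF_n$ (the lower–order terms fail to decay in the spurious variables), so the density of $\bD^{k,p'}(H)\cap L^\infty(\Omega;H)$ in $\bD^{k,p'}(H)$ needs its own argument. In the iterative route the analogous difficulty is that the pointwise conditions only yield $\Psi(\tilde{\mathbf x})\in L^p(\Omega;H)$ for almost every $\tilde{\mathbf x}$, whereas membership of $\Psi$ in $\dom((\delta^{(p)})^{k-1})$ requires global $L^p(\Omega\times E^{k-1};H)$–integrability, which need not follow from the pointwise bound alone. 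Handling this—by localizing with Lemma~\ref{lem:local_estimate_delta} and exploiting the closedness of $\delta^{(p)}$, or by committing to the direct duality argument—is where the real work lies.
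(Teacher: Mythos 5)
Your part (ii) is correct and is precisely the paper's argument: test the two adjoint relations on $F\in\bD^{k,p'}(H)\cap\bD^{k,q'}(H)$ and use that this space (it contains the bounded cylinder variables of Lemma~\ref{lem:coreD12}(ii)) is dense in $L^{p'}(\Omega;H)$ and in $L^{q'}(\Omega;H)$.

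Part (i), as written, is not a completed proof: you describe two routes and explicitly leave the decisive step of each open. The paper's proof is your iterative route, organized as an induction over $k$. Writing $\Psi:=\big(\delta(\Phi(\tilde{\mathbf x},\cdot))\big)_{\tilde{\mathbf x}\in E^{k-1}}$, it asserts
\begin{align*}
\delta^k(\Phi)
&=\delta^{k-1}\Big(\big(\delta(\Phi(\tilde{\mathbf x},\cdot))\big)_{\tilde{\mathbf x}}\Big)
=\delta^{k-1}\Big(\big(\delta^{(p)}(\Phi(\tilde{\mathbf x},\cdot))\big)_{\tilde{\mathbf x}}\Big)\\
&=\delta^{k-1,(p)}\Big(\big(\delta^{(p)}(\Phi(\tilde{\mathbf x},\cdot))\big)_{\tilde{\mathbf x}}\Big)
=(\delta^{(p)})^k(\Phi)
=\delta^{k,(p)}(\Phi),
\end{align*}
where the second equality is Proposition~\ref{prop:coincides}(i) applied for almost all $\tilde{\mathbf x}$, the third is the induction hypothesis applied to $\Psi$, and the last two come from Lemma~\ref{lem:coincides_multi} combined with Proposition~\ref{prop:coincides}(i). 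Your worry is aimed exactly at the third equality: invoking the induction hypothesis requires $\Psi\in L^1(\Omega\times E^{k-1};H)\cap L^p(\Omega\times E^{k-1};H)$; the $L^1$ part is free from \eqref{eq:bound_stoch_int}, but the pointwise statement ``$\delta(\Phi(\tilde{\mathbf x},\cdot))\in L^p(\Omega;H)$ for almost all $\tilde{\mathbf x}$'' does not by itself give the global $L^p$ bound, since $\delta$ admits no estimate of $\|\delta(\Phi(\tilde{\mathbf x},\cdot))\|_{L^p(\Omega;H)}$ in terms of $\|\Phi(\tilde{\mathbf x},\cdot)\|_{L^1\cap L^p}$. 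The paper invokes the induction hypothesis at this point without comment; its argument is justified once the hypothesis of (i) is read in the global sense suggested by the conventions of Remark~\ref{rem:Kabanov-Skorohod_2}, namely that each field $\big(\delta^j(\Phi(\tilde{\mathbf x},\cdot))\big)_{\tilde{\mathbf x}}$ is an element of $L^p(\Omega\times E^{k-j};H)$. Under that reading, the $j=1$ hypothesis is exactly the missing bound on $\Psi$, and the level-$(j+1)$ hypotheses on $\Phi$ become the level-$j$ hypotheses on $\Psi$, because $\delta^j(\Psi(\tilde{\mathbf y},\cdot))=\delta^{j+1}(\Phi(\tilde{\mathbf y},\cdot))$ pathwise; the induction then closes with no further work. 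So commit to the iterative route with this reading of the hypothesis, and your sketch becomes the paper's proof; no localization via Lemma~\ref{lem:local_estimate_delta} is needed.

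Your direct route is not what the paper does, and the obstruction you identify there is genuine: the truncations of Lemma~\ref{lem:coreD12}(i) do not dominate higher-order differences (the lower-order terms appearing in Proposition~\ref{lem:Dn_explicit} are constant in the extra variables and hence not integrable over $E^j$ when $\m(E)=\infty$), so the density of $\bD^{k,p'}(H)\cap L^\infty(\Omega;H)$ in $\bD^{k,p'}(H)$ is not available from the paper. The induction is exactly how the paper avoids ever needing it, while still using the higher-order duality of Proposition~\ref{lem:higher_duality} only through its first-order building block.
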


\begin{proof}
\felix{see notes 4.2.'17--A-- u. 4.2.'17--B--}
Let $p':=\frac{p}{p-1}$, $q':=\frac{q}{q-1}$ be the conjugate exponents to $p,q>1$. We prove the assertion (i): For $k=1$ it is trivially true. In order to perform an induction over $k$, let $k\geq2$ and assume that the assertion holds for $k-1$.
We have
\begin{align*}
\delta^k(\Phi)
&=
\delta^{k-1}\circ\delta(\Phi)
=
\delta^{k-1}\Big(\big(\delta(\Phi(\tilde{\mathbf x},\cdot)\big)_{\tilde{\mathbf x}\in E^{k-1}}\Big)
=
\delta^{k-1}\Big(\big(\delta^{(p)}(\Phi(\tilde{\mathbf x},\cdot)\big)_{\tilde{\mathbf x}\in E^{k-1}}\Big)\\
&=
\delta^{k-1,(p)}\Big(\big(\delta^{(p)}(\Phi(\tilde{\mathbf x},\cdot)\big)_{\tilde{\mathbf x}\in E^{k-1}}\Big)
=
(\delta^{(p)})^k(\Phi)
=
\delta^{k,(p)}(\Phi).
\end{align*}
Here the third equality follows Proposition~\ref{prop:coincides}(i) and the assumptions of $\Phi$, the fourth equality is due to the induction hypothesis, and the last two equalities are consequences of Lemma~\ref{lem:coincides_multi} in combination with Proposition~\ref{prop:coincides}(i). In particular we obtain that $\Phi\in\dom(\delta^{k,(p)})$.
%
Assertion (ii) can be verified in complete analogy to the proof of Proposition~\ref{prop:coincides}(ii).
\end{proof}

Note that Proposition~\ref{prop:coincides_multi} allows us to simplify notation by setting $\delta^k(\Phi):=\delta^{k,(p)}(\Phi)$ for $\Phi\in\dom(\delta^{k,(p)})$, $p>1$

%

\begin{remark}[Higher order commutation relations]
In addition to Setting~\ref{setting} assume that $(E,\cE)$ is a Borel space.
Let  $1\leq k\leq \ell<\infty$ and assume that $\Phi\in L^1(\Omega\times\X^k;H)$ or $\Phi\in L^2(\X^k;\bD^{k,2}(H))$. In the former case the multiple Kabanov-Skorohod integral $\delta^k(\Phi)$ is defined in the pathwise sense by Definition~\ref{def:Kabanov-Skorohod_multiple_pathwise}; in the latter case it follows from Lemma~\ref{lem:coincides_multi}, Proposition~\ref{prop:L2D12_subset_domdelta}, Proposition~\ref{prop:commutator_relation_L^2} and induction over $k$ that $\Phi\in \dom(\delta^{k,(2)})$. 
\felix{see notes 6.2.'17}
Under suitable additional technical assumptions on $\Phi$, similar to those in Proposition~\ref{lem:commutator1} and Proposition~\ref{prop:commutator_relation_L^2}, by a technical induction argument, it holds that
\begin{align}\label{eq:hocr}
  D^\ell_{\mathbf{x}}
  \delta^k(\Phi)
  =
  \delta^k(D^\ell_{\mathbf{x}}\Phi)
  +
  \sum_{i=1}^k
  \sum_{\substack{I\subset[k]\\|I|=i}}
  \sum_{\substack{J\subset[\ell]\\|J|=i}}
  \delta^{k-i}
  \big(
    D_{\mathbf{x}_{J^c}}^{\ell-i}
    \Phi_I
    (\mathbf{x}_J,\cdot)
  \big).
\end{align}
Here we denote for $I\subset[k]=\{1,\ldots,k\}$ and $\tilde{\mathbf x}\in E^{|I|}$ by $\Phi_I(\tilde{\mathbf x},\cdot)$ the random function on $\Omega\times E^{k-|I|}$ obtained by evaluating $\Phi$ in the coordinates corresponding to $I$ at $\tilde{\mathbf x}$ and letting the other variables be undetermined.
Since we have no immediate application in mind for this formula we refrain from further details.
%
\end{remark}


\section{A space-time setting}
\label{sec:space_time}

In this section we assume that the underlying measure space in Setting~\ref{setting} is of the special form
\begin{align}\label{eq:E_space-time}
(E,\cE,\mu)=\big([0,T]\times U,\;\cB([0,T]\times U),\;\lambda\otimes\nu\big),
\end{align}
where $T\in(0,\infty)$, $(U,\|\cdot\|_U,\langle\cdot,\cdot\rangle_U)$ is a separable real Hilbert space, $\cB(\ldots)$ denotes the Borel-$\sigma$-algebra, $\lambda$ denotes Lebesgue measure on $([0,T],\cB([0,T]))$, and
$\nu$ is a $\sigma$-finite measure on $(U,\cB(U))$. In applications $\nu$ is typically a Lévy measure. The Poisson Malliavin calculus from Section~\ref{sec:Malliavin} is perfectly valid in this special case. 
In Subsections~\ref{subsec:Malliavin_operators_space-time} and \ref{subsec:duality_comm_space-time} below we complement the general theory from Section~\ref{sec:Malliavin} by a series of results which are specifically adapted to the space-time structure \eqref{eq:E_space-time} and particularly relevant for the analysis of evolutionary SPDE with Lévy noise. In Subsection~\ref{subsec:PRMandLP} we describe the connection of our framework to Hilbert space-valued Lévy processes.
We start by introducing some additional notation.  
\begin{notation}
\label{notation_space-time}
In the sequel, we use the following notation:
\begin{itemize}
\item
$(\cF_t)_{t\in[0,T]}$ denotes the filtration given by 
$\cF_t:=\bigcap_{u\in(t,T]}\cF_{[0,u]\times U},$
where $\cF_{[0,u]\times U}$ is defined according to \eqref{eq:defFA} as the $\bP$-completion of the $\sigma$-algebra generated by the random variables $N\big(([0,u]\times U)\cap B\big)$, $B\in\cB([0,T]\times U)$.
\item
We set $\Omega_T:=\Omega\times [0,T]$, $\bP_T:=\bP\otimes\lambda$ and $\cP_T\subset\cF\otimes\cB([0,T])$  denotes the $\sigma$-algebra of predictable sets corresponding to $(\cF_t)_{t\in[0,T]}$, i.e.,
$
\cP_T:=\sigma\big(\{F_s\times (s,t]:0\leq s\leq t\leq T,\, F_s\in\cF_s\}\cup\{\{0\}\times F_0:F_0\in\cF_0\}\big).
$
\item  
We set
$
L^p_{\pred}(\Omega_T\times U; H):=L^p\big(\Omega_T\times U,\cP_T\otimes\cB(U),\P_T\otimes\nu;H\big)
$
for $p\in\{0\}\cup[1,\infty]$.
\item If $\Phi\in L^p_{\pred}(\Omega_T\times U; H)$ for some $p\in[1,2]$, then the stochastic integral of $\Phi$ w.r.t.~the compensated Poisson random measure $\tilde N$ is denoted by $I^{\tilde N}_t(\Phi)=\int_0^t\int_U\Phi(t,x)\,\tilde N(\dl t,\dl x)$, $t\in[0,T]$, cf.~Remark~\ref{rem:stoch_int_space-time} below.
\end{itemize}
\end{notation}
Note that the filtration $(\cF_t)_{t\in[0,T]}$ satisfies the `usual conditions' ($\bP$-completeness and right-continuity). This simplifies several argumentations in applications of our theory, e.g., when it comes to dealing with stopping times in the context of Lévy-driven SPDE.

\subsection{Malliavin operators in the space-time setting}
\label{subsec:Malliavin_operators_space-time} 

Here we collect some useful results concerning the difference operator $D$ and the Kabanov-Skorohod integral $\delta$ in the space-time setting determined by \eqref{eq:E_space-time}.

\subsubsection*{Difference operator in the space-time setting}
Note that by \eqref{eq:E_space-time} the Poisson ran-dom measure $N\colon \Omega\to\mathbf N$ from Setting~\ref{setting} now takes values in the space $\mathbf N=\mathbf N([0,T]\times U)$ 
of $\sigma$-finite $\bN_0\cup\{+\infty\}$-valued measures on $([0,T]\times U,\cB([0,T]\times U))$, endowed with the corresponding $\sigma$-algebra $\cN=\cN([0,T]\times U)$.
Accordingly, the difference operator  
$$
D\colon L^0(\Omega;H)\to L^0\big(\Omega\times([0,T]\times U);H\big)
$$
introduced in Definition~\ref{def:diffOp} now maps random variables $F$ to random functions 
$
DF=(D_{t,x}F)_{(t,x)\in[0,T]\times U}
$
on the space-time domain $[0,T]\times U$.

Our first result is an analogue of Lemma~\ref{lem:DF_F_FA_measurable} in the current setting.
 
\begin{corollary}\label{cor:DF_is_zero}
Let $t\in[0,T]$ and $F\in L^0(\Omega;H)$ be $\cF_t$-measurable. 
Then $DF=0$ $\bP_T\otimes \nu$-almost everywhere on $\Omega\times(t,T]\times U$.
\end{corollary}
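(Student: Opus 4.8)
The plan is to deduce the corollary from Lemma~\ref{lem:DF_F_FA_measurable} by exhausting $(t,T]$ from the right. If $t=T$ then $(t,T]=\emptyset$ and there is nothing to prove, so assume $t<T$.

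First I would observe that, by the very definition $\cF_t=\bigcap_{u\in(t,T]}\cF_{[0,u]\times U}$, we have $\cF_t\subseteq\cF_{[0,u]\times U}$ for every $u\in(t,T]$. Consequently, any $\cF_t$-measurable representative of $F$ is also $\cF_{[0,u]\times U}$-measurable, i.e.\ $F$ is $\cF_{A_u}$-measurable in the sense of Subsection~\ref{sec:DiffOp}, where $A_u:=[0,u]\times U\in\cE$. Since $A_u^c=(u,T]\times U$, Lemma~\ref{lem:DF_F_FA_measurable} applied with $A=A_u$ yields $DF=0$ $\bP\otimes\m$-almost everywhere on $\Omega\times\big((u,T]\times U\big)$, for each fixed $u\in(t,T]$.

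Next I would pass to the limit $u\downarrow t$. Choosing a sequence $u_n\in(t,T]$ with $u_n\downarrow t$ (e.g.\ $u_n=t+\tfrac{T-t}{n+1}$), the sets $(u_n,T]$ increase to $(t,T]$, so that $\bigcup_{n\in\N}\Omega\times\big((u_n,T]\times U\big)=\Omega\times\big((t,T]\times U\big)$. As a countable union of $\bP\otimes\m$-null sets is again a $\bP\otimes\m$-null set, we conclude $DF=0$ $\bP\otimes\m$-almost everywhere on $\Omega\times\big((t,T]\times U\big)$.

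Finally it remains to reconcile the measures: since $\m=\lambda\otimes\nu$ by \eqref{eq:E_space-time}, associativity of product measures gives $\bP\otimes\m=\bP\otimes\lambda\otimes\nu=(\bP\otimes\lambda)\otimes\nu=\bP_T\otimes\nu$ on $\Omega\times[0,T]\times U$, so the two notions of ``almost everywhere'' coincide and the assertion follows. I do not expect a genuine obstacle here; the only points requiring care are the measure-theoretic bookkeeping, namely the $\sigma$-algebra inclusion $\cF_t\subseteq\cF_{[0,u]\times U}$ (immediate from the definition of $\cF_t$ as an intersection), the exhaustion $(u_n,T]\nearrow(t,T]$, and the reidentification of $\bP\otimes\m$ with $\bP_T\otimes\nu$.
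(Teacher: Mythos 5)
Your proof is correct and follows exactly the paper's own argument: the paper likewise applies Lemma~\ref{lem:DF_F_FA_measurable} with $A=[0,u]\times U$ for each $u\in(t,T]$ and then lets $u\downarrow t$, merely leaving the countable exhaustion and the identification $\bP\otimes\m=\bP_T\otimes\nu$ implicit. Your write-up just makes these routine measure-theoretic steps explicit.
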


\begin{proof}
As $F$ is $\cF_{[0,u]\times U}$-measurable for all $u\in(t,T]$, Lemma~\ref{lem:DF_F_FA_measurable} yields that $DF=0$ $\P_T\otimes \nu$-almost everywhere on $\Omega\times(u,T]\times U$ for all $u\in(t,T]$, which implies the assertion.
\end{proof}

Next we show two   auxiliary results concerning the Malliavin derivative of stochastic processes $X=(X(t))_{t\in[0,T]}$. The following lemma compares the derivative of $X(t)$ for fixed $t$ in the sense of Definition~\ref{def:diffOp} with the derivative of $X$ in the sense of Remark~\ref{rem:D_for_random_fields}.

\begin{lemma}[Malliavin derivative of stochastic processes]\label{lem:DX}
Let 
$X\colon \Omega_T\to H$  
be a stochastic process which is $\cF\otimes\cB([0,T])$-$\cB(H)$-measurable and stochastically continuous. For $t\in[0,T]$ let
\[D(X(t))
\in 
L^0\big(\Omega\times([0,T]\times U);\,H\big)\]
be the Malliavin derivative of $X(t)\in L^0(\Omega;H)$, in the sense of Definition~\ref{def:diffOp} with $F=X(t)$, and let
\[DX
\in 
L^0\big(\Omega_T\times([0,T]\times U);\,H\big)\]
be the Malliavin derivative of $X\in L^0(\Omega_T;H)$, in the sense of Remark~\ref{rem:D_for_random_fields} with  and $F=X$ and $(S,\cS,m)=([0,T],\cB([0,T]),\lambda)$. Then there exists a 
representative 
$(D_{s,x}X(t))_{t\in[0,T],(s,x)\in [0,T]\times U}$
of 
(the equivalence class of random functions) 
$DX$ such that
\[\text{for all }t\in[0,T]:\quad D_{s,x}(X(t))=D_{s,x}X(t)\text{ in }L^0(\Omega\times([0,T]\times U);H).
\]
%
\end{lemma}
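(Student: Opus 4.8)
The plan is to upgrade the $\lambda$-almost-everywhere identity furnished by Remark~\ref{rem:D_for_random_fields} to an identity valid for \emph{every} $t$, by exhibiting a representative of $DX$ that depends continuously on $t$. First I would fix the objects precisely. For each $t$ the slice $X(t)=X(\cdot,t)$ is $\cF$-$\cB(H)$-measurable (a slice of the jointly measurable map $X$), so its class in $L^0(\Omega;H)$ is well defined and hence $D(X(t))\in L^0(\Omega\times E;H)$ is well defined by Definition~\ref{def:diffOp}, where $E=[0,T]\times U$. On the other hand, fixing a jointly measurable version $f\colon\mathbf N\times[0,T]\to H$ of $X$ gives the version $(s,x,t)\mapsto f(N+\delta_{(s,x)},t)-f(N,t)$ of $DX$, and Remark~\ref{rem:D_for_random_fields} states that $D_{s,x}(X(t))=D_{s,x}X(t)$ in $L^0(\Omega\times E;H)$ for $\lambda$-almost all $t$. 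It therefore suffices to construct a jointly measurable $\hat\Psi$ whose $t$-slice lies in the class $D(X(t))$ for \emph{all} $t$: since those classes coincide with the slices of $DX$ for a.a.\ $t$, any such $\hat\Psi$ is automatically a representative of $DX$ (two elements of $L^0(\Omega_T\times E;H)$ whose $t$-slices agree for a.a.\ $t$ are equal, by the $L^0$-identification conventions of Section~\ref{sec:Notation_and_conventions} and Fubini's theorem).

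The decisive ingredient is that the difference operator is continuous from $L^0(\Omega;H)$ to $L^0(\Omega\times E;H)$, which follows from Mecke's formula exactly as the $L^0$-continuity statements in Corollary~\ref{cor:localizing_lemma_1} and Remark~\ref{rem:D_for_random_fields}. Indeed, if $F_n\to F$ in $L^0(\Omega;H)$, then for $B\in\cE_0$ the identity $\|\varepsilon^+_{(s,x)}F_n-\varepsilon^+_{(s,x)}F\|=\varepsilon^+_{(s,x)}\|F_n-F\|$ together with \eqref{eq:Mecke1} yields
\begin{equation*}
\E\int_B\big(1\wedge\|\varepsilon^+_{(s,x)}F_n-\varepsilon^+_{(s,x)}F\|\big)\,\mu(\dl(s,x))=\E\big[N(B)\,\big(1\wedge\|F_n-F\|\big)\big]\xrightarrow{\;n\to\infty\;}0,
\end{equation*}
the convergence holding by dominated convergence since $N(B)\in L^1(\Omega)$ and $1\wedge\|F_n-F\|\to0$ in probability. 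Combined with the elementary continuity of the embedding $L^0(\Omega;H)\hookrightarrow L^0(\Omega\times E;H)$, this shows $D=\varepsilon^+-\mathrm{id}$ is $L^0$-continuous. Composing with the stochastically continuous curve $t\mapsto X(t)$, which is uniformly continuous on the compact interval $[0,T]$, I obtain that $t\mapsto D(X(t))$ is \emph{uniformly} continuous from $[0,T]$ into $(L^0(\Omega\times E;H),d)$.

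Finally I would construct $\hat\Psi$ by dyadic interpolation exploiting this uniform continuity. For $n\in\N$ set $\Psi_n(t):=D(X(\lfloor t\rfloor_n))$, where $\lfloor t\rfloor_n$ is the left endpoint of the $n$-th dyadic subinterval of $[0,T]$ containing $t$; choosing jointly measurable representatives of the finitely many classes $D(X(\lfloor t\rfloor_n))$ makes each $\Psi_n$ jointly measurable in $(t,\omega,(s,x))$. Uniform continuity gives $\sup_{t}d(\Psi_n(t),D(X(t)))\to0$, so I may pass to a subsequence with $\sup_t d(\Psi_{n_{k+1}}(t),\Psi_{n_k}(t))\le2^{-k}$. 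For \emph{every} fixed $t$ this bound is summable in $k$, so each term $\E\int_{B}(1\wedge\|\Psi_{n_{k+1}}(t)-\Psi_{n_k}(t)\|)\,\mu(\dl(s,x))$, $B\in\cE_0$, is summable, whence $\Psi_{n_k}(t,\cdot,\cdot)$ converges $\bP\otimes\mu$-a.e.\ on $\Omega\times E$. Defining $\hat\Psi:=\lim_k\Psi_{n_k}$ on the measurable set where the limit exists (and $0$ elsewhere) produces a jointly measurable function whose $t$-slice equals $\lim_k\Psi_{n_k}(t)=D(X(t))$ in $L^0(\Omega\times E;H)$ for every $t$; this is the desired representative.

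The main obstacle is precisely the passage from ``$\lambda$-almost all $t$'', which is all that Remark~\ref{rem:D_for_random_fields} and any naive jointly measurable modification deliver, to ``all $t$'': integrating the defect over $t$ destroys the pointwise-in-$t$ information. The resolution is that stochastic continuity upgrades to uniform continuity on $[0,T]$, which allows one to extract a \emph{single} subsequence converging $\bP\otimes\mu$-a.e.\ simultaneously for every $t$, rather than only for almost every $t$.
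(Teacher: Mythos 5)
Your proof is correct, but it takes a genuinely different route from the paper's. The paper works entirely at the level of representatives on the canonical space: by the factorization lemma each $X(t)$ has a representative $\tilde f(\cdot,t)\colon\NN\to H$, the stochastic continuity of $X$ transfers to the process $\tilde f$ on the probability space $(\NN,\cN,\bP_N)$, and the standard measurable-modification theorem for stochastically continuous processes (cited as \cite[Proposition~3.21]{PesZab2007}) yields a single $\cN\otimes\cB([0,T])$-measurable $f$ with $f(\cdot,t)=\tilde f(\cdot,t)$ $\bP_N$-a.s.\ for \emph{every} $t$; the formula $f(N+\delta_{(s,x)},t)-f(N,t)$ then represents $DX$ and every $D(X(t))$ simultaneously, so the ``for all $t$'' identity is definitional and no separate identification step is needed. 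You instead prove that $D$ is continuous from $L^0(\Omega;H)$ to $L^0(\Omega\times([0,T]\times U);H)$ --- a clean strengthening of Corollary~\ref{cor:localizing_lemma_1}, which the paper only states in $L^p$-to-$L^0(E;L^q)$ form, and your Mecke-plus-dominated-convergence proof of this fact is sound --- deduce from compactness of $[0,T]$ the uniform continuity of $t\mapsto D(X(t))$, and then carry out the measurable-modification construction by hand for the derivative process itself (dyadic piecewise-constant approximants, a rapidly Cauchy subsequence, $\bP\otimes\m$-a.e.\ limits), finally identifying the resulting $\hat\Psi$ with $DX$ via Remark~\ref{rem:D_for_random_fields} and Fubini; all of these steps check out. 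What the paper's route buys is brevity: the modification theorem is invoked once, applied to $X$ lifted to $\NN$, and the shift $\eta\mapsto\eta+\delta_{(s,x)}$ does the rest. What your route buys is a reusable operator-theoretic fact (the $L^0$-to-$L^0$ continuity of the difference operator) and independence from the lifting to $(\NN,\cN,\bP_N)$, at the cost of essentially re-proving inline the modification theorem that the paper simply cites.
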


\begin{remark}
The significance of Lemma~\ref{lem:DX} lies in the fact that the equality  $D_{s,x}(X(t))=D_{s,x}X(t)$ in $L^0(\Omega\times([0,T]\times U);H)$ holds for \emph{all} $t\in[0,T]$.
We already know from Remark~\ref{rem:D_for_random_fields} that it holds for \emph{almost all} $t\in[0,T]$.  
To show the stronger assertion in Lemma~\ref{lem:DX} we additionally assume the stochastic continuity of $X$. The latter is used to handle the fact that the factorization lemma from measure theory yields equalities only in an almost sure sense because the underlying $\sigma$-algebra $\cF$ is defined as the $\P$-completion of $\sigma(N)$ and not as $\sigma(N)$ itself.
\end{remark}


\begin{proof}[Proof of Lemma~\ref{lem:DX}]\felix{cf.~notes 24.9.'16}
The factorization lemma ensures for all $t\in[0,T]$ the existence of a $\cN$-$\cB(H)$-measurable mapping $\tilde f(\cdot,t)\colon \NN\to H$ such that $X(t)=\tilde f(N,t)$ $\P$-almost surely. This defines a stochastic process $\tilde f\colon\NN\times[0,T]\to H$ on the probability space $(\NN,\cN,\P_N)$. Clearly, the stochastic continuity of $X$ implies the stochastic continuity of $\tilde f$. Hence, by a standard Borel-Cantelli argument (see, e.g., \cite[Proposition~3.21]{PesZab2007}), there exists a $\cN\otimes\cB([0,T])$-$\cB(H)$-measurable process $f\colon\NN\times[0,T]\to H$ such that, for all $t\in[0,T]$, $\tilde f(\cdot,t)= f(\cdot,t)$ $\P_N$-almost surely. As a consequence, we have both $D_{s,x}X(t)=f(N+\delta_{s,x},t)-f(N,t)$ as an equality in $L^0(\Omega_T\times([0,T]\times U);H)$ and, for all $t\in[0,T]$, $D_{s,x}(X(t))=f(N+\delta_{s,x},t)- f(N,t)$ as an equality in $L^0(\Omega\times([0,T]\times U);H)$.
\end{proof}

Our next result concerns the Malliavin derivative of time-integrals of stochastic processes. An analogy for stochastic integrals is given in Propositions~\ref{prop:comm_space-time1} and \ref{prop:comm_space-time2} below.

\begin{prop}[Malliavin derivative of time integrals]\label{lem:intDX}
Let $X\colon \Omega_T\to H$ be a stochastic process which is $\cF\otimes\cB([0,T])$-$\cB(H)$-measurable and stochastically continuous. Let $\meas$ be a $\sigma$-finite Borel-measure on $[0,T]$ and assume that $X$ belongs to $L^1([0,T],\meas;L^p(\Omega;H))$ for some $p>1$. Then 
for all $B\in \cB(U)$ with $\nu(B)<\infty$
we have
$\bE\big[\int_{[0,T]}\int_B\int_{[0,T]}\|D_{s,x}X(t)\|\,\meas(\dl t)\,\nu(\dl x)\,\dl s\big]<\infty$,
so that the integral $\int_{[0,T]} D_{s,x}X(t)\,\meas(\dl t)$ is defined $\bP\otimes(\lambda\otimes\nu)$-almost everywhere on $\Omega\times([0,T]\times U)$ as an $H$-valued Bochner integral. Moreover,
the equality
\[D_{s,x}\int_{[0,T]} X(t)\,\meas(\dl t)=\int_{[0,T]} D_{s,x}X(t)\,\meas(\dl t)\]
holds in $L^0(U;L^1(\Omega_T;H))\subset L^0\big(\Omega\times([0,T]\times U);H\big)$.  
\end{prop}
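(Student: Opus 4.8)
The plan is to reduce everything to the pointwise identity $D_{s,x}X(t)=\varepsilon^+_{s,x}X(t)-X(t)$ together with Mecke's formula, and then to interchange the difference operator $D_{s,x}$ with the $\meas$-integral by means of a jointly measurable representative of the process supplied by Lemma~\ref{lem:DX}.

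First I would prove the integrability claim. Since $\|D_{s,x}X(t)\|\leq\|\varepsilon^+_{s,x}X(t)\|+\|X(t)\|$, it suffices to bound the two resulting triple integrals. Fix $B\in\cB(U)$ with $\nu(B)<\infty$, so that $[0,T]\times B\in\cE_0$ with $\m([0,T]\times B)=T\nu(B)$. The contribution of $\|X(t)\|$ equals $T\,\nu(B)\int_{[0,T]}\|X(t)\|_{L^1(\Omega;H)}\,\meas(\dl t)$, which is finite because $\|X(t)\|_{L^1(\Omega;H)}\leq\|X(t)\|_{L^p(\Omega;H)}$ and $X\in L^1([0,T],\meas;L^p(\Omega;H))$. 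For the contribution of $\varepsilon^+$ I would use Tonelli to interchange the $\meas$-integral with the $(\lambda\otimes\nu)$-integral over $[0,T]\times B$, apply Mecke's formula \eqref{eq:Mecke1} for each fixed $t$ exactly as in the proof of Lemma~\ref{lem:local_estimate_D} to obtain $\bE\int_{[0,T]\times B}\varepsilon^+_{s,x}\|X(t)\|\,\m(\diffin{(s,x)})=\bE\big[N([0,T]\times B)\,\|X(t)\|\big]$, and then apply Hölder's inequality with exponents $p,p'$ to bound this by $\|N([0,T]\times B)\|_{L^{p'}(\Omega;\R)}\,\|X(t)\|_{L^p(\Omega;H)}$. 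Here $p>1$ is essential, as the $L^{p'}$-norm of the Poisson variable $N([0,T]\times B)$ is finite precisely when $p'<\infty$; integrating over $t$ against $\meas$ then yields a finite bound. This gives finiteness for each such $B$; as a by-product the Bochner integral $\int_{[0,T]}D_{s,x}X(t)\,\meas(\dl t)$ is defined $\bP\otimes\m$-almost everywhere, and, exhausting $U$ by sets of finite $\nu$-measure, the map $x\mapsto\int_{[0,T]}D_{\cdot,x}X(t)\,\meas(\dl t)$ belongs to $L^0(U;L^1(\Omega_T;H))$.

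For the identity I would invoke Lemma~\ref{lem:DX} to fix a $\cN\otimes\cB([0,T])$-$\cB(H)$-measurable process $f\colon\NN\times[0,T]\to H$ with $X(t)=f(N,t)$ $\bP$-a.s.\ for \emph{every} $t$ and such that $\big(f(N+\delta_{s,x},t)-f(N,t)\big)$ is a representative of $DX$. A Fubini argument, using the integrability just established, shows that $Y:=\int_{[0,T]}X(t)\,\meas(\dl t)$ admits the representative $g\colon\NN\to H$ given by
\[
g(\eta):=\int_{[0,T]}f(\eta,t)\,\meas(\dl t)\quad\text{if }\int_{[0,T]}\|f(\eta,t)\|\,\meas(\dl t)<\infty,\qquad g(\eta):=0\text{ else,}
\]
whose measurability follows as in the construction in the proof of Proposition~\ref{lem:commutator1}. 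Then $D_{s,x}Y=g(N+\delta_{s,x})-g(N)$. The crucial point is that, by the integrability of $\varepsilon^+$ from the first step, $\int_{[0,T]}\|f(N+\delta_{s,x},t)\|\,\meas(\dl t)<\infty$ for $\bP\otimes\m$-almost all $(\omega,(s,x))$, so that $g(N+\delta_{s,x})=\int_{[0,T]}\varepsilon^+_{s,x}X(t)\,\meas(\dl t)$ there (and not the trivial branch). Subtracting $g(N)=\int_{[0,T]}X(t)\,\meas(\dl t)$ and combining the two finite Bochner integrals gives $D_{s,x}Y=\int_{[0,T]}\big(\varepsilon^+_{s,x}X(t)-X(t)\big)\,\meas(\dl t)=\int_{[0,T]}D_{s,x}X(t)\,\meas(\dl t)$, where $D_{s,x}X(t)=f(N+\delta_{s,x},t)-f(N,t)$ is exactly the representative from Lemma~\ref{lem:DX}; since the right-hand side lies in $L^0(U;L^1(\Omega_T;H))$ by the first step, the asserted equality holds in that space.

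The main obstacle is precisely the interchange of $D_{s,x}$ with $\int(\cdot)\,\meas(\dl t)$, which rests on two technical points that must be handled carefully: selecting a jointly measurable representative $f$ valid simultaneously for all $t$ (provided by Lemma~\ref{lem:DX}, where the stochastic continuity of $X$ enters), and verifying that after the shift $N\mapsto N+\delta_{s,x}$ the $t$-integral of $f$ remains finite almost everywhere, so that $g(N+\delta_{s,x})$ is given by the integral branch. This last verification is exactly where the Mecke-based integrability estimate of the first step is indispensable.
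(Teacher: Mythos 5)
Your proof is correct and follows essentially the same route as the paper's: the integrability bound via Mecke's formula and H\"older's inequality (you re-derive inline what the paper obtains by citing the local $L^q$-estimate of Lemma~\ref{lem:local_estimate_D}), and then the interchange of $\varepsilon^+_{s,x}$ with the $\meas$-integral using the jointly measurable representative $f$ from Lemma~\ref{lem:DX}, the representative $g$ with its integral/trivial branches, and the a.e.\ finiteness of $\int_{[0,T]}\|f(N+\delta_{s,x},t)\|\,\meas(\dl t)$ to ensure the integral branch is active. In particular, you correctly identified the two delicate points (joint measurability valid for all $t$, and finiteness after the shift $N\mapsto N+\delta_{s,x}$), which are exactly the points the paper's proof addresses.
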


\begin{proof}\felix{cf.~notes 24.9.'16}
The first assertion is a consequence of the local $L^q$-estimate from Lemma~\ref{lem:local_estimate_D} since
\begin{equation}\label{proof:intDX_1}
\begin{aligned}
\int_{[0,T]}&\Big[\bE\int_0^T\int_B\|D_{s,x}X(t)\|\,\nu(\dl x)\,\dl s\Big]\,\meas(\dl t)\\
&=\int_{[0,T]}\big\|\one_{\Omega\times([0,T]\times B)} DX(t)\big\|_{L^1(\Omega\times ([0,T]\times U);H)}\,\meas(\dl t)\\
&\leq C_{[0,T]\times B}\int_{[0,T]}
\|X(t)\|_{L^p(\Omega;H)}\,\meas(\dl t)<\infty.
\end{aligned}
\end{equation}
The second assertion follows directly from the equality in $L^0\big(\Omega\times([0,T]\times U);H\big)$
\begin{align}\label{proof:intDX_2}
\varepsilon_{s,x}^+\int_{[0,T]} X(t)\,\meas(\dl t)=\int_{[0,T]} \varepsilon_{s,x}^+X(t)\,\meas(\dl t).
\end{align}
To justify \eqref{proof:intDX_2} we first note that \eqref{proof:intDX_1} implies
\begin{align}\label{proof:intDX_3}
\bE\int_0^T\int_B\Big[\int_{[0,T]}\|\varepsilon_{s,x}^+ X(t)\|\,\meas(\dl t)\Big]\,\nu(\dl x)\,\dl s<\infty,
\end{align}
hence $\int_0^T\varepsilon_{s,x}^+X(t)\,\meas(\dl t)$ is defined 
$\bP\otimes(\lambda\otimes\nu)$-almost everywhere on $\Omega\times([0,T]\times U)$
as an $H$-valued Bochner integral.
Let $f\colon\NN\times[0,T]\to H$ be as in the proof of Lemma~\ref{lem:DX}, so that $X(t)=f(N,t)$ $\P$-a.s.\ for all $t\in[0,T]$. Define $g\colon\NN\to H$ by 
\[g(\eta):=
\begin{cases}
\int_{[0,T]}f(\eta,t)\,\meas(\dl t)& \text{ if }\int_{[0,T]}\|f(\eta,t)\|\,\meas(\dl t)<\infty\\
0& \text{ else}
\end{cases},\quad \eta\in\NN,
\]
so that $g$ is $\cN$-$\cB(H)$-measurable and $\int_{[0,T]} X(t)\,\meas(\dl t)=g(N)$ $\P$-a.s.. It follows that 
$
\varepsilon_{s,x}^+\int_{[0,T]}X(t)\,\meas(\dl t)=g(N+\delta_{s,x})=\int_{[0,T]}f(N+\delta_{s,x},t)\,\meas(\dl t),
$
where the second equality holds $\bP\otimes(\lambda\otimes\nu)$-almost everywhere due to \eqref{proof:intDX_3}. This completes the proof as $\int_{[0,T]}f(N+\delta_{s,x},t)\,\meas(\dl t)=\int_{[0,T]}\varepsilon_{s,x}^+ X(t)\,\meas(\dl t)$.
\end{proof}

\subsubsection*{Kabanov-Skorohod integral in the space time setting}
For predictable integrands $\Phi$, the $L^1$-integral $\int_0^T\int_U\varepsilon^-_{t,x}\Phi(t,x)\, N(\dl t,\dl x)$ from Proposition~\ref{lem:stoch_int} and the Kabanov-Skorohod integral $\delta(\Phi)$ introduced in Definition~\ref{def:pathwise_Kabanov}, \ref{def:Kabanov-Skorohod} and \eqref{eq:notation_Kabanov-Skorohod} coincide with Itô-type integrals of $\Phi$ w.r.t.~$N$ and $\tilde N$, respectively. 
This has already been indicated in Remark~\ref{rem:Kabanov-Skorohod_1} and is now stated more precisely.

To this end, we shortly review Hilbert space-valued stochastic integration w.r.t.\ $N$ and $\tilde N$ 
in Remark~\ref{rem:stoch_int_space-time} below.
We thereby follow the lines of \cite[Section~8.7]{PesZab2007}, where real-valued integrands are considered; the arguments therein carry over to the case of Hilbert space-valued integrands if one uses the Burkholder-Davis-Gundy inequality from \cite[Eq.~(1.3)]{brzezniak2009} or \cite[Theorem ~1.1]{Marinelli20161}. We refer to \cite{Dirksen2013,Knoche2005,Ruediger2004} for further results on vector-valued Poisson integration in a space-time setting.

\begin{remark}[Stochastic integration]\label{rem:stoch_int_space-time}
Note that $N$ is a Poisson random measure w.r.t. the filtration $(\cF_t)_{t\in[0,T]}$ in the sense that $N([0,t]\times B)$ is $\cF_t$-measurable and $N((t,t+h]\times B)$ is independent of $\cF_t$ for all $t\in[0,T]$, $h\in[0,T-t]$ and $B\in\cB(U)$;
\felix{see \cite[Remark~2.14]{Knoche2005}} 
in particular, $\big(\tilde N([0,t]\times B)\big)_{t\in[0,T]}$ is an $(\cF_t)$-martingale whenever $\nu(B)<\infty$.
The $L^1$ stochastic integral w.r.t.\ $N$ 
\felix{see notes 19.2.'16}
\begin{equation}\label{eq:I^N_t}
I^N_t\colon L^1_{\pred}(\Omega_T\times U; H)\to L^1(\Omega,\cF_t;H),\;\Phi\mapsto I^N_t(\Phi)
\end{equation}
is defined for all $t\in[0,T]$ by linear and continuous extension of the integral for simple integrands
$I^N_t(F\,\one_{(r,s]\times B}) :=F\, N((r,s\wedge t]\times B)$, where $0\leq r\leq s\leq T$, $F\colon\Omega\to H$ is bounded and $\cF_r$-measurable, and 
$B\in\cB(U)$ with $\nu(B)<\infty$. 
It satisfies $\bE I^N_t(\Phi)=\bE\int_0^T\int_U\Phi(s,x)\,\nu(\dl x)\,\dl s$ and $\bE I^N_t(\|\Phi\|)=\bE\int_0^T\int_U\|\Phi(s,x)\|\,\nu(\dl x)\,\dl s$. We will usually write $\int_0^t\int_U\Phi(s,x)N(\diffin s,\diffin y)$ instead of $I^N_t(\Phi)$. For $p\geq1$ we denote by $\cM_T^p(H)$ be the space of $L^p$-integrable, $H$-valued $(\cF_t)$-martingales $M=(M(t))_{t\in[0,T]}$ with càdlàg (right continuous with left limits) paths. The norm in $\cM_T^p(H)$ is given by $\|M\|_{\cM^p_T(H)}:=(\E\sup_{t\in[0,T]}\|M(t)\|^p)^{1/p}=(\E\|M(T)\|^p)^{1/p}$.
For $p\in[1,2]$ the $L^p$ stochastic integral w.r.t.\ $\tilde N$ 
\felix{see notes 20.2.'16}
\begin{equation}\label{eq:I^tildeN}
I^{\tilde N}\colon L^p_{\pred}(\Omega_T\times U; H)\to \cM_T^p(H),\;\Phi\mapsto I^{\tilde N}(\Phi)=\big(I^{\tilde N}_t(\Phi)\big)_{t\in[0,T]},
\end{equation}
is a linear and continuous mapping
defined by linear and continuous extension of the integral for simple integrands
$I^N_t(F\,\one_{(r,s]\times B}) :=F\, \tilde N((r,s\wedge t]\times B)$, where $F,r,s$ and $B$ are as above. For $p=2$ it is in fact an isometry and we have $\E\|I^{\tilde N}_t(\Phi)\|^2=\bE\int_0^t\int_U\|\Phi(s,x)\|^2\,\nu(\dl x)\dl s$. For different $p\in[1,2]$ the respective integral mappings $I^{\tilde N}$ coincide on the intersections of their domains, and for $\Phi\in L^1_{\pred}(\Omega_T\times U; H)$ it holds that $I^{\tilde N}_t(\Phi)=I^{N}_t(\Phi)-\int_0^t\int_U\Phi(s,x)\,\nu(\dl x)\dl s$. We will usually write $\int_0^t\int_U\Phi(s,x)\tilde N(\diffin s,\diffin x)$ instead of $I^{\tilde N}_t(\Phi)$. 
\end{remark}

We now relate \eqref{eq:I^N_t} and \eqref{eq:I^tildeN} to the integral mappings considered in Section~\ref{subsec:Kabanov-Skorohod}.

\begin{prop}\label{lem:space_time1}
For $\Phi\in L^1_{\pred}(\Omega_T\times U;H)$ we have
\begin{align*}
  \int_0^T\int_U
    \varepsilon_{t,x}^-\Phi(t,x)\,
  N(\diffin{t},\diffin{x})
  =
  \int_0^T\int_U
    \Phi(t,x)\,
  N(\diffin{t},\diffin{x})
\end{align*}
as an equality in $L^1(\Omega;H)$, where the integral on the left hand side is defined according to Proposition~\ref{lem:stoch_int} and the integral on the right hand side is the $L^1$ stochastic integral $I^N_T(\Phi)$ introduced in Remark~\ref{rem:stoch_int_space-time}.
\end{prop}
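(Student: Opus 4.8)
The plan is to establish the identity first for a dense class of simple predictable integrands and then to extend it by continuity. Both sides of the claimed equality define continuous linear maps on $L^1_{\pred}(\Omega_T\times U;H)$: the left-hand side is the restriction to the closed subspace $L^1_{\pred}(\Omega_T\times U;H)\subset L^1(\Omega\times E;H)$ (recall $\P_T\otimes\nu=\P\otimes\m$) of the contraction from Proposition~\ref{lem:stoch_int}, while the right-hand side is $I^N_T$, which is continuous by its very construction in Remark~\ref{rem:stoch_int_space-time} and satisfies $\|I^N_T(\Phi)\|_{L^1(\Omega;H)}\le\E\,I^N_T(\|\Phi\|)=\|\Phi\|_{L^1(\Omega\times E;H)}$. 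Since the simple integrands $\Phi=F\,\one_{(r,s]\times B}$ with $0\le r\le s\le T$, $B\in\cB(U)$ such that $\nu(B)<\infty$, and $F\colon\Omega\to H$ bounded and $\cF_r$-measurable have dense linear span in $L^1_{\pred}(\Omega_T\times U;H)$ (a standard monotone class argument, of the same type used for the construction of $I^N$ itself), it suffices to verify the equality for such $\Phi$.

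For such a simple integrand one has $I^N_T(\Phi)=F\,N((r,s]\times B)$ by definition, so the task reduces to proving
\[
\int_{(r,s]\times B}\varepsilon^-_{t,x}F\,N(\dl t,\dl x)=F\,N((r,s]\times B),
\]
which holds once we know that $\varepsilon^-_{t,x}F=F$ holds $\P\otimes N$-almost everywhere on $\Omega\times\big((r,T]\times U\big)$, for then the integrand is $\P$-a.s.\ constant and equal to $F$ on the support $(r,s]\times B$. To obtain this I would first record the $\varepsilon^-$-analogue of Lemma~\ref{lem:DF_F_FA_measurable}: if $A\in\cE$ and $F$ is $\cF_A$-measurable, then $F$ admits a representative of the form $f(N_A)$, where $N_A$ denotes the restriction of $N$ to $A$; since $(N\setminus\delta_x)_A=N_A$ for every $x\in A^c$, this gives $\varepsilon^-_xF=F$ on $\Omega\times A^c$. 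Applying this with $A=[0,u]\times U$ and using that an $\cF_r$-measurable $F$ is $\cF_{[0,u]\times U}$-measurable for every $u>r$, I would choose a sequence $u_n\downarrow r$ and conclude $\varepsilon^-_{t,x}F=F$ on $\bigcup_{n\in\N}\Omega\times\big((u_n,T]\times U\big)=\Omega\times\big((r,T]\times U\big)$, which contains the support $(r,s]\times B$.

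The main (indeed essentially the only nontrivial) obstacle is this $\varepsilon^-$-invariance of predictable integrands, namely the fact that deleting the atom at $(t,x)$ does not alter an $\cF_r$-measurable random variable when $t>r$; it is the exact difference-operator counterpart of predictability, and the same reasoning underlies the generalized-predictability statement in Remark~\ref{rem:Kabanov-Skorohod_1}. Once it is in place, the reduction to simple integrands together with the density and continuity argument from the first paragraph completes the proof; care is only needed to note that the almost-everywhere equalities for $\varepsilon^-$ are understood with respect to $\P\otimes N$ and that combining the countably many statements over the $u_n$ is harmless.
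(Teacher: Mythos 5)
Your proposal is correct and follows essentially the same route as the paper's proof: reduction to simple predictable integrands $F\,\one_{(r,s]\times B}$ by density and $L^1$-continuity of both integral maps, followed by showing $\varepsilon^-_{t,x}F=F$ $\;\P\otimes N$-a.e.\ on $\Omega\times(r,T]\times U$ via a representative of the form $f(N_{[0,u]\times U})$ for $u>r$, which is unchanged by removing a point mass located in $(u,T]\times U$, and finally letting $u\downarrow r$. The only (cosmetic) difference is that you package this last step as a general $\varepsilon^-$-analogue of Lemma~\ref{lem:DF_F_FA_measurable}, whereas the paper carries out the same computation inline with $f_\epsilon(\eta):=f(\eta_{[0,r+\epsilon]\times U})$.
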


\begin{proof}
\felix{see notes 19.2.'16}
The linear span of simple processes of the form
\begin{align}\label{eq:simpleIntegrand}
\Phi=F\,\one_{(r,s]\times B},\qquad &0\leq r\leq s\leq T,\; B\in\cB(U)\text{ with }\nu(B)<\infty,\\
&F\colon\Omega\to H\text{ bounded and $\cF_r$-$\cB(H)$-measurable},\notag
\end{align}
is dense in $L^1_{\pred}(\Omega_T\times U;H)$, and the considered integrals define continuous mappings from $L^1_{\pred}(\Omega_T\times U;H)$ to $L^1(\Omega;H)$. Hence it suffices to prove the equality for integrands $\Phi$ as in \eqref{eq:simpleIntegrand}. As $\varepsilon_{t,x}^-\Phi(t,x)=(\varepsilon_{t,x}^-F)\one_{(r,s]\times B}(t,x)$ it is enough to show
\begin{align}\label{eq:space_time1}
\varepsilon_{t,x}^-F=F\quad\P\otimes N\text{-almost everywhere on }\Omega\times(r,T]\times U,
\end{align}
where $\bP\otimes N$ is the product measure from \eqref{eq:PotimesN}.
Fix $\epsilon\in(0,T-r]$ and recall the definition of $\cF_t$ from Notation~\ref{notation_space-time}. Since $F$ is $\cF_{[0,r+\epsilon]\times U}$-$\cB(H)$-measurable, there exists a $\cN$-$\cB(H)$-measurable mapping $f\colon\mathbf N\to H$ such that $F=f\big(N_{[0,r+\epsilon]\times U}\big)$. Let us define $f_\epsilon\colon\mathbf{N}\to H$ by $f_\epsilon(\eta):=f\big(\eta_{[0,r+\epsilon]\times U}\big)$, $\eta\in\mathbf N$, where $\eta_{[0,r+\epsilon]\times U}\in\mathbf N$ is given by $\eta_{[0,r+\epsilon]\times U}(A):=\eta\big(A\cap ([0,r+\epsilon]\times U)\big)$, $A\in\cB([0,T]\times U)$. Then it holds
$\P\otimes N$-almost everywhere in $\Omega\times(r+\epsilon,T]\times U$ that
\begin{align*}
\varepsilon_{t,x}^-F=f_{\epsilon}\big(N\setminus\delta_{t,x}\big)=f_{\epsilon}(N)=F.
\end{align*}
As $\epsilon$ can be chosen arbitrarily small we obtain \eqref{eq:space_time1}.
\end{proof}

\begin{prop}\label{prop:space_time1}
For $p\in[1,2]$ the space $L^p_{\pred}(\Omega_T\times U;H)$ is contained in $\dom(\delta^{(p)})$ and for all $\Phi\in L^p_{\pred}(\Omega_T\times U;H)$ we have
\begin{align*}
\delta(\Phi)=\int_0^T\int_U\Phi(t,x)\tilde N(\dl t,\dl x)
\end{align*}
as an equality in $L^p(\Omega;H)$, where the integral on the right hand side is the $L^p$ stochastic integral $I^{\tilde N}_T(\Phi)$ introduced in Remark~\ref{rem:stoch_int_space-time}.
\end{prop}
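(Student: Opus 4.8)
The plan is to first establish the identity on a class of simple predictable integrands and then extend by density. I would take the simple integrands $\Phi=F\,\one_{(r,s]\times B}$ with $0\le r\le s\le T$, with $B\in\cB(U)$ satisfying $\nu(B)<\infty$, and with $F\colon\Omega\to H$ bounded and $\cF_r$-$\cB(H)$-measurable, exactly as in the proof of Proposition~\ref{lem:space_time1} and in Remark~\ref{rem:stoch_int_space-time}. Each such $\Phi$ is predictable, bounded, and supported on a set of finite $\bP\otimes\m$-measure, hence belongs to $L^1_{\pred}(\Omega_T\times U;H)\cap L^p(\Omega\times\X;H)$ for every $p\in[1,2]$.

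For such $\Phi$ I would evaluate the pathwise integral $\delta(\Phi)$ from Definition~\ref{def:pathwise_Kabanov} directly. Its deterministic part is $\int_0^T\int_U\Phi(t,x)\,\nu(\dl x)\,\dl t=F\,(s-r)\,\nu(B)$, and for the part $\int_0^T\int_U\varepsilon_{t,x}^-\Phi(t,x)\,N(\dl t,\dl x)$ I would invoke Proposition~\ref{lem:space_time1}, which for predictable integrands permits replacing $\varepsilon_{t,x}^-\Phi(t,x)$ by $\Phi(t,x)$ under the integral w.r.t.\ $N$; this gives $F\,N((r,s]\times B)$. Subtracting yields $\delta(\Phi)=F\,\tilde N((r,s]\times B)$, which is precisely the value of $I^{\tilde N}_T(\Phi)$ on simple integrands. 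Since $F$ is bounded and $\tilde N$ of a set of finite measure has moments of all orders, $\delta(\Phi)\in L^p(\Omega;H)$, and so Proposition~\ref{prop:coincides}(i) shows $\Phi\in\dom(\delta^{(p)})$ with $\delta^{(p)}(\Phi)=\delta(\Phi)=I^{\tilde N}_T(\Phi)$.

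The last step is to pass to a general $\Phi\in L^p_{\pred}(\Omega_T\times U;H)$. I would approximate $\Phi$ by simple integrands $\Phi_n$ in $L^p_{\pred}$, using the same density of the simple class that underlies the construction of $I^{\tilde N}$ in Remark~\ref{rem:stoch_int_space-time}. Because $\bP_T\otimes\nu=\bP\otimes\m$, convergence in $L^p_{\pred}$ is convergence in $L^p(\Omega\times\X;H)$, while the continuity of $I^{\tilde N}\colon L^p_{\pred}(\Omega_T\times U;H)\to\cM^p_T(H)$ gives $I^{\tilde N}_T(\Phi_n)\to I^{\tilde N}_T(\Phi)$ in $L^p(\Omega;H)$. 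For $p=1$, where $\delta=\delta^{(1)}$ is the pathwise integral, the continuity of $\delta$ on $L^1(\Omega\times\X;H)$ from Proposition~\ref{lem:stoch_int} lets the identity pass to the limit directly. For $p\in(1,2]$ I would instead use that $\delta^{(p)}$, being the adjoint of the densely defined operator $D|_{\bD^{1,p'}(H)}$, is closed; applying closedness to $\Phi_n\to\Phi$ together with $\delta^{(p)}(\Phi_n)=I^{\tilde N}_T(\Phi_n)\to I^{\tilde N}_T(\Phi)$ yields $\Phi\in\dom(\delta^{(p)})$ and $\delta^{(p)}(\Phi)=I^{\tilde N}_T(\Phi)$.

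I expect the only genuinely delicate point to be the treatment of the simple integrands: confirming that Proposition~\ref{lem:space_time1} applies so that $\varepsilon^-$ disappears under the $N$-integral, and that Proposition~\ref{prop:coincides}(i) then places each simple $\Phi$ into $\dom(\delta^{(p)})$ with the correct value. The subsequent limiting argument is a routine combination of density with the continuity of $I^{\tilde N}$ and the closedness of $\delta^{(p)}$, the $p=1$ endpoint being handled separately via the continuity of the pathwise integral, and should present no difficulty.
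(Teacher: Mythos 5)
Your proposal is correct and follows essentially the same route as the paper: verify the identity on simple predictable integrands $F\,\one_{(r,s]\times B}$, then extend to all of $L^p_{\pred}(\Omega_T\times U;H)$ by density, using the continuity of $I^{\tilde N}_T$ together with the closedness of $\delta^{(p)}$ for $p\in(1,2]$ (and the continuity of the pathwise integral for $p=1$). The only difference is one of packaging: on simple integrands you compute $\delta(\Phi)=F\,\tilde N((r,s]\times B)$ via Proposition~\ref{lem:space_time1} and then invoke Proposition~\ref{prop:coincides}(i) to conclude $\Phi\in\dom(\delta^{(p)})$, whereas the paper re-runs the underlying duality/approximation argument (Proposition~\ref{lem:duality} combined with Lemma~\ref{lem:coreD12}(i)) inline — the same mathematics, since that is exactly how Proposition~\ref{prop:coincides}(i) is proved.
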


\begin{proof}
\felix{compare notess 20.2.'16}
For $p=1$ the assertion follows directly from Definition~\ref{def:pathwise_Kabanov} and Proposition~\ref{lem:space_time1}. We fix $p\in(1,2]$. Due to the closedness of $\delta^{(p)}$ and the continuity of the integral mapping $I^{\tilde N}_T\colon L^p_{\pred}(\Omega_T\times U;H)\to L^p(\Omega;H)$, it is sufficient to verify the assertion for simple integrands $\Phi$ of the form \eqref{eq:simpleIntegrand}. Let $p':=p/(p-1)$ be the dual exponent of $p$, consider $F\in\bD^{1,p'}(H)$, and let $(F_n)_{n\in\bN}\subset\bD^{1,p'}(H)\cap L^\infty(\Omega;H)$ be an approximating sequence as in Lemma~\ref{lem:coreD12}(i). Then, using the assertion for $p=1$ and the duality formula in Proposition~\ref{lem:duality},
\begin{align*}
\tensor[_{L^{p'}(\Omega;H)}]
{\Big\langle F_n,\int_0^T\int_U\Phi(t,x)\tilde N(\dl t,\dl x),\Big\rangle}
{_{L^p(\Omega;H)}}
&=
\tensor[_{L^\infty(\Omega;H)}]
{\big\langle F_n,\delta(\Phi)\big\rangle}
{_{L^1(\Omega;H)}}\\
=
\tensor[_{L^\infty(\Omega_T\times U;H)}]
{\big\langle DF_n,\Phi\big\rangle}
{_{L^1(\Omega_T\times U;H)}}
&=
\tensor[_{L^{p'}(\Omega_T\times U;H)}]
{\big\langle DF_n,\Phi\big\rangle}
{_{L^p(\Omega_T\times U;H)}}.
\end{align*}
Since $F_n\to F$ in $\bD^{1,p'}(H)$ and $F\in\bD^{1,p'}(H)$ was chosen arbitrarily, this yields that $\Phi$ belongs to $\dom(\delta^{(p)})$ and $\delta(\Phi)=\int_0^T\int_U\Phi(t,x)\tilde N(\dl t,\dl x)$.
\end{proof}

\subsection{Duality and commutation relations in the space-time setting}
\label{subsec:duality_comm_space-time}

As a consequence of Proposition~\ref{prop:space_time1}, in the case of predictable integrands $\Phi$ the duality and commutation relations from Sections~\ref{subsec:Kabanov-Skorohod} and \ref{sec:commutation} are statements about Itô-type stochastic integrals w.r.t.~$\tilde N$. Moreover, by additionally exploiting the continuity of the stochastic integral mapping \eqref{eq:I^tildeN}, we obtain significant improvements and extensions of some of the general results from Sections~\ref{subsec:Kabanov-Skorohod} and \ref{sec:commutation}, cf.~Propositions~\ref{prop:duality_space_time_local} and \ref{prop:comm_space-time2} below. These extension will be crucial for applications, in particular for the analysis of Lévy-driven SPDE.

We first reformulate the global duality relations from Proposition~\ref{lem:duality} and Definition~\ref{def:Kabanov-Skorohod} for predictable integrands.

\begin{prop}[Duality formula]\label{prop:duality_space_time}
Let $p\in[1,2]$ and $p'\in[2,\infty]$ be such that $\frac1p+\frac 1{p'}=1$. Then, for all $\Phi\in L^{p}_{\pred}(\Omega_T\times U;H)$ and $F\in\bD^{1,p'}(H)$ we have 
\begin{align}\label{eq:duality_space_time}
\bE\,\Big\langle F,\int_0^T\int_U\Phi(t,x)\,\tilde N(\dl t,\dl x)\Big\rangle=\E\int_0^T\int_U\big\langle D_{t,x}F,\,\Phi(t,x)\big\rangle\,\nu(\dl x)\,\dl t.
\end{align}
\end{prop}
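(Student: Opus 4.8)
The plan is to read the identity off directly from the abstract duality relation that \emph{defines} the Kabanov--Skorohod integral $\delta^{(p)}$, once $\delta(\Phi)$ has been identified with the stochastic integral $I^{\tilde N}_T(\Phi)$ via Proposition~\ref{prop:space_time1}. With these two ingredients in place, the asserted formula is nothing but the adjointness of $\delta^{(p)}$ and $D|_{\bD^{1,p'}(H)}$ evaluated on the pair $(F,\Phi)$, so essentially no further work is required.

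For $p\in(1,2]$, with dual exponent $p'\in[2,\infty)$, I would first invoke Proposition~\ref{prop:space_time1} to conclude that $\Phi\in L^p_{\pred}(\Omega_T\times U;H)\subset\dom(\delta^{(p)})$ and that $\delta(\Phi)=\int_0^T\int_U\Phi(t,x)\,\tilde N(\dl t,\dl x)$ as an equality in $L^p(\Omega;H)$. Since, by Definition~\ref{def:Kabanov-Skorohod}, $\delta^{(p)}$ is precisely the adjoint of the restricted difference operator $D|_{\bD^{1,p'}(H)}$, the defining duality yields, for every $F\in\bD^{1,p'}(H)$,
\begin{align*}
\bE\,\Big\langle F,\int_0^T\int_U\Phi(t,x)\,\tilde N(\dl t,\dl x)\Big\rangle
&=\tensor[_{L^{p'}(\Omega;H)}]{\big\langle F,\delta^{(p)}(\Phi)\big\rangle}{_{L^p(\Omega;H)}}\\
&=\tensor[_{L^{p'}(\Omega\times E;H)}]{\big\langle DF,\Phi\big\rangle}{_{L^p(\Omega\times E;H)}},
\end{align*}
and the last pairing equals $\bE\int_0^T\int_U\langle D_{t,x}F,\Phi(t,x)\rangle\,\nu(\dl x)\,\dl t$ by the definition of the $L^{p'}$-$L^p$ pairing together with the space-time structure \eqref{eq:E_space-time} of $(E,\cE,\m)$. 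Along the way I would note that both sides converge absolutely: $\delta(\Phi)\in L^p(\Omega;H)$ pairs with $F\in L^{p'}(\Omega;H)$ by Hölder, and $DF\in L^{p'}(\Omega\times E;H)$ pairs with $\Phi\in L^p(\Omega\times E;H)$ likewise.

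The endpoint $p=1$, $p'=\infty$ requires a small separate remark, since $\bD^{1,\infty}(H)$ is not covered by Definition~\ref{def:Kabanov-Skorohod}. Here I would read $\bD^{1,\infty}(H)=L^\infty(\Omega;H)$ (the difference operator maps $L^\infty(\Omega;H)$ boundedly into $L^\infty(\Omega\times E;H)$, as $\varepsilon^+$ preserves the essential supremum), use Proposition~\ref{prop:space_time1} with $p=1$ to identify $\delta(\Phi)=\int_0^T\int_U\Phi\,\tilde N$ in $L^1(\Omega;H)$, and then apply the $L^1$-$L^\infty$ duality of Proposition~\ref{lem:duality} in place of the abstract adjoint relation, again converting the resulting pairing over $\Omega\times E$ into the iterated integral over $\Omega\times[0,T]\times U$ via \eqref{eq:E_space-time}. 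Since the genuine content has already been established in Propositions~\ref{prop:space_time1} and \ref{lem:duality} (and the definition of $\delta^{(p)}$), there is no real obstacle; the only points needing care are the endpoint interpretation just described and the routine Hölder-integrability checks ensuring that both pairings make sense.
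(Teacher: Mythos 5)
Your proof is correct and follows essentially the same route as the paper: Proposition~\ref{prop:space_time1} identifies $\delta(\Phi)$ with the stochastic integral $I^{\tilde N}_T(\Phi)$, and the identity is then read off from the adjointness of $\delta^{(p)}$ and $D|_{\bD^{1,p'}(H)}$ (for $p\in(1,2]$), respectively from the $L^1$--$L^\infty$ duality of Proposition~\ref{lem:duality} (for $p=1$). Your explicit treatment of the endpoint case, identifying $\bD^{1,\infty}(H)$ with $L^\infty(\Omega;H)$ via the boundedness of $\varepsilon^+$ on $L^\infty$, merely spells out what the paper's one-line proof leaves implicit.
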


\begin{proof}
The assertion is a direct consequence of Proposition~\ref{prop:space_time1} and the duality relation between the operators $D\colon\bD^{1,p'}(H)\subset L^{p'}(\Omega;H)\to L^{p'}(\Omega_T\times U;H)$ and $\delta\colon\dom(\delta^{(p)})\subset L^p(\Omega_T\times U;H)\to L^p(\Omega;H)$.
\end{proof}

Next we prove a local version of the duality formula of Proposition~\ref{prop:duality_space_time}. 
It is particularly useful for the analysis of $\alpha$-stable noises, cf.~Section~\ref{sec:alpha_stable}, and 
it corresponds to the local duality formula in Lemma~\ref{lemma:duality2} in the general setting. 
In contrast to the proof of Lemma~\ref{lemma:duality2} we are now able to
exploit the continuity of the stochastic integral mapping~\eqref{eq:I^tildeN}, so that the integrand $\Phi$ does not have to vanish outside a set of finite measure any more but outside a set of arbitrary measure. As in Lemma~\ref{lemma:duality2} the significance of the local duality formula lies in the fact that it does not rely on a global integrability assumption on $DF$.

\begin{prop}[Local duality formula]\label{prop:duality_space_time_local}\felix{$p=1$ admissible?}
Let $p\in[1,2]$ and $p'\in[2,\infty]$ be such that $\frac1p+\frac 1{p'}=1$. Let $\Phi\in L^{p}_{\pred}(\Omega_T\times U;H)$ be such that $\Phi=0$ $\P_T\otimes\nu$-almost everywhere on $\Omega_T\times B^c$ for some $B\in\cB(U)$, and let $F\in L^{p'}(\Omega;H)$ be such that $\one_{\Omega_T\times B}DF\in L^{p'}(\Omega_T\times U;H)$. 
Then the duality relation \eqref{eq:duality_space_time} holds.
\end{prop}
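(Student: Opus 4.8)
The plan is to reduce to integrands supported on a set of finite measure by exploiting the continuity of the Itô integral mapping $I^{\tilde N}$, and then to run the Mecke-type computation of Lemma~\ref{lemma:duality2}, using the standing hypothesis $\one_{\Omega_T\times B}DF\in L^{p'}(\Omega_T\times U;H)$ to cope with the borderline conjugate exponents. First I would fix $F$ and note that both sides of \eqref{eq:duality_space_time} are continuous linear functionals of $\Phi$ on the closed subspace $V:=\{\Phi\in L^p_{\pred}(\Omega_T\times U;H):\Phi=0\ \P_T\otimes\nu\text{-a.e.\ on }\Omega_T\times B^c\}$. Indeed, by Proposition~\ref{prop:space_time1} the left-hand side equals $\bE\langle F,\delta(\Phi)\rangle$ with $\delta(\Phi)=I^{\tilde N}_T(\Phi)$, and the continuity of $I^{\tilde N}\colon L^p_{\pred}(\Omega_T\times U;H)\to\cM^p_T(H)$ from Remark~\ref{rem:stoch_int_space-time} together with Hölder's inequality gives $|\bE\langle F,\delta(\Phi)\rangle|\leq C\,\|F\|_{L^{p'}(\Omega;H)}\,\|\Phi\|_{L^p(\Omega_T\times U;H)}$. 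For the right-hand side, $\Phi$ vanishes off $\Omega_T\times B$, so $\langle DF,\Phi\rangle=\langle\one_{\Omega_T\times B}DF,\Phi\rangle$, and Hölder's inequality bounds this by $\|\one_{\Omega_T\times B}DF\|_{L^{p'}}\|\Phi\|_{L^p}$.

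Since $\nu$ is $\sigma$-finite, I would next choose $B_n\subset B$ with $\nu(B_n)<\infty$ and $B_n\nearrow B$, and set $\Phi_n:=\one_{\Omega_T\times B_n}\Phi$. Multiplication by the spatial indicator $\one_{B_n}$ preserves $\cP_T\otimes\cB(U)$-measurability, so $\Phi_n\in V$, and $\Phi_n\to\Phi$ in $L^p_{\pred}(\Omega_T\times U;H)$ by dominated convergence. By the continuity just established it therefore suffices to prove \eqref{eq:duality_space_time} for each $\Phi_n$; that is, I may assume $\nu(B)<\infty$, so that $[0,T]\times B\in\cE_0$ has finite $\mu$-measure.

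In this reduced situation the finiteness of the support gives $\Phi\in L^p(\Omega\times E;H)\subset L^1(\Omega\times E;H)$, so $\delta(\Phi)$ is the pathwise integral of Definition~\ref{def:pathwise_Kabanov}, while $\delta(\Phi)=I^{\tilde N}_T(\Phi)\in L^p(\Omega;H)$ by Proposition~\ref{prop:space_time1}. I would then argue exactly as in the proof of Lemma~\ref{lemma:duality2}: writing $\delta(\Phi)=\int_{[0,T]\times B}\varepsilon^-_{t,x}\Phi\,N(\dl t,\dl x)-\int_{[0,T]\times B}\Phi\,\mu(\dl t,\dl x)$, moving the fixed $F$ inside the pathwise Bochner integral w.r.t.\ $N$, and using the identity $\langle F,\varepsilon^-_{t,x}\Phi\rangle=\varepsilon^-_{t,x}\langle\varepsilon^+_{t,x}F,\Phi\rangle$ together with Mecke's formula \eqref{eq:Mecke2}, one obtains $\bE\langle F,\delta(\Phi)\rangle=\bE\int_{[0,T]\times B}\langle\varepsilon^+_{t,x}F-F,\Phi\rangle\,\mu(\dl t,\dl x)=\bE\int_0^T\int_U\langle D_{t,x}F,\Phi\rangle\,\nu(\dl x)\,\dl t$. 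All these integrals converge absolutely by Hölder's inequality: on the finite-measure set $[0,T]\times B$ one has $\one_{\Omega_T\times B}F\in L^{p'}$, hence $\one_{\Omega_T\times B}\varepsilon^+_{\cdot}F=\one_{\Omega_T\times B}(F+DF)\in L^{p'}$ by the hypothesis on $DF$, and this pairs with $\Phi\in L^p$.

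The main obstacle is precisely the borderline/infinite-measure combination that places the statement outside the scope of Lemma~\ref{lemma:duality2}, which covers only the strict exponents $q<p$ and integrands supported on a set of finite measure. The two-step structure resolves both issues at once: the continuity of the Itô integral $I^{\tilde N}$, valid without any finiteness restriction on the support, removes the finite-measure assumption on $B$, while the explicit hypothesis $\one_{\Omega_T\times B}DF\in L^{p'}$ supplies exactly the integrability that Lemma~\ref{lem:local_estimate_D} would otherwise provide only below the conjugate exponent, thereby closing the Hölder estimates at the borderline.
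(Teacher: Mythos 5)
Your proof is correct and follows essentially the same route as the paper's: approximate $\Phi$ by $\Phi_n=\one_{\Omega_T\times B_n}\Phi$ with $\nu(B_n)<\infty$, use the continuity of the stochastic integral mapping $I^{\tilde N}$ (plus H\"older against $F$ and $\one_{\Omega_T\times B}DF$) to pass to the limit on both sides, and then verify the identity for each $\Phi_n$ by the pathwise Mecke computation of Proposition~\ref{lem:duality} and Lemma~\ref{lemma:duality2}. Your explicit remark that the hypothesis $\one_{\Omega_T\times B}DF\in L^{p'}(\Omega_T\times U;H)$ is what closes the H\"older estimates at the conjugate exponent is exactly the point the paper makes implicitly when it asserts $\langle DF,\Phi\rangle\in L^1(\Omega_T\times U;\bR)$ at the outset.
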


\begin{proof}
\felix{notes 19.4.-17.5.'16--3--, compare 3.10.'15--4--}
Due to the assumptions we have 
$\big\langle F,\int_0^T\int_U\Phi(t,x)\,\tilde N(\dl t,\dl x)\big\rangle\in L^1(\Omega;\bR)$ and $\langle DF,\,\Phi\rangle\in L^1(\Omega_T\times U;\bR)$, so that the integrals in \eqref{eq:duality_space_time} are defined.
 Let $(B_n)_{n\in\bN}\subset\cB(U)$ be such that $\nu(B_n)<\infty$ for all $n\in\bN$ and $B_n\nearrow B$ as $n\to\infty$. Then $\one_{\Omega_T\times B_n}\Phi\xrightarrow{n\to\infty}\Phi$ in $L^p(\Omega_T\times U;H)$ and therefore
\begin{align*}
&\bE\,\Big\langle F,\int_0^T\int_{B_n}\Phi(t,x)\,\tilde N(\dl t,\dl x)\Big\rangle
\;\xrightarrow{n\to\infty}\;
\bE\,\Big\langle F,\int_0^T\int_U\Phi(t,x)\,\tilde N(\dl t,\dl x)\Big\rangle,\\
&\E\int_0^T\int_{B_n}\big\langle D_{t,x}F,\,\Phi(t,x)\big\rangle\,\nu(\dl x)\,\dl t
\;\xrightarrow{n\to\infty}\;
\E\int_0^T\int_U\big\langle D_{t,x}F,\,\Phi(t,x)\big\rangle\,\nu(\dl x)\,\dl t,
\end{align*}
where we use the continuity of the stochastic integral mapping $\eqref{eq:I^tildeN}$. Thus is suffices to verify the duality relation \eqref{eq:duality_space_time} with $\one_{\Omega_T\times B_n}\Phi$ in place of $\Phi$. This can be done similarly as in the proofs of Proposition~\ref{lem:duality} and Lemma~\ref{lemma:duality2}, using Mecke's formula~\eqref{eq:Mecke2}. The crucial point is that $\one_{\Omega_T\times B_n}\Phi\in L^1(\Omega_T\times U;H)$ and consequently, by Proposition~\ref{prop:coincides} and \ref{prop:space_time1}, the $L^p$-stochastic integral of $\one_{\Omega_T\times B_n}\Phi$ w.r.t.\ $\tilde N$ can be written as the difference of pathwise $L^1$-integrals $\int_0^T\int_{B_n}\varepsilon^-_{t,x}\Phi(t,x)\,N(\dl t,\dl x)-\int_0^T\int_{B_n}\Phi(t,x)\,\nu(\dl x)\,\dl t$ in the sense of Definition~\ref{def:pathwise_Kabanov}. Thereby we have that $\int_0^T\int_{B_n}\varepsilon^-_{t,x}\Phi(t,x)\,N(\dl t,\dl x)\in L^p(\Omega;H)$ since both $\int_0^T\int_{B_n}\Phi(t,x)\,\tilde N(\dl t,\dl x)$ and $\int_0^T\int_{B_n}\Phi(t,x)\,\nu(\dl x)\,\dl t$ belong to $L^p(\Omega;H)$.
\end{proof}

We proceed by considering the commutation relations between $D$ and $\delta$ in the space-time case. We need an auxiliary result.
\begin{lemma}[Predictability of the derivative]
\label{lem:DPhi_predictable}
Assume $\Phi\in L^{p}_{\pred}(\Omega_T\times U;H)$ for some $p\geq1$. Then the
derivative $D\Phi\in L^0\big(\Omega_T\times U\times([0,T]\times U);H\big)$ has a $\cP_T\otimes\cB(U)\otimes\cB([0,T]\times U)$-measurable version. That is, the mapping
\begin{equation*}\label{eq:space_time2}
D\Phi\colon\Omega_T\times U\times([0,T]\times U)\to H,\;(\omega,s,y,t,x)\mapsto D_{t,x}\Phi(\omega,s,y)
\end{equation*} 
has a $\P_T\otimes\nu\otimes(\lambda\otimes\nu)$-version which is $\cP_T\otimes\cB(U)\otimes\cB([0,T]\times U)$-measurable.
\end{lemma}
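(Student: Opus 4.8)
The plan is to proceed by a density and approximation argument, reducing the claim to simple predictable integrands for which an explicit predictable version of $D\Phi$ can be written down. Throughout I group the variables as $(\omega,s)$ together with the mark $(y,t,x)\in V:=U\times([0,T]\times U)$, and observe that the target $\sigma$-algebra factorizes as $\cP_T\otimes\cB(U)\otimes\cB([0,T]\times U)=\cP_T\otimes\cB(V)$, so that the assertion is precisely that $D\Phi$, regarded as a $V$-marked random field on $\Omega\times[0,T]$, has a predictable version. First I would recall, as in the proof of Proposition~\ref{lem:space_time1}, that the linear span of simple integrands of the form \eqref{eq:simpleIntegrand}, i.e.\ $\Phi=G\,\one_{(r,u]\times B}$ with $0\le r\le u\le T$, $B\in\cB(U)$ and $G$ bounded and $\cF_r$-measurable, is dense in $L^p_{\pred}(\Omega_T\times U;H)$ for $p<\infty$ (the case $p=\infty$ following by localizing to sets $[0,T]\times B_m$ of finite $\lambda\otimes\nu$-measure, where measurability is a local property). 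It therefore suffices to prove the claim for such $\Phi$ and to show that the class of integrands for which it holds is closed under the relevant limits.

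For a simple integrand $\Phi=G\,\one_{(r,u]\times B}$ one computes directly from the definitions of $D$ and $\varepsilon^+$ that
\[
D_{t,x}\Phi(s,y)=(D_{t,x}G)\,\one_{(r,u]}(s)\,\one_B(y),
\]
and by Corollary~\ref{cor:DF_is_zero} we may pass to the version of $DG$ that vanishes for $t>r$. The heart of the matter is to show that $(\omega,t,x)\mapsto (D_{t,x}G)\,\one_{\{t\le r\}}$ admits an $\cF_r\otimes\cB([0,T]\times U)$-measurable version. To this end I would use that, since $G$ is $\cF_r$-measurable, for every $u'>r$ there is a $\cN$-$\cB(H)$-measurable $g_{u'}$ with $G=g_{u'}(N_{[0,u']\times U})$ $\bP$-a.s.; for $t\le r<u'$ one then has $\varepsilon^+_{t,x}G=g_{u'}(N_{[0,u']\times U}+\delta_{(t,x)})$ $\bP\otimes\m$-a.e., which is $\cF_{[0,u']\times U}\otimes\cB([0,T]\times U)$-measurable on $\{t\le r\}$. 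Letting $u'\downarrow r$ and invoking the right-continuity $\cF_r=\bigcap_{u'>r}\cF_{[0,u']\times U}$ from Notation~\ref{notation_space-time} produces the desired $\cF_r\otimes\cB([0,T]\times U)$-measurable version. The process $Z_s(\omega,y,t,x):=(D_{t,x}G)(\omega)\,\one_{\{t\le r\}}\,\one_{(r,u]}(s)\,\one_B(y)$ is then left-continuous in $s$ and, for every $s$, $\cF_s\otimes\cB(V)$-measurable (it vanishes for $s\le r$ and equals the above $\cF_r$-measurable field for $s\in(r,u]$); the standard fact that a left-continuous adapted $V$-marked field is predictable then yields that this version of $D\Phi$ is $\cP_T\otimes\cB(V)$-measurable. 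I expect this step — reconciling the nonlocal operator $\varepsilon^+$ with the predictable structure of the filtration — to be the main obstacle.

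For the passage to the limit, suppose $\Phi_n\to\Phi$ in $L^p_{\pred}(\Omega_T\times U;H)$ with each $D\Phi_n$ possessing a $\cP_T\otimes\cB(V)$-measurable version. Applying the local $L^q$-estimate of Lemma~\ref{lem:local_estimate_D} fibrewise in the outer variable $\sigma=(s,y)$ (exactly as in the random-field continuity statement of Remark~\ref{rem:D_for_random_fields}), for $q\in[1,p)$ and any $S_0\subset[0,T]\times U$ and $B_0\in\cE_0$ of finite measure one obtains
\[
\big\|\one_{\Omega\times S_0\times B_0}(D\Phi_n-D\Phi)\big\|_{L^q(\Omega\times S_0\times B_0;H)}\le C_{S_0,B_0}\,\|\Phi_n-\Phi\|_{L^p(\Omega\times([0,T]\times U);H)}\xrightarrow{n\to\infty}0.
\]
Exhausting the $(s,y)$- and $(t,x)$-spaces by such finite-measure sets and passing to a subsequence via a diagonal argument, $D\Phi_{n_k}\to D\Phi$ $\bP_T\otimes\nu\otimes(\lambda\otimes\nu)$-almost everywhere. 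Hence $D\Phi$ coincides a.e.\ with a function measurable with respect to the $\bP_T\otimes\nu\otimes(\lambda\otimes\nu)$-completion of $\cP_T\otimes\cB(V)$, and therefore — after modification on a null set, using that any function measurable for the completion equals an $\cP_T\otimes\cB(V)$-measurable one almost everywhere — with a genuinely $\cP_T\otimes\cB(V)$-measurable version. This establishes the claim for all $\Phi\in L^p_{\pred}(\Omega_T\times U;H)$.
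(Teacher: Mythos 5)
Your architecture (reduction to simple predictable integrands, an explicit version there, then approximation plus an a.e.\ convergent subsequence) is the same as the paper's, and your final limiting paragraph is sound --- in fact you do not even need the completion detour, since the set on which the chosen versions of $D\Phi_{n_k}$ converge is itself $\cP_T\otimes\cB(U)\otimes\cB([0,T]\times U)$-measurable. The gap is at the step you yourself call the heart of the matter. For each $u'>r$ you construct a version $V_{u'}(\omega,t,x)$ of $(D_{t,x}G)\,\one_{\{t\leq r\}}$ that is $\cF_{[0,u']\times U}\otimes\cB([0,T]\times U)$-measurable, and you then claim that letting $u'\downarrow r$ and invoking $\cF_r=\bigcap_{u'>r}\cF_{[0,u']\times U}$ yields an $\cF_r\otimes\cB([0,T]\times U)$-measurable version. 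This does not follow: the different $V_{u'}$ agree only up to null sets, so any limit you form along $u'_n\downarrow r$ is measurable with respect to $\bigcap_n\big(\cF_{[0,u'_n]\times U}\otimes\cB([0,T]\times U)\big)$, and a countable intersection of $\sigma$-algebras does not commute with taking products: one only has
\begin{align*}
\Big(\bigcap_{n}\cF_{[0,u'_n]\times U}\Big)\otimes\cB([0,T]\times U)
\;\subseteq\;
\bigcap_{n}\Big(\cF_{[0,u'_n]\times U}\otimes\cB([0,T]\times U)\Big),
\end{align*}
and the inclusion is strict in general (tail-$\sigma$-algebra type counterexamples). Equivalently, right-continuity of $(\cF_t)_{t\in[0,T]}$ does not imply right-continuity of the marked filtration $\big(\cF_t\otimes\cB([0,T]\times U)\big)_{t\in[0,T]}$; nor does $\cF_r$-measurability of every section $\omega\mapsto D_{t,x}G(\omega)$ give joint $\cF_r\otimes\cB([0,T]\times U)$-measurability. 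So the key intermediate claim is unproven, and it is exactly the delicate point this lemma has to negotiate.

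The repair --- which is precisely the device used in the paper's proof --- is to never separate the $\omega$-measurability level from the left endpoint of the time indicator. For each fixed $u'\in(r,u)$ the field $Z^{(u')}(\omega,s,y,t,x):=V_{u'}(\omega,t,x)\,\one_{(u',u]}(s)\,\one_B(y)$ is $\cP_T\otimes\cB(U)\otimes\cB([0,T]\times U)$-measurable, because $V_{u'}$ is $\cF_{u'}\otimes\cB([0,T]\times U)$-measurable (note $\cF_{[0,u']\times U}\subseteq\cF_{u'}$) and the time indicator only charges $(u',u]$; this follows by a monotone class argument from generators $\one_{A}(\omega)\one_{(u',u]}(s)$ with $A\in\cF_{u'}$, since $A\times(u',u]\in\cP_T$. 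Moreover $Z^{(u')}=D\Phi\cdot\one_{(u',u]}(s)$ almost everywhere, so along any sequence $u'_n\downarrow r$ one has $Z^{(u'_n)}\to D\Phi$ almost everywhere; the limit of these predictable versions, defined on its (predictably measurable) convergence set and set to $0$ elsewhere, is then the required $\cP_T\otimes\cB(U)\otimes\cB([0,T]\times U)$-measurable version of $D\Phi$. This is what the $\varepsilon$-shift in the paper accomplishes: there the simple integrand is replaced by $\Phi_\varepsilon=f_\varepsilon(N_{[0,r+\varepsilon]\times U})\,\one_{(r+\varepsilon,u]\times B}$, whose derivative is manifestly predictable because the measurability level $r+\varepsilon$ in $\omega$ and the left endpoint $r+\varepsilon$ of the time indicator are kept matched at every stage, and only then does one pass to the limit $\varepsilon\downarrow0$ in $L^0$ via the continuity of $D$ from Remark~\ref{rem:D_for_random_fields} and an a.e.\ convergent subsequence, exactly as in your final paragraph.
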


\begin{proof}\felix{see notes 21.2.'16}
Recall from Notation~\ref{notation_space-time} the definition of the filtration $(\cF_t)_{t\in[0,T]}$ and the definition of the predictable $\sigma$-algebra $\cP_T$. 
We first assume that $\Phi$ is a simple function of the form \eqref{eq:simpleIntegrand}, i.e., $\Phi=F\one_{(r,s]\times B}$ with $0\leq r\leq s\leq T$, $B\in\cB(U)$ such that $\nu(B)<\infty$, and $F\colon\Omega\to H$ bounded and $\cF_r$-$\cB(H)$-measurable. Then $F$ is $\cF_{[0,r+\varepsilon]\times U
}$-$\cB(H)$-measurable for all $\varepsilon\in(0,T-r]$, so that there exist $\cN$-$\cB(H)$-measurable mappings $f_\varepsilon\colon \mathbf N\to H$ such that $F=f_\varepsilon(N_{[0,r+\varepsilon]\times U})$ $\bP$-almost surely. 
Here we denote by $N_{[0,r+\varepsilon]\times U}\colon\Omega\to\mathbf N$ the point process obtained by removing from $N$ all point masses located outside of $[0,r+\varepsilon]\times U$, i.e., $N_{[0,r+\varepsilon]\times U}(\omega,C):=N(\omega,([0,r+\varepsilon]\times U)\cap C)$, $\omega\in\Omega$, $C\in\cB([0,T]\times U)$, compare the proof of  Lemma~\ref{lem:DF_F_FA_measurable}. 
It is straightforward to check that for all $\varepsilon\in (0,T-r]$ the derivate 
$D\Phi_\varepsilon$ of $\Phi_\varepsilon:=f_\varepsilon(N_{[0,r+\varepsilon]\times U})\one_{(r+\varepsilon,s]\times B}$ has a $\cP_T\otimes\cB(U)\otimes\cB([0,T]\times U)$-measurable version. 
This implies the assertion for simple $\Phi$ is above in the following way: As $\Phi_\varepsilon\xrightarrow{\varepsilon\to 0}\Phi$ in $L^p(\Omega_T\times U;H)$ by dominated convergence, we have that $D\Phi_\varepsilon\xrightarrow{\varepsilon\to 0}D\Phi$ in $L^0(\Omega_T\times U\times([0,T]\times U);H)$ by the continuity of $D$ stated in Remark~\ref{rem:D_for_random_fields}. 
As a consequence,
there exists a sequence $(\varepsilon_n)_{n\in\bN}\subset(0,T-r]$ decreasing to zero such that $D\Phi_{\varepsilon_n}\xrightarrow{n\to\infty}D\Phi$ $\bP_T\otimes\nu\otimes(\lambda\otimes\nu)$-almost everywhere. Thus, any $\cP_T\otimes\cB(U)\otimes\cB([0,T]\times U)$-measurable version of the pointwise limit $\lim_{n\to\infty}D\Phi_{\varepsilon_n}$ can be taken as the desired version of $D\Phi$. For general $\Phi\in L^p_{\pred}(\Omega_T\times U;H)$ the assertion follows from approximation by linear combinations of simple predictable functions, using again the continuity of $D$ as stated in Remark~\ref{rem:D_for_random_fields}.
\end{proof}

Due to Lemma~\ref{lem:DPhi_predictable} all integrals in the commutation relations below are defined.

\begin{prop}[Commutation relation -- $L^1$-version]
\label{prop:comm_space-time1}
Let $\Phi\in L^{1}_{\pred}(\Omega_T\times U;H)$ be such that $\E\int_0^T\int_U\|D_{t,x}\Phi(s,y)\|\,\nu(\dl y)\,\dl s<\infty$ for $\lambda\otimes\nu$-almost all $(t,x)\in[0,T]\times U$. Then the equality
\begin{align}\label{eq:comm_rel_space-time}
D_{t,x}\int_0^T\int_U\Phi(s,y)\,\tilde N(\dl s,\dl y)=\int_0^T\int_UD_{t,x}\Phi(s,y)\,\tilde N(\dl s,\dl y)+\Phi(t,x)
\end{align}
holds $\P_T\otimes\nu$-almost everywhere.
\end{prop}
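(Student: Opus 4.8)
The plan is to reduce the statement to the abstract $L^1$ commutation relation of Proposition~\ref{lem:commutator1} and then to convert both Kabanov-Skorohod integrals appearing there into Itô integrals with respect to $\tilde N$ by means of Proposition~\ref{prop:space_time1}. First I would observe that in the present setting the role of $(E,\cE,\m)$ is played by $([0,T]\times U,\cB([0,T]\times U),\lambda\otimes\nu)$, so that the generic point of $E$ is now written as a pair $(t,x)\in[0,T]\times U$ and the measure $\bP\otimes\m$ becomes $\bP_T\otimes\nu$; moreover, the Borel-space hypothesis required by Proposition~\ref{lem:commutator1} is automatically satisfied here, since $[0,T]\times U$ is Polish. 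The assumption $\Phi\in L^1_{\pred}(\Omega_T\times U;H)\subset L^1(\Omega\times E;H)$ together with $\bE\int_0^T\int_U\|D_{t,x}\Phi(s,y)\|\,\nu(\dl y)\,\dl s<\infty$ for $\lambda\otimes\nu$-almost all $(t,x)$ says precisely that $D_{t,x}\Phi\in L^1(\Omega\times E;H)$ for $\m$-almost all $(t,x)$. Hence the hypotheses of Proposition~\ref{lem:commutator1} are met and we obtain
\[
D_{t,x}\delta(\Phi)=\delta(D_{t,x}\Phi)+\Phi(t,x)\qquad \bP_T\otimes\nu\text{-almost everywhere.}
\]

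Next I would rewrite the two Kabanov-Skorohod integrals as stochastic integrals with respect to $\tilde N$. Since $\Phi\in L^1_{\pred}(\Omega_T\times U;H)$, Proposition~\ref{prop:space_time1} yields $\delta(\Phi)=\int_0^T\int_U\Phi(s,y)\,\tilde N(\dl s,\dl y)$, which identifies the left-hand side of \eqref{eq:comm_rel_space-time}. For the first term on the right-hand side I would invoke Lemma~\ref{lem:DPhi_predictable}, which provides a $\cP_T\otimes\cB(U)\otimes\cB([0,T]\times U)$-measurable version of $D\Phi$; for $\lambda\otimes\nu$-almost all $(t,x)$ the slice $(s,y)\mapsto D_{t,x}\Phi(s,y)$ is then predictable, and combined with the integrability assumption this shows $D_{t,x}\Phi\in L^1_{\pred}(\Omega_T\times U;H)$ for almost all $(t,x)$. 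A second application of Proposition~\ref{prop:space_time1} gives $\delta(D_{t,x}\Phi)=\int_0^T\int_U D_{t,x}\Phi(s,y)\,\tilde N(\dl s,\dl y)$ for almost all $(t,x)$. Substituting these two identities into the abstract commutation relation above yields \eqref{eq:comm_rel_space-time}.

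The only genuinely delicate point is the predictability bookkeeping in the second step: to apply Proposition~\ref{prop:space_time1} to $D_{t,x}\Phi$ one must know that this random field is predictable and not merely measurable, which is exactly the content of Lemma~\ref{lem:DPhi_predictable}. I would also take care that the exceptional $\lambda\otimes\nu$-null set of parameters $(t,x)$ outside of which each of the three displayed identities holds can be chosen inside a common full-measure set, so that the final equality is a genuine $\bP_T\otimes\nu$-almost everywhere statement jointly in $(t,x)$; this relies on the identifications of parameter-dependent and random-field operators from Remarks~\ref{rem:Kabanov-Skorohod_2} and \ref{rem:D_for_random_fields}.
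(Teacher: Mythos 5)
Your proposal is correct and follows essentially the same route as the paper's own proof, which likewise combines the abstract $L^1$-commutation relation of Proposition~\ref{lem:commutator1} with Proposition~\ref{prop:space_time1} and Lemma~\ref{lem:DPhi_predictable} to convert both Kabanov-Skorohod integrals into Itô integrals w.r.t.~$\tilde N$. Your explicit check that $[0,T]\times U$ is a Borel space, and your attention to the predictability of the slices $D_{t,x}\Phi$ and to the common exceptional null set, match (and slightly elaborate on) the two remarks the paper makes at the end of its proof.
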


\begin{proof}
The result follows from Proposition~\ref{prop:space_time1}, Lemma~\ref{lem:DPhi_predictable} and the $L^1$-commutation relation in Proposition~\ref{lem:commutator1}. Note that the integral $\int_0^T\int_UD_{t,x}\Phi(s,y)\tilde N(\dl s,\dl y)=I^{\tilde N}_T(D_{t,x}\Phi)$ exists in the $L^1$-sense for $\lambda\otimes\nu$-almost all $(t,x)\in[0,T]\times U$ if we consider a predictable version of $D\Phi$ as in Lemma~\ref{lem:DPhi_predictable}. 
Also note that the integral process $\big(\int_0^T\int_UD_{t,x}\Phi(s,y)\tilde N(\dl s,\dl y)\big)$ defines an element in $L^0([0,T]\times U;L^1(\Omega;H))\subset L^0(\Omega_T\times U;H)$. The latter follows from the continuity of the integral mapping $I_T^{\tilde N}\colon L^1_{\pred}(\Omega_T\times U;H)\to L^1(\Omega;H)$, cf.~$\eqref{eq:I^tildeN}$, and the fact that $D\Phi=(D_{t,x}\Phi)\in L^0\big([0,T]\times U;L^1_{\pred}(\Omega_T\times U;H)\big)$ according to our conventions concerning $L^0$-spaces stated in Section~\ref{sec:Notation_and_conventions}.
\end{proof}

Combining Proposition~\ref{prop:comm_space-time1} and the continuity of the stochastic integral mapping \eqref{eq:I^tildeN} we prove the following $L^p$-version of the commutation relation between $D$ and $\delta$ for predictable integrands. Let us remark  that even in the case $p=2$ the assertion differs from that of Proposition~\ref{prop:commutator_relation_L^2} since the subspaces $L^2([0,T]\times U;\bD^{1,2}(H))$ and $L^2_{\pred}(\Omega_T\times U;H)$ of $\dom(\delta^{(2)})$ obviously do not coincide.

\begin{prop}[Commutation relation -- $L^p$-version]
\label{prop:comm_space-time2}
Let $p\in(1,2]$, $\tilde p\in[1,2]$ and $\Phi\in L^p_{\pred}(\Omega_T\times U;H)$ be such that $\E\int_0^T\int_U\|D_{t,x}\Phi(s,y)\|^{\tilde p}\,\nu(\dl y)\,\dl s<\infty$ for $\lambda
\otimes\nu$-almost all $(t,x)\in[0,T]\times U$. Then the
equality \eqref{eq:comm_rel_space-time}
holds $\P_T\otimes\nu$-almost everywhere.
\end{prop}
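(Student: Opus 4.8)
The plan is to reduce the statement to the $L^1$-commutation relation of Proposition~\ref{prop:comm_space-time1} by truncating the integrand in the $U$-variable, and then to pass to the limit while keeping careful track of the different topologies in which the three terms of \eqref{eq:comm_rel_space-time} converge. Throughout I would fix a $\cP_T\otimes\cB(U)\otimes\cB([0,T]\times U)$-measurable version of $D\Phi$ as provided by Lemma~\ref{lem:DPhi_predictable}, so that all the stochastic integrals of derivatives below are well-defined. First I would choose $B_n\in\cB(U)$ with $\nu(B_n)<\infty$ and $B_n\nearrow U$, and set $\Phi_n:=\one_{\Omega_T\times B_n}\Phi$. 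Since $\P_T\otimes\nu$ is finite on $\Omega_T\times B_n$, H\"older's inequality gives $\Phi_n\in L^1_{\pred}(\Omega_T\times U;H)\cap L^p_{\pred}(\Omega_T\times U;H)$. Moreover $D_{t,x}\Phi_n(s,y)=\one_{B_n}(y)\,D_{t,x}\Phi(s,y)$, and a further application of H\"older's inequality with exponent $\tilde p$ on the finite measure space $\Omega_T\times B_n$, combined with the hypothesis on $\Phi$, shows that $\bE\int_0^T\int_U\|D_{t,x}\Phi_n(s,y)\|\,\nu(\dl y)\,\dl s<\infty$ for $\lambda\otimes\nu$-almost all $(t,x)$. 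Hence Proposition~\ref{prop:comm_space-time1} applies to each $\Phi_n$ and yields
\begin{equation}\label{eq:comm_Lp_proof}
D_{t,x}\int_0^T\!\!\int_U\Phi_n(s,y)\,\tilde N(\dl s,\dl y)=\int_0^T\!\!\int_U D_{t,x}\Phi_n(s,y)\,\tilde N(\dl s,\dl y)+\Phi_n(t,x)
\end{equation}
$\P_T\otimes\nu$-almost everywhere.

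Next I would pass to the limit $n\to\infty$ in each term of \eqref{eq:comm_Lp_proof}, viewing all terms as elements of $L^0([0,T]\times U;L^1(\Omega;H))$. Since $\Phi\in L^p_{\pred}$, dominated convergence gives $\Phi_n\to\Phi$ in $L^p_{\pred}(\Omega_T\times U;H)$; in particular $\Phi_n(t,x)\to\Phi(t,x)$ in $L^p(\Omega_T\times U;H)$, hence in $L^0([0,T]\times U;L^1(\Omega;H))$ by the continuous embeddings from Section~\ref{sec:Notation_and_conventions}. By continuity of the stochastic integral mapping \eqref{eq:I^tildeN} it follows that $I^{\tilde N}_T(\Phi_n)\to I^{\tilde N}_T(\Phi)$ in $L^p(\Omega;H)$; applying the continuity of $D$ from $L^p(\Omega;H)$ to $L^0([0,T]\times U;L^q(\Omega;H))$ for a fixed $q\in[1,p)$ (Corollary~\ref{cor:localizing_lemma_1}), the left-hand sides of \eqref{eq:comm_Lp_proof} then converge to $D_{t,x}\int_0^T\int_U\Phi(s,y)\,\tilde N(\dl s,\dl y)$ in $L^0([0,T]\times U;L^q(\Omega;H))$, and a fortiori in $L^0([0,T]\times U;L^1(\Omega;H))$.

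For the stochastic-integral term on the right-hand side of \eqref{eq:comm_Lp_proof} I would exploit the $L^{\tilde p}$-continuity of \eqref{eq:I^tildeN}. For $\lambda\otimes\nu$-almost all fixed $(t,x)$ the integrand $D_{t,x}\Phi$ lies in $L^{\tilde p}_{\pred}(\Omega_T\times U;H)$ by hypothesis, and $\one_{B_n}D_{t,x}\Phi\to D_{t,x}\Phi$ in $L^{\tilde p}_{\pred}$ by dominated convergence; hence continuity of $I^{\tilde N}_T\colon L^{\tilde p}_{\pred}(\Omega_T\times U;H)\to L^{\tilde p}(\Omega;H)$ gives
\[
\Big\|\int_0^T\!\!\int_U D_{t,x}\Phi_n\,\tilde N-\int_0^T\!\!\int_U D_{t,x}\Phi\,\tilde N\Big\|_{L^{\tilde p}(\Omega;H)}\leq C\Big(\bE\int_0^T\!\!\int_{B_n^c}\|D_{t,x}\Phi(s,y)\|^{\tilde p}\,\nu(\dl y)\,\dl s\Big)^{1/\tilde p}\xrightarrow{n\to\infty}0
\]
for almost all $(t,x)$. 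Bounding the error by $1$ and using dominated convergence on sets of finite $\lambda\otimes\nu$-measure shows that this term converges in $L^0([0,T]\times U;L^{\tilde p}(\Omega;H))$, hence in $L^0([0,T]\times U;L^1(\Omega;H))$, to $\int_0^T\int_U D_{t,x}\Phi(s,y)\,\tilde N(\dl s,\dl y)$. Since all three terms of \eqref{eq:comm_Lp_proof} converge in $L^0([0,T]\times U;L^1(\Omega;H))$ and the identity holds for every $n$, the limiting identity \eqref{eq:comm_rel_space-time} holds in $L^0([0,T]\times U;L^1(\Omega;H))$, i.e.\ $\P_T\otimes\nu$-almost everywhere.

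The main obstacle I anticipate is reconciling the different integrability exponents that govern the two sides: the derivative of the integral is controlled only in $L^q$ for $q<p$ (via Corollary~\ref{cor:localizing_lemma_1}), whereas the integral of the derivative is controlled in $L^{\tilde p}$, so the three limits live a priori in different spaces and the passage to the limit has to be funnelled through the common, weakest topology of $L^0([0,T]\times U;L^1(\Omega;H))$. The other point requiring care is verifying that the truncated integrands $\Phi_n$ genuinely satisfy the $L^1$-hypothesis of Proposition~\ref{prop:comm_space-time1}, which is exactly what the assumption $\tilde p\in[1,2]$ together with the H\"older estimate on the finite-measure sets $\Omega_T\times B_n$ secures.
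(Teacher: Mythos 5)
Your proposal is correct and follows essentially the same route as the paper's own proof: truncation $\Phi_n=\one_{\Omega_T\times B_n}\Phi$ with $\nu(B_n)<\infty$, application of the $L^1$-commutation relation of Proposition~\ref{prop:comm_space-time1} to each $\Phi_n$, and passage to the limit term by term using the continuity of $I^{\tilde N}_T$ on $L^p_{\pred}$ and $L^{\tilde p}_{\pred}$ together with Corollary~\ref{cor:localizing_lemma_1}. The only difference is cosmetic: you spell out the H\"older verification of the $L^1$-hypotheses for $\Phi_n$ and funnel all three convergences through $L^0([0,T]\times U;L^1(\Omega;H))$, whereas the paper compares the terms in $L^0(\Omega_T\times U;H)$; both yield the $\P_T\otimes\nu$-a.e.\ identity.
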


\begin{proof}
Let $(B_n)_{n\in\bN}\subset\cB(U)$ be such that $\nu(B_n)<\infty$, $n\in\bN$, and $B_n\nearrow U$. We set $\Phi_n:=\one_{\Omega_T\times B_n}\Phi$. 
As $\Phi_n$ satisfies the assumptions in Proposition~\ref{prop:comm_space-time1}, the equality \eqref{eq:comm_rel_space-time} with $\Phi_n$ in place of $\Phi$ holds $\P_T\otimes\nu$-almost everywhere, for all $n\in\bN$.
Now the assertion follows by letting $n$ tend to infinity in each term of this equality:
For the first term we have that 
$
D_{t,x}\int_0^T\int_U\Phi_n(s,y)\,\tilde N(\dl s,\dl y)
\xrightarrow{n\to\infty}
D_{t,x}\int_0^T\int_U\Phi(s,y)\,\tilde N(\dl s,\dl y)
$
in $L^0([0,T]\times U;L^q(\Omega;H))\subset L^0(\Omega_T\times U;H)$ for all $q\in [1,p)$. This is a consequence of the convergence $\Phi_n\to\Phi$ in $L^p(\Omega_T\times U;H)$, the continuity of the stochastic  integral mapping $I_T^{\tilde N}\colon L^p_{\pred}(\Omega_T\times U;H)\to L^p(\Omega;H)$, cf.~$\eqref{eq:I^tildeN}$, and the continuity of $D|_{L^p(\Omega;H)}$ from $L^p(\Omega;H)$ to $L^0([0,T]\times U;L^q(\Omega;H))$, cf.~Corollary~\ref{cor:localizing_lemma_1}. 
Concerning the second term in \eqref{eq:comm_rel_space-time} with $\Phi_n$ in place of $\Phi$, observe that, for $\lambda\otimes\nu$-almost all $(t,x)\in[0,T]\times U$,
\begin{align*}
\big\|D_{t,x}\Phi-D_{t,x}\Phi_n\big\|_{L^{\tilde p}(\Omega_T\times U;H)}
=
\Big(\bE\int_0^T\int_{B_n^c}\big\|D_{t,x}\Phi(s,y)\big\|^{\tilde p}\nu(\dl y)\,\dl s\Big)^{1/\tilde p}
\xrightarrow{n\to\infty}0.
\end{align*}
This implies $D_{t,x}\Phi_n\xrightarrow{n\to\infty}D_{t,x}\Phi$ in $L^0\big([0,T]\times U;L^{\tilde p}_{\pred}(\Omega_T\times U;H)\big)$. Due to the continuity of the integral mapping $I_T^{\tilde N}\colon L^{\tilde p}_{\pred}(\Omega_T\times U;H)\to L^{\tilde p}(\Omega;H)$ we obtain the convergence $\int_0^T\int_UD_{t,x}\Phi_n(s,y)\,\tilde N(\dl s,\dl y)\xrightarrow{n\to\infty}\int_0^T\int_UD_{t,x}\Phi(s,y)\,\tilde N(\dl s,\dl y)$ in 
$L^0\big([0,T]\times U;L^{\tilde p}(\Omega;H)\big)\subset L^0(\Omega_T\times U;H)$.
Finally, concerning the third term in \eqref{eq:comm_rel_space-time} with $\Phi_n$ in place of $\Phi$, we have $\Phi_n(t,x)\xrightarrow{n\to\infty}\Phi(t,x)$ in $L^p(\Omega_T\times U;H)$.
\end{proof}
%

\begin{remark}
The proofs above additionaly imply that, in the situation of Proposition~\ref{prop:comm_space-time1} and \ref{prop:comm_space-time2}, the commutation relation \eqref{eq:comm_rel_space-time} holds as an equality in the spaces
$L^0\big([0,T]\times U;L^1(\Omega;H)\big)\subset L^0(\Omega_T\times U;H)$ 
and 
$L^0\big([0,T]\times U;L^{p\wedge\tilde p}(\Omega;H)\big)\subset L^0(\Omega_T\times U;H)$,
respectively.
\end{remark}

\subsection{Hilbert space-valued Lévy processes}
\label{subsec:PRMandLP}

We will use the Poisson Malliavin calculus developed so far to analyze Hilbert space-valued stochastic evolution equations with Lévy noise. 
Here we describe how  
Hilbert space-valued valued Lévy processes can be embedded into our framework 
and present some important examples for such processes. 
To simplify the exposition we restrict ourselves to  integrable, mean-zero Lévy processes without Gaussian part. This class particularly includes the $\alpha$-stable processes  considered in Section~\ref{sec:alpha_stable} below, where $\alpha\in(1,2)$.
Our standard reference 
in this context
is \cite{PesZab2007}.

We first formulate our assumptions and then describe the relation to Setting~\ref{setting}.
\begin{assumption}\label{ass:L}
$L=(L(t))_{t\in[0,T]}$ is a Lévy process taking values in a separable Hilbert space $(U,\|\cdot\|_U,\langle\cdot,\cdot\rangle_U)$, defined on a complete probability space $(\Omega,\cF,\P)$ such that the $\sigma$-algebra $\cF$ coincides with the $\P$-completion of $\sigma(L(t):t\in[0,T])$. We assume that
\begin{itemize}
\item $L$ is integrable with mean zero, i.e., $L(t)\in L^1(\Omega;U)$ and $\bE( L(t))=0$;
\item the Gaussian part of $L$ is zero.
\end{itemize}
\end{assumption}

Let $\nu$ denote the jump intensity measure (Lévy measure) of $L$ . Recall that the jump intensity measure $\nu$ of a general $U$-valued Lévy process satisfies $\nu(\{0\})=0$ and 
$\int_{U}\min(\|x\|_U^2,1)\,\nu(\dl x)<\infty$, 
cf.~\cite[Section~4]{PesZab2007}. 
Due to Assumption~\ref{ass:L} and the Lévy-Khinchin decomposition (see, e.g., \cite[Theorem~4.23]{PesZab2007}) we have
\begin{align}\label{eq:ass_nu_minimal}
\int_{U}\min(\|x\|_U^2,\|x\|_U)\,\nu(\dl x)<\infty
\end{align}
and the characteristic function of $L(t)$ is given by
\begin{align}\label{eq:L_char_fn}
\bE e^{i\langle x,L(t)\rangle_U}=\exp\Big(-t\int_U\big(1-e^{i\langle x,y\rangle_U}+i\langle x,y\rangle_U\big)\,\nu(\dl y)\Big),\quad x\in U,\;t\geq0.
\end{align}
Conversely, every $U$-valued Lévy process $L$ satisfying \eqref{eq:ass_nu_minimal} and \eqref{eq:L_char_fn} is integrable with mean zero and vanishing Gaussian part.

We always consider a càdlàg (right continuous with left limits) modification of $L$, i.e., a modification such that $L(t)=\lim_{s\searrow t}L(s)$ for all $t\geq0$ and $L(t-):=\lim_{s\nearrow t}L(s)$ exists for all $t>0$, where the limits are pathwise limits in $U$.
The jumps of $L$ determine a Poisson random measure as follows: 
For $(\omega,t)\in\Omega\times(0,T]$ we denote by $\Delta L(t)(\omega):=L(t)(\omega)-L(t-)(\omega)\in U$ the jump of a trajectory of $L$ at time $t$. Then
\begin{equation}\label{eq:LjumpPRM}
N(\omega):=\sum_{t\in(0,T]:\Delta L(t)(\omega)\neq 0}\delta_{(t,\Delta L(t)(\omega))},\quad\omega\in\Omega,
\end{equation}
defines a Poisson random measure $N$ on $(\X,\cE):=([0,T]\times U,\,\cB([0,T]\times U))$ with intensity measure $\lambda\otimes\nu$, where $\delta_{(t,y)}$, $\lambda$ and $\nu$ denote Dirac measure at $(t,y)\in[0,T]\times U$, Lebesgue measure on $[0,T]$ and the jump intensity measure of $L$, respectively. This follows, e.g., from Theorem~6.5 in \cite{PesZab2007} together with Theorems~4.9, 4.15, 4.23 and Lemma 4.25 therein.


Recall the notation $B_U=\{x\in U:\|x\|_U\leq 1\}$ for the closed unit ball in $U$ and $B_U^c=U\setminus B_U$ for its complement.

\begin{lemma}\label{lem:LvsN}
Let Assumption~\ref{ass:L} hold, let $N$ be the Poisson random measure on $([0,T]\times U,\,\cB([0,T]\times U))$ associated to $L$ via \eqref{eq:LjumpPRM}, and let $\nu$ be the Lévy measure of $L$. Then the assumptions in Setting~\ref{setting} are fulfilled with $(E,\cE,\mu)$ given by \eqref{eq:E_space-time}. 
Moreover,  we have
\begin{align*}
\P\Big(L(t)=\int_0^t\int_{B_U}x\,\tilde N(\dl s,\dl x)+\int_0^t\int_{B_U^c}x\,\tilde N(\dl s,\dl x)\quad\forall t\in[0,T]\,\Big)=1,
\end{align*}
where the 
càdlàg 
processes 
$\big(\int_0^t\int_{B_U}x\,\tilde N(\dl s,\dl x)\big)_{t\in[0,T]}$, $\big(\int_0^t\int_{B_U^c}x\,\tilde N(\dl s,\dl x)\big)_{t\in[0,T]}$
are defined in terms of $U$-valued stochastic integrals w.r.t.\ $\tilde N$ in the $L^2$ sense and in the $L^1$ sense, respectively. In particular, the $U$-valued Malliavin derivative of $L(t)$ is given by $D_{s,x}L(t)=\one_{s\leq t}\,x$, $(s,x)\in[0,T]\times U$.
\end{lemma}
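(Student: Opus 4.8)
The plan is to proceed in three stages: first verify that Setting~\ref{setting} is met, then establish the compensated Lévy--Itô representation, and finally read off the Malliavin derivative from that representation by means of the commutation relations.

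\emph{Step 1 (Setting~\ref{setting}).} The probability space is complete by Assumption~\ref{ass:L}, and $N$ defined by \eqref{eq:LjumpPRM} is a Poisson random measure on $([0,T]\times U,\cB([0,T]\times U))$ with intensity $\lambda\otimes\nu$ by the cited results of \cite{PesZab2007}. The measure $\mu=\lambda\otimes\nu$ is $\sigma$-finite: $\lambda$ is finite, while \eqref{eq:ass_nu_minimal} together with $\nu(\{0\})=0$ implies $\nu(\{x\in U:\|x\|_U>1/n\})<\infty$ for each $n\in\bN$, and these sets exhaust $U\setminus\{0\}$. Since $U$ is a separable metric space, so is $[0,T]\times U$, whence its diagonal lies in the product Borel $\sigma$-algebra. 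It remains to identify the two completed $\sigma$-algebras: the inclusion $\sigma(N)\subseteq\sigma(L(t):t\in[0,T])$ holds because the jumps $\Delta L(t)$, and hence $N$, are measurable functionals of the c\`adl\`ag paths of $L$, and the reverse inclusion up to $\bP$-null sets follows from the representation proved in Step~2, since the stochastic integrals appearing there are, after $\bP$-completion, measurable with respect to $\sigma(N)$.

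\emph{Step 2 (Representation).} I would invoke the Lévy--Itô decomposition for $U$-valued Lévy processes, which, using that the Gaussian part of $L$ vanishes, yields a drift $b\in U$ with
\begin{align*}
L(t)=bt+\int_0^t\int_{B_U}x\,\tilde N(\dl s,\dl x)+\int_0^t\int_{B_U^c}x\, N(\dl s,\dl x)\quad\forall t\in[0,T]
\end{align*}
$\bP$-almost surely. The small-jump integral is well defined in the $L^2$ sense because $\int_{B_U}\|x\|_U^2\,\nu(\dl x)<\infty$, while \eqref{eq:ass_nu_minimal} gives $\int_{B_U^c}\|x\|_U\,\nu(\dl x)<\infty$, so the large-jump term may be compensated as $\int_0^t\int_{B_U^c}x\,N(\dl s,\dl x)=\int_0^t\int_{B_U^c}x\,\tilde N(\dl s,\dl x)+t\int_{B_U^c}x\,\nu(\dl x)$. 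Taking expectations, and using that both compensated integrals have mean zero, the hypothesis $\bE L(t)=0$ forces $b=-\int_{B_U^c}x\,\nu(\dl x)$, so the deterministic terms cancel and the claimed identity holds simultaneously for all $t\in[0,T]$.

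\emph{Step 3 (Derivative).} Fix $t\in[0,T]$ and set $\Phi(s,x):=\one_{\{s\le t\}}\,x\,\one_{B_U}(x)$. This integrand is deterministic, hence $D\Phi=0$, and it lies in $L^2_{\pred}(\Omega_T\times U;U)$ since $\int_0^t\int_{B_U}\|x\|_U^2\,\nu(\dl x)\,\dl s<\infty$. The hypotheses of the $L^p$-commutation relation (Proposition~\ref{prop:comm_space-time2} with $p=2$) are then trivially met, and as $\delta(\Phi)=\int_0^t\int_{B_U}x\,\tilde N(\dl s,\dl x)$ by Proposition~\ref{prop:space_time1}, we obtain $D_{s,x}\int_0^t\int_{B_U}x\,\tilde N=\Phi(s,x)=\one_{\{s\le t\}}x\one_{B_U}(x)$. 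Applying the same reasoning to the large-jump integral, this time via the $L^1$-commutation relation (Proposition~\ref{prop:comm_space-time1}) and $\int_{B_U^c}\|x\|_U\,\nu(\dl x)<\infty$, gives $D_{s,x}\int_0^t\int_{B_U^c}x\,\tilde N=\one_{\{s\le t\}}x\one_{B_U^c}(x)$. Summing the two contributions yields $D_{s,x}L(t)=\one_{\{s\le t\}}x$ as an equality $\bP\otimes(\lambda\otimes\nu)$-almost everywhere. The main obstacle I anticipate is Step~2: correctly setting up the drift-free, fully compensated decomposition in the infinite-dimensional setting and justifying the drift cancellation from the mean-zero assumption, while keeping track of which jump regime is treated in which $L^p$-space---the small jumps only in $L^2$, since for $\alpha$-stable noise the first moment of $\nu$ near the origin may diverge. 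Everything else is bookkeeping with the already-established duality and commutation results.
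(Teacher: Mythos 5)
Your proposal is correct and follows essentially the same route as the paper: the identity comes from the Lévy--Khinchin/Lévy--Itô decomposition of $L$ combined with compensation of the large-jump part (using $\int_{B_U^c}\|x\|_U\,\nu(\dl x)<\infty$ and the mean-zero assumption to cancel the drift), well-definedness of the integrals from \eqref{eq:ass_nu_minimal}, and the derivative formula from Propositions~\ref{prop:comm_space-time1} and \ref{prop:comm_space-time2} applied to the deterministic integrands $\one_{s\le t}\,x\,\one_{B_U}(x)$ and $\one_{s\le t}\,x\,\one_{B_U^c}(x)$. The only cosmetic differences are that you spell out the verification of Setting~\ref{setting} (including the $\sigma$-algebra identification), which the paper dismisses as obvious, and that you absorb the case $s>t$ into the indicator rather than invoking Corollary~\ref{cor:DF_is_zero}.
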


\begin{proof}
The first assertion is obvious. The stochastic integral processes are well-defined as $\int_0^T\int_{B_U}\|x\|_U^2\,\nu(\dl x)\dl s$ and $\int_0^T\int_{B_U^c}\|x\|_U\,\nu(\dl x)\dl s$ are finite due to \eqref{eq:ass_nu_minimal}. The indistinguishability assertion follows from the Lévy-Khinchin decomposition of $L$ (\cite[Theorem~4.23]{PesZab2007}) and the continuity of the stochastic integral mapping $I^{\tilde N}\colon L^p_{\pred}(\Omega_T\times U;U)\to \cM^p_T(U)$, compare Remark~\ref{rem:stoch_int_space-time}.
Finally, the last assertion is a direct consequence of Proposition~\ref{prop:comm_space-time1}, \ref{prop:comm_space-time2} and Corollary~\ref{cor:DF_is_zero}.
\end{proof}

\begin{remark}
It is clear that the $U$-valued stochastic integral $\int_0^t\int_{B_U^c}x\,\tilde N(\dl s,\dl x)$ can also be defined $\omega$-wise since, with probability one, $L$ has only finitely many jumps of size $\|\Delta L(t)(\omega)\|_U>1$ on finite time intervals. However, as we will repeatedly use the boundedness of the integral operator $I^{\tilde N}\colon L^p_{\pred}(\Omega_T\times U;U)\to \cM^p_T(U)$ for $p\in[0,1]$, it is more convenient for us to continuously adopt the point of view on stochastic integrals described in Remark~\ref{rem:stoch_int_space-time}. 
\end{remark}

We end this subsection with two important examples of Lévy processes $L$ satisfying Assumption~\ref{ass:L}. 
\felix{see notes 21.5.'16}
For further details we refer to \cite[Examples~2.3, 2.5]{KovLinSch2015}, where similar processes are considered under stronger integrability assumptions.
\begin{example}[Subordinate cylindrical $Q$-Wiener process] 
\label{ex:subordWP_general}
Let $Q\in\LB(H)$ be a nonnegative and symmetric operator and $W=(W(t))_{t\in[0,T]}$ be a cylindrical $Q$-Wiener process in $H$ in the sense of \cite[Section~2.5.1]{roeckner2007}. Assume that the Hilbert space $U$ is such that the Cameron-Martin space $Q^{1/2}(H)$ of $W$ is emdedded into $U$ via a Hilbert-Schmidt embedding, where $Q^{1/2}(H)$ is endowed with the inner product $\langle Q^{-1/2}\,\cdot,Q^{-1/2}\,\cdot\rangle$, $Q^{-1/2}$ being the pseudo-inverse of $Q^{1/2}$. Then $W$ has a realization taking values in $U$. Let $Z=(Z(t))_{t\geq0}$ be a real-valued increasing Lévy process (subordinator) in the sense of \cite[Definition~21.4]{Sat1999}, such that $W$ and $Z$ are independent, the drift of $Z$ is zero, and the jump intensity measure $\varrho$ of $Z$ satisfies 
$
\int_{(0,\infty)} \min(s,\sqrt s)\,\varrho(\dl s)<\infty.
$
In this situation, subordinate cylindrical Brownian motion
\[L(t):=W(Z(t)),\quad t\in[0,T],\]
defines a $U$-valued Lévy process with Lévy measure $\nu=(\varrho\otimes\P_{W(1)})\circ\kappa^{-1}$, where $\kappa\colon (0,\infty)\times U\to U$ is defined by $\kappa(s,y)=\sqrt{s}y$. 
The process $L$ satisfies \eqref{eq:ass_nu_minimal} and \eqref{eq:L_char_fn}. To verify \eqref{eq:L_char_fn} one can argue as in \cite[Example~2.3]{KovLinSch2015}. The integrability property \eqref{eq:ass_nu_minimal} holds since
\begin{align*}
\int_{B_U^c}\|y\|_U\,\nu(\dl y)
&=\int_{(0,\infty)}\int_U\one_{B_U^c}(\sqrt sy)\|\sqrt sy\|_U\,\P_{W(1)}(\dl y)\varrho(\dl s)\\
&=\int_{(0,\infty)} \sqrt s\,\bE\big[\one_{(s^{-1/2},\infty)}(\|W(1)\|_U)\|W(1)\|_U\big]\,\varrho(\dl s),
\end{align*}
where the expectation in the last integral can be estimated by
\begin{align*}
\int_{s^{-1/2}}^\infty\P(\|W(1)\|_U\geq r)\,\dl r
\leq\sqrt{s}\int_0^\infty\!r\, \P(\|W(1)\|_U\geq r)\,\dl r
=\frac{\sqrt s}2\,\bE(\|W(1)\|_U^2),
\end{align*}
so that
\[
\int_{B_U^c}\|y\|_U\,\nu(\dl y)
\leq
\int_{(0,1]} \frac s2\,\varrho(\dl s)\,\bE(\|W(1)\|_U^2)+\int_{(1,\infty)} \sqrt s\,\varrho(\dl s)\,\bE(\|W(1)\|_U)<\infty;\] the finiteness of 
$\int_{B_U}\|y\|^2_U\,\nu(\dl y)$ can be verified in a similar way. Subordinate cylindrical Wiener processes have been analyzed, e.g., in \cite{BrzZab2010}.
\end{example}

\begin{example}[Impulsive cylindrical process]\label{ex:impCylProc_general}
Let $\cO\subset\bR^d$ be open and bounded and $\tilde\pi$ be a compensated Poisson random measure on $[0,T]\times\cO\times\bR$ with intensity measure $\lambda^1\otimes\lambda^d\otimes\varrho$, where $\lambda^d$ denotes $d$-dimensional Lebesgue measure and $\varrho$ is a Lévy measure on $\bR$ satsfying $\int_\bR\min(\sigma^2,\sigma)\,\varrho(\dl\sigma)<\infty$. We consider the measure-valued process $L=(L(t))_{t\in[0,T]}$ defined, informally, by 
\[L(t,\dl\xi)=\int_0^t\int_\bR\sigma\,\tilde\pi(\dl s,\dl\xi,\dl\sigma).\]
More rigorously, let $U\supset L^2(\cO)$ be a separable real Hilbert space containing the Dirac measures $\delta_\xi$, $\xi\in\cO$, such that the mapping $\cO\ni\xi\mapsto \delta_\xi\in U$ is measurable and $\|\delta_\xi\|_U=\|\delta_{\xi'}\|_U$ for all $\xi,\xi'\in\cO$; 
for instance, any negative order $L^2$-Sobolev space $U:=H^{-\gamma}(\cO)=(H^\gamma_0(\cO))^*$ with $\gamma>d/2$ is a suitable choice. 
Then we can define $L=(L(t))_{t\in[0,T]}$ as the $U$-valued Lévy process  given by 
\[L(t):=\int_0^t\int_{\cO}\int_{\{|\sigma|\leq1\}}\sigma\delta_\xi\,\tilde\pi(\dl s,\dl\xi,\dl\sigma)+\int_0^t\int_{\cO}\int_{\{|\sigma|>1\}}\sigma\delta_\xi\,\tilde\pi(\dl s,\dl\xi,\dl\sigma),\]
where the integrals on the right hand side are $U$-valued stochastic integrals w.r.t.~$\tilde \pi$ in the $L^2$ sense and in the $L^1$ sense, respectively. The stochastic integrals exist since
\begin{align*}
\int_0^T\int_{\cO}\int_{\{|\sigma|\leq1\}}\|\sigma\delta_\xi\|_U^2\,\varrho(\dl\sigma)\,\dl\xi\,\dl s+
\int_0^T\int_{\cO}\int_{\{|\sigma|>1\}}\|\sigma\delta_\xi\|_U\,\varrho(\dl\sigma)\,\dl\xi\,\dl s<\infty
\end{align*}
due to our assumptions. The Lévy measure $\nu$ of $L$ is given by $\nu=(\lambda^d\otimes\varrho)\circ\tau^{-1}$, where $\tau\colon\cO\times\bR\to U$ is defined by $\tau(\xi,\sigma):=\sigma\delta_\xi$. Obviously, the process $L$ satisfies \eqref{eq:ass_nu_minimal} and \eqref{eq:L_char_fn}. 
The Poisson random measure on $[0,T]\times U$ associated to $L$ via \eqref{eq:LjumpPRM} has the representation
\[N(A)=\pi\big((\id_{[0,T]},\tau)^{-1}(A)\big),\quad A\in\cB([0,T]\times U),\]
where $(\id_{[0,T]},\tau)^{-1}(A):=\{(t,\xi,\sigma)\in[0,T]\times\cO\times\bR:(t,\sigma\delta_\xi)\in A\}$ is the preimage of $A$ under $(\id_{[0,T]},\tau)\colon[0,T]\times\cO\times\bR\to[0,T]\times U$.
Impulsive cylindrical processes are considered, e.g., in the monograph \cite{PesZab2007}.
\end{example}

\section{Weak approximation of linear SPDE with $\alpha$-stable noise}
\label{sec:alpha_stable}

Here we present a concrete application of the general theory developed in the previous sections. 
We analyze the weak convergence of space-time discretizations of the solutions to linear stochastic evolution equations of the type
\begin{align}\label{eq:linear_additive_noise}
\dl X(t)+AX(t)\,\dl t=\dl L(t),\quad t\in[0,T];\quad X(0)=X_0,
\end{align}
where 
$-A$ is the generator of an analytic semigroup of bounded operators on $H$
and $L$ is an infinite-dimensional Lévy process of $\alpha$-stable type, $\alpha\in(1,2)$, 
as specified in Section~\ref{sec:setting_alpha_stable} below.
For comparison's sake we first estimate the strong approximation error in Section~\ref{sec:strong_convergence}. Our main result, Theorem~\ref{thm:weak_error_alpha} in Section~\ref{sec:weak_convergence}, states that for suitable test functions the weak order of convergence is $\alpha$ times the strong order of convergence.
Let us recall the notation $B_U=\{x\in U:\|x\|_U\leq 1\}$ and $B_U^c=U\setminus B_U$ for the closed unit ball and its complement in a Hilbert space $U$.

\subsection{Setting, examples and regularity of the solution}
\label{sec:setting_alpha_stable}

We describe the assumptions on Eq.~\eqref{eq:linear_additive_noise} in detail and analyze the regularity of solution. As before, $H$ is a separable Hilbert space with inner product $\langle\cdot,\cdot\rangle$ and norm $\|\cdot\|$.

\begin{assumption}[Operator $A$]\label{as:A}
The operator $A\colon D(A)\subset H\to H$ is densely defined, linear, self-adjoint, positive definite and has a compact inverse. 
In particular, $-A$ is the generator of an analytic semigroup of contractions, which we denote by $(S(t))_{t\geq0}\subset\cL(H)$. 
\end{assumption}
Under Assumption~\ref{as:A} the fractional powers $A^{\frac \rho2}$, $\rho\in\bR$, of $A$ are defined
and there exist constants $C_\rho\in[0,\infty)$ (independent of $t$) such that
\begin{align}
  \big\|A^\frac{\rho}2S(t)\big\|_{\LB(H)}
&\leq
  C_\rho\, t^{-\frac{\rho}2},
  \quad
  t>0, \ \rho\geq0,\label{eq:estA1}\\
  \big\|A^{-\frac{\rho}2}(S(t)-\id_H)\big\|_{\LB(H)}
&\leq
  C_\rho \, t^{\frac{\rho}2},
  \quad \;\;
  t\geq0, \ \rho\in(0,2],\label{eq:estA2}
\end{align}
see, e.g.,~\cite[Section~2.6]{pazy1983}.
In particular, the operator $A$ gives rise to the scale of spaces $\dot H^\rho$, $\rho\in\R$, which we use to measure spatial regularity. These spaces are defined for $\rho\geq0$ as $\dot H^\rho:= D(A^{\frac{\rho}2})$ with norm 
$\|\cdot\|_{\dot H^\rho}:=\|A^{\frac{\rho}2}\cdot\|$ and for $\rho<0$ as the closure of $H$ w.r.t.\ the analogously defined
$\|\cdot\|_{\dot H^\rho}$-norm. 

\begin{example}\label{ex:A}
For $d\in\{1,2,3\}$ let $\cO\subset \bR^d$ be an open, bounded, convex, poly-gonal/polyhedral domain and set $H:=L^2(\cO)$. 
Our standard example for $A$ is a second order elliptic partial differential operator with zero Dirichlet boundary condition of the form
\begin{align*}
Au:=-\nabla\cdot(a\nabla u)+cu, \quad u\in D(A):=H^1_0(\cO)\cap H^2(\cO),
\end{align*}
with sufficiently smooth coefficients $a,c\colon\cO\to\bR$ such that $a(\xi)\geq\theta>0$ and $c(\xi)\geq 0$ for all $\xi\in\cO$. Here $H^1_0(\cO)$ and $H^2(\cO)$ are the classical $L^2$ Sobolev spaces of order one with zero Dirichlet boundary condition and of order two, respectively. It is well known that in this situation Assumption~\ref{as:A} is fulfilled and the abstract spaces $\dot H^{\rho}$ are related to the classical $L^2$-Sobolev spaces via, e.g, $\dot H^1=H^1_0(\cO)$ and $\dot H^2=H^1_0(\cO)\cap H^2(\cO)$.
\end{example}

The parameter $\beta>0$ in the following assumption on the driving Lévy process is a regularity parameter, compare Proposition~\ref{prop:existence_alpha} below.

\begin{assumption}[Lévy process $L$]\label{ass:L_alpha_stable}
The process $L=(L(t))_{t\in[0,T]}$ is a Hilbert-space valued Lévy process as described in Assumption~\ref{ass:L} (integrable, with mean zero and vanishing Gaussian part). In addition, there exist $\alpha\in(1,2)$ and $\beta>0$ such that the state space $U$ of $L$ is given by $U=\dot H^{\beta-\frac2\alpha}$ and the Lévy measure $\nu$ of $L$ safisfies the integrability condition
\begin{align}\label{eq:ass_nu_alphastable}
\int_{U}\min\big(\|x\|_U^{\alpha_+},\|x\|_U^{\alpha_-}\big)\,\nu(\dl x)<\infty
\end{align}
for all $\alpha_-<\alpha<\alpha_+$. 
\end{assumption}

It is clear that the condition \eqref{eq:ass_nu_alphastable} implies \eqref{eq:ass_nu_minimal}.  Assumption~\ref{ass:L_alpha_stable} is particularly satisfied by the following infinite-dimensional $\alpha$-stable processes.

\begin{example}[$\alpha$-stable subordinate cylindrical $Q$-Wiener process]
\label{ex:subordWP_alpha}
Consider the\linebreak situation of Example~\ref{ex:subordWP_general} and fix $\alpha\in(1,2)$, $\beta>0$. Assume that the covariance operator $Q\in\cL(H)$ of $W$ is such that $A^{\frac{\beta}2-\frac1\alpha}Q^{\frac 12}\in\cL_2(H)$ is a Hilbert-Schmidt operator on $H$, so that $W$ takes values in $U=\dot H^{\beta-\frac 2\alpha}$. Moreover, let the jump intensity measure $\varrho$ of the subordinator process $Z$ be given by
\begin{align*}
\varrho(\dl s)=\frac\alpha{\Gamma(1-\frac\alpha2)s^{1+\frac\alpha2}}\one_{(0,\infty)}(s)\,\dl s,
\end{align*}
so that $Z$ is $\alpha/2$-stable with Laplace transform $\bE e^{-rZ(t)}=\exp(-t\,r^{\frac\alpha2})$, $t,r\geq0$, compare \cite[Example~4.34]{PesZab2007}, \cite{BrzZab2010}. In this situation, subordinate Brownian motion $(L(t))_{t\in[0,T]}=(W(Z(t)))_{t\in[0,T]}$ satisfies Assumption~\ref{ass:L_alpha_stable}. The integrability condition \eqref{eq:ass_nu_alphastable} can be verified similarly as the condition \eqref{eq:ass_nu_minimal} in Example~\ref{ex:subordWP_general}. For instance, for $\alpha_-<\alpha$ we have
\begin{align*}
\int_{B_U^c}\|y\|_U^{\alpha_-}\,\nu(\dl y)
&=\int_0^\infty\int_U\one_{B_U^c}(\sqrt sy)\|\sqrt sy\|_U^{\alpha_-}\,\P_{W(1)}(\dl y)\varrho(\dl s)\\
&=\int_0^\infty s^{\frac{\alpha_-}2}\,\bE\big[\one_{(s^{-1/2},\infty)}(\|W(1)\|_U)\|W(1)\|_U^{\alpha_-}\big]\,\varrho(\dl s),
\end{align*}
where the expectation in the last integral can be estimated by
\begin{align*}
\int_{s^{-1/2}}^\infty\alpha_-\cdot r^{\alpha_--1}\P(\|W(1)\|_U\geq r)\,\dl r
&\leq\alpha_-\cdot s^{1-\frac{\alpha_-}2}\int_0^\infty\!r\, \P(\|W(1)\|_U\geq r)\,\dl r\\
&=\frac{\alpha_-}2\cdot s^{1-\frac{\alpha_-}2}\,\bE(\|W(1)\|_U^2),
\end{align*}
so that $\int_{B_U^c}\|y\|_U^{\alpha_-}\,\nu(\dl y)$ is less than or equal to
\begin{align*}
\frac{\alpha_-}2\int_0^1 s\,\varrho(\dl s)\,\bE(\|W(1)\|_U^2)+\int_1^\infty s^{\frac{\alpha_-}2}\,\varrho(\dl s)\,\bE(\|W(1)\|_U^{\alpha_+})<\infty;
\end{align*}
the finiteness of
$\int_{B_U}\|y\|^{\alpha_+}_U\,\nu(\dl y)$ can be verified similarly.
\end{example}

\begin{example}[$\alpha$-stable impulsive cylindrical process]\label{ex:impCylProc_alpha}
In the situation of Example~\ref{ex:impCylProc_general}, let $\alpha\in(1,2)$, assume that the spatial dimension $d$ satisfies $d<4/\alpha$, and let $\beta\in(0,\frac2\alpha-\frac d2)$. Choose 
$U=\dot H^{\beta-\frac2\alpha}$ 
\felix{argue as in Thomee p.38f and p.320f and use Sobolev embedding to obtain that $\dot H^{\frac2\alpha-\beta}$ is embedded in $C_b$}
as the state space of $L$ and
and let the jump size intensity measure $\varrho$ of $L$ be given by
\begin{align*}
\varrho(\dl\sigma)= \frac 1{\sigma^{1+\alpha}}\one_{(0,\infty)}(\sigma)\dl\sigma,
\end{align*}
compare~\cite{Mytnik2002}, \cite[Example~7.26]{PesZab2007}.
Then $L$ satisfies Assumption~\ref{ass:L_alpha_stable}. The integrability condition \eqref{eq:ass_nu_alphastable} holds since  
\begin{align*}
\int_{B_U}\|y\|_U^{\alpha_+}\,\nu(\dl y)&=\int_{0}^{1/c}\int_\cO\|\sigma\delta_\xi\|_U^{\alpha_+}\,\dl\xi\,\varrho(\dl\sigma)\\
&=c\, \lambda^d(\cO)\int_0^{1/c}\frac{\sigma^{\alpha_+}}{\sigma^{1+\alpha}}\,\dl\sigma <\infty
\end{align*}
for all $\alpha_+>\alpha$, where we have set $c:=\|\delta_\xi\|_U$;
the finiteness of $\int_{B_U^c}\|y\|_U^{\alpha_-}\,\nu(\dl y)$ is checked analogously.
\end{example}

We are interested in the mild solution $X(t)=S(t)X_0+\int_0^tS(t-s)\,\dl L(s)$ to Eq.~\eqref{eq:linear_additive_noise}. 
To simplify the exposition, we assume that the initial condition $X_0\in H$ is deterministic. 
In view of Lemma~\ref{lem:LvsN} it is natural to define the stochastic convolution $\int_0^tS(t-s)\dl L(s)$ in terms of integrals w.r.t.\ the compensated Poisson random measure $\tilde N$ associated to $L$ via \eqref{eq:LjumpPRM} and to define the mild solution $X=(X(t))_{t\in[0,T]}$ as the $H$-valued adapted process given by
\begin{equation}\label{eq:mild_sol_alpha_stable}
X(t)= S(t)X_0+\int_0^t\int_{B_U} S(t-s)x\,\tilde N(\dl s,\dl x)
+\int_0^t\int_{B_U^c} S(t-s)x\,\tilde N(\dl s,\dl x),
\end{equation}
where the first integral is an $H$-valued $L^{\alpha_+}$-integral w.r.t.\ $\tilde N$ and the second integral is an $H$-valued $L^{\alpha_-}$-integral w.r.t.\ $\tilde N$ in the sense of Remark~\ref{rem:stoch_int_space-time}. \felix{check measurability of integrands (general $A$)}
Under Assumption~\ref{as:A} and Assumption~\ref{ass:L_alpha_stable} this definition is meaningful.
In particular, as a consequence of \eqref{eq:estA1}, the operators $S(t)\in\LB(H),\,
t>0$, admit unique extensions $S(t)\in\LB(\dot H^{-\varrho},H),\,t>0,$ for all $\varrho\geq0$. As shown in the following result, the parameter $\beta>0$ in Assumption~\ref{ass:L_alpha_stable} determines the spatio-temporal regularity of $X$. Its proof is postponed to the appendix.

\begin{prop}[Regularity]\label{prop:existence_alpha}
Let Assumption~\ref{as:A} and \ref{ass:L_alpha_stable} hold. Let $X_0\in H$ and let $X=(X(t))_{t\in[0,T]}$ be the mild solution to Eq.~\eqref{eq:linear_additive_noise} given by \eqref{eq:mild_sol_alpha_stable}.
Let $\alpha_-\in[1,\alpha)$ and $\beta_-\in[0,\beta)$. Then, 
\begin{align*}
X(t)\in L^{\alpha_-}(\Omega;\dot H^{\beta_-})\;\text{ for all } t\in(0,T].
\end{align*}
Moreover, the mapping $t\mapsto X(t)$ is continuous from $(0,T]$ to $L^{\alpha_-}(\Omega;\dot H^{\beta_-})$ and $\min(\frac{\beta_-}2,1)$-Hölder continuous from $[\varepsilon,T]$ to $L^{\alpha_-}(\Omega;H)$ for all $\varepsilon\in(0,T]$. If additionally $X_0\in \dot H^\beta$, then $t\mapsto X(t)$ is continuous from $[0,T]$ to $L^{\alpha_-}(\Omega;\dot H^{\beta_-})$ and $\min(\frac{\beta_-}2,1)$-Hölder continuous from $[0,T]$ to $L^{\alpha_-}(\Omega;H)$.
\end{prop}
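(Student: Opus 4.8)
The plan is to reduce everything to estimates for the stochastic convolution and to play the smoothing of the analytic semigroup off against the integrability properties \eqref{eq:ass_nu_alphastable} of $\nu$. Throughout I set $\gamma:=\beta_--\beta+\tfrac2\alpha$, so that, since $U=\dot H^{\beta-2/\alpha}$ by Assumption~\ref{ass:L_alpha_stable}, one has $\|A^{\beta_-/2}S(r)x\|=\|A^{\gamma/2}S(r)A^{(\beta-2/\alpha)/2}x\|\le\|A^{\gamma/2}S(r)\|_{\LB(H)}\,\|x\|_U$ for $x\in U$ and $r>0$; by \eqref{eq:estA1} this is $\le C\,r^{-\gamma^+/2}\|x\|_U$ with $\gamma^+:=\max(\gamma,0)$ (for $\gamma<0$ the operator $A^{\gamma/2}S(r)$ is bounded uniformly in $r$). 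The essential tool is the continuity of the stochastic integral mapping $I^{\tilde N}\colon L^p_{\pred}(\Omega_T\times U;H)\to\cM^p_T(H)$ for $p\in[1,2]$ from Remark~\ref{rem:stoch_int_space-time}, which yields the moment bound $\E\|\int_0^t\int_B\Phi(s,x)\,\tilde N(\dl s,\dl x)\|^p\le C_p\,\E\int_0^t\int_B\|\Phi(s,x)\|^p\,\nu(\dl x)\,\dl s$, applied with the target Hilbert space $\dot H^{\beta_-}$ in place of $H$.

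For the spatial regularity I would treat the two stochastic integrals in \eqref{eq:mild_sol_alpha_stable} separately. For the small-jump part I would use the above bound with $p=\alpha_+\in(\alpha,2)$, obtaining
\[
\E\Big\|\int_0^t\!\int_{B_U}\!S(t-s)x\,\tilde N(\dl s,\dl x)\Big\|_{\dot H^{\beta_-}}^{\alpha_+}\le C\int_0^t (t-s)^{-\gamma^+\alpha_+/2}\,\dl s\int_{B_U}\|x\|_U^{\alpha_+}\,\nu(\dl x),
\]
where the spatial integral is finite by \eqref{eq:ass_nu_alphastable} and the time integral converges provided $\gamma^+\alpha_+/2<1$. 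Since $\beta_-<\beta$ forces $\gamma<\tfrac2\alpha$ strictly, one has $\gamma^+<2/\alpha_+$ once $\alpha_+$ is chosen close enough to $\alpha$, which is the decisive point. For the large-jump part I would use $p=\alpha_-$; the analogous condition $\gamma^+\alpha_-/2<1$ holds automatically because $\gamma^+<\tfrac2\alpha<\tfrac2{\alpha_-}$, and $\int_{B_U^c}\|x\|_U^{\alpha_-}\,\nu(\dl x)<\infty$ by \eqref{eq:ass_nu_alphastable}. Since $L^{\alpha_+}(\Omega)\hookrightarrow L^{\alpha_-}(\Omega)$ and $S(t)X_0\in\dot H^{\beta_-}$ for $t>0$ by \eqref{eq:estA1}, this gives $X(t)\in L^{\alpha_-}(\Omega;\dot H^{\beta_-})$ for $t\in(0,T]$.

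For the temporal Hölder continuity in $L^{\alpha_-}(\Omega;H)$ I would use, for $0<t<t+h\le T$, the decomposition
\[
X(t+h)-X(t)=(S(h)-\id_H)S(t)X_0+\int_0^t\!\int_U(S(h)-\id_H)S(t-s)x\,\tilde N+\int_t^{t+h}\!\int_U S(t+h-s)x\,\tilde N.
\]
The first term is deterministic and is bounded, via \eqref{eq:estA2} with $\rho=\min(\beta_-,2)$ and \eqref{eq:estA1}, by $C\,h^{\min(\beta_-/2,1)}t^{-\rho/2}\|X_0\|$. For the second term I would factor out $(S(h)-\id_H)A^{-\rho/2}$ using \eqref{eq:estA2} to gain the factor $h^{\rho/2}$ and estimate the remaining integral exactly as in the spatial step, again splitting into small and large jumps and using the $L^{\alpha_\pm}$ bounds. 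The third term is the convolution over the short interval $[t,t+h]$ and is estimated in the same way, the shrinking domain producing a positive power of $h$. On $[\varepsilon,T]$ the factor $t^{-\rho/2}$ is bounded by $C_\varepsilon$, giving the claimed $\min(\tfrac{\beta_-}2,1)$-Hölder continuity; the left increment $X(t)-X(t-h)$ is handled symmetrically.

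For the continuity from $(0,T]$ into the stronger norm $L^{\alpha_-}(\Omega;\dot H^{\beta_-})$ I would apply the same three-term decomposition but measure everything in $\dot H^{\beta_-}$ and let $h\to0$. The first term tends to $0$ by strong continuity of the semigroup applied to $A^{\beta_-/2}S(t)X_0\in H$; the second tends to $0$ by dominated convergence, its integrand $\|(S(h)-\id_H)A^{\beta_-/2}S(t-s)x\|^p$ converging to $0$ pointwise and being dominated by $(2\|A^{\beta_-/2}S(t-s)x\|)^p$, which is integrable by the spatial estimate; and the third tends to $0$ because it is supported on the shrinking interval. Finally, if $X_0\in\dot H^\beta$ then for $\rho\le\beta_-<\beta$ one has $\|A^{\rho/2}S(t)X_0\|=\|A^{(\rho-\beta)/2}S(t)A^{\beta/2}X_0\|\le C\|X_0\|_{\dot H^\beta}$ uniformly in $t\in[0,T]$, so the factor $t^{-\rho/2}$ disappears and both continuity and Hölder continuity extend up to $t=0$. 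The main obstacle throughout is the integrability of the temporal singularity $(t-s)^{-\gamma^+\alpha_+/2}$ for the small jumps: it forces one to exploit the strict inequality $\beta_-<\beta$ together with the freedom to let $\alpha_+\downarrow\alpha$ in \eqref{eq:ass_nu_alphastable}, which is precisely the feature distinguishing the $\alpha$-stable setting from the square-integrable one.
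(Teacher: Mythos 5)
Your proposal is correct and follows essentially the same route as the paper's proof: the identical factorization $\|A^{\beta_-/2}S(r)x\|\le\|A^{\gamma/2}S(r)\|_{\LB(H)}\|x\|_U$ with $\gamma=\beta_--\beta+\frac2\alpha$, the identical splitting of \eqref{eq:mild_sol_alpha_stable} into the $B_U$-part estimated in $L^{\alpha_+}$ and the $B_U^c$-part estimated in $L^{\alpha_-}$ via the continuity of $I^{\tilde N}$, and the identical device of letting $\alpha_+\downarrow\alpha$ to make the singularity $(t-s)^{-\gamma^+\alpha_+/2}$ integrable; the paper writes out only this spatial estimate and settles the continuity assertions with ``similar arguments using \eqref{eq:estA2}'', which your three-term decomposition makes explicit. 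The one spot to tighten is the increment integral over $[t,t+h]$: ``a positive power of $h$'' is not by itself sufficient, and you should verify — by the same $\alpha_\pm$-splitting and semigroup factorization, using $\beta_-<\beta$ and $\alpha_+$ close to $\alpha$ — that the resulting exponent $\tfrac1{\alpha_\pm}-\tfrac12\big(\tfrac2\alpha-\beta\big)^+$ is indeed at least $\min(\tfrac{\beta_-}2,1)$, which is the same decisive mechanism you already invoked for the spatial estimate.
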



\begin{remark}\label{rem:Bochner_integrability_alpha}
The continuity assertions in Proposition~\ref{prop:existence_alpha} imply that the $H$-valued solution process $X=(X(t))_{t\in[0,T]}$ is stochastically continuous and hence has a predictable modification, see, e.g., \cite[Proposition~3.21]{PesZab2007}. In the sequel we always consider such a modification of $X$. 
Proposition~\ref{prop:existence_alpha} also implies that 
$X$ belongs to $L^1([0,T],\meas;L^{\alpha_-}(\Omega;H))$ 
for every finite Borel measure $\meas$ on $[0,T]$ and all $\alpha_-\in[1,\alpha)$. As a consequence we have
\begin{align*}
\bE\int_0^T \|X(t)\|\,\meas(\dl t)=\int_0^T\|X(t)\|_{L^1(\Omega;H)}\,\meas(\dl t)\leq \int_0^T\|X(t)\|_{L^{\alpha_-}(\Omega;H)}\,\meas(\dl t)<\infty,
\end{align*}
so that $\P$-almost all trajectories of $X$ belong to $L^1([0,T],\meas;H)$.
\end{remark}

\subsection{Discretization scheme and strong convergence}
\label{sec:strong_convergence}

We discretize the mild solution \eqref{eq:mild_sol_alpha_stable} to Eq.~\eqref{eq:linear_additive_noise} by combining an implicit Euler scheme in time with an abstract finite element discretization in space.

\begin{assumption}[Discretization]\label{ass:discretization}
For the spatial discretization we use a family
$(V_h)_{h\in(0,1)}$  of finite dimensional subspaces of $H$ and linear operators $A_h\colon V_h\to V_h$ that serve as discretizations of $A$. By $P_h\colon H\to V_h$ we denote the orthogonal projectors w.r.t.\ the inner product in $H$. For the discretization in time we use a uniform grid  $t_m=km$, $m\in\{0,\ldots,M\}$, with stepsize $k\in(0,1)$, where $M=M_k\in\bN$ is determined by $t_M\leq T< t_M+k$. 
The operators 
$S_{h,k}:=(\id_{V_h}+kA_h)^{-1}P_h$
serve as discretizations of $S(k)$,
and 
$E_{h,k}^m:=S_{h,k}^m-S(t_m)$
are the corresponding error operators. There are constants 
$D_\rho,\,D_{\rho,\sigma}\in [0,\infty)$ (independent of $h$, $k$, $m$) such that, 
\felix{cannot change order of operators if they are not self-adjoint -- changes ok for later sections?}
\begin{align}
&\big\|A_h^{\frac{\rho}2}S_{h,k}^m \big\|_{\LB(H)} + \big\|S_{h,k}^m A^{\frac{\min(\rho,1)}2} \big\|_{\LB(H)}
\leq
  D_\rho\,t_m^{-\frac{\rho}2},
  \quad
  \rho\geq0,\label{eq:estAh1}\\
&\big\| E_{h,k}^m A^{\frac{\rho}2}\big\|_{\LB(H)}
\leq
  D_{\rho,\sigma} \,t_m^{-\frac{\rho+\sigma}2}
  \big(
    h^\sigma + k^{\frac{\sigma}2}
  \big),
  \quad
  \sigma\in[0,2],\ \rho\in [-\sigma,\min(1,2-\sigma)],\label{eq:estAh2}
 \end{align}
for all $h,k\in(0,1)$ and $m\in\{1,\ldots,M\}$.
\end{assumption}

\begin{example}\label{ex:Vh}
Consider the situation of Example~\ref{ex:A}. A concrete example for the spaces $V_h$ are standard finite element spaces consisting of continuous, piecewise linear functions w.r.t.\ regular triangulations of $\cO$, with maximal mesh size bounded by $h$. A proof of the estimates \eqref{eq:estAh1}, \eqref{eq:estAh2} in this case can be found in \cite[Section 5]{AnderssonKruseLarsson}.
\end{example}

Due to $\eqref{eq:estAh1}$, the operators $S_{h,k}^{m-j}\in\LB(H,V_h)$ have unique continuous extensions from $\dot H^{-1}$ to $V_h$, which we also denote by $S_{h,k}^{m-j}$. We will henceforth assume that $\beta >\frac 2\alpha-1$ so that the Lévy process $L$ takes values in $U=\dot H^{\beta-\frac2\alpha}\subset\dot H^{-1}$ and the following definition makes sense.
For $h,k\in(0,1)$ and $M=M_k\in\bN$, the discretization $(X^m_{h,k})_{m\in\{0,\ldots,M\}}$ of $(X(t))_{t\in[0,T]}$  in space and time is defined by
\begin{equation}\label{eq:mild_solution_discrete_alpha_stable}
\begin{aligned}
X^m_{h,k}&= S^m_{h,k}X_0+\sum_{j=0}^{m-1}S^{m-j}_{h,k}(L(t_{j+1})-L(t_j))
\end{aligned}
\end{equation}
or, equivalently, 
\begin{align*}
X^m_{h,k}&= S^m_{h,k}X_0+\sum_{j=0}^{m-1}\Big(\int_{t_j}^{t_{j+1}}\!\!\int_{B_U}S^{m-j}_{h,k}x\,\tilde N(\dl s,\dl x)+\int_{t_j}^{t_{j+1}}\!\!\int_{B_U^c}S^{m-j}_{h,k}x\,\tilde N(\dl s,\dl x)\Big)
\end{align*}
where the integrals involving the unit ball $B_U$ are $V_h$-valued $L^{\alpha_+}$-integrals w.r.t.\ $\tilde N$ and the integrals involving $B_U^c$ are $V_h$-valued  $L^{\alpha_-}$-integrals w.r.t.\ $\tilde N$.
We denote further by $(\tilde X_{h,k}(t))_{t\in[0,T]}$ the piecewise constant interpolation of $(X^m_{h,k})_{m\in\{0,\ldots,M\}}$ defined as
\begin{align}\label{eq:mild_solution_discrete_interpol_alpha_stable}
\tilde X_{h,k}(t):=\sum_{m=0}^{M-1}\one_{[t_m,t_{m+1})}(t) X^m_{h,k}+\one_{[t_M,T]}(t)X^M_{h,k}.
\end{align}
Using the notation $\lfloor t\rfloor_k:=\max\{n\in\bN_0:nk\leq t\}$ we can rewrite $\tilde X_{h,k}(t)$ more conveniently as $X_{h,k}^{\lfloor t\rfloor_k}$, hence \felix{see notes 11.10.'16}
\begin{equation}\label{eq:mild_solution_discrete_interpol_alpha_stable2}
\begin{aligned}
\tilde X_{h,k}(t)
&=S_{h,k}^{\lfloor t\rfloor_k}X_0+\int_0^{k\cdot\lfloor t\rfloor_k}\int_{B_U}S_{h,k}^{\lfloor t\rfloor_k-\lfloor s\rfloor_k}x\,\tilde N(\dl s,\dl x)\\
&\quad +\int_0^{k\cdot\lfloor t\rfloor_k}\int_{B_U^c}S_{h,k}^{\lfloor t\rfloor_k-\lfloor s\rfloor_k}x\,\tilde N(\dl s,\dl x),
\end{aligned}
\end{equation}
where the stochastic integrals are again understood in the $L^{\alpha_+}$ sense and in the $L^{\alpha_-}$ sense, respectively.
 
Finally, for convenience we also introduce the piecewise continuous error mapping $\tilde E_{h,k}\colon [0,T]\to\LB(H)$ given by $\tilde E_{h,k}(0):=P_h-\id_H$ and $\tilde E_{h,k}(t):=S_{h,k}^m-S(t)$ for $t\in(t_{m-1},t_m]$, so that
\begin{equation}\label{eq:error_alpha_stable}
\begin{aligned}
X_{h,k}^m-X(t_m)
&=E_{h,k}^m X_0+\int_0^{t_m}\int_{B_U} \tilde E_{h,k}(t_m-s)x\,\tilde N(\dl s,\dl x)\\
&\quad+\int_0^{t_m}\int_{B_U^c} \tilde E_{h,k}(t_m-s)x\,\tilde N(\dl s,\dl x),
\end{aligned}
\end{equation} 
the stochastic integrals being understood in the $L^{\alpha_+}$ sense and in the $L^{\alpha_-}$ sense, respectively.

\begin{remark}
\felix{see notes 14.10.'16}
The error estimate \eqref{eq:estAh2} extends to the piecewise continuous error mapping $\tilde E_{h,k}\colon[0,T]\to\LB(H)$. Indeed, as a consequence of the identity $\tilde E_{h,k}(t)=E_{h,k}^m+(S(t_m)-S(t))$, $t\in(t_{m-1},t_m]$, and the estimates \eqref{eq:estA1}, \eqref{eq:estA2}, \eqref{eq:estAh2}, we have
\begin{align}\label{eq:estAh3}
\big\|\tilde E_{h,k}(t)A^{\frac\rho2}\big\|_{\LB(H)}\leq (C_\sigma C_{\sigma+\rho}+D_{\rho,\sigma})\,t^{-\frac{\rho+\sigma}2}\big(h^\sigma+k^{\frac\sigma2}\big),
\end{align}
holding for $\sigma\in[0,2]$, $\rho\in [-\sigma,\min(1,2-\sigma)]$ and $h,k\in(0,1)$, $t\in(0,T]$.
\end{remark}

For comparison's sake we present the following strong convergence result. The proof of which is postponed to the appendix.

\begin{prop}[Strong order]\label{prop:strong_error_alpha_stable}
Let Assumption~\ref{as:A}, \ref{ass:L_alpha_stable} and \ref{ass:discretization} hold with $\beta\in(\frac2\alpha-1,\frac2\alpha]$. Let $X_0\in \dot H^\beta$,
let $(X(t))_{t\in[0,T]}$ be the mild solution to Eq.~\eqref{eq:linear_additive_noise} given by \eqref{eq:mild_sol_alpha_stable}, and $(\tilde X_{h,k}(t))_{t\in[0,T]}$ be its discretization given by \eqref{eq:mild_solution_discrete_alpha_stable}, \eqref{eq:mild_solution_discrete_interpol_alpha_stable}.
Then, for all $\alpha_-\in[1,\alpha)$ and $\beta_-\in[0,\beta)$  there exists a finite constant $C=C(X_0,T,\nu,\alpha,\alpha_-,\beta,\beta_-)$ such that 
\begin{align*}
\sup_{t\in[0,T]}\big\|\tilde X_{h,k}(t)-X(t)\big\|_{L^{\alpha_-}(\Omega;H)}\leq C\big(h^{\beta_-}+k^{\frac{\beta_-}2}\big),\quad h,k\in (0,1).
\end{align*}
\end{prop}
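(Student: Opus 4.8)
The plan is to fix $t\in[0,T]$, set $m:=\lfloor t\rfloor_k$ and $t_m:=km\le t$, and split the error as
\[
\tilde X_{h,k}(t)-X(t)=\big(X_{h,k}^m-X(t_m)\big)+\big(X(t_m)-X(t)\big),
\]
recalling from \eqref{eq:mild_solution_discrete_interpol_alpha_stable2} that $\tilde X_{h,k}(t)=X_{h,k}^{m}$. The second summand is purely a temporal regularity term: since $X_0\in\dot H^\beta$, Proposition~\ref{prop:existence_alpha} guarantees that $s\mapsto X(s)$ is $\tfrac{\beta_-}2$-Hölder continuous from $[0,T]$ to $L^{\alpha_-}(\Omega;H)$ (note $\tfrac{\beta_-}2<1$), whence $\|X(t_m)-X(t)\|_{L^{\alpha_-}(\Omega;H)}\le C|t-t_m|^{\beta_-/2}\le C k^{\beta_-/2}$, uniformly in $t$. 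The first summand is the genuine space-time discretization error, for which I use the explicit representation \eqref{eq:error_alpha_stable}, splitting it into the deterministic contribution $E_{h,k}^m X_0$ and the two stochastic integrals over $B_U$ and $B_U^c$.

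For the deterministic term I write $E_{h,k}^m X_0=\big(E_{h,k}^m A^{-\beta_-/2}\big)A^{\beta_-/2}X_0$ and apply the error estimate \eqref{eq:estAh2} with $\sigma=\beta_-$ and $\rho=-\beta_-$ (the choice $\rho+\sigma=0$ removes the temporal singularity), giving $\|E_{h,k}^m X_0\|\le C(h^{\beta_-}+k^{\beta_-/2})\|X_0\|_{\dot H^{\beta_-}}$. The analytic workhorse for the two stochastic terms is the moment inequality for compensated Poisson integrals in the regime $p\in[1,2]$,
\[
\bE\Big\|\int_0^{T}\!\!\int_U\Phi(s,x)\,\tilde N(\dl s,\dl x)\Big\|^p\le C\,\bE\int_0^{T}\!\!\int_U\|\Phi(s,x)\|^p\,\nu(\dl x)\,\dl s,
\]
which follows from the Burkholder--Davis--Gundy inequality cited in Remark~\ref{rem:stoch_int_space-time} together with subadditivity of $r\mapsto r^{p/2}$ and the compensation formula; here the integrands $\Phi(s,x)=\tilde E_{h,k}(t_m-s)x$ are deterministic, hence predictable.

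Writing $\gamma:=\beta-\tfrac2\alpha\in(-1,0]$ so that $U=\dot H^{\gamma}$ and $\|x\|_U=\|A^{\gamma/2}x\|$, I estimate $\|\tilde E_{h,k}(\tau)x\|\le\|\tilde E_{h,k}(\tau)A^{-\gamma/2}\|_{\LB(H)}\|x\|_U$ and invoke \eqref{eq:estAh3} with $\rho=-\gamma$ and $\sigma=\beta_-$ (one checks $-\gamma\in[-\beta_-,\min(1,2-\beta_-)]$, so the estimate applies) to obtain $\|\tilde E_{h,k}(\tau)A^{-\gamma/2}\|_{\LB(H)}\le C\tau^{-(\beta_--\gamma)/2}(h^{\beta_-}+k^{\beta_-/2})$. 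For the large-jump integral $I_2$ over $B_U^c$ I apply the moment bound with $p=\alpha_-$; this factorizes into the finite Lévy integral $\int_{B_U^c}\|x\|_U^{\alpha_-}\,\nu(\dl x)<\infty$ (from \eqref{eq:ass_nu_alphastable}) times $\int_0^{t_m}\tau^{-\alpha_-(\beta_--\gamma)/2}\,\dl\tau$, which converges because $\alpha_-<\alpha$ makes $\alpha_-(\beta_--\gamma)/2<1$ comfortably. For the small-jump integral $I_1$ over $B_U$ I apply the moment bound with $p=\alpha_+$ for a suitably chosen $\alpha_+\in(\alpha,2]$; here $\int_{B_U}\|x\|_U^{\alpha_+}\,\nu(\dl x)<\infty$ by \eqref{eq:ass_nu_alphastable} and the time integral $\int_0^{t_m}\tau^{-\alpha_+(\beta_--\gamma)/2}\,\dl\tau$ converges provided $\alpha_+(\beta_--\gamma)/2<1$. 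Finally I use $\|I_1\|_{L^{\alpha_-}(\Omega;H)}\le\|I_1\|_{L^{\alpha_+}(\Omega;H)}\le C(h^{\beta_-}+k^{\beta_-/2})$.

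Collecting the four bounds, each of the form $C(h^{\beta_-}+k^{\beta_-/2})$ with $C$ independent of $t,h,k$, and taking the supremum over $t\in[0,T]$ yields the claim. The crux of the argument --- and the reason the rate is $\beta_-<\beta$ rather than $\beta$ --- is the choice of $\alpha_+$ for the small jumps: raising the moment to $\alpha_+>\alpha$ is forced by the integrability \eqref{eq:ass_nu_alphastable} near the origin, but it worsens the temporal singularity $\tau^{-\alpha_+(\beta_--\gamma)/2}$, so I must verify that $\alpha_+$ can be taken close enough to $\alpha$ that $\alpha_+(\beta_--\gamma)/2<1$ still holds. This is possible precisely because, as $\alpha_+\downarrow\alpha$, one has $\alpha_+(\beta_--\gamma)/2\to\tfrac{\alpha}2(\beta_--\beta)+1<1$ (using $\beta_-<\beta$), so the condition holds strictly for $\alpha_+$ sufficiently close to $\alpha$. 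This balancing of the moment exponent against the operator singularity is the main obstacle; everything else is routine.
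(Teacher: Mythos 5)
Your proposal is correct and follows essentially the same route as the paper's own proof: the same splitting $\tilde X_{h,k}(t)-X(t)=(X_{h,k}^m-X(t_m))+(X(t_m)-X(t))$ with the temporal term handled by the Hölder continuity from Proposition~\ref{prop:existence_alpha}, the same decomposition \eqref{eq:error_alpha_stable} into the deterministic part (estimated via \eqref{eq:estAh2} with $\sigma=\beta_-$, $\rho=-\beta_-$) and the two stochastic integrals (estimated via the $L^p$-continuity of $I^{\tilde N}$ with $p=\alpha_+$ on $B_U$ and $p=\alpha_-$ on $B_U^c$, combined with \eqref{eq:estAh3} for $\rho=\frac2\alpha-\beta$, $\sigma=\beta_-$), and the same final balancing of $\alpha_+\downarrow\alpha$ against the singularity exponent, which is exactly the paper's condition $\frac{\beta_-}2<\frac\beta2-\frac1\alpha+\frac1{\alpha_+}$.
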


\begin{remark}
The restriction $\beta\in(\frac2\alpha-1,\frac2\alpha]$ for the regularity parameter $\beta$ in Proposition~\ref{prop:strong_error_alpha_stable} and in Theorem~\ref{thm:weak_error_alpha} below is made for the following two reasons: As mentioned above, the lower bound $\beta>\frac2\alpha-1$ ensures that the discrete solution \eqref{eq:mild_solution_discrete_alpha_stable} is defined. The upper bound $\beta\leq\frac2\alpha$ is needed to be able to apply the deterministic error estimates \eqref{eq:estAh1}, \eqref{eq:estAh2} in the proofs of our results. The restriction on the range of admissible regularity parameters $\beta$ can be relaxed if one considers higher order finite element spaces $V_h$ instead of the `second order spaces' in Assumption~\ref{ass:discretization}.
\end{remark}

\subsection{Weak order of convergence}
\label{sec:weak_convergence}

We now analyze the weak error $\bE\big(f(\tilde X_{h,k})-f(X)\big)$, where $f$ belongs to the following class of path-dependent test functions.
Recall the definition of the spaces $\cC^{1,\delta}(H,\bR)$ from Subsection~\ref{sec:Notation_and_conventions}.
\begin{assumption}[Test function]\label{as:Phi_alpha}
Let $\varphi\in\cC^{1,\alpha_--1}(H,\R)\cap \cC^{1,\alpha_+-1}(H,\R)$ for \linebreak 
some $1\leq\alpha_-<\alpha<\alpha_+\leq2$, where $\alpha\in(1,2)$ is as in Assumption~\ref{ass:L_alpha_stable}.
Let $\meas$ be a finite Borel measure on $[0,T]$. The functional 
$f\colon L^1([0,T],\meas;H) \to \R$ is given by
\begin{align*}
 f(x)=
  \varphi
  \Big(
    \int_{[0,T]}
      x(t)
    \,\meas(\diffin t)
  \Big),\quad x\in L^1([0,T],\meas;H).
\end{align*}
\end{assumption}

Let us discuss Assumption \ref{as:Phi_alpha} and give a concrete example for $f$.

\begin{remark}\label{rem:condition_varphi_alpha}
\felix{cf.~notes~21.5.'16}
Consider $\varphi\colon H\to\bR$ and $1\leq\alpha_-<\alpha<\alpha_+\leq2$.
\begin{enumerate}[(i)]
\item 
A sufficient condition for $\varphi$ belonging to $\cC^{1,\alpha_--1}(H,\R)\cap \cC^{1,\alpha_+-1}(H,\R)$ is the following: 
$\varphi\in\cC^2(H,\bR)$ and $\varphi''$ satisfies the growth bound $\|\varphi''(x)\|_{\LB(H)}\leq C\min(\|x\|^{\alpha_--2},\|x\|^{\alpha_+-2})$, $x\in H$, with a finite constant $C$ that does not depend on $x$. This condition is natural in view of the typical assumptions in the Gaussian case (see, e.g., \cite{AnderssonKruseLarsson}) and the limited integrability properties of $\alpha$-stable random variables.
\item 
The assumption $\varphi\in\cC^{1,\alpha_--1}(H,\R)\cap \cC^{1,\alpha_+-1}(H,\R)$ is equivalent to assuming that $\varphi$ is Fréchet differentiable and that there exists a constant $C\in[0,\infty)$ such that $\|\varphi'(x)-\varphi'(y)\|\leq C\|x-y\|^{\alpha_+-1}$ for all $x,y\in H$ with $\|x-y\|\leq 1$ and $\|\varphi'(x)-\varphi'(y)\|\leq C\|x-y\|^{\alpha_--1}$ for all $x,y\in H$ with $\|x-y\|\geq1$.
\item
The assumption $\varphi\in \cC^{1,\alpha_--1}(H,\bR)$ implies the growth condition $|\varphi(x)|\leq C(1+\|x\|^{\alpha_-})$, $x\in H$, with $C\in[0,\infty)$ independent of $x$.
\end{enumerate}
\end{remark}

\begin{example}
Let the situation of Example~\ref{ex:A} and \ref{ex:Vh} be given, where $H=L^2(\cO)$. Assumption~\ref{as:Phi_alpha} is particularly satisfied by local space or space-time averages of the form
\begin{align*}
f(x):=\frac{1}{\lambda^d(\cD)}\big\langle x(\tau_1),\one_\cD\big\rangle
\quad\text{ or }\quad
f(x):=\frac{1}{(\tau_1-\tau_0)\lambda^d(\cD)}\Big\langle\int_{\tau_0}^{\tau_1}x(t)\,\dl t,\,\one_\cD\Big\rangle,
\end{align*}
where $0\leq \tau_0 <\tau_1\leq T$, $\cD\subset\cO$ is a Borel subset with positive Lebesgue measure $\lambda^d(\cD)$,  
and $x\in L^1([0,T],\meas;L^2(\cO))$ for $\meas=\delta_{\tau_1}$ and $\meas=\one_{[\tau_0,\tau_1]}\cdot\lambda$, respectively.
\end{example}

Recall from Remark~\ref{rem:Bochner_integrability_alpha} that, under Assumption~\ref{as:A} and \ref{ass:L_alpha_stable}, $\P$-almost all trajectories of the mild solution $X$ are Bochner integrable w.r.t.\ any finite Borel measure $\meas$ on $[0,T]$.  Hence, under Assumption~\ref{as:A}, \ref{ass:L_alpha_stable} and \ref{as:Phi_alpha}, the real-valued random variable $f(X)$ is well defined. Moreover, Remark~\ref{rem:Bochner_integrability_alpha} and \ref{rem:condition_varphi_alpha}(iii) together with Minkowski's inequality for integrals imply that $f(X)$ is integrable:
\begin{align*}
\bE|f(X)|\
&\leq C\bE\Big[1+\Big\|\int_{[0,T]}X(t)\,\meas(\dl t)\Big\|^{\alpha_-}\Big]\\
&\leq C\Big(1+\bE\Big[\Big(\int_{[0,T]}\|X(t)\|\,\meas(\dl t)\Big)^{\alpha_-}\Big]\Big)\\
&\leq C\Big(1+\Big(\int_{[0,T]}\|X(t)\|_{L^{\alpha_-}(\Omega;H)}\,\meas(\dl t)\Big)^{\alpha_-}\Big)<\infty.
\end{align*}
Obviously, the same is true for the real-valued random variable $f(\tilde X_{h,k})$.

Here is the main result of this section. Note that the obtained convergence rate for the weak error $\bE\big(f(\tilde X_{h,k})-f(X)\big)$ is $\alpha$ times that of the strong error $\sup_{t\in[0,T]}\big\|\tilde X_{h,k}(t)-X(t)\big\|_{L^{\alpha_-}(\Omega;H)}$ from Proposition~\ref{prop:strong_error_alpha_stable}.

\begin{theorem}[Weak order]\label{thm:weak_error_alpha}
Let Assumption~\ref{as:A}, \ref{ass:L_alpha_stable}, \ref{ass:discretization} and \ref{as:Phi_alpha} hold with $\beta\in(\frac2\alpha-1,\frac2\alpha]$. Let $X_0\in \dot H^{\alpha\beta}$,
let $(X(t))_{t\in[0,T]}$ be the mild solution to Eq.~\eqref{eq:linear_additive_noise} given by \eqref{eq:mild_sol_alpha_stable}, and $(\tilde X_{h,k}(t))_{t\in[0,T]}$ be its discretization given by \eqref{eq:mild_solution_discrete_alpha_stable}, \eqref{eq:mild_solution_discrete_interpol_alpha_stable}.
Then, for all $\beta_-\in[0,\beta)$  there exists a finite constant $C=C(X_0,T,\nu,\meas,f,\alpha,\beta,\beta_-)$ such that 
\felix{(statement to be adapted)}
\begin{align*}
\big|\bE\;\!f(\tilde X_{h,k})-\bE\;\!f(X)\big|\leq C\big(h^{\alpha\beta_-}+k^{\frac{\alpha\beta_-}2}\big),\quad h,k\in(0,1).
\end{align*}
\end{theorem}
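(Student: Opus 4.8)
The plan is to reduce the weak error to a one-dimensional quantity and extract the extra factor $\alpha$ through a single Malliavin integration by parts. Set $Y:=\int_{[0,T]}X(t)\,\meas(\dl t)$ and $\tilde Y:=\int_{[0,T]}\tilde X_{h,k}(t)\,\meas(\dl t)$, both well defined and integrable by Remark~\ref{rem:Bochner_integrability_alpha}, so that $\bE f(X)=\bE\varphi(Y)$ and $\bE f(\tilde X_{h,k})=\bE\varphi(\tilde Y)$. By a (stochastic) Fubini argument, justified through Proposition~\ref{lem:intDX} and the identification of $\delta$ with the Itô integral in Proposition~\ref{prop:space_time1}, I would represent the stochastic parts of $Y$ and $\tilde Y$ as Kabanov--Skorohod integrals of the \emph{deterministic} predictable integrands $\Phi(s,x)=\Gamma(s)x$ and $\tilde\Phi(s,x)=\tilde\Gamma(s)x$, where $\Gamma(s)=\int_{[s,T]}S(t-s)\,\meas(\dl t)$ and $\tilde\Gamma(s)=\int_{[s,T]}\one_{\{s\le k\lfloor t\rfloor_k\}}S_{h,k}^{\lfloor t\rfloor_k-\lfloor s\rfloor_k}\,\meas(\dl t)$ in $\LB(U,H)$. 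Writing $\Delta:=\tilde\Gamma-\Gamma$ and $\Psi(s,x):=\Delta(s)x$, this gives $e:=\tilde Y-Y=R_0+\delta(\Psi)$ with deterministic remainder $R_0=\int_{[0,T]}(S_{h,k}^{\lfloor t\rfloor_k}-S(t))X_0\,\meas(\dl t)$.

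Next I would use the exact first-order Taylor formula to split
\[
\bE\varphi(\tilde Y)-\bE\varphi(Y)=\int_0^1\bE\big\langle\varphi'(Y+\theta e),R_0\big\rangle\,\dl\theta+\int_0^1\bE\big\langle\varphi'(Y+\theta e),\delta(\Psi)\big\rangle\,\dl\theta.
\]
The first (deterministic) summand is controlled by $\sup_\theta\bE\|\varphi'(Y+\theta e)\|\cdot\|R_0\|_H$; the growth bound of Remark~\ref{rem:condition_varphi_alpha}(iii) together with the uniform-in-$(h,k)$ $L^{\alpha_-}$-bounds on $\tilde X_{h,k}$ keeps the expectation bounded, while $\|R_0\|_H\lesssim h^{\alpha\beta}+k^{\alpha\beta/2}$ follows from \eqref{eq:estAh3} and $X_0\in\dot H^{\alpha\beta}$ --- this is exactly where the stronger hypothesis $X_0\in\dot H^{\alpha\beta}$ is needed. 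For the second summand I would integrate by parts for each fixed $\theta$ using the \emph{local} duality formula of Proposition~\ref{prop:duality_space_time_local}, which is essential since the $\alpha$-stable solution only has moments of order below $\alpha$. By the commutation relations (Propositions~\ref{prop:comm_space-time1} and \ref{prop:comm_space-time2}) together with Lemma~\ref{lem:LvsN} one has $D_{s,x}Y=\Gamma(s)x$ and $D_{s,x}e=\Delta(s)x$, hence $D_{s,x}(Y+\theta e)=\Gamma_\theta(s)x$ with $\Gamma_\theta:=\Gamma+\theta\Delta$, and the chain rule (Proposition~\ref{lem:chain}) yields $D_{s,x}\varphi'(Y+\theta e)=\varphi'(Y+\theta e+\Gamma_\theta(s)x)-\varphi'(Y+\theta e)$.

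The heart of the argument is then the estimate of
\[
\int_0^1\!\!\int_0^T\!\!\int_U\bE\big|\big\langle\varphi'(Y+\theta e+\Gamma_\theta(s)x)-\varphi'(Y+\theta e),\Delta(s)x\big\rangle\big|\,\nu(\dl x)\,\dl s\,\dl\theta.
\]
Applying the Hölder continuity of $\varphi'$ from Assumption~\ref{as:Phi_alpha} (Remark~\ref{rem:condition_varphi_alpha}(ii)) bounds the integrand by $C\min(\|\Gamma_\theta(s)x\|^{\alpha_+-1},\|\Gamma_\theta(s)x\|^{\alpha_--1})\,\|\Delta(s)x\|$; since $\|\Gamma_\theta(s)\|_{\LB(U,H)}$ and $\|\Delta(s)\|_{\LB(U,H)}$ are uniformly bounded (via \eqref{eq:estA1}), the $x$-integral collapses to $\big(\int_U\min(\|x\|_U^{\alpha_+},\|x\|_U^{\alpha_-})\,\nu(\dl x)\big)\,\|\Delta(s)\|_{\LB(U,H)}$, finite by \eqref{eq:ass_nu_alphastable}. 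Everything thus reduces to the \emph{linear} operator-error integral $\int_0^T\|\Delta(s)\|_{\LB(U,H)}\,\dl s$. This is the source of the factor $\alpha$: because the error operator now enters only to the first power (unlike the $\alpha_-$-th moment in the strong estimate), the analytic smoothing $\|\tilde E_{h,k}(t-s)A^{\rho/2}\|\lesssim(t-s)^{-(\rho+\sigma)/2}(h^\sigma+k^{\sigma/2})$ of \eqref{eq:estAh3}, run with $U=\dot H^{\beta-\frac2\alpha}$ (so $\rho=\tfrac2\alpha-\beta$) and $\sigma=\alpha\beta_-$, keeps the exponent $(\rho+\sigma)/2=\tfrac12(\tfrac2\alpha-\beta+\alpha\beta_-)<1$ integrable in $t$ against $\meas$ precisely because $\beta\le\tfrac2\alpha$, giving $\int_0^T\|\Delta(s)\|_{\LB(U,H)}\,\dl s\lesssim h^{\alpha\beta_-}+k^{\frac{\alpha\beta_-}2}$.

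\emph{Main obstacle.} The principal difficulty is to organize the integration by parts so that it uses only the limited integrability of the $\alpha$-stable solution. This forces reliance on the local duality formula Proposition~\ref{prop:duality_space_time_local} together with a careful splitting of the jump integral over $B_U$ and $B_U^c$ matched to the two Hölder exponents $\alpha_-,\alpha_+$, and a corresponding choice of conjugate exponents $p,p'$ making both $\varphi'(Y+\theta e)\in L^{p'}(\Omega;H)$ and the deterministically bounded increment $\one_{\Omega_T\times B}D\varphi'(Y+\theta e)\in L^{p'}(\Omega_T\times U;H)$ admissible. Once the identities $D_{s,x}(Y+\theta e)=\Gamma_\theta(s)x$ are secured, the remaining work is the sharp deterministic bound $\int_0^T\|\Delta(s)\|_{\LB(U,H)}\,\dl s\lesssim h^{\alpha\beta_-}+k^{\frac{\alpha\beta_-}2}$, whose validity hinges on the constraint $\beta\le\tfrac2\alpha$.
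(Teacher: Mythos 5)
Your overall architecture --- Taylor expansion around $Y$, Malliavin integration by parts via the local duality formula of Proposition~\ref{prop:duality_space_time_local}, the chain rule and commutation relations to get $D_{s,x}(Y+\theta e)=\Gamma_\theta(s)x$, the $B_U$/$B_U^c$ splitting matched to $\alpha_\pm$, and the final smoothing estimate with $\rho=\tfrac2\alpha-\beta$, $\sigma=\alpha\beta_-$ --- is exactly the paper's strategy. However, the step where you ``collapse'' the $x$-integral contains a genuine gap: the claim that $\|\Gamma_\theta(s)\|_{\LB(U,H)}$ and $\|\Delta(s)\|_{\LB(U,H)}$ are uniformly bounded in $s$ is false in general. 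Since $U=\dot H^{\beta-\frac2\alpha}$, one has $\|S(\tau)\|_{\LB(U,H)}=\|A^{\frac\rho2}S(\tau)\|_{\LB(H)}\sim\tau^{-\frac\rho2}$ with $\rho=\tfrac2\alpha-\beta$, which is strictly positive whenever $\beta<\tfrac2\alpha$; hence for a concentrated measure, e.g.\ $\meas=\delta_{\tau_1}$ (the paper's flagship test function $f(x)=\lambda^d(\cD)^{-1}\langle x(\tau_1),\one_\cD\rangle$), one gets $\|\Gamma_\theta(s)\|_{\LB(U,H)}\sim(\tau_1-s)^{-\frac\rho2}\to\infty$ as $s\uparrow\tau_1$. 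Estimate \eqref{eq:estA1} yields uniform boundedness only when $\rho=0$ (i.e.\ $\beta=\tfrac2\alpha$) or when $\meas$ has a bounded Lebesgue density, whereas Assumption~\ref{as:Phi_alpha} allows arbitrary finite Borel measures.

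This is not cosmetic: replacing the false uniform bound by the true one, $\|\Gamma_\theta(s)x\|\le g(s)\|x\|_U$ with $g(s)\lesssim\int_{[s,T]}(t-s)^{-\frac\rho2}\,\meas(\dl t)$, and pulling $g(s)$ out of the $\min$ with power one (which is what your collapse amounts to) produces, for $\meas=\delta_{\tau_1}$, an $s$-integrand of order $(\tau_1-s)^{-\frac\rho2-\frac{\rho+\sigma}2}$, and the integrability condition $\rho+\tfrac\sigma2<1$ fails for $\beta$ near $\tfrac2\alpha-1$; so the weak error does not reduce to $\int_0^T\|\Delta(s)\|_{\LB(U,H)}\,\dl s$ alone. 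The repair is precisely the paper's computation: keep the increment bound $\min\big(\|\Gamma_\theta(s)x\|^{\alpha_+-1},\|\Gamma_\theta(s)x\|^{\alpha_--1}\big)$, split into $B_U$ and $B_U^c$, and apply H\"older's inequality in $x$ with exponents $(\alpha_+',\alpha_+)$ resp.\ $(\alpha_-',\alpha_-)$, so that the singular factor $g(s)$ enters only with power $\alpha_\pm-1<1$. The exponent condition then becomes $\tfrac\rho2\alpha_\pm+\tfrac\sigma2<1$, which is satisfiable with $\sigma=\alpha\beta_-$ after choosing $\alpha_\pm$ sufficiently close to $\alpha$, because $\tfrac\rho2\alpha+\tfrac{\alpha\beta_-}2=1-\tfrac\alpha2(\beta-\beta_-)<1$; this is where the hypothesis $\beta_-<\beta$ (not merely $\beta\le\tfrac2\alpha$) does the work. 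A smaller point: your identity $\tilde Y-Y=R_0+\delta(\Psi)$ needs a stochastic Fubini theorem that the paper does not provide (Proposition~\ref{lem:intDX} is the analogue for $D$, not for $\delta$); you can avoid it by exchanging $\bE\langle\varphi'(Y+\theta e),\cdot\rangle$ with $\int_{[0,T]}\cdots\,\meas(\dl t)$ by ordinary Fubini and applying the local duality formula for each fixed $t$, as the paper does.
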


In the sequel we will often omit the explicit notation of the discretization parameters $k$, $h$ and write, e.g., $\tilde X(t)$ instead of $\tilde X_{h,k}(t)$. In the proof of Theorem~\ref{thm:weak_error_alpha} we will deal with the $H$-valued random variable 
\begin{align}\label{eq:def_Fhk_alpha}
F:=F_{h,k}:=\int_0^1\varphi'\Bigg(\int_{[0,T]} X(t)\,\meas(\dl t)+\theta\int_{[0,T]}(\tilde X_{h,k}(t)-X(t))\,\meas(\dl t)\Bigg)\,\dl \theta.
\end{align}
Several integrability and Malliavin regularity properties of $F$ are collected in the following lemma.

\begin{lemma}\label{lem:F_DF_alpha}
In the setting of Theorem~\ref{thm:weak_error_alpha}, let $F=F_{h,k}$ be  the $H$-valued random variable given by \eqref{eq:def_Fhk_alpha}. Let $1<\alpha_-<\alpha<\alpha_+<2$ 
\felix{(check! $\leq$ vs. $<$)} 
be as in Assumption~\ref{as:Phi_alpha} and $\alpha_-'=\frac{\alpha_-}{\alpha_--1}$, $\alpha_+'=\frac{\alpha_+}{\alpha_+-1}$ be the corresponding dual exponents. The following assertions hold:
\begin{enumerate}[(i)]
\item 
$F\in L^{\alpha_-'}(\Omega;H)$ 
and 
$\sup\limits_{h,k\in(0,1)}\|F\|_{L^{\alpha_-'}(\Omega;H)}<\infty$,
\item 
$\one_{\Omega_T\times B_U}D F\in L^{\alpha_+'}(\Omega_T\times U;H)$ 
and 
$\sup\limits_{h,k\in(0,1)}\|\one_{\Omega_T\times B_U}D F\|_{L^{\alpha_+'}(\Omega_T\times U;H)}<\infty$,
\item
$\one_{\Omega_T\times B_U^c}D F\in L^{\alpha_-'}(\Omega_T\times U;H)$
and 
$\sup\limits_{h,k\in(0,1)}\|\one_{\Omega_T\times B_U^c}D F\|_{L^{\alpha_-'}(\Omega_T\times U;H)}<\infty$.
\end{enumerate}
\end{lemma}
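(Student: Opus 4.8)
The plan is to reduce all three assertions to deterministic estimates on the Malliavin derivative of the two stochastic convolutions, combined with the two-sided Hölder control on $\varphi'$ coming from Remark~\ref{rem:condition_varphi_alpha}. First I would record two consequences of Assumption~\ref{as:Phi_alpha}: the growth bound $\|\varphi'(y)\|\leq C(1+\|y\|^{\alpha_--1})$ (from the $\alpha_-$-regime in Remark~\ref{rem:condition_varphi_alpha}(ii)), and, by distinguishing $\|z\|\leq1$ and $\|z\|\geq1$, the single estimate
\[
\|\varphi'(y+z)-\varphi'(y)\|\leq C\min\!\big(\|z\|^{\alpha_--1},\|z\|^{\alpha_+-1}\big),\qquad y,z\in H.
\]
For (i), set $I_X:=\int_{[0,T]}X(t)\,\meas(\dl t)$, $I_{\tilde X}:=\int_{[0,T]}\tilde X_{h,k}(t)\,\meas(\dl t)$ and $Y_\theta:=(1-\theta)I_X+\theta I_{\tilde X}$, so $F=\int_0^1\varphi'(Y_\theta)\,\dl\theta$ and $\|Y_\theta\|\leq\max(\|I_X\|,\|I_{\tilde X}\|)$. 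The growth bound together with $(\alpha_--1)\alpha_-'=\alpha_-$ gives $\|F\|^{\alpha_-'}\leq C(1+\|I_X\|^{\alpha_-}+\|I_{\tilde X}\|^{\alpha_-})$; taking expectations and applying Minkowski's integral inequality reduces the claim to $\sup_t\|X(t)\|_{L^{\alpha_-}(\Omega;H)}<\infty$ and $\sup_{h,k,t}\|\tilde X_{h,k}(t)\|_{L^{\alpha_-}(\Omega;H)}<\infty$, the former being Proposition~\ref{prop:existence_alpha} and the latter following by combining it with the strong error bound of Proposition~\ref{prop:strong_error_alpha_stable}.

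Next I would compute $DF$. Since $F=g(N)$ with $g(\eta)=\int_0^1\varphi'(Y_\theta(\eta))\,\dl\theta$, the operator $\varepsilon^+_{s,x}$ commutes with the $\theta$-integral exactly as in \eqref{proof:intDX_2}, and the chain rule (Proposition~\ref{lem:chain}) yields $\bP_T\otimes\nu$-a.e.
\[
D_{s,x}F=\int_0^1\big(\varphi'(Y_\theta+D_{s,x}Y_\theta)-\varphi'(Y_\theta)\big)\,\dl\theta,
\]
where $D_{s,x}Y_\theta=(1-\theta)\int_{[0,T]}D_{s,x}X(t)\,\meas(\dl t)+\theta\int_{[0,T]}D_{s,x}\tilde X_{h,k}(t)\,\meas(\dl t)$, the interchange of $D_{s,x}$ with the $\meas$-integral being justified by Proposition~\ref{lem:intDX}. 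Because the integrands of the stochastic convolutions \eqref{eq:mild_sol_alpha_stable} and \eqref{eq:mild_solution_discrete_interpol_alpha_stable2} are deterministic, the same reasoning as in Lemma~\ref{lem:LvsN} (Proposition~\ref{prop:comm_space-time2} in the case $D\Phi=0$) gives the deterministic expressions $D_{s,x}X(t)=\one_{\{s\leq t\}}S(t-s)x$ and $D_{s,x}\tilde X_{h,k}(t)=\one_{\{s\leq k\lfloor t\rfloor_k\}}S_{h,k}^{\lfloor t\rfloor_k-\lfloor s\rfloor_k}x$. Consequently $D_{s,x}Y_\theta$ is a convex combination of two $\theta$-independent, deterministic vectors, so $\|D_{s,x}Y_\theta\|\leq c_{s,x}$ uniformly in $\theta$ and $\omega$, where $c_{s,x}:=\max\big(\|\int_{[0,T]}D_{s,x}X(t)\,\meas(\dl t)\|,\|\int_{[0,T]}D_{s,x}\tilde X_{h,k}(t)\,\meas(\dl t)\|\big)$.

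The Hölder estimate above, evaluated at $z=D_{s,x}Y_\theta$ and integrated in $\theta$, then yields the deterministic pointwise bound $\|D_{s,x}F\|\leq C\min(c_{s,x}^{\alpha_--1},c_{s,x}^{\alpha_+-1})$. The two elementary inequalities $\min(u^{\alpha_--1},u^{\alpha_+-1})^{\alpha_+'}\leq u^{\alpha_+}$ and $\min(u^{\alpha_--1},u^{\alpha_+-1})^{\alpha_-'}\leq u^{\alpha_-}$, valid for all $u\geq0$ by distinguishing $u\leq1$ and $u>1$ and using $\alpha_-\leq\alpha_+$ and $(\alpha_\pm-1)\alpha_\pm'=\alpha_\pm$, reduce (ii) and (iii) to showing that $\int_0^T\!\int_{B_U}c_{s,x}^{\alpha_+}\,\nu(\dl x)\,\dl s$ and $\int_0^T\!\int_{B_U^c}c_{s,x}^{\alpha_-}\,\nu(\dl x)\,\dl s$ are finite uniformly in $h,k$. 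Writing $\rho:=\tfrac2\alpha-\beta\in[0,1)$ and using $U=\dot H^{-\rho}$ together with \eqref{eq:estA1} and \eqref{eq:estAh1} gives $c_{s,x}\leq C\|x\|_U\,K(s)$, where $K(s):=\int_{[0,T]}\one_{\{s\leq t\}}(t-s)^{-\rho/2}\,\meas(\dl t)$ plus an analogous discrete kernel. By Minkowski's integral inequality, for $r\in\{\alpha_-,\alpha_+\}$ one has $\|K\|_{L^r(\dl s)}\leq\int_{[0,T]}\big(\int_0^t(t-s)^{-\rho r/2}\,\dl s\big)^{1/r}\meas(\dl t)$, which is finite precisely because $\rho r/2<1$; this holds since $\rho<1$ and $r<2$ force $\rho<1<2/r$. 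Hence $\int_0^T c_{s,x}^{r}\,\dl s\leq C\|x\|_U^{r}\|K\|_{L^r(\dl s)}^{r}$, and integrating against $\nu$ and invoking the Lévy integrability \eqref{eq:ass_nu_alphastable} (with $r=\alpha_+$ on $B_U$, $r=\alpha_-$ on $B_U^c$) yields (ii) and (iii) with constants independent of $h,k$.

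I expect the main obstacle to be the uniform-in-$(h,k)$ control of the discrete kernel: one must dominate $\one_{\{s\leq k\lfloor t\rfloor_k\}}\|S_{h,k}^{\lfloor t\rfloor_k-\lfloor s\rfloor_k}x\|$, uniformly in $h,k$, by a continuous-type kernel $C\|x\|_U(t-s)^{-\rho/2}$ admitting an integrable $L^{\alpha_\pm}(\dl s)$ bound. This requires comparing $t_{\lfloor t\rfloor_k}-t_{\lfloor s\rfloor_k}$ with $t-s$ near the diagonal, invoking \eqref{eq:estAh1} only for indices $m\geq1$ (the contribution of coinciding time steps being $\lambda\otimes\meas$-negligible), while keeping every constant independent of the discretization parameters.
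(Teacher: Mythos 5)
Your proposal is correct and follows essentially the same route as the paper's proof: part (i) via the growth bound on $\varphi'$, Minkowski's inequality, Proposition~\ref{prop:existence_alpha} and Proposition~\ref{prop:strong_error_alpha_stable}; parts (ii)--(iii) via the chain rule (Proposition~\ref{lem:chain}), Proposition~\ref{lem:intDX} and the commutation relation (Proposition~\ref{prop:comm_space-time2}) to identify $D_{s,x}Y$ and $D_{s,x}\tilde Y$ as deterministic kernels, and then the Hölder property of $\varphi'$, the Lévy integrability \eqref{eq:ass_nu_alphastable}, and the estimates \eqref{eq:estA1}, \eqref{eq:estAh1}. The only cosmetic difference is that you unify (ii) and (iii) through the single pointwise bound $\|D_{s,x}F\|\leq C\min\big(c_{s,x}^{\alpha_--1},c_{s,x}^{\alpha_+-1}\big)$ and elementary power inequalities (exploiting that $D_{s,x}Y_\theta$ is deterministic, so no Jensen step is needed), whereas the paper treats (ii) with $\varphi\in\cC^{1,\alpha_+-1}(H,\bR)$ and Jensen's inequality and declares (iii) analogous with $\alpha_-$ in place of $\alpha_+$.
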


\begin{proof}
Let us set $Y:=\int_{[0,T]}X(t)\meas(\dl t)$, $\tilde Y:=\int_{[0,T]}\tilde X(t)\meas(\dl t)$. Assertion (i) follows from the assumption that $\varphi\in\cC^{1,\alpha_--1}(H,\bR)$, which implies the growth bound $\|\varphi'(x)\|\leq C(1+\|x\|^{\alpha_--1})$  with $C\in (0,\infty)$ independent of $x\in H$. Indeed, we have
\begin{align*} 
\|F\|_{L^{\alpha_-'}(\Omega;H)}
&\leq \int_0^1\big\|\varphi'\big(Y+\theta(\tilde Y-Y)\big)\big\|_{L^{\alpha_-'}(\Omega;H)}\dl\theta\\
&\leq \sup_{\theta\in[0,1]}C\,\big\|1+\|Y+\theta(\tilde Y-Y)\|^{\alpha_--1}\big\|_{L^{\alpha_-'}(\Omega;\bR)}\\
&\leq C\,\big\|1+\|Y\|^{\alpha_--1}+\|\tilde Y-Y\|^{\alpha_--1}\big\|_{L^{\alpha_-'}(\Omega;\bR)}\\
&\leq C\,\big(1+\|Y\|_{L^{\alpha_-}(\Omega;H)}^{\alpha_--1}+\|\tilde Y-Y\|_{L^{\alpha_-}(\Omega;H)}^{\alpha_--1}\big).
\end{align*}
The latter term is finite as a consequence of Remark~\ref{rem:Bochner_integrability_alpha} and Minkowski's integral inequality. Using also the strong convergence from Proposition~\ref{prop:strong_error_alpha_stable} we see that it is even uniformly bounded in $h,k\in(0,1)$.

Next, we verify the assertion (ii). \felix{see notes 11.10.'16} 
Applying the chain rule from Proposition~\ref{lem:chain} to $(Y,\tilde Y)$ and the function $h\colon H\times H\to H$, $(y,\tilde y)\mapsto\int_0^1\varphi'((1-\theta)y+\theta \tilde y)\,\dl\theta$, we obtain
\begin{align*}
D_{s,x}F=\int_0^1\Big(\varphi'\big((1-\theta)(Y+D_{s,x}Y)+\theta (\tilde Y+D_{s,x}\tilde Y)\big)-\varphi'\big((1-\theta)Y+\theta\tilde Y\big)\Big)\,\dl\theta,
\end{align*}
$(s,x)\in[0,T]\times U$.
Using Lemma~\ref{lem:intDX} and the commutator relation from Proposition~\ref{prop:comm_space-time2} together with \eqref{eq:mild_sol_alpha_stable} and \eqref{eq:mild_solution_discrete_interpol_alpha_stable2}, we have
\begin{align*}
D_{s,x}Y=\int_{[s,T]}S(t-s)x\,\meas(\dl t),\quad D_{s,x}\tilde Y=\int_{[k\lfloor s\rfloor_k+1),T]}S_{h,k}^{\lfloor t\rfloor_k-\lfloor s\rfloor_k}x\,\meas(\dl t).
\end{align*}
Note that the $H$-valued Bochner integrals w.r.t.\ $\meas$ are defined for $\lambda\otimes \nu$-almost all $(s,x)\in[0,T]\times U$ according to Lemma~\ref{lem:intDX}.
Now we can use the assumption $\varphi\in\cC^{1,\alpha_+-1}(H,\bR)$, Jensen's inequality, and the identity $(\alpha_+-1)\cdot\alpha_+'=\alpha_+$ to estimate
\begin{align*}
&\bE\int_0^T\int_{B_U}\|D_{s,x}F\|^{\alpha_+'}\,\nu(\dl x)\,\dl s\\
&\leq C \sup_{\theta\in[0,T]}\bE\int_0^T\int_{B_U}\big\|(1-\theta)D_{s,x}Y+\theta D_{s,x}\tilde Y\big\|^{(\alpha_+-1)\cdot\alpha_+'}\,\nu(\dl x)\,\dl s\\
&\leq C\,\bE\int_0^T\int_{B_U}\Big(\|D_{s,x}Y\|^{\alpha_+}+\|D_{s,x}\tilde Y\|^{\alpha_+}\Big)\,\nu(\dl x)\,\dl s\\
&\leq C\!\int_{[0,T]}\int_0^T\!\int_{B_U}\!\Big(\one_{s\leq t}\,\big\|S(t-s)x\big\|^{\alpha_+}+\one_{s\leq k\lfloor t\rfloor_k}\big\|S_{h,k}^{\lfloor r\rfloor_k-\lfloor s\rfloor_k}x\big\|^{\alpha_+}\Big)\,\nu(\dl x)\,\dl s\,\meas(\dl t)\\
&\leq C\!
\int_{[0,T]}\!\Big(\int_0^t\big\|S(t-s)A^{\frac1\alpha-\frac\beta2}\big\|_{\LB(H)}\,\dl s + \int_0^{k\lfloor t\rfloor_k}\!\!\big\|S_{h,k}^{\lfloor t\rfloor_k-\lfloor s\rfloor_k}A^{\frac1\alpha-\frac\beta2}\big\|_{\LB(H)}\,\dl s\Big)\,\meas(\dl t).
\end{align*}
For the last inequality we have used the identity $\|x\|_U=\|A^{\frac\beta2-\frac1\alpha}x\|$, $x\in U$, and the integrability assumption \eqref{eq:ass_nu_alphastable}. The proof of assertion (ii) is finished by applying the estimates \eqref{eq:estA1} and \eqref{eq:estAh1}.

Assertion (iii) can be verified analogously to the proof of assertion (ii), using the assumption that $\varphi\in\cC^{1,\alpha_--1}(H,\bR)$.
\end{proof}

\begin{proof}[Proof of Theorem~\ref{thm:weak_error_alpha}]
\felix{see notes 19.4.'16 - 17.5.'16}
By a simple application of the fundamental theorem of calculus, the weak approximation error can be rewritten as
\begin{equation}\label{eq:proof_weak_error_alpha_1}
\begin{aligned}
&\big|\bE\big(f(\tilde X)-f(X)\big)\big|\\
&=\Big|\bE\Big\langle F,\int_{[0,T]}\big(\tilde X(t)-X(t)\big)\,\meas(\dl t)\Big\rangle\Big|\\
&=\Big|\int_{[0,T]}\bE\big\langle F,\tilde X(t)-X(t)\big\rangle\,\meas(\dl t)\Big|\\
&\leq\Big|\int_{[0,T]}\bE\big\langle F,X_{h,k}^{\lfloor t\rfloor_k}-X(k\lfloor t\rfloor_k)\big\rangle\,\meas(\dl t)\Big|
+\Big|\int_{[0,T]}\bE\big\langle F,X(k\lfloor t\rfloor_k)-X(t)\big\rangle\,\meas(\dl t)\Big|\\
&=:I+I\!I,
\end{aligned}
\end{equation}
with $F=F_{h,k}\in L^{\alpha_-'}(\Omega;H)$ given by \eqref{eq:def_Fhk_alpha}. We estimate $I$ and $II$ separately.

Concerning the term $I$ in \eqref{eq:proof_weak_error_alpha_1} it is enough to show that there exists a finite constant $C=C(X_0,\meas,f,\alpha,\beta,\beta_-)$, which does not depend on $h,k\in(0,1)$ or $m\in\{0,1,\ldots,M\}$, such that
\begin{equation*}
\big|\bE\big\langle F,X_{h,k}^m-X(t_m)\big\rangle\big|\leq C\big(h^{\alpha\beta_-}+k^{\frac{\alpha\beta_-}2}\big).
\end{equation*}
According to \eqref{eq:error_alpha_stable}, we have
\begin{equation}\label{eq:proof_weak_error_alpha_3}
\begin{aligned}
\big|\bE\big\langle F,X_{h,k}^m-X(t_m)\big\rangle\big|
&\leq\big|\bE\big\langle F, E_{h,k}^mX_0\big\rangle\big|\\
&\quad+\Big|\bE\Big\langle F,\int_0^{t_m}\int_{B_U}\tilde E_{h,k}(t_m-s)x\,\tilde N(\dl s,\dl x)\Big\rangle\Big|\\
&\quad +\Big|\bE\Big\langle F,\int_0^{t_m}\int_{B_U^c}\tilde E_{h,k}(t_m-s)x\,\tilde N(\dl s,\dl x)\Big\rangle\Big|\\
&\quad=:I_a+I_b+I_c.
\end{aligned}
\end{equation}
Estimate \eqref{eq:estAh2} with $\sigma=\alpha\beta\in(1,2]$ and $\rho=-\sigma=-\alpha\beta$ implies
\begin{align*}
I_a
\leq\|F\|_{L^{1}(\Omega;H)}\|X_0\|_{\dot H^{\alpha\beta}}\|E_{h,k}^m A^{-\frac{\alpha\beta}2}\|_{\LB(H)}\leq C\big(h^{\alpha\beta}+k^{\frac{\alpha\beta}2}\big),
\end{align*}
$C\in(0,\infty)$ being independent of $h,k$ and $m$.
To estimate the second term on the right hand side of \eqref{eq:proof_weak_error_alpha_3}, we apply the local duality formula from Proposition~\ref{prop:duality_space_time_local} with $p=\alpha_+$, $\Phi(s,x):=\one_{[0,t_m]\times B_U}(s,x)\tilde E_{h,k}(t_m-s)x$, and use Hölder's inequality:
\begin{equation*}
\begin{aligned}
I_b
&= \Big|\bE\int_0^{t_m}\int_{B_U}\big\langle DF,\tilde E_{h,k}(t_m-s)x\big\rangle\,\nu(\dl x)\,\dl s\Big|\\
&\leq \bE\int_0^{t_m}\Big(\int_{B_U}\|D_{s,x}F\|^{\alpha_+'}\,\nu(\dl x)\Big)^{\frac1{\alpha_+'}}\Big(\int_{B_U}\|\tilde E_{h,k}(t_m-s)x\|^{\alpha_+}\,\nu(\dl x)\Big)^{\frac1{\alpha_+}}\,\dl s.
\end{aligned}
\end{equation*}
Note that Proposition~\ref{prop:duality_space_time_local} is applicable due to Lemma~\ref{lem:F_DF_alpha} and the fact that $\int_0^{t_m}\int_{B_U}\|\tilde E_{h,k}(t_m-s)x\|^{\alpha_+}\,\nu(\dl x)\,\dl s$ is finite, as seen in the proof of Proposition~\ref{prop:strong_error_alpha_stable}. 
Using the assumption $\varphi\in\cC^{1,\alpha_+-1}(H;\bR)$, a similar argumentation as in the proof of Lemma~\ref{lem:F_DF_alpha}(ii) yields, for $\lambda$-almost all $s\in[0,T]$, 
\felix{see notes 12.10.'16}
\begin{equation}\label{eq:proof_weak_error_alpha_5}
\begin{aligned}
&\Big(\int_{B_U}\|D_{s,x}F\|^{\alpha_+'}\nu(\dl x)\Big)^{\frac1{\alpha_+'}}\\
&\leq C\int_{[s,T]}\Big(\int_{B_U}\|S(t-s)x\|^{\alpha_+}\nu(\dl x)\Big)^{\frac1{\alpha_+'}}\,\meas(\dl t)\\
&\quad+C\int_{[k\lfloor s\rfloor_k+1,T]}\Big(\int_{B_U}\big\|S_{h,k}^{\lfloor t\rfloor_k-\lfloor s\rfloor_k}x\big\|^{\alpha_+}\nu(\dl x)\Big)^{\frac1{\alpha_+'}}\,\meas(\dl t)\\
&\leq C\,\Big(\int_{B_U}\|x\|_U^{\alpha_+}\nu(\dl x)\Big)^{\frac1{\alpha_+'}}
\int_{[0,T]}\Big(\one_{s\leq t}\cdot\big\|S(t-s)A^{\frac1\alpha-\frac\beta2}\big\|_{\LB(H)}^{\alpha_+-1}\\
&\quad + \one_{s\leq k\lfloor t\rfloor_k}\cdot\big\|S_{h,k}^{\lfloor t\rfloor_k-\lfloor s\rfloor_k}A^{\frac1\alpha-\frac\beta2}\big\|_{\LB(H)}^{\alpha_+-1}\Big)\,\meas(\dl t).
\end{aligned}
\end{equation}
As a consequence, we have
\begin{equation*}
\begin{aligned}
I_b
&\leq 
C\,\Big(\int_{B_U}\|x\|_U^{\alpha_+}\nu(\dl x)\Big)^{\frac1{\alpha_+'}+\frac1{\alpha_+}}
\int_{[0,T]}\int_0^{t_m}
\Big(\one_{s\leq t}\cdot\big\|S(t-s)A^{\frac1\alpha-\frac\beta2}\big\|_{\LB(H)}^{\alpha_+-1}\\
&\quad + \one_{s\leq k\lfloor t\rfloor_k}\cdot\big\|S_{h,k}^{\lfloor t\rfloor_k-\lfloor s\rfloor_k}A^{\frac1\alpha-\frac\beta2}\big\|_{\LB(H)}^{\alpha_+-1}\Big)\cdot\big\|\tilde E_{h,k}(t_m-s)A^{\frac1\alpha-\frac\beta2}\big\|_{\LB(H)}
\,\dl s\,\meas(\dl t)\\
&\leq C \int_{[0,T]}\int_0^{\min(t_m,k\lfloor t\rfloor_k)}
\Big(\big\|S(t-s)A^{\frac1\alpha-\frac\beta2}\big\|_{\LB(H)}^{\alpha_+-1}\\
&\quad + \big\|S_{h,k}^{\lfloor t\rfloor_k-\lfloor s\rfloor_k}A^{\frac1\alpha-\frac\beta2}\big\|_{\LB(H)}^{\alpha_+-1}\Big)\cdot\big\|\tilde E_{h,k}(t_m-s)A^{\frac1\alpha-\frac\beta2}\big\|_{\LB(H)}
\,\dl s\,\meas(\dl t)
\end{aligned}
\end{equation*}
Applying the estimates \eqref{eq:estA1}, \eqref{eq:estAh1} and \eqref{eq:estAh3} with $\rho:=\frac2\alpha-\beta\in[0,1)$ and $\sigma\in[0,2]$ to be determined below, the last expression can be bounded from above by
\felix{see notes 14.10.'16}
\begin{align*}
&C \int_{[0,T]}\int_0^{\min(t_m,k\lfloor t\rfloor_k)}
\Big((t-s)^{-\frac\rho2(\alpha_+-1)}+(k\lfloor t\rfloor_k-k\lfloor s\rfloor_k)^{-\frac\rho2(\alpha_+-1)}\Big)\\
&\quad\times(t_m-s)^{-\frac{\rho+\sigma}2}(h^\sigma+k^{\frac\sigma2})
\;\dl s\,\meas(\dl t)\\
&\leq C\,\meas([0,T])\int_0^T(T-s)^{-\frac\rho2(\alpha_+-1)-\frac{\rho+\sigma}2}\,\dl s\, (h^\sigma+k^{\frac\sigma2})
\end{align*}
The last integral is finite if, and only if, $\sigma<2-\rho\alpha_+$. This condition also implies that $\rho<2-\sigma$, so that the application of $\eqref{eq:estAh3}$ is justified for $\sigma\in[0,2-(\frac2\alpha-\beta)\alpha_+)\subset[0,2]$. In particular, we can use $\sigma:=\alpha\beta_-$ if $\alpha_+\in(\alpha,2)$ is chosen small enough.
The third term $I_c$ on the right hand side of \eqref{eq:proof_weak_error_alpha_3} can be estimated analogously to $I_b$ if one considers the integrability exponent $\alpha_-$ instead of $\alpha_+$ and uses the assumption $\varphi\in\cC^{1,\alpha_--1}(H,\bR)$.

In order to handle the term $I\!I$ in \eqref{eq:proof_weak_error_alpha_1} it suffices to show that for $t\in[t_m,t_{m+1})$, $m\in\{0,\ldots,M-1\}$ or $t\in[t_m,T]$, $m=M$, and $k\in(0,1)$
\begin{align*}
\big|\bE\big\langle F,X(t_m)-X(t)\big\rangle\big|\leq C\,k^{\frac{\alpha\beta_-}2}
\end{align*}
with a finite constant $C=C(X_0,T,\meas,f,\alpha,\beta,\beta_-)$ that does not depend on $t,m$ or $k$. Fix $m$ and $t$ as above. According to \eqref{eq:mild_sol_alpha_stable} we may write
\begin{equation}\label{eq:proof_weak_error_alpha_5}
\begin{aligned}
&\big|\bE\big\langle F,X(t_m)-X(t)\big\rangle\big|
\,\leq\,
\big|\bE\big\langle F,(S(t_m)-S(t))X_0\big\rangle\big|\\
&\quad+\Big|\bE\Big\langle F,\int_0^{t_m}\int_{B_U}(S(t_m-s)-S(t-s))x\,\tilde N(\dl s,\dl x)\Big\rangle\Big|\\
&\quad+\Big|\bE\Big\langle F,\int_0^{t_m}\int_{B_U^c}(S(t_m-s)-S(t-s))x\,\tilde N(\dl s,\dl x)\Big\rangle\Big|\\
&\quad+\Big|\bE\Big\langle F,\int_{t_m}^t\int_{B_U}S(t-s)x\,\tilde N(\dl s,\dl x)\Big\rangle\Big|
+\Big|\bE\Big\langle F,\int_{t_m}^t\int_{B_U^c}S(t-s)x\,\tilde N(\dl s,\dl x)\Big\rangle\Big|\\
&=:I\!I_a+I\!I_{b}+I\!I_{c}+I\!I_{d}+I\!I_{e}.
\end{aligned}
\end{equation}
As a consequence of the estimate \eqref{eq:estA2} with $\rho=\alpha\beta\in(1,2]$,
\begin{align*}
I\!I_a&\leq \|F\|_{L_1(\Omega;H)}\|X_0\|_{\dot H^{\alpha\beta}}\|S(t_m)\|_{\LB(H)}\big\|(\id_H-S(t-t_m))A^{-\frac{\alpha\beta}2}\big\|_{\LB(H)}\\
&\leq C\,(t-t_m)^{\frac{\alpha\beta}2}\,\leq\, C\,k^{\frac{\alpha\beta}2},
\end{align*}
$C\in[0,\infty)$ being independent of $m$, $t$ and $k$. 
\felix{$C=C(T)$ here}
Concerning the second term on the right hand side of \eqref{eq:proof_weak_error_alpha_3}, 
we apply the local duality formula from Proposition~\ref{prop:duality_space_time_local} with $p=\alpha_+$ and $\Phi(s,x):=\one_{[0,t_m]\times B_U}(s,x)(S(t_m-s)-S(t-s))x$, Hölder's inequality, and the estimate \eqref{eq:proof_weak_error_alpha_5} to obtain:
\felix{see notes 15.10.'16}
\begin{equation*}
\begin{aligned}
I\!I_b
&= \Big|\bE\int_0^{t_m}\int_{B_U}\big\langle DF,(S(t_m-s)-S(t-s))x\big\rangle\,\nu(\dl x)\,\dl s\Big|\\
&\leq C\,\Big(\int_{B_U}\|x\|_U^{\alpha_+}\nu(\dl x)\Big)^{\frac1{\alpha_+'}+\frac1{\alpha_+}}
\int_{[0,T]}\int_0^{t_m}
\Big(\big\|S(t-s)A^{\frac1\alpha-\frac\beta2}\big\|_{\LB(H)}^{\alpha_+-1}\\
&\quad + \|S_{h,k}^{m-\lfloor s\rfloor_k}A^{\frac1\alpha-\frac\beta2}\big\|_{\LB(H)}^{\alpha_+-1}\Big)\cdot\big\|(S(t_m-s)-S(t-s))A^{\frac1\alpha-\frac\beta2}\big\|_{\LB(H)}
\,\dl s\,\meas(\dl t)\\
\end{aligned}
\end{equation*}
Set $\rho:=\frac2\alpha-\beta\in[0,1)$ and let $\gamma\in (0,2]$. As $\|(S(t_m-s)-S(t-s))A^{\frac\rho2}\|_{\LB(H)}\leq\|(\id_H-S(t-t_m))A^{-\frac\gamma2}\|_{\LB(H)}\|S(t_m-s)A^{\frac{\rho+\gamma}2}\|_{\LB(H)}$ and $t-t_m\leq k$, the estimates \eqref{eq:estA1}, \eqref{eq:estA2} and \eqref{eq:estAh1} yield
\begin{align*}
I\!I_b\leq C\,\meas([0,T])\int_0^T(T-s)^{-\frac\rho2(\alpha_+-1)-\frac{\rho+\gamma}2}\,\dl s \; k^{\frac\gamma2}.
\end{align*}
The integral is finite if, and only if, $\gamma<2-(\frac2\alpha-\beta)\alpha_+$. Hence we can use $\gamma:=\alpha\beta_-$ if $\alpha_+\in(\alpha,2)$ is chosen small enough. The term $I\!I_c$ on the right hand side of \eqref{eq:proof_weak_error_alpha_5} can be treated analogously to $I\!I_b$, with integrability exponent $\alpha_-$ instead of $\alpha_+$. Finally, the remaining terms $I\!I_d$ and $I\!I_e$ on the right hand side of \eqref{eq:proof_weak_error_alpha_5} can be estimated in a similar way; 
\felix{see notes 15.10.'16}
we omit the details as no new arguments are involved.
\end{proof}

\addtocontents{toc}{\protect\setcounter{tocdepth}{0}}
\smallskip
\subsection*{Acknowledgement}
We thank Kristin Kirchner, Raphael Kruse, Annika Lang and Stig Larsson for the participation in early discussions regarding this work and~\cite{AnderssonLindner2017b}.
\addtocontents{toc}{\protect\setcounter{tocdepth}{2}}

\begin{appendix}
\section{}

Here we add the postponed proofs of the regularity and strong convergence results in Proposition~\ref{prop:existence_alpha} and \ref{prop:strong_error_alpha_stable}.

\begin{proof}[Proof of Proposition~\ref{prop:existence_alpha}]
Let $\alpha_+\in(\alpha,2]$. Due to the closedness of the operator $A^{\frac{\beta_-}2}$, the continuity of the stochastic integral mapping $I^{\tilde N}_T$ from $L^{\alpha_+}_{\pred}(\Omega_T\times U;H)$ to $L^{\alpha_+}(\Omega;H)$ (cf.~Remark~\ref{rem:stoch_int_space-time}), and the fact that $\|x\|_U=\|A^{\frac\beta2-\frac1\alpha}x\|$ for  $x\in U=\dot H^{\beta-\frac2\alpha}$, we have
\begin{align*}
\Big\|\int_0^t\int_{B_U}&S(t-s)x\,\tilde N(\dl s,\dl x)\Big\|_{L^{\alpha_-}(\Omega;\dot H^{\beta_-})}\\
&\leq \Big\|\int_0^t\int_{B_U}S(t-s)x\,\tilde N(\dl s,\dl x)\Big\|_{L^{\alpha_+}(\Omega;\dot H^{\beta_-})}\\
&\leq C\Big(\int_0^t\int_{B_U}\big\|A^{\frac{\beta_-}2}S(t-s)x\big\|^{\alpha_+}\nu(\dl x)\,\dl s\Big)^{\frac1{\alpha_+}}\\
&\leq C\Big(\int_{B_U}\|x\|_U^{\alpha_+}\nu(\dl x)\int_0^t\big\|S(t-s)A^{\frac{\beta_-}2-\frac{\beta}2+\frac1\alpha}\big\|_{\LB(H)}^{\alpha_+}\dl s\Big)^{\frac 1{\alpha_+}}\\
&\leq C\cdot C_{\beta_--\beta+\frac2\alpha}\Big(\int_0^t(t-s)^{-\alpha_+\cdot(\frac{\beta_--\beta}2+\frac1\alpha)}\dl s\Big)^{\frac 1{\alpha_+}}.
\end{align*}
In the last step we have used the integrability assumption \eqref{eq:ass_nu_alphastable} as well as the estimate \eqref{eq:estA1}, assuming without loss of generality that $\beta_-\in[0,\beta)$ is big enough for $\beta_--\beta+\frac2\alpha$ to be nonnegative. The integral in the last line is finite if we choose $\alpha_+\in(\alpha,2]$ small enough so that $\alpha_+\cdot(\frac{\beta_--\beta}2+\frac1\alpha)$ is less than one. The finiteness of $\|\int_0^t\int_{B_U^c}S(t-s)x\,\tilde N(\dl s,\dl x)\|_{L^{\alpha_-}(\Omega;\dot H^{\beta_-})}$ can be checked analogously. The continuity assertions follows with similar arguments if one uses the estimate~\eqref{eq:estA2}. \felix{see notes 26.6.'16}
\end{proof}

\begin{proof}[Proof of Proposition~\ref{prop:strong_error_alpha_stable}]
We show that 
\begin{equation*}
\big\|X^m_{h,k}-X(t_m)\big\|_{L^{\alpha_-}(\Omega;H)}\leq C\big(h^{\beta_-}+k^{\frac{\beta_-}2}\big),\quad h,k\in(0,1),
\end{equation*}
for all $m\in\{0,\ldots,M\}$ with a finite constant $C=C(X_0,T,\nu,\alpha,\alpha_-,\beta,\beta_-)$ that does not depend on $h,k$ or $m$. Together with the $\beta_-/2$-Hölder continuity of the mapping $[0,T]\ni t\mapsto L(t)\in L^{\alpha_-}(\Omega;H)$ stated in Proposition~\ref{prop:existence_alpha} this implies the assertion.

Fix $\alpha_+\in (\alpha,2]$. The continuity of the stochastic integral mapping $I^{\tilde N}_T$ from $L^p_{\pred}(\Omega_T\times U;H)$ to $L^p(\Omega;H)$ for $p=\alpha_-$ and $p=\alpha_+$ (cf.~Remark~\ref{rem:stoch_int_space-time}) implies
\begin{equation}\label{eq:proof_strong_alpha_1}
\begin{aligned}
&\big\|X^m_{h,k}-X(t_m)\big\|_{L^{\alpha_-}(\Omega;H)}\\
&\leq \|E_{h,k}^m X_0\|+C\,\Big(\int_0^{t_m}\int_{B_U}\big\|\tilde E_{h,k}(t_m-s)x\big\|^{\alpha_+}\nu(\dl x)\,\dl s\Big)^{\frac1{\alpha_+}}\\
&\quad +C\,\Big(\int_0^{t_m}\int_{B_U^c}\big\|\tilde E_{h,k}(t_m-s)x\big\|^{\alpha_-}\nu(\dl x)\,\dl s\Big)^{\frac1{\alpha_-}}.
\end{aligned}
\end{equation}
We estimate the three terms on the right hand side separately and independently of $m$. For the first term we apply \eqref{eq:estAh2} with $\sigma=\beta_-\in [0,\beta)\subset [0,2)$ and $\rho=-\beta_-$ to obtain
\begin{align*}
\|E_{h,k}^m X_0\|
\leq \big\|E_{h,k}^m A^{-\frac{\beta_-}2}\big\|_{\LB(H)}\|X_0\|_{\dot H^{\beta_-}}
\leq C\big(h^{\beta_-}+k^{\frac{\beta_-}2}\big)\|X_0\|_{\dot H^{\beta_-}}.
\end{align*}
For the second term on the right hand side of \eqref{eq:proof_strong_alpha_1} we use the integrability assumption \eqref{eq:ass_nu_alphastable}, the fact that $\|y\|_U=\|A^{\frac\beta2-\frac1\alpha}y\|$ for  $y\in U=\dot H^{\beta-\frac2\alpha}$, and the error estimate \eqref{eq:estAh3} with $\sigma=\beta_-\in [0,\beta)\subset [0,2)$ and $\rho=\frac 2\alpha-\beta\in[0,\min(1,2-\beta_-))$:
\begin{align*}
&\int_0^{t_m}\int_{B_U}\big\|\tilde E_{h,k}(t_m-s)x\big\|^{\alpha_+}\nu(\dl x)\,\dl s\\
&\leq\int_0^{t_m}\big\|\tilde E_{h,k}(t_m-s)A^{\frac1\alpha-\frac\beta2}\big\|_{\LB(H)}^{\alpha_+}\,\dl s\cdot\int_{B_U}\|x\|_U^{\alpha_+}\nu(\dl x)\\
&\leq C\int_0^{t_m}\Big(\big(h^{\beta_-}+k^{\frac{\beta_-}2}\big)\cdot(t_m-s)^{-\frac{\frac 2\alpha-\beta+\beta_-}2}\Big)^{\alpha_+}\dl s\\
&=C\int_0^{t_m}s^{\alpha_+\cdot\frac{\beta-\beta_-}2-\frac{\alpha_+}\alpha}\dl s\;\big(h^{\beta_-}+k^{\frac{\beta_-}2}\big)^{\alpha_+}.
\end{align*}
We observe that the integral in the last line in finite if, and only if, $\frac{\beta_-}2<\frac\beta2-\frac1\alpha+\frac1{\alpha_+}$. The latter condition is fulfilled if we choose $\alpha_+\in (\alpha,2]$ small enough. The third term on the right hand side of \eqref{eq:proof_strong_alpha_1} is estimated in an analogous way. \felix{cf.~notes 17.4.'16}
\end{proof}

\color{black}

\end{appendix}

\bibliographystyle{plain}
\bibliography{../litLevy}

\end{document}